\newextarrow{\xbigtoto}{{20}{20}{20}{20}}
   {\bigRelbar\bigRelbar{\bigtwoarrowsleft\rightarrow\rightarrow}}
\newcommand{\calV}{{\mathcal{V}}}
\newcommand{\calA}{{\mathcal{A}}}
\newcommand{\calM}{{\mathcal{M}}}
\newcommand{\calD}{{\mathcal{D}}}
\newcommand{\calQ}{\mathcal{Q}}
\newcommand{\E}{\mathcal{E}}
\newcommand{\calF}{\mathcal{F}}
\newcommand{\calC}{\mathcal{C}}
\newcommand{\calS}{\mathcal{S}}
\newcommand{\calT}{\mathcal{T}}
\newcommand{\calW}{\mathcal{W}}
\newcommand{\Spec}{\mathrm{Spec}}
\newcommand{\Hom}{\mathrm{Hom}}
\newcommand{\Map}{\mathrm{Map}}
\newcommand{\End}{\mathrm{End}}
\newcommand{\Aut}{\mathrm{Aut}}
\newcommand{\Sym}{\mathrm{Sym}}
\newcommand{\Set}{\mathrm{Set}}
\newcommand{\ob}{\mathrm{ob}}
\newcommand{\rk}{\mathrm{rk}}
\newcommand{\D}{\mathcal{D}}
\newcommand{\im}{\mathrm{im}}
\newcommand{\id}{\mathrm{id}}
\newcommand{\coker}{\mathrm{coker}}
\newcommand{\red}{\mathrm{red}}
\newcommand{\cl}{\mathrm{cl}}
\newcommand{\Var}{\mathcal{V}\mathrm{ar}}
\newcommand{\opp}{\mathrm{op}}    
\renewcommand{\ker}{\mathrm{ker}}
\newcommand{\argu}{\text{(---)}}
\newcommand{\M}{\mathcal{M}}
\newcommand{\Z}{\mathbb{Z}}
\newcommand{\N}{\mathbb{N}}
\newcommand{\C}{\mathbb{C}}
\newcommand{\thT}{\mathbb{T}}
\newcommand{\lrangles}[1]{\langle#1\rangle}
\newcommand{\SC}{\calS\calC}
\newcommand{\Wtop}{W_{\mathrm{top}}}
\newcommand{\Wbot}{W_{\mathrm{bot}}}
\newcommand{\MSF}{{\overline{M}| F}}
\newcommand{\piMSF}{\pi_0(\MSF)}
\newcommand{\OSF}{{{O|\calS F}}}
\newcommand{\gm}[1]{{{|#1|}}}
\newcommand{\cofib}{\mathrm{cofib}}
\newcommand{\tx}{3\times 3}
\def\forkindep{\mathrel{\raise0.2ex\hbox{\ooalign{\hidewidth$\vert$\hidewidth\cr\raise-0.9ex\hbox{$\smile$}}}}}
\newcommand{\cmot}{\chi^{\mathrm{mot}}}
\newcommand{\sk}{\mathbbm{1}_k}
\newcommand*\dotp{\mathpalette\dotp@{.5}}
\newcommand*\dotp@[2]{\mathbin{\vcenter{\hbox{\scalebox{#2}{$\m@th#1\bullet$}}}}}
\DeclareSymbolFont{matha}{OML}{txmi}{m}{it}
\DeclareMathSymbol{\varv}{\mathord}{matha}{118}
\newcommand{\s}{\tilde{s}}
\newcommand{\tk}{\,\text{,}\,}
\newcommand{\cosimp}[3]{\xymatrix@1{#1 \ar@<.4ex>[r] \ar@<-.4ex>[r] & {\ }#2 \ar@<0.8ex>[r] \ar[r] \ar@<-.8ex>[r] & {\ } #3 \ar@<1.2ex>[r] \ar@<.4ex>[r] \ar@<-.4ex>[r] \ar@<-1.2ex>[r] & \cdots }}
\newsavebox{\pullback}
\sbox\pullback{%
	\begin{tikzpicture}%
	\draw (0,0) -- (1ex,0ex);%
	\draw (1ex,0ex) -- (1ex,1ex);%
	\end{tikzpicture}}
\definecolor{quotemark}{gray}{0.7}
\newlength\origparskip
\newcommand{\fquote}{%
	\@ifnextchar[{\fquote@i}{\fquote@i[]}
}
\def\fquote@i[#1]{%
	\@ifnextchar[{\fquote@ii{#1}}{\fquote@ii{#1}[]}
}%
\def\fquote@ii#1[#2]{%
	\def\pqm@tempa{#1}%
	\def\pqm@tempb{#2}%
	\noindent
	\list
	{}
	{\setlength{\leftmargin}{0.3\textwidth}%
		\setlength{\rightmargin}{0.1\textwidth}%
		\setlength{\origparskip}{\parskip}}%
	\item[]%
	\begin{picture}(0,0)%
	\put(-15,-8){\makebox(0,0){\scalebox{4}{%
				\textcolor{quotemark}{\textquotedblright}}}}%
	\end{picture}%
	\begingroup
	\itshape
	\ignorespaces}%
\def\endfquote{%
	\endgroup
	\par
	\raggedleft
	\ifx\pqm@tempa\empty
	\else
	{\bfseries --- \pqm@tempa\par}%
	\setlength{\parskip}{\origparskip}%
	\ifx\pqm@tempb\empty
	\else
	(\pqm@tempb)%
	\fi
	\fi
	\par
	\endlist}
\def\doubleunderline#1{\underline{\underline{#1}}}
\def\doubleoverline#1{\overline{\overline{#1}}}
\newcommand{\bigslant}[2]{{\raisebox{.2em}{$#1$}\left/\raisebox{-.2em}{$#2$}\right.}}
\newcommand{\equalizer}[2]{\xymatrix@1{#1 \ar@<.4ex>[r] \ar@<-0.4ex>[r] & {\ } #2}}
\newcommand{\adjunction}[4]{\xymatrix@1{#1{\ } \ar@<-0.3ex>[r]_{ {\scriptstyle #2}} & {\ } #3 \ar@<-0.3ex>[l]_{ {\scriptstyle #4}}}}
\newcommand*{\relrelbarsep}{.386ex}
\newcommand*{\relrelbar}{%
	\mathrel{%
		\mathpalette\@relrelbar\relrelbarsep
	}%
}
\newcommand*{\@relrelbar}[2]{%
	\raise#2\hbox to 0pt{$\m@th#1\relbar$\hss}%
	\lower#2\hbox{$\m@th#1\relbar$}%
}
\providecommand*{\rightrightarrowsfill@}{%
	\arrowfill@\relrelbar\relrelbar\rightrightarrows
}
\providecommand*{\leftleftarrowsfill@}{%
	\arrowfill@\leftleftarrows\relrelbar\relrelbar
}
\providecommand*{\xrightrightarrows}[2][]{%
	\ext@arrow 0359\rightrightarrowsfill@{#1}{#2}%
}
\providecommand*{\xleftleftarrows}[2][]{%
	\ext@arrow 3095\leftleftarrowsfill@{#1}{#2}%
}
\newcommand{\dsquare}[4]{
	\begin{tikzcd}[ampersand replacement=\&]
	#1\ar[r, >->]\ar[d, {Circle[open]}->] \ar[dr, phantom, "\square"]\& #2 \ar[d, {Circle[open]}->]\\
	#3 \ar[r, >->] \& #4
	\end{tikzcd}}
\newcommand{\dsquaref}[8]{
	\begin{tikzcd}[ampersand replacement=\&]
	#1\ar[r, >->,"#5"]\ar[d, swap,{Circle[open]}->,"#6"] \ar[dr, phantom, "\square"]\& #2 \ar[d, {Circle[open]}->,"#8"]\\
	#3 \ar[r, >->,"#7"] \& #4
	\end{tikzcd}}
\newcommand{\squaref}[8]{
	\begin{tikzcd}[ampersand replacement=\&]
	#1\ar[r,"#5"]\ar[d, swap,"#6"] \& #2 \ar[d, "#8"]\\
	#3 \ar[r, "#7"] \& #4
	\end{tikzcd}}
\newcommand{\rtail}{\rightarrowtail}
\newcommand{\ltail}{\leftarrowtail}
\newcommand{\xrtail}[1]{\overset{#1}{\rtail}}
\newcommand{\xltail}[1]{\overset{#1}{\leftarrowtail}}
\newcommand{\otail}{\,\,\stackengine{0pt}{\hspace{.81ex}$\rightarrow$}{$\circ$}{O}{l}{F}{F}{L}} 
\newcommand{\xotail}[1]{\overset{\,\,\,\,\,#1}{\otail}}
\newcommand{\ones}[2]{\begin{pmatrix}
		#1 \\ #2
\end{pmatrix}}
\newcommand{\Cplus}{{\calC^{\oplus}}}
\newcommand{\Ghat}{{\widehat{G}\calC}}
\newcommand{\Ar}{\mathrm{Ar}}
\newcommand{\Ars}{\Ar_{\square}}
\newcommand{\Art}{\Ar_{\triangle}}
\begin{document}
\bibliographystyle{alpha}
\newtheorem{theorem}{Theorem}[section]
\newtheorem*{theorem*}{Theorem}
\newtheorem*{condition*}{Condition}
\newtheorem*{definition*}{Definition}
\newtheorem*{corollary*}{Corollary}
\newtheorem*{conjecture}{Conjecture}
\newtheorem{proposition}[theorem]{Proposition}
\newtheorem{lemma}[theorem]{Lemma}
\newtheorem{corollary}[theorem]{Corollary}
\newtheorem{claim}[theorem]{Claim}
\newtheorem{conclusion}[theorem]{Conclusion}
\newtheorem{hypothesis}[theorem]{Hypothesis}
\newtheorem{summarytheorem}[theorem]{Summary Theorem}
\newtheorem{summaryexample}[theorem]{Summary Example}

\theoremstyle{definition}
\newtheorem{definition}[theorem]{Definition}
\newtheorem{question}{Question}
\newtheorem{remark}[theorem]{Remark}
\newtheorem{observation}[theorem]{Observation}
\newtheorem{discussion}[theorem]{Discussion}
\newtheorem{guess}[theorem]{Guess}
\newtheorem{example}[theorem]{Example}
\newtheorem{condition}[theorem]{Condition}
\newtheorem{warning}[theorem]{Warning}
\newtheorem{notation}[theorem]{Notation}
\newtheorem{construction}[theorem]{Construction}
\newtheorem{problem}[theorem]{Problem}
\newtheorem{fact}[theorem]{Fact}
\newtheorem{thesis}[theorem]{Thesis}
\newtheorem{convention}[theorem]{Convention}
\newtheorem{summary}[theorem]{Summary}
\newtheorem{reminder}[theorem]{Reminder}

\newtheorem{naivedefinition}[theorem]{Naive Definition}
\newtheorem{maintheorem}{Theorem}
\renewcommand*{\themaintheorem}{\Alph{maintheorem}}
\newtheorem*{theorem:GilletGrayson}{Theorem A}
\newtheorem*{theorem:Generators}{Theorem B}
\newtheorem*{theorem:R1}{Theorem C}
\newtheorem*{theorem:R2}{Theorem D}

\title{$K_1(\Var)$ is presented by stratified birational equivalences}

\author{Ming Ng}
\begin{abstract} This paper provides a complete presentation of $K_1(\Var)$, the $K_1$ group of varieties, resolving and simplifying a problem left open in \cite{ZakhK1}. Our approach adapts Gillet-Grayson's $G$-Construction to define an un-delooped $K$-theory space of varieties. There are two levels on which one can read the present paper. On a technical level, we streamline and extend previous results on the $K$-theory of exact categories to a broader class of categories, including $\Var$. On a more conceptual level, our investigations bring into focus an interesting generalisation of automorphisms (``double exact squares'') which generate $K_1$. For varieties, this corresponds to what we call stratified birational equivalences, but the construction extends to a wide range of non-additive contexts (e.g. $o$-minimal structures, definable sets etc.). This raises a challenging question: what kind of information do these generalised automorphisms calibrate? 
\end{abstract}

 \thanks{Research partially supported by EPSRC Grant EP/V028812/1 and a FY2024 JSPS Postdoctoral Fellowship (Short-Term).}
\maketitle

Our understanding of $K$-theory is changing. Recent efforts to extend tools from classical algebraic $K$-theory to non-additive settings have led us to make decisions on what its essential features are. One perspective, influenced by Waldhausen's $S_\bullet$-construction \cite{Wald}, is that $K$-theory is a framework for analysing the finite assembly and disassembly of objects; non-additive applications of this insight can be found in Campbell's $\widetilde{S}_\bullet$-construction \cite{Campbell} as well as Zakharevich's Assemblers \cite{ZakhAss}. A related perspective emphasises the view that $K$-theory breaks an object into two types of pieces. This underpins Campbell-Zakharevich's framework of {\em CGW categories} \cite{CGW}, which formalises key similarities between exact categories and the category of varieties $\Var_k$.

In a different line of work: the study of $K_1$ of arbitrary exact categories properly began with Gillet-Grayson’s $G$-construction \cite{GG}, which provides an elementary description of its generators. This description was later refined by Sherman \cite{ShermanAbelian, ShermanExact} and Nenashev \cite{NenGen}, before culminating in Nenashev's characterisation of the complete set of relations for $K_1$ \cite{Nen0, Nen1}. 

The present paper unites the two lines of investigation by extending the $K_1$ results to a subclass of CGW categories known as {\em pCGW categories}. These include not only exact categories and varieties, but also finite sets, $o$-minimal structures, and definable sets. Our analysis leads to two alternative, complete presentations of $K_1$ applicable to a broad range of non-additive contexts. For varieties, our results show $K_1(\Var_k)$ is generated by what we we call {\em stratified} birational equivalences. Informally, these are isomorphisms between dense open subvarieties whose complements are also isomorphic; in the reducible case, the open isomorphism induces birational equivalences on the irreducible components that meet the open subsets (see Section~\ref{sec:stratified}). We conclude with a broad range of interesting test problems exploring the implications.

\subsection*{Background Overview} Let us develop the previous remark that $K$-theory is an abstract framework for breaking an object into two different types of pieces. Consider the following two definitions.
\begin{itemize}
	\item Let $R$ be a ring. We define $K_0(R)$ as

	$$K_0(R):=\bigslant{\left\{\stackanchor{free abelian group}{fin. gen. proj. $R$-modules}\right\}}{\stackanchor{$[M]=[M']$, if $M\cong M'$}{$[M]=[M']+[M'']$, if $M'\to M\to M''$}}$$
	where $M'\to M\to M''$ is a short exact sequence. 
	\item Let $\Var_k$ be the category of $k$-varieties, i.e. reduced separated schemes of finite type over field $k$. We define $K_0(\Var_k)$ as

	$$K_0(\Var_k):=\bigslant{\left\{\stackanchor{free abelian group}{$k$-varieties}\right\}}{\stackanchor{$[X]=[X']$, if $X\cong X'$}{$[X]=[U]+[X\setminus U]$, if $U\hookrightarrow X$ is a closed immersion}}$$
	
\end{itemize}
The analogy is clear. A short exact sequence $M'\to M\to M''$ decomposes the $R$-module $M$ into two distinct pieces, $M'$ and $M''$, with $M'\to M$ an admissible mono and $M\to M''$ as an admissible epi. Similarly, $K_0(\Var_k)$ decomposes a variety $X$ into $U$ and $X\setminus U$, with $U\hookrightarrow X$ a closed immersion and $X\setminus U\hookrightarrow X$ an open immersion. 
In both cases, an object is broken into two pieces and viewed as an abstract sum of the two components. But what are the essential features of this decomposition? 

Quillen \cite{QuilenExact} introduced the notion of an exact category as a pair $(\calC, \calS)$, where $\calC$ is an additive category, and $\calS$ is a family of sequences $M' \to M \to M''$ satisfying conditions analogous to short exact sequences in abelian categories. This includes the natural condition that admissible monos $M' \to M$ are kernels of admissible epis $M \to M''$, and admissible epis are cokernels of admissible monos. These conditions enabled Quillen to construct a $K$-theory spectrum $K\calC$, recovering $K_0(R)$ on $\pi_0$ when $\calC$ is the category of finitely-generated projective $R$-modules.

Campbell-Zakharevich \cite{CGW} re-examined Quillen's framework, and made a crucial observation: instead of requiring the two classes of morphisms to compose, it suffices to encode their interaction formally. This added flexibility allows us to extend Quillen's $K$-theory to non-additive settings like varieties, where sequences like $U \hookrightarrow X \hookleftarrow X \setminus U$ clearly do not compose. This is made precise by \cite[Thm 4.3]{CGW}, which adapts Quillen's argument to construct a $K$-theory spectrum $K\Var_k$ recovering $K_0(\Var_k)$ on $\pi_0$. 

Importantly, the Campbell-Zakharevich construction allows us to define the $K$-groups $K_n(\Var_k):=\pi_n (K\Var_k)$ for all $n$. It is natural to ask:

\begin{question}\label{qn:key} What kind of information do the higher $K$-groups of varieties encode?
\end{question}

This is a challenging question. In classical algebraic $K$-theory, the coarseness of $K_0$ as an invariant may be measured by the fact that $K_0(F)=\Z$ for all fields $F$, whereas $K_1(F)\cong F^\times$. 
Is there an analogous story in the setting of varieties? More explicitly, how might we measure the loss of information in $K_0(\Var_k)$? To what extent can this information be recovered in the higher $K$-groups? The following summary theorem gives a snapshot of the current landscape.

\begin{summarytheorem}\label{sumthm:KVar} Let $k$ be an algebraically closed field of characteristic 0, and equip $K_0(\Var_k)$ with a ring structure by defining $[X]\cdot [Y]:= [(X\times_k Y)_{\red}]$. Two $k$-varieties $X,Y$ are said to be \emph{piecewise isomorphic} if $X$ and $Y$ admit finite partitions 
	$$X_1,\dots, X_n \qquad\text{and}\qquad  Y_1,\dots, Y_n$$
into locally closed subvarieties such that $X_i\cong Y_i$ for all $n$. The following is known:
\begin{enumerate}[label=(\roman*)]
	\item Define $SK_0(\Var_k)$ as the freely generated {\em semiring} on $[X]$ subject to $[X]=[Z]+[X\setminus Z]$. \underline{Then} two $k$-varieties $X,Y$ are piecewise isomorphic iff $[X]=[Y]$ in $SK_0(\Var_k)$. 
	\item Let $X,Y$ $k$-varieties such that $\dim X\leq 1$. \underline{Then} $[X]=[Y]$ in $K_0(\Var_k)$ iff they are piecewise isomorphic.
	\item There exists $k$-varieties $X$ and $Y$ such that $[X]=[Y]$ in $K_0(\Var_k)$ and yet fail to be piecewise isomorphic.
	\item Let $X$ be a $k$-variety of any non-negative dimension containing only finitely many rational curves. \underline{Then} for any $k$-variety $Y$, $[X]=[Y]$ in $K_0(\Var_k)$ iff they are piecewise isomorphic.
\end{enumerate}
\end{summarytheorem}
\begin{proof} (i) appears to be folklore, and is recorded in \cite{Beke} as well as \cite[Corollary 1.4.9, Chapter 2]{CNSMotivicIntegration}. (ii) is \cite[Proposition 6]{LiuSeb}. For (iii), various constructions are now known but the most well-known example is due to Borisov \cite{Borisov}. (iv) is \cite[Theorem 5]{LiuSeb}.
\end{proof} 

Summary Theorem~\ref{sumthm:KVar} sharpens our understanding of what is at stake. Given our high-level characterisation of $K$-theory as an abstract framework for analysing the finite assembly and decompositions of objects, the following question was natural:

\begin{question}[{{\cite[Question 1.2]{LL}}}] Is it true that two $k$-varieties are piecewise isomorphic iff they agree in $K_0(\Var_k)$?
\end{question}

In the setting of characteristic 0, item (iii) of the Summary Theorem answers no, signalling a loss of information on the level of $K_0$. Item (i) tells us the information is lost precisely because $K_0(\Var_k)$ involves group completion -- akin to an Eilenberg Swindle. Item (ii) tells us that piecewise isomorphism and equivalence in $K_0(\Var_k)$ coincide so long as the varieties are of sufficiently low dimension. Put otherwise, the algebraic barriers to geometric information only occur at the higher dimensions. Item (iv) is subtler, and raises interesting questions about how taking piecewise isomorphisms of complex varieties relates to the ampleness of their canonical line bundles (cf. the algebraic hyperbolicity conjecture for surfaces). 

In light of this discussion, let us return to Question~\ref{qn:key}. Some promising initial progress has been made. Using the formalism of Assemblers, Zakharevich constructs an alternative $K$-theory spectrum of varieties, before leveraging its connection with Waldhausen categories to obtain a partial characterisation of $K_1(\Var_k)$ \cite[Theorem B]{ZakhK1}. Inspired by Borisov's work \cite{Borisov}, this was later developed in \cite{ZakhLefschetz} to illuminate a subtle geometric insight: the failure to extend birational automorphisms of varieties to piecewise isomorphisms is tightly connected to the Lefschetz motive $[\mathbb{A}^1]$ being a zero divisor in $K_0(\Var_k)$. In a different vein: \cite{CWZ} identifies non-trivial elements in $K_n(\Var_k)$ by lifting various motivic measures $K_0(\Var_k)\to K_0(\calC)$ to the level of spectra $K\Var_k\to K\calC$. 

\subsection*{Discussion of Main Results} Until recently, a full characterisation of any higher $K$-group of varieties was not known. In her original paper, Zakharevich \cite[Theorem B]{ZakhK1} identifies the generators of $K_1(\Var_k)$ and some key relations, but does not prove their completeness. Independently from us, an intriguing recent collaboration between algebraic topologists and experts in homological stability has uncovered a homological proof \cite[Prop. 4.1]{SrokaScissors} that Zakharevich's presentation is in fact complete. 

We take a different approach. Whereas \cite{SrokaScissors} utilises homological methods to analyse $K_1$, the present paper instead relies on techniques from simplicial homotopy theory. Further, whereas \cite{ZakhK1} relies on the connection between $\Var_k$ and Waldhausen categories, we instead focus on the (tighter) connection between $\Var_k$ and exact categories. This sets up the following theorem.

\begin{maintheorem}[Theorem~\ref{thm:Gconstruction}]\label{thm:GG} Let $\calC$ be a pCGW category, and $\calS\calC$ the simplicial set obtained by applying the $S_\bullet$-construction. Then, there exists a simplicial set $G\calC$ such that there is a homotopy equivalence
	$$|G\calC|\simeq \Omega |\calS\calC|.$$
	In particular, $\pi_n |G\calC|=K_n\calC$ for all $n$.
\end{maintheorem}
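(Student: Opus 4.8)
The plan is to adapt the classical Gillet--Grayson argument, which establishes $|G\calC| \simeq \Omega |\calS\calC|$ for exact categories $\calC$, to the setting of pCGW categories. The key structural input that made the original argument work is that both classes of distinguished morphisms (admissible monos and epis) interact via a calculus of pullbacks and pushouts encoded by "bicartesian squares"; the pCGW axioms are precisely an axiomatisation of this interaction divorced from additivity, so the combinatorics of the $S_\bullet$- and $G$-constructions should carry over verbatim. Concretely, I would first recall the definition of $\calS\calC$ as the simplicial set whose $n$-simplices are the "staircase" diagrams of admissible subobjects with the appropriate subquotients, exactly as in the exact-category case but using the pCGW notions of mono/epi and the distinguished squares. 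Then I would define $G\calC$ as the simplicial set of pairs of such staircases sharing a common set of subquotients — Gillet--Grayson's original "$G$-construction" — and check that the two face/degeneracy structures are well-defined using only the pCGW axioms (existence and functoriality of pullbacks along monos, pushouts along epis, the bicartesian square condition relating them).

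Next I would set up the comparison. The standard route is to exhibit $G\calC$ as (a model for) the homotopy fiber of the map $|\calS\calC| \to |\calS\calC \times \calS\calC|$, or equivalently to realize the path-loop fibration: there are two natural projections $G\calC \rightrightarrows \calS\calC$ (remember the first staircase; remember the second), their "difference" should be nullhomotopic, and the resulting map $|G\calC| \to \Omega|\calS\calC|$ should be shown to be an equivalence. Following Gillet--Grayson, one proves this by a theorem-A / theorem-B style argument: one shows that the relevant map of simplicial sets, after passing to an auxiliary bisimplicial object built from the $S_\bullet$-construction, has contractible homotopy fibers. The crucial lemma here — the analogue of Gillet--Grayson's key contractibility statement — is that a certain category (or simplicial set) of "extensions of a given staircase" is contractible; in the exact setting this uses that the category of admissible subobjects with prescribed quotient has an initial or terminal object coming from the zero object and direct sums. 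In the pCGW setting, the substitute is the existence of the "empty" object $\emptyset$ and the coproduct-like structure that pCGW categories possess (they are a subclass of CGW categories designed exactly so that enough of this survives); I would isolate and prove this contractibility as a standalone lemma.

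The last step is a purely formal consequence: once $|G\calC| \simeq \Omega|\calS\calC|$ is established, iterating the $S_\bullet$-construction gives the spectrum $K\calC$ with $\Omega^\infty$-structure in the usual way (the $S_\bullet$-construction of a pCGW category is again manageable by \cite[Thm 4.3]{CGW}, or its pCGW refinement), so $\pi_n|G\calC| = \pi_{n+1}|\calS\calC| = K_n\calC$, with the $n=0$ case recovering $K_0$ as in the Campbell--Zakharevich framework and the $n=1$ case — the payload for the rest of the paper — giving the promised elementary description of $K_1\calC$ in terms of the $G$-construction.

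I expect the main obstacle to be verifying the contractibility lemma in the non-additive setting. In Gillet--Grayson's proof one leans repeatedly on the ambient additive structure — forming $M \oplus M'$, using that $0$ is a zero object, splitting idempotents — and in a pCGW category none of this is literally available; one only has the formal mono/epi calculus, distinguished squares, and the initial object $\emptyset$. The work will be in finding the right replacement manipulations: showing that the relevant "adjunction" squares and the Reidemeister-type moves still produce the homotopies and deformation retractions needed, using only pullback/pushout functoriality and the pCGW compatibility axioms. A secondary, more bookkeeping-level difficulty is ensuring that all the simplicial identities for $G\calC$ hold on the nose rather than up to coherent isomorphism, which in the pCGW world may require either a careful choice of pullbacks/pushouts or a mild rectification; I would address this by working with a strictified model of the pCGW category (choosing representatives for all distinguished squares) so that $\calS\calC$ and $G\calC$ are honest simplicial sets, as stated in the theorem.
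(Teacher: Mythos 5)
Your proposal is correct in outline and would work --- the paper itself acknowledges that a direct adaptation of the original Gillet--Grayson argument to pCGW categories is possible --- but the paper takes a genuinely different route. You propose to follow \cite{GG}: realize $G\calC$ as a homotopy fiber and establish the homotopy-Cartesian square by a Theorem B' argument whose crux is a contractibility lemma (Gillet--Grayson's reduction to the two face maps $f_0,f_1\colon[0]\to[1]$). The paper instead follows the variant of \cite{GraysonThmC} based on \emph{dominance}: it introduces ``moral subsets'' $F\colon Y\to\calS\calC$ and a \emph{cofinality} hypothesis, equips the right fiber $\overline{M}\,|\,F$ with an $H$-space structure built from formal direct sums $\oplus$ and restricted pushouts $\star_A$ (Construction~\ref{cons:Hspace}), shows that cofinality forces $\pi_0(\overline{M}\,|\,F)$ to be a group, and deduces from Theorem B' that the square in Theorem~\ref{thm:thmC'} is homotopy Cartesian; applying this to the $1$-moral subset $q\colon O|\calS\calC\to\calS\calC$ (which has cofinal image) gives Theorem~\ref{thm:Gconstruction}. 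What the dominance route buys is a reusable fibration criterion for right fibers, which the paper invokes a second time in the proof of Theorem~\ref{thm:Sherman} with a different moral subset $\calS\Cplus\hookrightarrow\calS\calC$; your contractibility route would prove the specific square for $q$ but would not transport as cleanly to that second application. The obstacle you anticipate --- that the classical argument ``leans repeatedly on the ambient additive structure'' --- is exactly the right worry, and the paper's answer is precise where you are vague: the ``coproduct-like structure'' you gesture at is the pair of operations $\oplus$ and $\star_A$ axiomatised in Definition~\ref{def:pCGW}, and these (not a contractibility argument) are what supply the inverses needed for dominance, and hence the required homotopy equivalence.
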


In broad strokes: Theorem~\ref{thm:GG}  extends Gillet-Grayson's $G$-construction on exact categories \cite{GG} to a wider class of categories including $\Var_k$. The beauty of the $G$-construction is that it translates a topological problem (i.e. characterising $\pi_1$ of a loop space) into a simplicial one, which is more combinatorial and easier to work with. To show that this gives us sufficient leverage to characterise $K_1$ will, of course, take the rest of the paper. Let us also remark that while one can prove Theorem~\ref{thm:GG} by adapting the original proof \cite{GG} to our setting, we provide a more streamlined argument (Theorem~\ref{thm:thmC'}) inspired by Grayson's framework of dominant functors \cite{GraysonThmC}.

The previous remarks underscore a more fundamental difference. Both \cite{ZakhK1} and \cite{SrokaScissors} are concerned with the $K$-theory of Assemblers, whereas our paper builds on \cite{CGW} to develop the $K$-theory of so-called {\em pCGW categories}.  Precise definitions will be given in due course; for now, it suffices to think of Assemblers and pCGW categories as two distinct yet equivalent ways of defining the $K$-theory spectrum of varieties.\footnote{For the curious reader: the weak equivalence of these spectra as spaces is \cite[Theorems 7.8 and 9.1]{CGW}.} This difference becomes apparent when comparing our respective presentations of $K_1(\Var_k)$. In our language: 

 \begin{maintheorem}[Proposition~\ref{prop:baseline}]\label{thm:K1} Let $\calC$ be a pCGW category. Then $K_1(\calC)$ is generated by {\em double exact squares}, i.e. by pairs of distinguished squares in $\calC$ with identical nodes
\begin{equation}\label{eq:Bl}
l:=\left(\, \dsquaref{O}{C}{A}{B}{ }{ }{f_1}{g_1} \quad,\quad  \dsquaref{O}{C}{A}{B}{ }{ }{f'_1}{g'_1} \,\right),
\end{equation}
modulo the following relations
\begin{itemize}
	\item[(B1)] $\left\langle\left(\dsquaref{O}{A}{O}{A}{}{}{}{1}\,,\,\dsquaref{O}{A}{O}{A}{}{}{}{1}\right)\right\rangle=0$;
	\item[(B2)]  $\left\langle\left(\dsquaref{O}{O}{A}{A}{}{}{1}{}\,,\,\dsquaref{O}{O}{A}{A}{}{}{1}{}\right)\right\rangle=0;$
	\item[(B3)] 
	Suppose $f_2\colon A\xrtail{f_0} B\xrtail{f_1} C$ and $f'_2\colon A\xrtail{f'_0} B\xrtail{f'_1} C$. Under technical conditions, the composition splits:
		$$\!\!\!\!\!\!\!\!\!\!\!\!\!\!\!\!\!\!\!\!\!\!\small{\left\langle\left(\dsquaref{O}{\frac{B}{A}}{A}{B}{}{}{f_0}{g_0} \,,\, \dsquaref{O}{\frac{B}{A}}{A}{B}{}{}{f'_0}{g'_0} \right)\right\rangle + \left\langle\left(\dsquaref{O}{\frac{C}{B}}{B}{C}{}{}{f_1}{g_1} \,,\, \dsquaref{O}{\frac{C}{B}}{B}{C}{}{}{f'_1}{g'_1} \right)\right\rangle} = 
	\small{	\left\langle \left(\dsquaref{O}{\frac{C}{A}}{A}{C}{}{}{f_2}{g_2} \,,\, \dsquaref{O}{\frac{C}{A}}{A}{C}{}{}{f'_2}{g'_2} \right)\right\rangle}.$$
\end{itemize}
 \end{maintheorem}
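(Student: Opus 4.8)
## Proof Proposal

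The plan is to derive the presentation of $K_1(\calC)$ from Theorem~\ref{thm:GG}: since $|G\calC|\simeq \Omega|\calS\calC|$, we have $K_1(\calC)=\pi_1|G\calC|=\pi_0(\Omega|G\calC|)$, but more usefully $K_1(\calC) = \pi_1 |G\calC|$ computed directly from the simplicial set $G\calC$. The first step is to unwind the combinatorial structure of $G\calC$ in low degrees. As in Gillet-Grayson, a $0$-simplex of $G\calC$ is a pair of objects sharing the same ``value,'' a $1$-simplex is a pair of admissible filtrations (a pair of distinguished squares, in the pCGW language) with matching nodes, and so forth. A loop in $|G\calC|$ based at the canonical basepoint is represented (after simplicial approximation / the combinatorial description of $\pi_1$ of a simplicial set via the fundamental groupoid of its $2$-skeleton) by an edge-path in the $1$-skeleton; the key point is that such a path can be normalised so that it is a single $1$-simplex from the basepoint to itself, i.e. exactly a double exact square $l$ as in \eqref{eq:Bl}. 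This is the content that identifies the generators.

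The second step is to extract the relations from the $2$-simplices of $G\calC$. Each $2$-simplex gives, via its three face maps, a relation among the three associated $1$-simplices in $\pi_1$; one must check that each such relation is generated by (B1), (B2), (B3) together with the groupoid relations (concatenation of edges, inverses). Relations (B1) and (B2) should come from the degenerate-type $2$-simplices — those built from identity morphisms in one of the two ``directions'' (mono direction for (B1), epi direction for (B2)) — which force the corresponding double square to be nullhomotopic. Relation (B3) should come from the genuinely non-degenerate $2$-simplices encoding a length-two flag $A\rtail B\rtail C$: the three faces of such a simplex are precisely the three double squares appearing in (B3), and the simplex itself witnesses the asserted additivity. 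The ``technical conditions'' in (B3) presumably record exactly the hypotheses under which the relevant pushout/quotient squares $\tfrac{B}{A}$, $\tfrac{C}{B}$, $\tfrac{C}{A}$ exist and fit together compatibly in the pCGW structure (so that the $2$-simplex is actually defined).

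The third step is to verify that no further relations are needed — i.e. that $\pi_1|G\calC|$ is the free abelian group on double exact squares modulo (B1)–(B3), with nothing missing. Here I would argue that $\pi_1$ of a Kan simplicial set (or its fibrant replacement) is presented by generators = edges of a spanning tree complement and relations = $2$-simplices, and then show (a) the abelianness of $\pi_1|G\calC|$ — automatic since it is $\pi_1$ of a loop space, hence a homotopy-commutative $H$-space — lets us pass from the a priori non-abelian presentation to the abelian one, and (b) every $2$-simplex relation reduces, after the normalisation of step one, to an instance of (B1), (B2) or (B3) possibly combined with the additivity coming from the $H$-space structure (which is itself encoded by the simplicial structure of $G\calC$, as in \cite{GG}). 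Steps one and three are the routine-but-careful combinatorial bookkeeping; the main obstacle I expect is step two, specifically pinning down the correct ``technical conditions'' in (B3) and checking that the pCGW axioms (as opposed to the stronger exact-category axioms used in \cite{GG, NenGen}) genuinely suffice to produce the requisite $2$-simplices — i.e. that the non-additivity of $\calC$ does not obstruct the existence of the compatible family of distinguished squares realising the length-two flag. This is where the generalisation beyond exact categories has real content, and where I would expect to invoke the pullback/pushout compatibility axioms of pCGW categories most heavily.
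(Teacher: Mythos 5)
Your overall architecture — present $\pi_1|G\calC|$ by 1-simplices modulo 2-simplex relations, using a spanning tree and the $H$-space structure — is the right general frame, and is what the paper does in the end. But the proposal contains one serious gap at exactly the point where you wave it off as "routine-but-careful combinatorial bookkeeping."

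\medskip

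\noindent\textbf{The gap: reduction to double exact squares is not simplicial approximation.}
You claim that any loop "can be normalised so that it is a single $1$-simplex from the basepoint to itself, i.e.\ exactly a double exact square $l$." This has two hidden claims, and the second is false as stated. A vertex of $G\calC$ is an arbitrary pair $(M,N)\in\calC\times\calC$, and a $1$-simplex goes $(M,N)\to(M',N')$ for arbitrary such pairs. A \emph{double exact square} is the special case $M=N$, $M'=N'$. The obvious family of edges $\{(O,O)\to(A,A)\}$ does \emph{not} span the $1$-skeleton of $G\calC^o$ — it only reaches vertices with $A\cong A'$ — so there is no canonical maximal tree on $G\calC^o$ under which the standard presentation of $\pi_1$ outputs double exact squares as generators (this is precisely Remark~\ref{rem:piGC} in the paper). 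The fact that every loop in $K_1(\calC)$ is freely homotopic to the canonical loop of a double exact square is Theorem~\ref{thm:NenashevDSES}, which in the paper requires: Theorem~\ref{thm:Sherman} (generation by Sherman loops, via the helper categories $\Cplus$, $\widehat{G}\calC$ and a homotopy-fiber-sequence argument), plus a Nenashev-style chain of explicit $2$-simplices and the restricted-pushout trick that replaces the exact-category pushout which fails in $\Var_k$ (cf.\ Discussion~\ref{dis:NenPushout}). None of this falls out of simplicial approximation or the $H$-space structure alone. After that reduction, the paper passes to the simplicial subset $X\subset G\calC^o$ generated by vertices $(A,A)$ and double exact squares, checks $\iota_*\colon\pi_1|X|\to\pi_1|G\calC^o|$ is an isomorphism, and only then applies the standard spanning-tree presentation. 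Your steps one and three would only work once you already have Theorem~\ref{thm:NenashevDSES} in hand, which you do not prove or cite.

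\medskip

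\noindent\textbf{Secondary point: the "technical conditions" in (B3).}
You guess these "record exactly the hypotheses under which the relevant pushout/quotient squares exist and fit together compatibly." The quotients always exist (Axiom~(K)); the actual condition is that the lower-left quotient-index triangle
\[
\left(\,\dsquaref{O}{Y}{X}{Z}{}{}{h_1}{h_2}\ , \ \dsquaref{O}{Y}{X}{Z}{}{}{h_1}{h_2}\,\right)
\]
appearing in the would-be $2$-simplex~\eqref{eq:baseline2SIMP} is \emph{the same distinguished square} in both flag diagrams — otherwise the pair of flag diagrams does not define a $2$-simplex of $G\calC$. This is Warning~\ref{warning:compose}, and it is exactly where the pCGW presentation diverges from Zakharevich's Assembler presentation (where composition always splits). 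So the condition is a rigidity constraint on the \emph{given} data, not an existence hypothesis.
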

\[\]

Surprisingly, this presentation appears to be new, even in the context of exact categories. How does it compare with $K_1$ of an Assembler? The following informal discussion may be illuminating.

\subsubsection*{On Generators} Double exact squares describe the breaking of an object into two distinct pieces -- for instance, Equation~\eqref{eq:Bl} shows $B$ being broken into $A$ and $C$. Interestingly, these squares generalise the usual notion of an automorphism -- see Example~\ref{ex:Aut}. By contrast, \cite{ZakhK1} shows that $K_1$ of an Assembler is generated by {\em piecewise isomorphisms}, which break an object into $n$ many pieces simultaneously. This difference reflects a trade-off between simplicity vs. flexibility. Our Theorem~\ref{thm:K1} presents a simpler set of generators for $K_1(\Var_k)$ than \cite{ZakhK1}, which can be advantageous when e.g. constructing derived motivic measures, as done in \cite{CWZ}.\footnote{Technically, \cite{CWZ} views $\Var_k$ as a so-called {\em subtractive category} before applying the {\em $\widetilde{S}_\bullet$-construction} as defined in \cite{Campbell}, but this is equivalent to viewing $\Var_k$ as a CGW category and applying the $S_\bullet$-construction; see \cite[Example 7.4]{CGW}.} On the other hand, the generality of piecewise automorphisms makes the Assemblers formalism better suited for investigating e.g. scissors congruence of convex polytopes, as done in \cite{SrokaScissors}, where simultaneous decomposition is more natural.\footnote{{\em Details.} Define two polytopes $P$ and $Q$ to be scissors congruent if: (i) $P=\bigcup_{i=1}^m P_i$ and $Q=\bigcup_{i=1}^m Q_i$ such that $P_i\cong Q_i$, and (ii) $P_i\cap P_j=Q_i\cap Q_j=\emptyset$ for $i\neq j$. The key hypothesis here is convexity -- arbitrary pairwise unions $P_j \cup P_k$ may fail to form a convex polytope, so decomposition and reassembly is best done simultaneously. 
} 

\subsubsection*{On Relations} There is an interesting discrepancy between the $K_1$ relations of \cite{ZakhK1} and Theorem~\ref{thm:K1}. In Zakharevich's presentation (which we restate as Theorem~\ref{thm:ZakhB}), the composition of piecewise automorphisms always splits in $K_1$. More precisely: $$\left\langle A\xrightrightarrows[f_2]{f_1} B\right\rangle + \left\langle B\xrightrightarrows[g_2]{g_1} C \right\rangle = \left\langle A\xrightrightarrows[g_2f_2]{g_1f_1} C\right\rangle \qquad \text{in} \, K_1,$$
 where $f_i,g_i$ are piecewise automorphisms. Figure~\ref{fig:K1Split} gives an informal illustration.
	\begin{figure}[h!]
	\centering
	\includegraphics[scale=0.9]{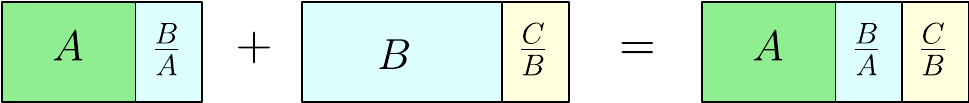}
	\caption{LHS: the piecewise automorphisms induced by closed immersions $A\xrtail{f} B$ and $B\xrtail{g} C$. RHS: the piecewise automorphism induced by their composition $A\xrtail{ gf} C$.} \label{fig:K1Split}
\end{figure}  	

By contrast, in our Theorem~\ref{thm:K1}, composition only splits under a technical condition. Proposition~\ref{prop:ass} clarifies this by showing
$$\lrangles{f} + \lrangles{g}=\lrangles{g\circ f} + \lrangles{l_2}  \qquad \text{in} \, K_1,$$
with $\lrangles{l_2}$ measuring the obstruction to splitting. Importantly, this obstruction term $\lrangles{l_2}$ is non-zero in general -- see 
Example~\ref{ex:KeyExample}. Our investigations lead to Theorem~\ref{thm:Nenashev}, the final main result of our paper. This gives an alternative presentation of $K_1$, extending previous work of Nenashev \cite{Nen1}. 

\begin{maintheorem}[Corollary~\ref{cor:Nen}]\label{thm:Nenashev} Let $\calC$ be a pCGW category. Then $K_1(\calC)$ is generated by double exact squares subject to the following relations:
\begin{enumerate}[label=(N\arabic*)]
	\item $\lrangles{l}=0$ if $l$ is a pair of identical squares, say
	\begin{equation*}
	l=\left(\dsquaref{O}{C}{A}{B}{}{}{f}{g} \quad ,\quad \dsquaref{O}{C}{A}{B}{}{}{f}{g}\right).
	\end{equation*}
	\item Given a so-called optimal $\tx$ diagram
	\begin{equation*} 
	\left(\begin{tikzcd}
	X_{00}\ar[r, >->, "f_{0}"] 
	\ar[d,>->,swap,"h_{0}"] & X_{01} \ar[d, >->,"h_{1}"] & X_{02}  
	\ar[d,>->, "h_{2}"] \ar[l, {Circle[open]}->,swap,"g_{0}"]\\
	X_{10}  \ar[r,>->,"f_{1}"]  &	X_{11}  \ar[dr,phantom,"\circlearrowleft"] & X_{12} \ar[l, {Circle[open]}->,swap,"g_1"] \\
	X_{20} \ar[u, {Circle[open]}->,"j_{0}"] \ar[r,>->,"f_2"]& X_{21} \ar[u, {Circle[open]}->,"j_1"]& X_{22}  \ar[u, {Circle[open]}->,swap,"j_2"] \ar[l, {Circle[open]}->,swap,"g_{2}"]
	\end{tikzcd} \qquad\text{,}\qquad  \begin{tikzcd}
	X_{00}\ar[r, >->, "f'_{0}"] 
	\ar[d,>->,swap,"h'_{0}"]  & X_{01} \ar[d, >->,"h'_{1}"] & X_{02}  
	\ar[d,>->, "h'_{2}"] \ar[l, {Circle[open]}->,swap,"g'_{0}"]\\
	X_{10}  \ar[r,>->,"f'_{1}"]  &	X_{11}  \ar[dr,phantom,"\circlearrowleft"] & X_{12} \ar[l, {Circle[open]}->,swap,"g'_1"] \\
	X_{20} \ar[u, {Circle[open]}->,"j'_{0}"] \ar[r,>->,"f'_2"]& X_{21} \ar[u, {Circle[open]}->,"j'_1"]& X_{22}  \ar[u, {Circle[open]}->,swap,"j'_2"] \ar[l, {Circle[open]}->,swap,"g'_{2}"]
	\end{tikzcd}\right)
	\end{equation*}
	defined by the following 6 double exact squares
	\begin{equation*}
l_{i}:= \left(	\dsquaref{O}{X_{i2}}{X_{i0}}{X_{i1}}{ }{ }{ f_{i}}{g_{i} } \, \,,\,\,	\dsquaref{O}{X_{i2}}{X_{i0}}{X_{i1}}{ }{ }{ f'_{i}}{g'_{i} }\right) \qquad	l^{i}:= \left(\dsquaref{O}{X_{2i}}{X_{0i}}{X_{1i}}{ }{}{h_{i} }{ j_{i}} \, \,,\,\, \dsquaref{O}{X_{2i}}{X_{0i}}{X_{1i}}{ }{}{h'_{i} }{ j'_{i}} \right)
	\end{equation*}
for all $i\in \{0,1,2\}$, the following 6-term relation holds
	\begin{equation*}
	\lrangles{l_0} + \lrangles{l_2} - \lrangles{l_1} = \lrangles{l^0} + \lrangles{l^2} - \lrangles{l^1} \quad .
	\end{equation*}
\end{enumerate}
\end{maintheorem}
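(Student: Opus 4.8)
The plan is to read off $K_1(\calC)=\pi_1(|G\calC|,*)$ from the combinatorics of the simplicial set $G\calC$ of Theorem~\ref{thm:GG}, adapting Nenashev's treatment of exact categories; here $*$ is the basepoint $0$-simplex coming from the zero object. The governing principle is that $\pi_1$ of a simplicial set, based at a vertex, is presented by its edge-paths modulo the boundaries of its $2$-simplices, once a spanning tree of the $1$-skeleton of the basepoint component has been collapsed. So there are three tasks: (a) identify the edge-loops at $*$ arising from double exact squares; (b) show that every class in $\pi_1(|G\calC|,*)$ is a product of such loops; and (c) match the boundary relations of the $2$-simplices of $G\calC$ with (N1) and (N2).

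For (a) and (c): unwinding the $G$-construction, a double exact square $l$ as in~\eqref{eq:Bl} is an edge of $G\calC$ running between the pairs $(A,A)$ and $(B,B)$, and closing it up against canonical edges joining $*$ to those two vertices produces the class $\lrangles{l}\in\pi_1(|G\calC|,*)$. A pair of identical squares $(s,s)$ yields the trivial class, its associated loop being null-homotopic through an explicit $2$-simplex of $G\calC$ built by degenerating $s$ — this is relation (N1). A non-degenerate $2$-simplex of $G\calC$ is, after unwinding, a compatible pair of $3$-simplices of $\calS\calC$, i.e. exactly an optimal $\tx$ diagram in the sense of (N2), and its boundary is the six-term relation. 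As a cross-check against Theorem~\ref{thm:K1}, (B1) and (B2) are instances of (N1), while (N1) for a general identical pair $(s,s)$ itself follows from (B1) and (B3): apply (B3) to the two-step filtration $O\rtail A\rtail B$ whose second step is the admissible mono of $s$; two of the three resulting double exact squares are identical pairs of (B1)-type and the third is $\lrangles{(s,s)}$.

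Task (b) is the heart of the matter and the step I expect to fight with. In the exact-category case it is the ``swallowing''/rectification lemma of Gillet--Grayson and Sherman: an inductive filling of $2$-simplices of $G\calC$ shows that an arbitrary edge-loop at $*$ can be absorbed into a product of the standard loops, and that the only relations thereby introduced are the boundaries of the degenerate and of the optimal $2$-simplices. Transporting this to a pCGW category means redoing the combinatorics of the $1$- and $2$-skeleta of $G\calC$ with only the axiomatic distinguished squares and complements of a pCGW category available, rather than honest pullbacks and pushouts: the auxiliary objects needed to build the filler simplices — subquotients, complements, and the analogues of $X_{11}/X_{00}$ and $X_{10}+X_{01}$ in an optimal $\tx$ diagram — must be produced from the pCGW axioms, and at every filling step one must verify the relevant side conditions, namely the ``technical conditions'' that surface as the caveat in (B3) and are packaged by the word ``optimal'' in (N2).

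Carrying out (b) carefully simultaneously recovers the baseline presentation of Theorem~\ref{thm:K1}: the families (N1)--(N2) and (B1)--(B3) are two different — the latter coarser — records of which $2$-skeletal boundary relations of $G\calC$ one retains. If one prefers to take Theorem~\ref{thm:K1} as given, the theorem reduces to the formal statement that the relation subgroups $\langle\text{(N1), (N2)}\rangle$ and $\langle\text{(B1), (B2), (B3)}\rangle$ of the free abelian group on double exact squares coincide; the inclusion of the latter in the former uses the observations of the second paragraph together with recovering (B3) from (N1) and (N2) via a suitable family of optimal $\tx$ diagrams whose border sequences are largely degenerate, while the reverse inclusion is the pCGW ``$\tx$-lemma'' — that the six-term relation (N2) holds in $K_1(\calC)$ — proved by iterating (B3) along the composable chains of monos and their common refinements read off an optimal $\tx$ diagram and cancelling the degenerate sequences via (N1). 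Either way, the real work is this $\tx$-lemma / swallowing argument in the non-additive pCGW setting.
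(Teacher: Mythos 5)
Your high-level plan — take the baseline presentation (B1)–(B3) as given and show it is equivalent to (N1)–(N2) — is the route the paper actually takes (Proposition~\ref{prop:baseline}, Theorem~\ref{thm:NenSurj}, Corollary~\ref{cor:Nen}), and your cross-check that (N1) for a general identical pair follows from (B1) and (B3) is right. But there are two real problems.

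First, the claim that a non-degenerate $2$-simplex of $G\calC$ ``is exactly an optimal $\tx$ diagram'' and that its boundary gives the six-term relation is wrong. A $2$-simplex of $G\calC$ is a pair of compatible $3$-simplices of $\calS\calC$, and its boundary yields precisely the \emph{three}-term relation (B3), $\lrangles{l_A}+\lrangles{l_B}=\lrangles{l_C}$. The six-term relation (N2) is not the boundary of a single $2$-simplex; it is synthesised from several filled triangles around a new auxiliary object $Z$, which is what the definition of ``optimal'' $\tx$ diagram is packaging.

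Second, and this is the genuine gap: your sketch of the ``reverse inclusion — proved by iterating (B3) along composable chains of monos'' runs straight into Warning~\ref{warning:compose}. Relation (B3) only applies when the three double exact squares assemble into an honest $2$-simplex of $G\calC$, which forces the quotient-index triangles of the two flags to be \emph{two copies of the same} square. For a general composable pair of double exact squares, e.g.\ the edges $\alpha_0,\alpha_2$ or $\alpha_1,\alpha_2$ around the restricted-pushout object $Z$ in the proof of Theorem~\ref{thm:NenSurj}, this fails: the quotients in the top and bottom rows are related by two potentially different $\M$-morphisms $h_1$ and $h'_1$. What is needed instead is the Admissible Triple Lemma (Lemma~\ref{lem:admtriple}): given \emph{any} admissible triple $(f,g,g\circ f)$, the loop it traces is freely homotopic to $\mu(l(\calT))$ where $l(\calT)$ is the (generally non-trivial) double exact square on the quotients, yielding the obstruction term $\lrangles{l_2}$ of Proposition~\ref{prop:ass}(A2). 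That obstruction term is non-zero in general (Example~\ref{ex:KeyExample}), so ``iterate (B3)'' cannot be made to close up without it. Proving Lemma~\ref{lem:admtriple} is the content of Appendix~B.3 and requires the restricted-pushout machinery plus the Permutation Lemma~\ref{lem:permute}; this is the missing ingredient your proposal does not supply, even at the level of a sketch.
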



This subtle discrepancy brings into focus key differences between the two $K$-theory frameworks, particularly in their choice of weak equivalences. Zakharevich models Assemblers via a special class of Waldhausen categories whose cofibration sequences all split up to weak equivalence (see \cite[Theorem 1.9]{ZakhK1}). This may seem a strong condition, but Zakharevich relies on a non-standard choice of weak equivalences that effectively hardcodes the splitting at the categorical level. To illustrate, consider the sequence $$\ast\hookrightarrow \mathbb{P}^1\hookleftarrow \mathbb{A}^1.$$
In the pCGW setting,  where weak equivalences are simply isomorphisms, this sequence clearly fails to split since $\mathbb{P}^1\not\cong\mathbb{A}^1\coprod \{\ast\}$. By contrast, in Zakharevich's framework, it splits up to weak equivalence since 
$$\mathbb{P}^1\dashleftarrow \{\mathbb{A}^1\}, \{\ast\} \xrtail{=} \{\mathbb{A}^1\}, \{\ast\}$$
defines a weak equivalence in the sense of \cite{ZakhK1}, Definition 1.7. We discuss the implications for the $K$-theory presentation more fully in Section~\ref{sec:NAKtheory}. 

Finally, we remark that our results extend to settings not modelled by the Assemblers framework. One obvious example is exact categories. More interestingly, there is suggestive evidence that they also apply to matroids (Example~\ref{ex:Matroid}), although some details remain to be worked out (see Section~\ref{sec:ProbMatroids}).

\subsubsection*{Implications for Characterising $K_n$}

Many of the results of the present paper are inspired by Nenashev's work \cite{NenGen} characterising $K_1(\calC)$ for an exact category $\calC$. Grayson \cite{GraysonBinary} later extended this to characterise $K_n(\calC)$ for all $n$, and we expect our approach to generalise similarly to the higher $K$-groups of varieties (or, more generally, the higher $K$-groups of pCGW categories). It is currently unclear how one might analogously extend the methods from \cite{ZakhK1} or \cite{SrokaScissors}.


\subsection*{Acknowledgements} This paper benefitted greatly from discussions with C. Eppolito, M. Kamsma, A. Nanavaty, A. Nenashev, M. Sarazola, S. Vasey, C. Weibel, C. Winges, T. Wittich and I. Zakharevich -- it is a pleasure to thank them all for their thoughtful insights as well as their generosity in sharing ideas. Special thanks are due to C. Malkiewich, B. Noohi and J. Pajwani. The author also thanks M. Malliaris for encouragement, and for reminding him that a cup is not always a doughnut.

\tableofcontents

\section{Preliminaries}
\subsection{CGW Categories} The key definition in \cite{CGW} is the {\em CGW category}. Informally, this is a category equipped with two subclasses of maps, $\M$ and $\E$ (analogous to admissible monos and epis in exact categories), along with a collection of square diagrams (``distinguished squares'') that encode how the $\M$ and $\E$-morphisms interact. 

This framework is presented using the language of double categories. Recall that a \emph{double category} $\calC$ is an internal category in $\mathrm{Cat}$. For our purposes, we will require the following refinement.

\begin{definition}\label{def:goodDC} A \emph{good double category} is a triple of categories $(\calC,\M,\E)$ presented by the data:
	\begin{itemize}
		\item \emph{Objects.} All three categories have the same objects: $\ob(\E)=\ob(\M)=\ob(\calC)$.
		\item \emph{Morphisms.} 
		\begin{itemize}
			\item[] \textbf{$\M$-morphisms:} $\M$ is a subcategory of $\calC$. Its morphisms are denoted $\rtail$.
			\item[] \textbf{$\E$-morphisms:} Either $\E$ or $\E^{\opp}$ is a subcategory of $\calC$. Its morphisms are denoted $\otail$. 
		\end{itemize}
		\item \emph{Distinguished Squares.} A collection of square diagrams, denoted
	\[	\begin{tikzcd} 
A \ar[r, >->,"f'"]\ar[d, swap, {Circle[open]}->,"g'"] \ar[dr, phantom, "\square"]& B \ar[d, {Circle[open]}->,"g"]\\
C \ar[r, >->,"f"] & D
\end{tikzcd}\]
	where $f,f'\in \M$ and $g,g'\in \E$. These squares are closed under horizontal and vertical composition. 
\smallskip
In addition, we require:
\begin{itemize}
	\item[\(\diamond\)] {\bf Commutativity.} Each distinguished square is a commutative diagram in $\calC$ (after adjusting for whether $\E$ or $\E^\opp$ is taken as a subcategory). 
	\item[\(\diamond\)] {\bf Closure under isomorphisms.} Any commutative square in $\calC$ with the indicated shape above is distinguished whenever either {\em both} $\M$-morphisms or {\em both} $\E$-morphisms are isomorphisms. 
\end{itemize}
	\end{itemize}
\end{definition}

\begin{convention}
	We denote a good double category by $\calC=(\M,\E)$, though we often abbreviate to $\calC$ when no confusion arises.  When $\calC$ is viewed as an ordinary 1-category, we call it the \emph{ambient category}, emphasising that it contains $\M$ and either $\E$ or $\E^{\opp}$ as subcategories.  
	In particular:
	\[
	A \xotail{g} B \;=\;
	\begin{cases}
	A \xrightarrow{g} B & \text{if } \E\subseteq\calC,\\
	B \xrightarrow{g} A & \text{if } \E^{\opp}\subseteq\calC.
	\end{cases}
	\]
\end{convention}

We now introduce a couple of helper definitions, before defining a CGW category. 

\begin{definition}\label{def:help} Let $C=(\M,\E)$ be a good double category, and $\D$ be any (ordinary) category.
\begin{enumerate}
\item Define $\Ars\E$ 
	\begin{itemize}
		\item Objects: Morphisms $A\otail B$ in $\E$.
		\item Morphisms:
		$\Hom_{\Ars\E}(A\xotail{ g} B, A'\xotail{g'} B')=
		\left\{\begin{tabular}{c}
		\hbox{distinguished} \\  \hbox{squares}
		\end{tabular}
		\hbox{$\dsquaref{A}{A'}{B}{B'}{}{g}{}{g'}$}\right\}.$	 
	\end{itemize}
$\Ars\M$ is defined analogously.
\item Define $\Art\D$ 
\begin{itemize}
	\item Objects: Morphisms $A\rightarrow B$ in $\D$.
	\item Morphisms: 
	$\Hom_{\Art\D}(A\xrightarrow{\,\,\,f} B, A'\xrightarrow{\,\,\,f'} B')=\left\{      \begin{tabular}{c}
	commutative \\ squares
	\end{tabular}\, \begin{tikzcd}
	A \ar[r,"\cong"] \ar[d,"f",swap] \ar[dr,phantom,"\circlearrowleft"] & A' \ar[d,"f'"]\\
	B \ar[r] & B'
	\end{tikzcd} \right\}.$	 
\end{itemize}
\end{enumerate}
	
\end{definition}

\begin{definition}[CGW Category]\label{def:CGWcat} A \emph{CGW category} $(\calC,\varphi,c,k)$ consists of the following data
	\begin{itemize}
		\item A good double category $\calC=(\M,\E)$;
		\item An isomorphism of categories $\varphi\colon \mathrm{iso}\M\to \mathrm{iso}\E$ which is identity on objects;

	\item An equivalence of categories 
		$$k\colon \Ars\E\longrightarrow\Art\M \qquad \qquad \qquad\text{and}\qquad\qquad\quad  c\colon \Ars\M\to \Art\E$$
		\[ \begin{tikzcd} A \ar[r,>->,"f"] \ar[dr,phantom,"\square"] \ar[d,{Circle[open]->},swap,"g"] & A' \ar[d,{Circle[open]->},"g'"] \\
		B \ar[r,>->,"f'",blue]  & B'
		\end{tikzcd} \longmapsto 
		\begin{tikzcd} \ker(g) \ar[d,>->,swap,"k(g)"] \ar[dr,phantom,yshift=0.4em,xshift=-0.2em,"\circlearrowleft"] \ar[r,>->,"\cong"]& \ker(g') \ar[d,>->,"k(g')"]\\
		B \ar[r,>->,blue,"f'"]& B'
		\end{tikzcd} \qquad \qquad\quad \begin{tikzcd} A \ar[r,>->,"f"] \ar[dr,phantom,"\square"] \ar[d,{Circle[open]->},swap,"g"] & A' \ar[d,{Circle[open]->},violet,"g'"] \\
		B \ar[r,>->,"f'"]  & B'
		\end{tikzcd} \longmapsto 
		\begin{tikzcd} A' \ar[d,{Circle[open]->},violet,"g'",swap] \ar[dr,phantom,yshift=0.1em,xshift=0.2em,"\circlearrowleft"] &\coker(f) \ar[d,{Circle[open]->},"\cong"]\ar[l,{Circle[open]->},swap,"c(f)"]\\
		B' & \coker(f') \ar[l,{Circle[open]->},swap,"c(f')"]
		\end{tikzcd}  \]
		\end{itemize}
		satisfying the axioms:
		\begin{itemize}
			\item[(Z)] \emph{Basepoint object.} $\calC$ contains an object $O$ initial in both $\E$ and $\M$.
			\item[(I)] \emph{Isomorphisms.} Let $\psi\colon A\to B$ be an isomorphism in ambient category $\calC$. Then: 
			\begin{itemize}
				\item $\psi$ belongs to $\mathrm{iso}\M$, which we denote suggestively as $\psi\colon A\rtail B$. 
				\item If $\E$ is a subcategory of $\calC$, then $\varphi(\psi)\colon A\otail B$ corresponds to $\psi\colon A\to B$ in $\calC$.
				\item If $\E^\opp$ is a subcategory of $\calC$, then $\varphi(\psi)\colon A\otail B$ corresponds to $\psi^{-1}\colon B\to A$ in $\calC$.
			\end{itemize}
			 \item[(M)] \emph{Monicity.} Every $\M$ and $\E$-morphism is monic. Furthermore:
			 \begin{itemize}
			 	\item  Every $\M$-morphism is monic in $\calC$. 
			 	\item If $\E\subseteq \calC$, then every $\E$-morphism is monic in $\calC$; otherwise if $\E^\opp\subseteq \calC$ then every $\E^\opp$-morphism is epi in $\calC$.
			 \end{itemize}
			 \item[(K)] \emph{Formal kernels and cokernels.} For any $f\colon A\rtail B$ in $\M$, there exists a \emph{formal cokernel}, denoted $c(f)\colon\coker (f)\otail B,$ 
		with a distinguished square as below left.
			 \begin{equation*}
			 \dsquaref{O}{\coker(f)}{A}{B}{ }{ }{f}{c(f)} \qquad \dsquaref{O}{A}{\ker(g)}{B}{ }{ }{k(g)}{g}
			 \end{equation*}
			 Dually, for any $g\colon A\otail B$ in $\E$, there exists a \emph{formal kernel}, denoted $k(g)\colon \ker(g)\rtail B$, with a distinguished square as above right. 
			 
Formal cokernels are unique up to (codomain-preserving) isomorphism: if $f'\colon C\otail B$ is another $\E$-morphism with distinguished square
			 \begin{equation*}
			 \dsquaref{O}{C}{A}{B}{ }{ }{f}{f'}, 
			 \end{equation*}
			 then there exists an isomorphism $\gamma\colon \coker(f)\rtail C$ such that the rightmost square in
			 \begin{equation*}
			 \begin{tikzcd}
		O \ar[dr,phantom,"\square"] \ar[d,{Circle[open]}-> ] \ar[r,>->]& \coker(f)\ar[dr,phantom,"\square"] \ar[d,{Circle[open]}->,"c(f)"] \ar[r,>->,"\gamma"]& C \ar[d,{Circle[open]}->,"f'"]\\
		A \ar[r,>->,swap,"f"]& B \ar[r,>->,swap,"1"] & B
			 \end{tikzcd}
			 \end{equation*}
is distinguished. We sometimes call $c(f)$ the {\em canonical quotient} of $f$. Formal kernels are unique in the analogous sense.
 
			 \end{itemize}
\end{definition}

\begin{remark}\label{rem:INV} The $\E$-morphisms of a distinguished square have isomorphic formal kernels and its $\M$-morphisms have isomorphic formal cokernels -- this follows from $c$ and $k$ mapping distinguished squares to the morphisms of $\Ar_{\triangle}$. Thus, by Axiom (K), $c$ and $k$ are inverse on objects (up to isomorphism).\footnote{\label{fn:INV} 
	Strictly speaking, $c$ and $k$ are only inverses up to codomain-preserving isomorphism, not inverses on the nose. 
In practice, however, eliding this distinction is usually harmless since distinguished squares commute and are closed under isomorphisms.} 
\end{remark}

There is a natural notion of structure-preserving functors and subcategories in the CGW context. A {\em CGW functor} of CGW categories is a double functor 
$$F\colon (\M,\E)\to (\M',\E')$$
that commutes with the functors $c$ and $k$, as follows:
\[\begin{tikzcd}
\Ars\M \ar[r,"c"] \ar[d,swap,"\Ars F"]& \Art \E \ar[d,"\Art F"] \\
\Ars\M' \ar[r,"c'"]& \Art \E' 
\end{tikzcd}\qquad \begin{tikzcd}
\Ars\E \ar[r,"k"] \ar[d,swap,"\Ars F"]& \Art \M \ar[d,"\Art F"] \\
\Ars\E' \ar[r,"k'"]& \Art \M' 
\end{tikzcd} \quad .\] 
For a CGW category $(\calC,\varphi,c,k)$, a {\em CGW subcategory} is a sub-double category $\calD\subseteq \calC$ such that the obvious restrictions $(\calD,\phi|_{\calD}, c|_{\calD}, k|_{\calD})$ forms a CGW category. 

\begin{convention} When context permits, we suppress $(\varphi,c,k)$ and simply write $\calC=(\M,\E)$ for a CGW category, or just $\calC$. 
\end{convention}

We now discuss Axioms (K) and (I) in more detail below, followed by some illustrative examples.

\subsubsection*{Quotients in CGW Categories} CGW categories are agnostic about whether formal cokernels arise from taking quotients in the {\em additive} setting (e.g. $R$-modules) or taking complements in the {\em non-additive setting} (e.g. varieties). Either way, the formal properties remain consistent. To reinforce this perspective, we adopt the following suggestive convention.

\begin{convention}[``Quotient''] We typically denote the formal cokernel of $f\colon A\rtail B$ as $\frac{B}{A}$, whenever the map $f$ is clear from context, which we refer to as a  \emph{quotient}.
\end{convention}

The following property about quotients, made possible by Helper Definition~\ref{def:help}, will play a key role in our analysis.

\begin{lemma}[Quotients respect Filtrations]\label{lem:quotFilt} Let  $$P_{0}\xrtail{f_1} P_{1}\xrtail{g_1} P_{2}$$
be a sequence of $\M$-morphisms in a CGW category. Choose formal quotients 
$$(f_2,g_2,h_2)$$
corresponding to the $\M$-morphisms $(f_1,g_1\circ f_1,g_1)$. Then, there exists a diagram of distinguished squares
\begin{equation}\label{eq:quot-filt}
\begin{tikzcd}
P_0 \ar[r, >->,"f_1"] \ar[dr,phantom,"\square"] & P_1 \ar[dr,phantom,"\square"] \ar[r, >->,"g_1"] & P_2 \\
O \ar[r,>->] \ar[u,{Circle[open]}->]&	P_{1/0} \ar[r, >->,"j_1"] \ar[u, {Circle[open]}->,"f_2"] \ar[dr,phantom,"\square"]  & P_{2/0}\ar[u, {Circle[open]}->,"g_2"] \\
& O \ar[r,>->] \ar[u,{Circle[open]}->]& P_{2/1} \ar[u, {Circle[open]}->,"j_2"] 
\end{tikzcd}	\qquad ,
\end{equation}
such that $h_2=g_2\circ j_2$. 
\end{lemma}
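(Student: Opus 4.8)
The plan is to assemble the diagram~\eqref{eq:quot-filt} one distinguished square at a time, leaning on Axiom~(K) together with the functors $c$ and $k$. Write the chosen formal quotients as $f_2\colon P_{1/0}\otail P_1$ (of $f_1$), $g_2\colon P_{2/0}\otail P_2$ (of $g_1\circ f_1$), and $h_2\colon P_{2/1}\otail P_2$ (of $g_1$); by definition each carries a defining distinguished square of the shape prescribed in Axiom~(K). Two of the three distinguished squares of~\eqref{eq:quot-filt} then come essentially for free: the top-left square is literally the defining square of $f_2$, and -- once $j_1$ is available -- the lower square will be the defining square of the formal cokernel of $j_1$, with $P_{2/1}:=\coker(j_1)$ and $j_2:=c(j_1)$. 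So the real content is (a) producing the $\M$-morphism $j_1\colon P_{1/0}\rtail P_{2/0}$ together with the middle distinguished square, and (b) checking $h_2=g_2\circ j_2$.

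For (a), which is the crux, I would exploit that $k\colon\Ars\E\to\Art\M$ is an \emph{equivalence}. Note that there is no distinguished square directly comparing $f_1$ with $g_1\circ f_1$ -- such a square would require an $\E$-morphism $P_1\to P_2$, which in general does not exist -- so a direct pasting argument is unavailable; instead one passes to $\Art\M$, where the desired comparison is obvious. Concretely, consider the morphism $\mu\colon f_1\to g_1\circ f_1$ in $\Art\M$ given by the commutative square with top edge $\id_{P_0}$ and bottom edge $g_1$ (it commutes since $g_1\circ f_1=g_1\circ f_1$). By Remark~\ref{rem:INV} there are codomain-preserving isomorphisms $k(f_2)\cong f_1$ and $k(g_2)\cong g_1\circ f_1$ in $\Art\M$; conjugating $\mu$ by these gives a morphism $\bar\mu\colon k(f_2)\to k(g_2)$ in $\Art\M$ whose bottom edge is still $g_1$. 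As $k$ is full, $\bar\mu=k(\nu)$ for some $\nu\colon f_2\to g_2$ in $\Ars\E$, and since $k$ carries a square to a square with the same bottom edge, $\nu$ is exactly a distinguished square
$$\begin{tikzcd}[ampersand replacement=\&]
P_{1/0}\ar[r,>->,"j_1"]\ar[d,{Circle[open]}->,swap,"f_2"]\ar[dr,phantom,"\square"]\&P_{2/0}\ar[d,{Circle[open]}->,"g_2"]\\
P_1\ar[r,>->,"g_1"]\&P_2
\end{tikzcd}$$
for a suitable $\M$-morphism $j_1$; this is the middle square of~\eqref{eq:quot-filt}.

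For (b), apply Axiom~(K) to $j_1$ to obtain the defining square of $c(j_1)\colon\coker(j_1)\otail P_{2/0}$, and set $P_{2/1}:=\coker(j_1)$, $j_2:=c(j_1)$; this is the lower square of~\eqref{eq:quot-filt}, so the diagram is complete. Now stack this square on top of $\nu$ -- they share the edge $j_1\colon P_{1/0}\rtail P_{2/0}$ -- and invoke closure of distinguished squares under vertical composition: the composite is a distinguished square with top edge $O\rtail P_{2/1}$, left edge the unique $\E$-morphism $O\otail P_1$ (Axiom~(Z)), right edge $g_2\circ j_2$, and bottom edge $g_1\colon P_1\rtail P_2$. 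By Axiom~(K) this is precisely a defining square exhibiting $g_2\circ j_2$ as a formal cokernel of $g_1$; by uniqueness of formal cokernels it agrees with the chosen $h_2$ up to a codomain-preserving isomorphism -- equivalently, one is free to take $h_2:=g_2\circ j_2$ -- which is the claimed identity $h_2=g_2\circ j_2$.

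The sole genuine obstacle is step~(a): one must recognise that the missing $\M$-morphism $j_1$ is to be manufactured by transporting the evident comparison map through the equivalence $k$, rather than by pasting distinguished squares. Everything else -- the two ready-made distinguished squares, the vertical composition, and the cokernel uniqueness at the end -- is routine given the CGW axioms; the only point requiring care is the bookkeeping of the "up to codomain-preserving isomorphism" identifications (the caveat noted in Remark~\ref{rem:INV}), all of which are harmless because distinguished squares are closed under isomorphism.
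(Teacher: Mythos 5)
Your proof is correct and follows essentially the same route as the paper's: both arguments manufacture the middle square by transporting the evident map $(P_0\rtail P_1)\to(P_0\rtail P_2)$ in $\Art\M$ through the equivalence $k$ (you via conjugation and fullness, the paper via $k^{-1}$ followed by identifying quotients), then obtain the bottom square by taking the formal cokernel of $j_1$ and invoking uniqueness of cokernels. The only nitpick is your closing phrase ``one is free to take $h_2:=g_2\circ j_2$'': since $h_2$ is fixed in advance, the cleaner statement (and what the paper does) is that one adjusts $j_2$ by the codomain-preserving isomorphism furnished by Axiom~(K) so that $h_2=g_2\circ j_2$ holds on the nose -- but this is a cosmetic point, as you have already identified the relevant isomorphism.
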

\begin{proof} This adapts \cite[Lemma 2.10]{CGW}. Notice $g_1$ induces a morphism
	$$(P_0\rtail P_1) \longrightarrow (P_0\rtail P_2) \in\Ar_{\triangle}\M\,.$$
Applying the functor $k^{-1}\colon \Ar_{\triangle}\M\to \Ar_{\square}\E$ constructs the top right square of Diagram~\eqref{eq:quot-filt}. In general, the quotients $k^{-1}(P_0\xrtail{f_1} P_1)$ and $k^{-1}(P_0\xrtail{g_1\circ f_1}P_2)$ need not coincide with our chosen $(f_2,g_2)$. Nonetheless, by Axiom (K), all choices of quotients are canonically related by codomain-preserving isomorphisms. Hence, since distinguished squares commute and are closed under isomorphism, we may identify these choices (cf. Footnote~\ref{fn:INV}). Extending this argument, we may also choose $j_2\colon P_{2/1}\otail P_{2/0}$ such that $h_2=g_2\circ j_2$. 
\end{proof}

\subsubsection*{Isomorphisms in CGW Categories} In their original definition, CGW categories were not required to be good double categories. Our reason for invoking goodness is to make the interaction between distinguished squares and isomorphisms more precise. This not only simplifies the original Axiom (I) in \cite[Definition 2.5]{CGW}, but also provides the technical control needed for our simplicial arguments to work smoothly (e.g. Lemmas~\ref{lem:SherLoopAdd} and~\ref{lem:SherLoopSplit}).

Ultimately, this is no real loss in generality since our definition still covers all the major examples in the original paper \cite{CGW}. The underlying reason for this is the following observation.

\begin{observation}\label{obs:goodISOs} Let $\calC=(\M,\E)$ be any CGW category. Suppose 
\begin{equation}\label{eq:PBPODistSq}
\begin{tikzcd} 
A \ar[r, >->,"f'"]\ar[d, swap, {Circle[open]}->,"g'"] & B \ar[d, {Circle[open]}->,"g"]\\
C \ar[r, >->,"f"] & D
\end{tikzcd}
\end{equation}
defines a commutative diagram in the ambient category $\calC$. If $f$ and $f'$ are isomorphisms, then Diagram~\eqref{eq:PBPODistSq} defines a pullback and a pushout in $\calC$. The same holds if $g$ and $g'$ are isomorphisms. 
\end{observation}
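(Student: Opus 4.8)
The claim is a diagram-chase: we must show that a distinguished square whose two horizontal ($\M$-) legs are isomorphisms is automatically both a pullback and a pushout in the ambient category $\calC$, and symmetrically for the vertical ($\E$-) legs. The plan is to reduce to the two primitive cases already packaged into the CGW axioms, namely the formal-kernel and formal-cokernel squares of Axiom (K), and then transport the conclusion along isomorphisms using the ``closure under isomorphisms'' clause of goodness (Definition~\ref{def:goodDC}) together with the uniqueness statements in Axiom (K). Concretely, the first step is to treat the normalised case: a distinguished square of the form
\[
\begin{tikzcd}
\ker(g) \ar[r,>->,"k(g)"]\ar[d,swap,{Circle[open]}->] & B \ar[d,{Circle[open]}->,"g"]\\
\ker(g') \ar[r,>->,"k(g')"] & D
\end{tikzcd}
\]
where the left vertical is the initial map out of $O$ — no, that isn't quite the shape; rather I would start from the observation that if $f,f'$ are isomorphisms then, precomposing and postcomposing with $f'^{-1}$ and $f$, the square is isomorphic (as an object of a suitable arrow category) to one in which $f = f' = \id$, i.e. to a square $\begin{tikzcd} A \ar[r,equal]\ar[d,{Circle[open]}->,swap,"g'"] & A\ar[d,{Circle[open]}->,"g"]\\ C\ar[r,equal] & C\end{tikzcd}$. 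By Observation~\ref{obs:goodISOs}'s own hypothesis (goodness: closure under isomorphisms) this square is still distinguished, and a square with two identity legs is a pullback/pushout iff $g = g'$; so the real content is to show $g = g'$ in $\calC$. This follows because $g$ and $g'$ have the same formal kernel (Remark~\ref{rem:INV}) and the square identifies $k(g)$ with $k(g')$ under the identity on $C$; by the monicity Axiom (M) and the uniqueness-up-to-isomorphism of formal kernels in Axiom (K), two $\E$-morphisms $C\otail$ with a common domain $C$, a common formal kernel realised compatibly, must agree.

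For the general (non-normalised) case I would then argue as follows. Given the square \eqref{eq:PBPODistSq} with $f,f'$ isomorphisms, form the candidate pullback diagram and the candidate pushout diagram in $\calC$ in the usual way, and produce the comparison maps directly from the universal properties applied to the normalised square, conjugating by $f,f'$. Since isomorphisms are preserved under this conjugation and a square is a limit/colimit iff any square isomorphic to it is, the pullback and pushout properties descend from the normalised case. The symmetric statement, with $g,g'$ isomorphisms, is obtained by the evident dual argument: one uses Axiom (I) (specifically that an ambient isomorphism lies in $\mathrm{iso}\M$ and corresponds under $\varphi$ to an $\E$- or $\E^{\opp}$-isomorphism), the closure-under-isomorphisms clause applied now to the $\E$-legs, and the uniqueness of formal cokernels in Axiom (K) in place of formal kernels.

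\textbf{Main obstacle.} The delicate point is bookkeeping around the footnote-level subtlety flagged in Remark~\ref{rem:INV} and Footnote~\ref{fn:INV}: $c$ and $k$ are inverse only up to \emph{codomain-preserving} isomorphism, and likewise formal kernels/cokernels are unique only up to such isomorphism, not on the nose. So when I claim ``$g = g'$'' I must be careful that the identification of $k(g)$ with $k(g')$ supplied by the distinguished square is compatible — over the identity of $C$ — with the identifications coming from Axiom (K), so that the two $\E$-morphisms genuinely coincide in $\calC$ rather than merely being isomorphic. The clean way to dispatch this is to note that both $g$ and $g'$ are monic (Axiom (M)) and that the square, read as a morphism in $\Ars\E$, maps under $k$ to a morphism in $\Art\M$ whose top edge is forced to be the identity on $C$ because $f = f'=\id$; monicity of $k(g')$ then pins down the comparison isomorphism $\ker(g)\cong\ker(g')$ to be precisely the one induced by $\id_C$, and monicity of $g'$ propagates this to $g = g'$. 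Once this compatibility is in hand, the limit/colimit verification is entirely routine, so I would keep that part terse.
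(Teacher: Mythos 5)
Your reduction (conjugate by $f^{-1},f'^{-1}$ to the case $f=f'=\mathrm{id}$, then observe that a square with identity horizontals and $g=g'$ is trivially a pullback and pushout, then dualise for the $\E$-legs) is a perfectly valid outline, and it is genuinely different in organisation from the paper's proof, which doesn't normalise at all: given a cone $(R,j_0,j_1)$ over the cospan, the paper directly exhibits the mediating morphism as $f^{-1}\circ j_1$, checks commutativity using $g\circ f'=f\circ g'$ (or its reversal depending on whether $\E$ or $\E^{\opp}$ sits inside $\calC$), checks uniqueness from $f$ being monic, and does the same for pushouts. Both routes work; the paper's is shorter since it avoids passing through the arrow category.

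However, the argument you give for the step you single out as ``the real content'' — showing $g=g'$ in the normalised square — is both unnecessary and, as written, not actually a proof. The normalised square is a \emph{commutative} square in $\calC$ by hypothesis, and with $f=f'=\mathrm{id}$ the commutativity identity $g\circ\mathrm{id}=\mathrm{id}\circ g'$ (or its flip in the $\E^{\opp}$ convention) gives $g=g'$ on the nose, in one line, with no CGW structure required. Your route instead declares the square distinguished via the closure-under-isomorphisms clause, applies the functor $k$, and invokes Axiom (K) plus monicity. Two problems. First, the output of $k$ on that morphism in $\Ars\E$ only gives you $k(g')\circ\psi=k(g)$ for some codomain-preserving isomorphism $\psi$; Axiom (K) says formal kernels are unique up to such isomorphism, but it says nothing about an $\E$-morphism being determined by its kernel, so ``$g$ and $g'$ have the same formal kernel'' does not entail $g=g'$, and neither monicity of $k(g')$ nor monicity of $g'$ closes this gap — the phrase ``propagates this to $g=g'$'' is doing work the cited facts cannot do. Second, and more structurally, the observation exists so that the paper can \emph{verify} the closure-under-isomorphisms clause of goodness in concrete examples (exact categories, $\Var_k$, \dots), where ``distinguished'' is being defined as, say, biCartesian; the content of the observation is precisely that it holds in the bare ambient category before the double-category structure is known to be good. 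Reaching for the distinguished-square machinery and Remark~\ref{rem:INV} is therefore circular in spirit: it uses the thing the observation is supposed to help you check. Strip that material out and state the one-line fact about commutativity, and your proof is fine.
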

We omit the routine proof; see e.g. \cite[Lemma 4.3]{5autManifolds}. Observation~\ref{obs:goodISOs} essentially restates the obvious: pullback and pushout squares are closed under isomorphisms. As we shall see, this will be useful for verifying goodness.

\subsubsection*{Examples} We review some standard examples of CGW categories, plus two new ones; some further details can be found in \cite[\S 4]{CGW} and \cite[\S 3]{SarazolaShapiro}.

\begin{example}[Exact Categories]\label{ex:Exact} For an exact category $\calC$, define a CGW category $\calC=(\M,\E)$ by setting
$$\M=\{\text{admissible monomorphisms}\}\qquad \E=\{\text{admissible epimorphisms}\}^\opp.$$	
The basepoint object is the zero object in $\calC$. Notice by taking the opposite category, the zero object becomes initial in $\E$, as required by Axiom (Z).
 The distinguished squares are the biCartesian squares (= both pushouts and pullbacks in the ambient category $\calC$). By Observation~\ref{obs:goodISOs}, $\calC=(\M,\E)$ is a good double category. The equivalences $k$ and $c$ map admissible epis to kernels and admissible monos to cokernels, respectively. 

\end{example}

\begin{example}[Finite Sets]\label{ex:FinSet} Given $\mathrm{FinSet}$, define a CGW category $\mathrm{FinSet}=(\M,\E)$ by setting 
	$$\M=\E=\{\text{injections}\}.$$
The basepoint object $O$ is the empty set, the distinguished squares are the pushout squares. By Observation~\ref{obs:goodISOs}, this defines a good double category. The equivalences $c$ and $k$ are given by taking any inclusion $A\hookrightarrow B$ to the inclusion $B\setminus A\hookrightarrow B$. 

\end{example}

\begin{example}[Varieties]\label{ex:Var} Given $\Var_k$, define a CGW category $\Var_k=(\M,\E)$ by setting
	$$\M=\{\text{closed immersions}\} \qquad \E=\{\text{open immersions}\}.$$
The basepoint object $O$ is the empty variety. The distinguished squares are the pullback squares
	\[\dsquaref{A}{B}{C}{D}{}{}{f}{g}\]
in which $\im f\cup \im g= D$; notice this implies goodness of $\calC$.\footnote{In particular, we get $\im f\cup \im g= D$ for free if either f or g are isomorphisms.} Axioms (I) and (M) follow from standard properties of closed and open immersions. For Axiom (K), let $c$ and $k$ take a morphism to the inclusion of its complement. In particular, their corresponding distinguished squares are unique due to the uniqueness of reduced scheme structure\footnote{The key hypothesis here is that varieties are {\em reduced}. For general schemes, enforcing Axiom (K) is obstructed by the fact that a closed subset may admit different structure sheaves -- see e.g. \cite[Example 3.2.6]{Hartshorne}. 
} on locally closed subsets \cite[Tag 01J3]{stacks-project}. Finally, to see why $c$ and $k$ define the required functors in Definition~\ref{def:CGWcat}, use the fact that distinguished squares compose. 

\end{example}

For those interested in model theory, the following example will be suggestive.

\begin{example}[Definable Sets] Fix $\Sigma$ to be a first-order language and $M$ a $\Sigma$-structure. The term {\em definable} will always mean with parameters from $M$. Following \cite{KrajicekScanlon}, denote $\mathrm{Def}(M)$ as the category with objects the definable sets of $M$ and its powers $M^n$, and morphisms the definable functions.\footnote{Alternatively: one can work syntactically and define definable sets as functors from the category of $\thT$-models to $\Set$, where $\thT$ is some fixed first-order theory. Zakharevich defined its $K$-theory via her Assemblers framework in \cite[Example 3.4]{ZakhAss}.} We upgrade this to a CGW category by setting
	$$\M=\E=\{\text{definable injections}\}.$$ 		
Axioms (I) and (M) are thus satisfied by definition. For Axiom (Z), set the basepoint object $O$ as $\emptyset$. Define the distinguished squares as the pullback squares
\[\dsquaref{A}{B}{C}{D}{}{}{f}{g}\]
where $\im f \cup \im g=D$. 
Finally, any definable injection $f\colon C\rtail D$ can be mapped to its formal cokernel 
\[\dsquaref{O}{D\setminus f(C) }{C}{D}{}{}{f}{c(f)}\]
where $c(f)$ is the obvious inclusion of the complement. The same holds for the formal kernels.
\end{example}

\begin{remark}\label{rem:logic} For the curious non-logician: the category of definable sets $\mathrm{Def}(M)$ can be viewed as an abstraction of $\Var_k$. Consider $\mathbb{C}$ as an algebraically closed field with characteristic 0. In which case, the objects of $\mathrm{Def}(\mathbb{C})$ are the constructible subsets of complex affine $n$-spaces \cite[Corollary 3.2.8]{DMarker}. In fact, there exists an isomorphism of Grothendieck rings $K_0(\Var_\C)\cong K_0(\mathrm{Def}(\C))$ \cite{Beke} -- this indicates how logic may pull the cut-and-paste combinatorics away from the underlying algebraic geometry.
\end{remark}

Finally, let us mention another generalisation of exact categories: {\em proto-exact categories}, introduced by Dyckerhoff-Kapranov \cite{HigherSegal}. A particularly challenging example comes from \cite{ProtoExactMatroids}, which showed that the category of pointed matroids form a proto-exact category, yielding a $K$-theory spectrum.

 Informally, a  {\em matroid} abstracts the notion of linear independence. It consists of a finite set $E$ and a collection of subsets (``{\em flats}'') that are closed under dependency -- akin to subspaces in a vector space.\footnote{For those interested in homological stability: compare this, perhaps, with the perspective from Tits buildings.} Matroids bridge combinatorics and geometry, and have surprisingly deep links to algebraic geometry and Hodge Theory \cite{Katz,BakerHodgeCombinatorics}. It is therefore very interesting that they also define a CGW category.

\begin{example}[Matroids]\label{ex:Matroid} Let $M=(E,\calF,\bullet_M)$ be a {\em pointed matroid}, where $E$ is a finite set, and $\calF\subseteq 2^{E}$ the set of flats of matroid $M$ and $\bullet_M$ the distinguished base-point. A {\em strong map} of pointed matroids $f\colon M\to N$ is a function $f\colon E_M\to E_N$ such that $f(\bullet_M)=\bullet_N$ and $f^{-1}A\in\calF(M)$ for all $A\in\calF(N)$. By \cite[Lemma 2.12]{ProtoExactMatroids}, pointed matroids and strong maps form a category $\mathrm{Mat}_\bullet$. 

Next, denote $\widetilde{E}:=E\setminus \{\bullet_M\}$. Given any $S\subseteq \widetilde{E}$, denote $M|S$ to be the {\em restriction of $M$ to $S$} and $M/S$ to be the {\em contraction of $M$ to $S$} (for details, see e.g. \cite{OxleyMatroids} or \cite[\S 2]{ProtoExactMatroids}.)
We upgrade $\mathrm{Mat}_\bullet$ to a CGW category $\mathrm{Mat}_\bullet=(\M,\E)$ by setting
	$$\M=\{\text{strong maps that can be factored $N\xrightarrow{\sim} M|S\hookrightarrow M$, for some $S\subseteq \widetilde{E}_M$}\}$$
		$$\E=\{\text{strong maps that can be factored $M\twoheadrightarrow M/S\xrightarrow{\sim} N$, for some $S\subseteq \widetilde{E}_M$}\}^{\opp}.$$
Notice $\E$ is the {\em opposite} category of contractions, analogous to Example~\ref{ex:Exact}. In fact, one can apply \cite[Lemma 5.3]{ProtoExactMatroids} to check that $\M$ and $\E$ are closed under isomorphisms and composition, satisfying Axiom (I). Finally, let the distinguished squares be the biCartesian squares in $\mathrm{Mat}_\bullet$. 

Axioms (M) and (Z) follow from \cite[Lemma 5.4]{ProtoExactMatroids}: a strong map $f$ is monic in $\mathrm{Mat}_\bullet$ iff $f$ is injective on the underlying set, and $f$ is epi iff $f$ is surjective. In particular, this implies all morphisms in $\M$ and $\E$ are monic, and so the matroid $O:=(\{\ast\},\ast)$ is initial in both. In addition, translating \cite[Props. 5.7 and 5.8]{ProtoExactMatroids} to our setting: any $P\xrtail{i'} Q\xotail{j'} N$ or 
$P\xotail{j} M\xrtail{i} N$ can be completed into a distinguished square
\[\dsquaref{P}{Q}{M}{N}{i'}{j}{i}{j'}\,.\]
Setting $P=O$, this gives the formal kernels and cokernels of Axiom (K), unique up to isomorphism by the biCartesian property. Finally, since biCartesian squares compose, this assignment extends to define the required functors $c$ and $k$.

\end{example}

\subsection{The $K$-Theory of pCGW categories}\label{sec:pCGW} The main result of \cite[\S 4]{CGW} is that Quillen's $Q$-Construction \cite{QuilenExact} can be applied to any CGW category to define a $K$-theory spectrum. However, this paper will focus on a particularly well-behaved class of CGW categories, which we call {\em pCGW categories.} 

Informally, pCGW categories are CGW categories $\calC=(\M,\E)$ where $\M$ is closed under a formal kind of pushout. This, of course, generalises the familiar fact that admissible monos are  closed under pushouts in exact categories but it is instructive to understand why a generalisation is needed. Consider Example~\ref{ex:FinSet} where $\M$ is the category of finite sets and injections. In which case,
$$A\leftarrow \emptyset \to A \qquad \text{where $A\neq\emptyset$}$$
does not have a pushout in $\M$ since the map $A\coprod A\to A$ is not monic. Nonetheless, this issue can be circumvented by weakening the universal pushout property, as follows. 

\begin{definition}[Restricted Pushout]\label{def:restPO} Let $\M$ be a category whose morphsims are all monic. Suppose $D$ is a span
	$$C\leftarrow A \rightarrow B.$$
Define $\M_D$ to be the category whose objects are pullback squares of the form
	\[\squaref{A}{B}{C}{X}{}{}{}{} \qquad, \]
and whose morphisms are natural transformations where all components are the identity {\em except} at $X$. A {\em class of optimal squares} for $D$ is a subcategory $\widetilde{\M}_D\subseteq \M_D$ satisfying:
\begin{enumerate}[label=(\roman*)]
	\item $\widetilde{\M}_D$ has all squares where $A\xrightarrow{\cong} C$ above is an isomorphism.
	\item $\widetilde{\M}_D$ has an initial object. We denote this $B\star_A C$, and call it the {\em restricted pushout} of $D$. 
\end{enumerate}
We say $\M$ \emph{contains all restricted pushouts} if for every span $D$ there exists a chosen optimal class $\widetilde{\M}_D$ with initial object $B\star_A C$.

\end{definition}

We now introduce the key definition of a pCGW category, before turning to examples. 

\begin{definition}[pCGW Category]\label{def:pCGW} Let $\calC=(\M,\E)$ be a CGW category. We call $\calC$ a \emph{pCGW category} if $\M$ contains all restricted pushouts, subject to the following axioms:
	\begin{itemize}
		\item[(A)] \emph{Formal Direct Sums.} Denote the restricted pushout of $C\leftarrowtail O\rtail B$ as $B\oplus C:=B\star_O C$, which we also call {\em formal direct sums}. Then, there exists a canonical pair of distinguished squares  
		\[\dsquaref{O}{B}{C}{B\oplus C}{}{}{p_C}{q_B}\quad \text{and}\quad \dsquaref{O}{C}{B}{B \oplus C}{}{}{p_B}{q_C},\]
		which we call \emph{direct sum squares}. 
		\item[(PQ)]\emph{Preserves Quotients.}  Given any $B\star_A C$, there exists a canonical isomorphism $\frac{B\star_A C}{C}\cong \frac{B}{A}$. This assembles into a commutative diagram in the ambient category $\calC$
		\[\begin{tikzcd}
		A \ar[r,>->,"f"] \ar[d,>->,"g"]& B \ar[d,>->,"g'"] \ar[dr,phantom,"\circlearrowleft"] & \frac{B}{A}\ar[l,swap,"c(f)",{Circle[open]}->] \ar[d,>->,"\cong"]\ \\
		C \ar[r,>->,"f'"]& B\star_AC & \frac{B\star_A C}{C} \ar[l,swap,"c(f')",{Circle[open]}->]
		\end{tikzcd}\]
Note the RHS square need not be distinguished; it is only required to commute in $\calC$.

		\item[(DS)] \emph{Compatibility with Distinguished Squares.} Given a diagram of distinguished squares \[\begin{tikzcd}
		C \ar[d, {Circle[open]}->] \ar[dr, phantom, "\square"]& A \ar[dr, phantom, "\square"]\ar[d, {Circle[open]}->]  \ar[l, >->] \ar[r, >->] & B \ar[d, {Circle[open]}->] \\
		C' & A' \ar[l, >->] \ar[r, >->] & B' 
		\end{tikzcd}\]
		there is an induced map $B\star_A C\otail B'\star_{A'}C'$ such that the two induced squares 
		\[\dsquare{B}{B\star_AC}{B'}{B'\star_{A'}C'}\qquad \dsquare{C}{B\star_AC}{C'}{B'\star_{A'}C'}\]
		are distinguished.
	\end{itemize}
\end{definition}

\begin{remark} For transparency, Definition~\ref{def:pCGW} requires only that $\M$ contains all restricted pushouts; no such requirement is placed on $\E$. 
\end{remark}

\begin{convention}\label{conv:restrPO-quotient} We adopt the following conventions for quotients and formal direct sums.
\begin{itemize}
	\item Since we generally work only up to isomorphism (cf. Remark~\ref{rem:INV}), we typically fix a representative of $\frac{B\star_A C}{C}$ and tacitly identify it with some choice of quotient of $f$ via the canonical isomorphism of Axiom~(PQ).
	
	\item Axiom~(A) specifies canonical direct sum squares. In light of Axiom~(PQ), we take
	$$
	\begin{tikzcd}
	O \ar[r,>->] \ar[d,>->] & B \ar[d,>->,"p_B"] \ar[dr,phantom,"\circlearrowleft"] 
	& \ar[l,{Circle[open]}->,swap,"="] B \ar[d,>->,"="] \\
	C \ar[r,>->,"p_C"] & B\oplus C & \ar[l,{Circle[open]}->,swap,"q_B"] B
	\end{tikzcd}\qquad.
	$$
	In particular, this implies $p_B=q_B$ if $\E\subseteq\calC$, and $q_B\circ p_B=1_B$ if $\E^\opp\subseteq\calC$. Hence, for suggestiveness, we sometimes write, e.g. $p_B = 1\oplus C$.
	
\end{itemize}	
\end{convention}

\begin{remark} Definition~\ref{def:restPO} of restricted pushouts is a synthesis of \cite[Def. 5.3]{CGW} and \cite[Def. 3.2]{SarazolaShapiro}. The reason behind requiring Condition (i) for optimality is to ensure that restricted pushouts behave functorially in the following sense: 
\end{remark}

\begin{fact}\label{facts:restrictedpushouts} Given a pCGW category, consider the diagram 
	$$C\leftarrowtail A\rtail B \rtail B'$$
	Then $B'\star_B \left(B\star_A C\right)\cong B'\star_A C$. More explicitly, the composite of restricted pushouts in Diagram~\eqref{eq:rpusCOMPOSE} is the restricted pushout of the outer span.		
	\begin{equation}\label{eq:rpusCOMPOSE}
	\begin{tikzcd}
	A \ar[r,>->] \ar[d,>->] & B \ar[d,>->] \ar[r,>->] & B' \ar[d,>->] \\
	C \ar[r,>->] & B\star_A C \ar[r,>->]& B'\star_B \left(B\star_A C\right)
	\end{tikzcd} 
	\end{equation}	
\end{fact}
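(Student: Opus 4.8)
The plan is to verify that the outer rectangle of Diagram~\eqref{eq:rpusCOMPOSE} satisfies the defining universal property of the restricted pushout of the outer span $D' := (C\leftarrowtail A\rtail B')$ --- i.e. that it is initial in the chosen optimal class $\widetilde{\M}_{D'}$ --- and then to deduce $B'\star_B(B\star_A C)\cong B'\star_A C$ from uniqueness of initial objects. Throughout, write $D := (C\leftarrowtail A\rtail B)$ and $E := (B\star_A C\leftarrowtail B\rtail B')$, where $B\rtail B\star_A C$ is the canonical $\M$-morphism into the restricted pushout and $B\rtail B'$ is the given map.

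First I would check that the outer rectangle defines an object of $\M_{D'}$. By Definition~\ref{def:restPO} the two inner squares are pullback squares in the ambient category, so the pasting lemma for pullbacks shows the outer rectangle is again a pullback; its legs $A\rtail B'$, $B'\rtail B'\star_B(B\star_A C)$ and $C\rtail B'\star_B(B\star_A C)$ are composites of $\M$-morphisms and hence lie in $\M$. So the outer rectangle is a legitimate candidate for $B'\star_A C$, and it remains to prove it is initial in $\widetilde{\M}_{D'}$.

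For initiality I would take an arbitrary object $T$ of $\widetilde{\M}_{D'}$, that is, a pullback square $C\rtail T\leftarrowtail B'\leftarrowtail A$ realising $A\cong B'\times_T C$, and build the comparison map in two stages. Restricting $T$ along $B\rtail B'$ gives a commutative square on the span $D$ with corner $T$; monicity of $B\rtail B'$ (Axiom (M)) together with a short diagram chase shows $A\cong B\times_T C$, so this square is a pullback and defines an object $T|_B$ of $\M_D$. Granting that $T|_B$ lies in $\widetilde{\M}_D$, initiality of $B\star_A C$ supplies a unique map $u\colon B\star_A C\to T$ compatible with the $B$- and $C$-legs; reassembling $u$ with the leg $B'\rtail T$ yields (again a pullback, by the same kind of chase) an object of $\M_E$ with corner $T$, and --- granting it lies in $\widetilde{\M}_E$ --- initiality of $B'\star_B(B\star_A C)$ supplies a unique compatible map $v\colon B'\star_B(B\star_A C)\to T$. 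This $v$ is the required $\M_{D'}$-morphism out of the outer rectangle; its uniqueness is obtained by feeding any competitor back through the uniqueness clauses of the two universal properties (a competitor restricts, via the leg $B\star_A C\rtail B'\star_B(B\star_A C)$, to a map satisfying exactly the conditions characterising $u$, hence equals $u$, and is then pinned down by initiality of $B'\star_B(B\star_A C)$). Uniqueness of initial objects then gives $B'\star_B(B\star_A C)\cong B'\star_A C$, and unwinding the construction identifies the composite rectangle with the restricted pushout of the outer span.

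The step I expect to be the main obstacle is the two parenthetical ``granting that $\dots$'' clauses: one must know that restriction along $B\rtail B'$ and the subsequent reassembly carry the chosen optimal classes into one another, i.e. that $T|_B\in\widetilde{\M}_D$ and that the reassembled square lies in $\widetilde{\M}_E$. This compatibility is precisely what Condition (i) of Definition~\ref{def:restPO} is designed to secure: in the boundary cases these restricted and reassembled squares acquire a leg that is forced to be an isomorphism, so Condition (i) places them in the relevant optimal class, and the general case reduces to checking that the chosen optimal classes are stable under these operations --- equivalently, that the assignment sending a span to its restricted pushout is functorial, which is the content of the remark preceding this Fact. I would therefore organise the write-up so that this functoriality is established once and for all, after which the pasting statement above is essentially formal.
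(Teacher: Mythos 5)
Your strategy of directly verifying initiality of the outer rectangle is the textbook pasting argument for pushouts, and you correctly identify the crux: whether the restriction $T|_B$ lies in $\widetilde{\M}_D$ and whether the reassembled square lies in $\widetilde{\M}_E$. However, the way you try to dispose of this crux is circular. Definition~\ref{def:restPO} only stipulates that each $\widetilde{\M}_D$ is a \emph{chosen} subcategory containing the squares with an isomorphism leg and possessing an initial object; it imposes no closure under restriction along a map $B\rtail B'$, and indeed in Example~\ref{ex:Exact} the optimal class is a proper subcollection of all pullback squares over a span. You then assert that the needed stability ``is precisely what Condition (i) of Definition~\ref{def:restPO} is designed to secure'' and that ``the general case reduces to checking that the chosen optimal classes are stable under these operations --- equivalently, that the assignment sending a span to its restricted pushout is functorial, which is the content of the remark preceding this Fact.'' But that remark is purely motivational; the Fact itself \emph{is} the functoriality statement being proved, so this reduction assumes the conclusion.

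The paper's cited proof (via \cite[Cor.~A.2]{SarazolaShapiro}, adapting their Lemma~A.1) avoids this obstacle by routing the comparison map through the CGW quotient machinery rather than the abstract universal property of $\widetilde{\M}_D$. One applies Axiom~(PQ) to identify $(B\star_A C)/C\cong B/A$, applies the functor $c$ to the relevant $\M$-square to obtain an induced map on quotients, and shows it coincides with the canonical isomorphism by cancelling against an $\E$-morphism, which is monic or epi in $\calC$ by Axiom~(M). This explicit construction produces the unique comparison map without ever needing the classes $\widetilde{\M}_D$ to be closed under restriction or reassembly. To salvage your universal-property route you would first have to prove a closure lemma for $\widetilde{\M}_D$ that the paper's definitions do not supply, and that lemma requires essentially the same quotient analysis --- so the route is not shorter, and as written the gap is real.
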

\begin{proof} Translate the proof of \cite[Corollary A.2]{SarazolaShapiro}.
\end{proof}

We now revisit the examples from before.\footnote{To avoid later confusion: when we specify the class of optimal squares to be ``{\em all pullback squares} such that \dots ''  we are of course restricting to $\M$ (and not all pullback squares in the ambient CGW category).}

\begin{example}[Exact Categories] Let $\star$ be the usual pushout, and let the optimal squares be the class of pullback squares such that the induced map $B\cup_A C\to X$ is an admissible mono. Then $\star$ is well-defined since the pushout of any span of admissible monos is also a pullback. Axiom (PQ) follows from such pushouts preserving cokernels, Axiom (DS) from the pasting law for pushouts.	For Axiom (A), pick the obvious biCartesian squares in the exact category:
	\[\begin{tikzcd}
	B \ar[r,>->,"p_B"] \ar[d] & B\oplus C \ar[d,->>,"q_C"] \\
	O \ar[r]&  C
	\end{tikzcd}\qquad\begin{tikzcd}
	C \ar[r,>->,"p_C"] \ar[d] & B\oplus C \ar[d,->>,"q_B"] \\
	O \ar[r]&  B
	\end{tikzcd}.\]
\end{example}

\begin{example}[Varieties]\label{ex:pCGWVar} Let $\star$ denote the pushout in the category of schemes (not just varieties), and let the optimal squares be all pullback squares. Leveraging the results in \cite{Schwede} and the Stacks Project \cite[Tag 0ECH]{stacks-project}, one can verify:
\begin{itemize}
	\item[$\diamond$] Pushouts along closed immersions exist in the category of schemes, and yield squares of closed immersions \cite[Corollary 3.9]{Schwede}. 
	\item[$\diamond$] Such pushouts are also pullbacks.

	\item[$\diamond$] If \( A \rtail B \), \( A \rtail C \) are closed immersions of varieties, the pushout scheme \( B \cup_A C \) is also a variety.
\end{itemize}
A key observation: many properties of the pushout — reducedness, separatedness, locally of finite type, and the pullback property — can be checked affine-locally. To show the pushout is of {\em finite} type (not just {\em locally finite} type), it suffices to check it is also quasi-compact -- this follows from gluing two quasi-compact schemes along a quasi-compact subscheme. Finally, since optimal squares are pullbacks (by hypothesis), this ensures the induced pushout morphism $B\cup_A C\to X$ is a closed immersion.  In sum: $\star$ is well-defined.

Axioms (PQ) and (DS) follow from the pushout property and examining the pushout construction in \cite[\S 2-3]{Schwede}.\footnote{It is easy to see the pushout property yields a map $v\colon B\star_A C\to B'\star_{A'} C'$; verifying that $v$ also defines an open immersion, however, is somewhat involved. For a more formal perspective on why Axiom (DS) holds, see \cite[Proposition A.4]{SarazolaShapiro}.} For Axiom (A), let the direct sum squares be the coproduct squares with standard coproduct injection maps.

\end{example}

\begin{example}[Definable Sets] Let $M$ be a $\Sigma$-structure that: (i) eliminates imaginaries; and (ii) contains at least two distinct elements. Then the (ambient) category $\mathrm{Def}(M)$ admits coproducts\footnote{This result is well-known for the weak syntactic category \cite{Harnik}, but the same argument works semantically. Fix definable sets $D_0\subseteq M^k$ and $D_1\subseteq M^l$. Let $P=M^k\times M^l\times M$, and define an equivalence relation $E\subseteq P^2$ by
		$$ (\overline{x}_0,\overline{y}_0,z_0)\sim (\overline{x}_1,\overline{y}_1,z_1)\iff (z_0=z_1\land \overline{x}_0=\overline{x}_1)\vee (z_0\neq z_1\land \overline{y}_0=\overline{y}_1).$$
		By elimination of imaginaries, $Q:=P/E$ is definable. In fact, it can be definably identified with $M^k\coprod M^l$, and so there exists definable injections $M^k,M^l\rtail Q$ whose disjoint images cover $Q$. Restricting to $D_0$ and $D_1$ yields the coproduct $D_0\coprod D_1\subseteq Q$.} and quotients of equivalence relations. Given a span of definable injections $B\xltail{f} A\xrtail{g} C$, define 
	$$B\star_AC:=\bigslant{ B\coprod C}{\sim}$$
	where $\sim$ is the obvious equivalence relation generated by $f(a)\sim g(a), a\in A$. A routine check shows that $\sim$ is expressible as a {\em definable} equivalence relation, with transitivity following from injectivity of $f$ and $g$. In other words, $B\star_A C$ defines a pushout in the ambient category $\mathrm{Def}(M)$. 
Now let the optimal squares be all pullback squares; another straightforward check shows $\star$ is well-defined. 

Axioms (PQ) and (DS) follow for similar reasons as in $\Var_k$. For Axiom (A), the (definable) coproduct square is distinguished since $\M=\E=\{$definable injections$\}$.

\end{example}

\begin{example}[Matroids]\label{ex:MatroidspCGW} There is a technical barrier. Suppose $M_0$ and $M_1$ are matroids with groundsets $E_0$ and $E_1$, and assume $M_0|T=M_1|T=N$ where $E_0\cap E_1=T$. An {\em amalgam} of $M_0$ and $M_1$ is a matroid $M$ on $E_0\cup E_1$ such that $M|E_0=M_0$ and $M|E_1=M_1$. Unfortunately, as noted in \cite[\S 12.4]{OxleyMatroids}, amalgams do not always exist.  In our setting, this means an arbitrary span $M_0\ltail N\rtail M_1$ may not be completable into a commutative square
	\[\begin{tikzcd}
	N \ar[r,>->] \ar[d,>->] & M_0 \ar[d,>->]\\
	M_1 \ar[r,>->] & M
	\end{tikzcd}\]
so long as we require that $M$ has groundset $E_0\cup E_1$. 
One solution is to relax this requirement, and allow $M$ to have any groundset. The question then becomes: given a span $M_0\ltail N\rtail M_1$, is it always possible to embed $M_0$ and $M_1$ into a larger matroid $M$? Can we regard this larger matroid as being initial in the sense of Definition~\ref{def:restPO}? Further discussion of this problem is deferred to Section~\ref{sec:ProbMatroids}. 


\end{example}

We now set up the $K$-theory of pCGW categories via Waldhausen's $S_\bullet$-construction.

\begin{construction}[$S_\bullet$-Construction]\label{cons:SDOT}  Let $\calC$ be a pCGW category. Define $S \calC$ to be the simplicial set with $n$-simplices $S_n\calC$ given by flag diagrams
\[\begin{tikzcd} 
	M_{00}\ar[r, >->] & M_{01} \ar[r, >->] & M_{02} \ar[r, >->] & \dots\ar[r, >->]  & M_{0n} \\
	&	M_{11} \ar[r, >->] \ar[u, {Circle[open]}->]& M_{12} \ar[r, >->]\ar[u, {Circle[open]}->]  & \dots \ar[r,>->] & M_{1n}  \ar[u, {Circle[open]}->]\\
	&& M_{22} \ar[r,>->]  \ar[u, {Circle[open]}->]  & \dots   \ar[r,>->] & M_{2n} \ar[u, {Circle[open]}->] \\
	&&& \vdots\ar[u, {Circle[open]}->]  & \vdots \ar[u, {Circle[open]}->] \\
	&&&& M_{nn} \ar[u, {Circle[open]}->] 
	\end{tikzcd}\]	
subject to the conditions
\begin{enumerate}[label=(\roman*)]
	\item $M_{ii}=O$ for all $i$
	\item Every subdiagram 
	\[\begin{tikzcd}
	M_{ji} \ar[r,>->] \ar[dr,phantom,"\square"]& M_{jl}
\\
M_{ki} \ar[r,>->] \ar[u, {Circle[open]}->] & M_{kl}\ar[u, {Circle[open]}->] 	\end{tikzcd}\]
for $j<k$ and $i<l$ is distinguished. 
\end{enumerate}	
We will often represent an $n$-simplex as a sequence of $\M$-morphisms
$$O=M_0\rtail M_1\rtail M_2\rtail \dots \rtail  M_n$$
together with choices of quotients 
$$M_{j/i}:=\frac{M_j}{M_i} \qquad {i<j}.$$	
Degeneracy maps are obtained by duplicating an $M_i$, face maps by forgetting an $M_i$, with the addendum that forgetting $M_0$ means factoring out by $M_1$.
\end{construction}

This sets up the following translation of Theorems 4.3 and 7.8 in \cite{CGW}.

\begin{theorem}[Presentation Theorem]\label{thm:PresK0} Let $\calC$ be a pCGW category and define its $K$-theory spectrum
	$$K\calC:=\Omega|\calS\calC|,$$
with associated $K$-groups $K_n(\calC):=\pi_n K\calC$. Then $K_0(\calC)$ is the free abelian group generated by objects of $\calC$ modulo the relation that for any distinguished square
$$\dsquare{A}{B}{D}{C},$$
we have $[D]+[B]=[A]+[C].$	
\end{theorem}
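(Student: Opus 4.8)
The plan is to read off $K_0(\calC)=\pi_0 K\calC=\pi_1|\calS\calC|$ from the $2$-skeleton of $\calS\calC$ and then match the resulting presentation with the stated one. By condition~(i) of Construction~\ref{cons:SDOT}, $\calS\calC$ is a \emph{reduced} simplicial set: its only $0$-simplex is the flag $M_{00}=O$. Hence $|\calS\calC|$ is connected, and $\pi_1|\calS\calC|$ carries the standard presentation of the fundamental group of a reduced simplicial set --- generators are the $1$-simplices, i.e.\ the objects of $\calC$; the unique degenerate edge (the object $O$) is trivial, so $[O]=0$; and each $2$-simplex $\sigma$ imposes one relation expressing $d_1\sigma$ as a product of $d_2\sigma$ and $d_0\sigma$. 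A $2$-simplex is a flag $O=M_0\rtail M_1\rtail M_2$ together with a chosen quotient $M_{2/1}$; the single subdiagram condition of Construction~\ref{cons:SDOT} forces $M_{2/1}$ to be a formal cokernel of $M_1\rtail M_2$ in the sense of Axiom~(K). Computing faces --- $d_2$ forgets $M_2$, $d_1$ forgets $M_1$, and $d_0$ forgets $M_0$ by quotienting out by $M_1$ --- gives $d_2\sigma=M_1$, $d_1\sigma=M_2$, $d_0\sigma=M_{2/1}$, so $\sigma$ contributes the relation $[M_2]=[M_1]\cdot[M_{2/1}]$.

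Next I would check that $\pi_1|\calS\calC|$ is abelian, which is the usual additivity of $K$-theory. For objects $M,N$, the direct-sum square of Axiom~(A) with top edge $O\rtail N$ and bottom edge $M\rtail M\oplus N$ is distinguished; by Remark~\ref{rem:INV} its two $\M$-morphisms have isomorphic formal cokernels, so one may take $(M\oplus N)/M\cong N$, exhibiting $O\rtail M\rtail M\oplus N$ as a $2$-simplex and giving $[M\oplus N]=[M]\cdot[N]$. By symmetry $[N\oplus M]=[N]\cdot[M]$; the spans defining $M\oplus N$ and $N\oplus M$ differ only by relabeling their legs, so $M\oplus N\cong N\oplus M$, and isomorphic objects have equal classes (apply the $2$-simplex relation to $O\rtail M\xrightarrow{\ \cong\ }M'$, whose chosen quotient is $O$ since an isomorphism has trivial formal cokernel). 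Hence $[M]\cdot[N]=[N]\cdot[M]$ for all generators, so $\pi_1|\calS\calC|$ is abelian. (Alternatively, invoke that $|\calS\calC|$ deloops, as shown in \cite{CGW}.) Writing the group additively, this shows $K_0(\calC)$ is the free abelian group on $\ob\calC$ modulo $[O]=0$ and the relations $[M_2]=[M_1]+[M_{2/1}]$ for all flags $M_1\rtail M_2$ with chosen quotient.

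Finally I would match this with the presentation in the statement via the identity on generators, using the standard normalization $[O]=0$ (which is forced above and is the same one that gives $[\emptyset]=0$ in $K_0(\Var_k)$). On the one hand, the asserted relation $[D]+[B]=[A]+[C]$ for the cokernel square of Axiom~(K) on $M_1\rtail M_2$ (i.e.\ $A=O$, $B=M_{2/1}$, $D=M_1$, $C=M_2$) reads $[M_1]+[M_{2/1}]=[O]+[M_2]$, which with $[O]=0$ is exactly our flag relation. On the other hand, given any distinguished square with corners $A,B,D,C$ as in the statement, Remark~\ref{rem:INV} produces an isomorphism between the formal cokernels of $A\rtail B$ and $D\rtail C$; calling this common object $Q$, the flags $O\rtail A\rtail B$ and $O\rtail D\rtail C$ with quotient $Q$ are $2$-simplices (their subdiagram squares being cokernel squares of Axiom~(K)), so $[B]=[A]+[Q]$ and $[C]=[D]+[Q]$, and subtracting yields $[D]+[B]=[A]+[C]$. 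Hence the two presentations cut out the same relation subgroup and agree. The one step requiring genuine care is the first --- correctly running the low-degree $S_\bullet$ bookkeeping (especially the $d_0$ face) together with the additivity argument; everything downstream is formal. Indeed the whole statement also follows by combining \cite[Thm.~4.3]{CGW}, which gives precisely this presentation of $\pi_0$ for the CGW $Q$-construction spectrum, with \cite[Thm.~7.8]{CGW}, which identifies $\Omega|\calS\calC|$ with that spectrum --- the only additional task being to check the dictionary between the pCGW/$S_\bullet$ and CGW/$Q$ frameworks.
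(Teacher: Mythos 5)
Your argument is correct, and it takes a genuinely more hands-on route than the paper. The paper's proof is essentially a citation: invoke \cite[Thm.~4.3]{CGW} for the $Q$-construction presentation of $\pi_0$, and \cite[Thm.~7.8]{CGW} for the edgewise-subdivision comparison between $\Omega|\calS\calC|$ and the $Q$-construction spectrum. You instead work directly with the $2$-skeleton of $\calS\calC$: the low-degree face bookkeeping ($d_0\sigma=M_{2/1}$, $d_1\sigma=M_2$, $d_2\sigma=M_1$, so the relation is $[M_2]=[M_1]\cdot[M_{2/1}]$) is right, the Eckmann--Hilton-style abelianness argument via the two direct-sum squares of Axiom~(A) is clean (in fact both squares land on the \emph{same} object $M\oplus N$, so you can read off $[M]\cdot[N]=[M\oplus N]=[N]\cdot[M]$ without even appealing to $M\oplus N\cong N\oplus M$), and the back-and-forth between the flag relations and the distinguished-square relations via Remark~\ref{rem:INV} is correct. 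What your route buys is a self-contained computation entirely inside the pCGW/$S_\bullet$ framework, avoiding the dictionary with the CGW $Q$-construction; what the paper's route buys is brevity.

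One point worth making explicit, which you noticed but which is glossed over in the statement: the distinguished-square relations alone do \emph{not} force $[O]=0$ (the assignment $[M]\mapsto 1$ for every object respects every relation $[D]+[B]=[A]+[C]$), whereas your simplicial computation does force it via the degenerate edge. So the presentation in the theorem should be read with $[O]=0$ imposed; your proof, by organizing things around that normalization, is arguably more careful than the theorem statement itself.
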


\begin{remark} In fact, Theorem~\ref{thm:PresK0} holds for any CGW category equipped with a formal direct sum satisfying Axiom (A), Definition~\ref{def:pCGW}. In the case of matroids, define 
$M_1\oplus M_2$ to be the matroid on ground set $E_1\coprod E_2$, with flats of the form $F_1\coprod F_2$ where $F_1\in \calF(M_1)$ and $F_2\in \calF(M_2)$. 
\end{remark}

\subsection{Simplicial Loops \& Fibers} Re-examining Presentation Theorem~\ref{thm:PresK0}, notice the loop space features in the definition of $K\calC$. Following \cite[\S 2]{GG}, we present a simplified model of this construction.

\begin{convention} A simplicial set is a contravariant functor $X\colon \Delta^\opp\to \Set$. We sometimes write $X[n]$ as shorthand for $X([n])$. If $A,B\in\Delta$, we write $AB$ to mean the disjoint union of $A$ followed by $B$, where elements of $A$ are below those of $B$. A 0-simplex is sometimes called a {\em vertex}, a 1-simplex an {\em edge}.
\end{convention}

To motivate, recall that the loop space $\Omega Z$ of a pointed topological space $Z$ is the space of based loops $\Map(S^1,Z)$. Now fix a simplicial set $X$ with basepoint $O\in X[0]$. A simplicial loop may look like two 1-simplices glued together at the ends
$$\begin{tikzcd}
O \ar[r,"f", bend left] \ar[r, "g", bend right,swap,]& x
\end{tikzcd}$$
while a homotopy of such loops may look like
\[\begin{tikzcd}
O \ar[r,bend left, "f_0"] \ar[r,bend right,swap, "g_0"] \ar[rr,bend left=80, "f_1"] \ar[rr,bend right=80, swap, "g_1"]& x \ar[r,"h"] & y
\end{tikzcd}\]
where we glue 2-simplices in the triangles $f_0hf_1^{-1}$ and $g_0hg_1^{-1}$ along a shared 1-simplex $h$. One can then extend this picture in a natural way to the higher homotopies as follows:

\begin{construction}[Simplicial Loops]\label{cons:LOOP} Given any simplicial set $X$, define 
	$$\Omega X(A):=\displaystyle\lim_{\leftarrow} \left( \begin{tikzcd}
	\{O\} \ar[r,hook] & X([0]) & X([0]A) \ar[d] \ar[l]\\
	& X([0]A) \ar[u] \ar[r] & X(A)
	\end{tikzcd}\right).$$
\end{construction}

Notice that Construction~\ref{cons:LOOP} works for any simplicial set $X$. This suggests the obvious definition:

\begin{definition}[$G$-Construction] Let $\calC$ be a pCGW category. The {\em $G$-Construction} on $\calC$ is defined by applying the Simplicial Loop Construction to $\calS\calC$,
	$$G\calC := \Omega S\calC.$$
\end{definition}

Although the $G$-construction is well-defined for ordinary CGW categories, we will rely on the restricted pushouts of $\M$-morphisms to show that $G\calC\simeq K\calC$ (and thus $K_n(\calC)\cong \pi_n|G\calC|$). Our approach mirrors what was done by Gillet-Grayson \cite{GG}, who established $G\calC\simeq K\calC$ for exact categories. A key notion in their argument is the so-called {\em right fiber}. 
 
\begin{definition}[Right Fiber]\label{def:fiber} Suppose $F\colon X\to Y$ is a map of simplicial sets, $A\in\Delta$ and $\rho\in Y(A)$. We define $\rho|F$ (``the right fiber over $\rho$'' ) by 
	
	$$(\rho|F)(B):=\displaystyle\lim_{\leftarrow}\left( \begin{tikzcd}
	&& X(B) \ar[d]\\
	& Y(AB) \ar[r] \ar[d] & Y(B)\\
	\{\rho\} \ar[r,hook] &  Y(A)
	\end{tikzcd}\right).$$
One may regard $\rho|F$ as the simplicial analogue of the homotopy fiber of $|F|$ over $\rho$. We write $\rho | Y$ for $\rho|1_Y$.  
\end{definition}

We can now restate our problem. To show that $G\calC\simeq K\calC$, we need to show that geometric realisation and the loop space constructions commute up to homotopy equivalence, i.e. 
$$|\Omega\calS\calC|\simeq \Omega|\calS\calC|.$$
The following key observation tells us when this happens.

	\begin{observation}[Key Observation]\label{obs:homotopycartesian} For any simplicial set $X$, consider the commutative square 
		\begin{equation}\label{eq:GhomCar}
		\begin{tikzcd}
		\Omega X \ar[r,"t"] \ar[d,"b"] & O|X \ar[d,"q"]\\
		O|X \ar[r,"q"] & X
		\end{tikzcd}.
		\end{equation}
where $b$ and $t$ are the projection maps 
and $q$ the obvious face map. \underline{Then}:
\begin{enumerate}[label=(\roman*)]
	\item $O|X$ is contractible.
	\item $O|q\simeq \Omega X$
	\item  $|\Omega X|\simeq \Omega |X|$ iff this square is homotopy Cartesian.	
\end{enumerate}
\end{observation}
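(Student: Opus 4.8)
The plan is to prove the three statements in order, with (iii) following formally once (i) and (ii) are in place. For (i), the claim that $O|X$ is contractible: unwinding Definition~\ref{def:fiber} with $A = [0]$, $\rho = O$ and $F = 1_X$, an $n$-simplex of $O|X$ is a pair consisting of a simplex $\sigma \in X([0][n])$ whose restriction to the initial vertex is $O$, together with the induced simplex in $X([n])$ — so $O|X$ is essentially the "décalage" (path object) of $X$ at the basepoint. The standard argument is that the extra initial vertex $[0]$ provides an explicit simplicial contraction: the inclusions $[0][n] \hookleftarrow [0]$ and the "collapse to the cone point" maps assemble into a simplicial homotopy $O|X \times \Delta^1 \to O|X$ from the identity to the constant map at $O$. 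I would cite the standard fact that $\mathrm{Dec}\, X \simeq \ast$ (e.g. via the extra degeneracy), rather than redo the homotopy by hand.

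For (ii), that $O|q \simeq \Omega X$: I would identify both sides as limits of the same diagram up to homotopy. Comparing Construction~\ref{cons:LOOP} (the limit defining $\Omega X$) with Definition~\ref{def:fiber} applied to $q\colon O|X \to X$, one sees that $O|q$ is the pullback of $O|X \xrightarrow{q} X \xleftarrow{q} O|X$ — but this is exactly Diagram~\eqref{eq:GhomCar}, whose pullback is $\Omega X$ by inspection of Construction~\ref{cons:LOOP} (the two copies of $X([0]A)$ in the limit defining $\Omega X$ are precisely the two copies of $O|X$). So $O|q$ and $\Omega X$ agree on the nose as simplicial sets, or at worst are canonically isomorphic; in any case (ii) holds, and in fact Diagram~\eqref{eq:GhomCar} exhibits $\Omega X$ as the strict pullback of the cospan $O|X \xrightarrow{q} X \xleftarrow{q} O|X$.

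For (iii): since $O|X$ is contractible by (i), the square \eqref{eq:GhomCar} has both "total space" corners contractible, so it is homotopy Cartesian if and only if the induced map from $\Omega X$ to the homotopy pullback of $O|X \xrightarrow{q} X \xleftarrow{q} O|X$ is a weak equivalence; and the homotopy pullback of a cospan with both ends contractible is $\Omega X$ (the loop space of $X$ based at the image of the contraction, namely $O$). Now apply geometric realisation. Realisation preserves products and (being a left adjoint, and by Quillen's theorem that $|{-}|$ sends the strict pullback in question to a homotopy pullback precisely under the fibrancy-type hypothesis at issue) the question of whether $|\Omega X| \simeq \Omega|X|$ becomes exactly the question of whether $|{-}|$ carries \eqref{eq:GhomCar} to a homotopy Cartesian square of spaces: the realisation of the bottom-right corner is $|X|$, the realisations of the two off-diagonal corners are contractible by (i), so the realised square is homotopy Cartesian iff its homotopy pullback $\Omega|X|$ receives a weak equivalence from $|\Omega X|$. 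This is the "iff".

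The main obstacle is pinning down the last step cleanly: geometric realisation does \emph{not} preserve homotopy pullbacks in general, so one must argue that realising \eqref{eq:GhomCar} is homotopy Cartesian as a condition is both necessary and sufficient without circularity. The clean way is: realisation always preserves the strict pullback here (realisation preserves finite limits when one leg is — or can be replaced by — a fibration, but more robustly one checks directly that $\Omega$ commutes with $|{-}|$ up to the comparison map, since $\Omega X$ is a pullback along the face map $q$ which is a Kan fibration onto its image after the standard cofibrant/fibrant adjustments), so $|\Omega X|$ is the strict pullback of $|O|X| \to |X| \leftarrow |O|X|$, and then the statement "$|\Omega X| \simeq \Omega|X|$" is literally "this strict pullback agrees with the homotopy pullback", i.e. the realised square is homotopy Cartesian. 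Care is needed because $\calS\calC$ is a general simplicial set, not a Kan complex, so one should either invoke the realisation fibration theorem (realisation of a Kan fibration is a Serre fibration) after a fibrant replacement, or appeal to the fact that $|{-}|$ preserves the relevant pullback because one of the maps $q$ is, up to the standard model-categorical manipulations, a fibration — I would flag this and defer the precise bookkeeping to the proof proper.
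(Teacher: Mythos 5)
Your approach to (i) and (ii) matches the paper's exactly: (i) is the standard décalage/extra-degeneracy contraction (the paper cites \cite[Lemma 1.4]{GG}), and (ii) follows from unwinding definitions, with $O|q$ and $\Omega X$ agreeing as simplicial sets. Your treatment of (iii) also arrives at the same conclusion, but the final paragraph — where you worry about geometric realisation failing to preserve homotopy pullbacks and reach for Kan-fibration replacements to ``avoid circularity'' — is chasing a problem that the paper's framing does not have.

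The paper's argument for (iii) is simpler and dodges your concern entirely. It does \emph{not} claim $|\Omega X|$ equals either a strict or homotopy pullback of the realised cospan. It only observes that realising the commutative square~\eqref{eq:GhomCar} yields a commutative square of spaces, hence a canonical comparison map $|\Omega X|\to P$ where $P$ is the homotopy pullback of $|O|X|\to |X|\leftarrow |O|X|$; by \emph{definition}, the square is homotopy Cartesian iff this comparison map is a weak equivalence. Since $|O|X|\simeq\ast$ by (i), $P\simeq \Omega|X|$. Done. No claim about $|{-}|$ preserving (homotopy) pullbacks is needed — only the universal property of the homotopy pullback and contractibility of the off-diagonal corners. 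Your proposed ``fix'' of checking that $q$ is a Kan fibration (after replacements) would establish the stronger claim that the strict pullback $|\Omega X|$ computes $P$, i.e. that the square \emph{is} homotopy Cartesian unconditionally — but that is exactly the nontrivial content of Theorem~\ref{thm:thmC'}/\ref{thm:Gconstruction}, proved later, and is not what Observation~\ref{obs:homotopycartesian}(iii) asserts. Part (iii) is intentionally an equivalence of two conditions, not an assertion that either holds; your instinct to prove one of the conditions outright at this stage would short-circuit the whole point of the observation.
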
 
\begin{proof} (i) follows from \cite[Lemma 1.4]{GG}. 
	 (ii) is clear from unpacking definitions. 
	For (iii), first take the homotopy pullback $P$ of 
	$$|O|X|\to |X|\leftarrow |O|X|$$
	in the homotopy category of spaces. Taking the geometric realisation of Diagram~\eqref{eq:GhomCar}, this yields a map $|\Omega X|\to P$, which is a homotopy equivalence iff Diagram~\eqref{eq:GhomCar} defines a homotopy pullback. Finally, since $O|X$ is contractible, the homotopy pullback $P$ is equivalent to the homotopy pullback of 
	$$ \ast \to |X| \leftarrow \ast, $$
	which is the loop space $\Omega|X|$.
\end{proof}

Key Observation~\ref{obs:homotopycartesian} suggests the following proof strategy. By item (iii), in order to show $G\calC\simeq K\calC$ it suffices to verify that the square
	\begin{equation}
\begin{tikzcd}
\Omega \calS\calC \ar[r,"t"] \ar[d,"b"] & O|\calS\calC \ar[d,"q"]\\
O|\calS\calC \ar[r,"q"] & \calS\calC
\end{tikzcd}.
\end{equation}
is homotopy Cartesian. By item (ii), this is equivalent to showing that 
	\begin{equation}
\begin{tikzcd}
O|q  \ar[r,"t"] \ar[d,"b"] & O|\calS\calC \ar[d,"q"]\\
O|\calS\calC \ar[r,"q"] & \calS\calC
\end{tikzcd}.
\end{equation}
is homotopy Cartesian. 

To demonstrate this, we will rely on a simplicial translation of Quillen's Theorem B:

\begin{theorem}[\cite{GG}, Theorem B']\label{thm:B'} Suppose $F\colon X\to Y$ is a map of simplicial sets. Suppose for any $A\in \Delta,$ any $\rho\in Y(A)$, and any $f\colon A'\to A$, the induced map
	$$ \rho|F \to f^\ast \rho|F$$
is a homotopy equivalence. Then the square
$$\squaref{\rho|F}{X}{\rho|Y}{Y}{ }{ }{ }{}$$
is homotopy Cartesian.	
\end{theorem}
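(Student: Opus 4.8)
The statement is Gillet--Grayson's simplicial avatar of Quillen's Theorem B, so the plan is to run the Theorem~B machinery at the level of bisimplicial sets. First I would assemble the right fibers into one object: set $T_n:=\coprod_{\rho\in Y_n}(\rho|F)$. The standing hypothesis is exactly what upgrades $[n]\mapsto T_n$ to a bisimplicial set --- a morphism $\theta\colon[m]\to[n]$ sends $\rho\in Y_n$ to $\theta^{\ast}\rho$ and the fiber $\rho|F$ to $\theta^{\ast}\rho|F$ via the induced map (a homotopy equivalence, by hypothesis) --- and $T$ carries a projection $\pi\colon T\to Y$ (viewing $Y$ as a bisimplicial set constant in the new direction) whose fiber over $\rho\in Y_n$ is precisely the simplicial set $\rho|F$. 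Carrying out the same construction with $F$ replaced by $\id_Y$ gives $T^{Y}$, with projection $\pi^{Y}\colon T^{Y}\to Y$, fibers $\rho|Y$, and an induced map $T\to T^{Y}$ over $Y$; I would also record the generalization of \cite[Lemma~1.4]{GG} (the same extra-degeneracy argument) that $\rho|Y$ is contractible for every $\rho$.

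The first computation is to identify the realizations. Unwinding Definition~\ref{def:fiber}, one gets $T_{n,q}\cong X_q\times_{Y_q}Y_{[n][q]}$, where $Y_{[n][q]}\to Y_q$ is restriction to the top copy of $[q]$. For fixed $q$, the simplicial set $[n]\mapsto Y_{[n][q]}$ is a d\'ecalage-type object: the degeneracy $s_0$ on $Y_{[n][q]}$ furnishes an extra degeneracy for $[n]\mapsto Y_{[n][q]}$ over its augmentation to $Y_q$, and it preserves the fiber over each $y\in Y_q$. Hence $|T_{\bullet,q}|\simeq X_q$ naturally in $q$, so $|T|\simeq|X|$; likewise $|T^{Y}|\simeq|Y|$, and under these equivalences the map $|T|\to|T^{Y}|$ is identified with $|F|$.

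The homotopy-theoretic engine is the realization form of Theorem~B: if $p\colon E\to B$ is a map of bisimplicial sets such that for every $\theta\colon[m]\to[n]$ the square with top $E_n\to E_m$, bottom $B_n\to B_m$ and vertical maps $p$ is homotopy Cartesian, then $(E_n,|E|,B_n,|B|)$ is homotopy Cartesian for each $n$. I would apply this to $T\to T^{Y}$: over the discrete sets $Y_n,Y_m$ all four corners break up as coproducts of their fibers, and the relevant comparison of fibers is $\rho|F\to\theta^{\ast}\rho|F$ (a homotopy equivalence by hypothesis) against $\rho|Y\to\theta^{\ast}\rho|Y$ (a homotopy equivalence, both being contractible), so the hypothesis of the lemma holds and $(T_n,|T|,T^{Y}_n,|T^{Y}|)$ is homotopy Cartesian.

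Finally I would paste squares. Fixing $\rho\in Y_n$, taking fibers over the point $\rho$ of the maps to the discrete set $Y_n$ makes $(\rho|F,T_n,\{\rho\},Y_n)$ and $(\rho|Y,T^{Y}_n,\{\rho\},Y_n)$ homotopy Cartesian, whence by the pasting law $(\rho|F,T_n,\rho|Y,T^{Y}_n)$ is homotopy Cartesian; pasting this with the previous square gives that $(\rho|F,|T|,\rho|Y,|T^{Y}|)$ is homotopy Cartesian. Transporting along $|T|\simeq|X|$ and $|T^{Y}|\simeq|Y|$ identifies this rectangle with the square $\squaref{\rho|F}{X}{\rho|Y}{Y}{ }{ }{ }{ }$, which is therefore homotopy Cartesian. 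I expect the main obstacle to be not a single deep point but the accumulated bookkeeping --- chiefly the extra-degeneracy computation of $|T|$ and $|T^{Y}|$ together with the identification of $|\pi|$ with $|F|$, and the careful composition of homotopy pullbacks; the remainder is the standard Theorem~B template (indeed the argument is that of \cite{GG}).
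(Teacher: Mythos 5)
The paper does not prove this statement; it quotes it verbatim from Gillet--Grayson \cite{GG} as a black box. Your proposal correctly reconstructs the Gillet--Grayson argument itself --- the bisimplicial set $T_n=\coprod_{\rho\in Y_n}(\rho|F)$, the extra-degeneracy identification $|T|\simeq|X|$ and $|T^Y|\simeq|Y|$, and the levelwise-to-realization homotopy-pullback lemma (whose $\pi_*$-Kan hypothesis is satisfied here since each $T^Y_n$ is homotopy discrete) --- so it matches the cited source, not a divergent alternative.
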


\begin{convention} Following \cite{GG,GGErratum}, we hereafter refer to Theorem~\ref{thm:B'} as Theorem B'.
\end{convention}

\section{A Technical Result on Right Fibers}\label{sec:RightFib}

\begin{convention}\label{conv:pcgw-C} Hereafter, unless stated otherwise: $\calC$ denotes a fixed pCGW category, and all definitions and constructions are made with respect to its pCGW structure.
\end{convention}

The goal of this section is to prove Theorem~\ref{thm:thmC'}, which essentially says: given a nice simplicial map $F\colon Y\to \calS\calC$ where $\calC$ is a pCGW category, the right fiber $O|F$ admits a nice description. 
The results here are technical, and are inspired by Grayson's framework of {\em dominant functors} \cite{GraysonThmC}. 

There are two main applications of Theorem~\ref{thm:thmC'}. First, the key result that $G\calC\simeq K\calC$ is obtained as a straightforward corollary (Theorem~\ref{thm:Gconstruction}). Second, it sets up the proof of Theorem~\ref{thm:Sherman}, which gives an initial characterisation of the generators of $K_1(\calC)$; the details will be deferred to Section~\ref{sec:generators}.

\subsection{$H$-Space Structure} We introduce the new notion of a {\em moral subset}, which is similar to a simplicial subset of $\SC$ except we allow for a shift in dimensions.

\begin{definition}[Moral Subset] Fix $k\in \N$. A {\em $k$-moral subset} of $\SC$ is a simplicial set $Y$ together with a map
	$$F\colon Y\longrightarrow \calS\calC$$	
such that:
\begin{enumerate}[label=(\roman*)]
	\item If $k=0$, then $F$ is the inclusion of a simplicial subset $Y\subseteq \SC$.
	\item If $k\geq 1$, fix a base-point $O_Y:=(O\rtail \dots \rtail O)\in \SC[k-1]$. Then, all $n$-simplices in $Y$ have the form
	$$(O_Y\rtail J_0\rtail J_1\rtail \dots \rtail J_n)\in \SC([k-1][n])$$
The map $F$ is defined by forgetting $O_Y$ and factoring the rest by $J_0$.
	\item $Y$ inherits its face and degeneracy maps from $\SC$ in the obvious way. In addition, we require $Y$ to be closed under direct sums of $n$-simplices.\footnote{By which we mean we can take direct sums of distinguished squares in the obvious manner, see Lemma~\ref{lem:DirectSum}.}
\end{enumerate}
\end{definition}


\begin{example} The quotient map $q\colon O|\SC\to \SC$ from Observation~\ref{obs:homotopycartesian} is a 1-moral subset. 
\end{example}

\begin{construction}[Addition Map]\label{cons:Hspace} Fix $A\in \Delta$, where $A=[a]$. Given a $k$-moral subset $F\colon Y \to \calS\calC$ and $\overline{M}\in \SC (A)$, represent a $q$-simplex $W$ in $\overline{M}|  F$ as
	\begin{equation}\label{eq:WMSF}
	W=\begin{pmatrix}
	\doubleunderline{\Wtop}\\ \Wbot
	\end{pmatrix}=\begin{pmatrix} &  && &(O_Y&\rtail) &  \doubleunderline{J_0\rtail J_1 \rtail \dots \rtail J_q}\\ O\rtail M_1&\rtail &\dots &\rtail &M_a &\rtail &  L_0\rtail L_1\rtail \dots \rtail L_q\end{pmatrix}
	\end{equation}	
where $\Wtop$ is a simplex of $Y$, $\Wbot$ a simplex of $\SC$ and the double line represents the identity
 \[\begin{tikzcd}
O=F\left(J_0\right) \ar[d,equal] \ar[r,>->]& \dots \ar[r,>->]& F\left(J_q\right) \ar[d,equal]  \\
O=\frac{L_0}{L_0} \ar[r,>->]& \dots \ar[r,>->] & \frac{L_q}{L_0}
\end{tikzcd}.\]
Parentheses are placed around $(O_Y\rtail)$ in Equation~\eqref{eq:WMSF} to indicate that when $Y$ is a $0$-moral subset, we delete $O_Y\rtail$ from $\Wtop$ and set $J_0=O$. 

Define addition $(W,W')\mapsto W+W'$ by setting
$$W+W':=\begin{pmatrix} &(O_Y  &\rtail) & J_0\oplus J'_0\rtail \dots \rtail J_q\oplus J'_q\\ O \rtail \dots \rtail& M_a &\rtail & \doubleoverline{L_0\star_{M_a}L'_0\rtail \dots \rtail L_q\star_{M_a}L'_q} \end{pmatrix},$$
with the following quotients:
\begin{itemize}
	\item For $J_{i}\oplus J'_i$:
	$$\frac{J_i\oplus J'_i}{J_j\oplus J'_j}:=\frac{J_i}{J_j}\oplus \frac{J'_i}{J'_j}.$$
	\item For $L_i\star_{M_a}L'_i:$
	$$\frac{L_i\star_{M_a}L'_i}{M_j}:=\begin{cases}
	\frac{L_i}{M_a}\oplus \frac{L'_i}{M_a} \qquad \text{if } j=a\\
	\frac{L_i\star_{M_a}L'_i}{M_j} \qquad  \,\,\text{ by Axiom (K), if } 1\leq j<a \,\,
	\end{cases},$$
and recursively,
	$$\frac{L_i\star_{M_a} L'_i}{L_j\star_{M_a}L'_j}:=F\left(\frac{J_i\oplus J'_i}{J_j\oplus J'_j}\right).$$

\end{itemize}
\end{construction}



\begin{claim}\label{claim:Hspace} The addition map above turns $\overline{M}|  F$ into a homotopy associative and commutative $H$-space, making $\piMSF$ a monoid.
\end{claim}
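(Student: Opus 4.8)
The plan is to verify the $H$-space axioms directly by exhibiting the requisite simplicial homotopies, following the standard template from the exact-category case (Gillet--Grayson, \cite[\S 3]{GG}), and checking that each simplicial map in sight is well-defined on $\overline{M}|F$ — i.e. that the formal direct sums and restricted pushouts used in Construction~\ref{cons:Hspace} genuinely land back in $Y$ and in $\SC$, with the stated quotients assembling into diagrams of distinguished squares. The axioms to establish are: (a) the addition map $+\colon (\overline{M}|F)\times(\overline{M}|F)\to \overline{M}|F$ is a well-defined simplicial map; (b) there is a two-sided unit up to homotopy, namely the constant simplex $O$ (built from $O_Y\rtail O\rtail\dots\rtail O$ over $\overline M$); (c) $+$ is homotopy associative; (d) $+$ is homotopy commutative. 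Granting (a)--(d), the fact that $\pi_0$ of an $H$-space is a monoid — commutative since the multiplication is homotopy commutative — is formal.

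**First I would** check (a). The face and degeneracy maps of $\overline{M}|F$ act on the $\doubleunderline{J_\bullet}$-part and the $L_\bullet$-part of $W$ separately (with the usual caveat that $d_0$ on the $J$/$L$-flags quotients out $J_1$, resp. $L_1\star_{M_a}L_1'$), so it suffices to observe that $J_i\oplus J_i'$ and $L_i\star_{M_a}L_i'$ are functorial in $i$ in a way compatible with quotients: for the $\oplus$-part this is because $\oplus=\star_O$ and Axiom~(PQ) gives $\frac{J_i\oplus J_i'}{J_j\oplus J_j'}\cong \frac{J_i}{J_j}\oplus\frac{J_i'}{J_j'}$, while the fact that the induced squares are distinguished is Axiom~(DS) applied pointwise to the distinguished squares making $\Wtop,\Wtop'$ into simplices of $Y$ (this is exactly the "closed under direct sums'' clause (iii) in the definition of moral subset, cf. Lemma~\ref{lem:DirectSum}). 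For the $L_i\star_{M_a}L_i'$-part one uses Fact~\ref{facts:restrictedpushouts} to see the restricted pushouts compose correctly along the flag $L_0\rtail\dots\rtail L_q$, Axiom~(PQ) again for the quotients $\frac{L_i\star_{M_a}L_i'}{M_j}$ with $j=a$, Axiom~(K) to relate them to the chosen quotients for $1\le j<a$, and Axiom~(DS) for distinguishedness; the compatibility condition $F\big(\frac{J_i\oplus J_i'}{J_j\oplus J_j'}\big)=\frac{L_i\star_{M_a}L_i'}{L_j\star_{M_a}L_j'}$ is built into the construction and is consistent because $F$ is (by definition of moral subset) "forget $O_Y$, quotient by $J_0$''. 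One must also check $+$ is natural in $A=[a]$, i.e. commutes with the simplicial structure maps in the $\overline M$-direction; this reduces to functoriality of $\star_{M_a}$ under the maps $M_a\rtail M_{a'}$ coming from face/degeneracy operators on $\overline M$, which is again Fact~\ref{facts:restrictedpushouts} and Axiom~(DS).

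**For (b), (c), (d)** I would write down the explicit homotopies used in \cite{GG}. For the unit: the map $W\mapsto W+O$ replaces each $J_i$ by $J_i\oplus O$ and each $L_i$ by $L_i\star_{M_a}O$; by Axiom~(A) and the conventions of Convention~\ref{conv:restrPO-quotient} (which pin down $p_B=1\oplus O$, resp. $q_B\circ p_B=1_B$) these are canonically isomorphic to $J_i$, resp. $L_i$, and the canonical isomorphisms assemble — since distinguished squares are closed under isomorphism (goodness) — into a simplicial homotopy from $\mathrm{id}$ to $(-)+O$; similarly on the other side. For associativity one compares $(W+W')+W''$ with $W+(W'+W'')$ using the canonical associativity isomorphisms $(J_i\oplus J_i')\oplus J_i''\cong J_i\oplus(J_i'\oplus J_i'')$ and, for the bottom row, the analogous isomorphism for iterated restricted pushouts over $M_a$ (Fact~\ref{facts:restrictedpushouts} with $\star_O$ in the fibre directions gives $(L_i\star_{M_a}L_i')\star_{M_a}L_i''\cong L_i\star_{M_a}(L_i'\star_{M_a}L_i'')$ up to canonical iso). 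For commutativity one uses the swap isomorphism $J\oplus J'\cong J'\oplus J$ and $L\star_{M_a}L'\cong L'\star_{M_a}L$. In each case the point is that a compatible family of isomorphisms between $n$-simplices, natural in $[n]$, is the same thing as a simplicial homotopy, so there is no extra combinatorial work beyond verifying naturality — which follows from the uniqueness-up-to-canonical-isomorphism clauses of Axioms~(K), (A), (PQ) together with goodness.

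**The main obstacle** I expect is bookkeeping rather than conceptual: making sure that all the "canonical'' isomorphisms chosen in the homotopies for (b)--(d) are mutually compatible across faces — in particular across the $d_0$ face on the $J_\bullet$ and $L_\bullet$ flags, where one quotients out the smallest term, so the swap/associativity/unit isomorphisms must descend to the quotients. This is where Axiom~(PQ) (quotients of restricted pushouts are controlled) and the uniqueness of formal cokernels up to codomain-preserving isomorphism (Axiom~(K), Remark~\ref{rem:INV}) do the real work, and where one has to be careful not to conflate "equal'' with "canonically isomorphic'' (cf. Footnote~\ref{fn:INV}); everything else is a direct transcription of \cite[\S 3]{GG} into the pCGW setting. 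I would organise the proof so that (a) is checked in detail and (b)--(d) are each reduced to "the evident isomorphism is natural in $[n]$ and commutes with faces'', citing goodness and Axioms~(A), (K), (PQ), (DS) at each step.
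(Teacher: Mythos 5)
Your overall decomposition (a)--(d) and the idea of realising unit/associativity/commutativity as canonical isomorphisms assembling into simplicial homotopies is exactly what the paper does, and the appeals to Axioms (A), (K), (PQ), (DS), goodness, Fact~\ref{facts:restrictedpushouts} and Lemma~\ref{lem:DirectSum} are the right ingredients for well-definedness.

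There is, however, a concrete error in step (b). You take the unit to be ``the constant simplex $O$ built from $O_Y\rtail O\rtail\dots\rtail O$ over $\overline M$'' and then write the homotopy in terms of $L_i\star_{M_a}O$. Neither of these makes sense unless $M_a=O$. A $q$-simplex of $\overline M|F$ has bottom row $O\rtail M_1\rtail\dots\rtail M_a\rtail L_0\rtail\dots\rtail L_q$, so $L_0$ must receive an $\M$-morphism from $M_a$; in particular $L_0=O$ forces $M_a=O$. And the restricted pushout $L_i\star_{M_a}L_i'$ is only defined for a span $L_i\leftarrowtail M_a\rightarrowtail L_i'$, so $L_i\star_{M_a}O$ is undefined. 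The correct unit is the (degenerate extension of the) vertex
\[
\begin{pmatrix} & (O_Y\rtail) & \doubleunderline{\,O\,} \\ O\rtail M_1\rtail\dots\rtail & M_a \xrtail{1} & M_a \end{pmatrix},
\]
i.e.\ $J_i=O$ on top but $L_i=M_a$ on the bottom row, and the relevant isomorphisms are $J\oplus O\cong J$ together with $L_i\star_{M_a}M_a\cong L_i$ (the latter by initiality of the restricted pushout, since the optimal class contains the square with $M_a\xrightarrow{\,1\,}M_a$). Once you replace $O$ by $M_a$ in the $L$-slot and $\star_{M_a}O$ by $\star_{M_a}M_a$, the rest of your argument goes through unchanged.
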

\begin{proof}
This follows from a series of basic checks:
	\begin{enumerate}[label=(\alph*)]
		\item {\em Well-definedness.} One can check Construction~\ref{cons:Hspace} indeed defines a simplicial map $$+\colon \overline{M}|F\times \overline{M}|F\to \overline{M}|F.$$
		Choices of quotients are valid in $\SC$ by Lemma~\ref{lem:Hspace1}. The top row of $W+W'$ defines a simplex in $Y$ since moral subsets are closed under direct sums.  Finally, apply Fact~\ref{facts:restrictedpushouts} and Lemma~\ref{lem:DirectSum} to verify that $(W,W')\mapsto W+W'$ commutes with face and degeneracy maps.
		\item {\em Identity.} The 0-simplex 
		\begin{equation}\label{eq:addID}
		\begin{pmatrix}  &(O_Y \rtail) & \doubleunderline{\,O\, }\\ O \rtail M_1\rtail \dots \rtail &M_a  \xrtail{1} & M_a \end{pmatrix}, 
		\end{equation}
	represents the additive identity. 
Why? Use the fact that restricted pushouts are initial to deduce $J\oplus O\cong J$ for any $J\in\calC$, and $L_i\star_{M_a} M_a\cong L_i$. When adding the higher $n$-simplices, extend the 0-simplex by adding degeneracies.
		\item {\em Associativity and Commutativity.} Since restricted pushouts (and direct sums) are initial, they are associative and commutative up to natural isomorphism. These induce simplicial homotopies that make $|\MSF|$ into a homotopy associative and commutative $H$-space. 
	\end{enumerate}
\end{proof}

\subsection{The Main Result} We now leverage the $H$-space structure of $O|F$ to describe the right fiber, assuming $F$ satisfies a technical condition we call {\em cofinality.} 

\begin{definition}[Cofinality]\label{def:cofinal} Suppose $F\colon Y\to \calS\calC$ is a $k$-moral subset. We say $F$ has {\em cofinal image} if:
\begin{enumerate}[label=(\roman*)]
	\item {\bf Case 1: $k=0$.} For any $O\rtail M\in \SC,$ there exists $T\in\ob\calC$ such that $$O\rtail M\oplus T \quad \in Y[1].$$
	\item {\bf Case 2: $k\geq 1$.} For any $C\in\ob\calC$, 
	$$O_Y\rtail C\,\,\in Y[0]\qquad \text{and}\qquad O_Y\rtail O\rtail C\in Y[1].$$
\end{enumerate}	
\end{definition}

\begin{example} It is clear the map $q\colon O|\SC\to \SC$ has cofinal image by construction.
\end{example}



This sets up the main theorem of this section.

\begin{theorem}\label{thm:thmC'}  Let $\calC$ be a pCGW category, and $F\colon Y\to \calS\calC$ a moral subset with cofinal image. \underline{Then} 
	\begin{equation}\label{eq:ThmC'}
	\begin{tikzcd}
	O|F\ar[d] \ar[r] & Y\ar[d]\\
	O|\calS\calC \ar[r]& \calS\calC 
	\end{tikzcd}	
	\end{equation}
	is a homotopy Cartesian square. 
\end{theorem}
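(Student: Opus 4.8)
The plan is to verify the hypothesis of Theorem~B' for the map $F\colon Y\to\calS\calC$: namely, that for every $A\in\Delta$, every $\rho\in\calS\calC(A)$, and every $f\colon A'\to A$, the induced map on right fibers $\rho|F\to f^*\rho|F$ is a homotopy equivalence. Once this is established, Theorem~B' immediately yields that the square
\[
\squaref{\rho|F}{Y}{\rho|\calS\calC}{\calS\calC}{}{}{}{}
\]
is homotopy Cartesian for every $\rho$. Specializing $\rho$ to the basepoint $O\in\calS\calC[0]$ (i.e.\ $A=[0]$), we obtain exactly Diagram~\eqref{eq:ThmC'}, since $O|\calS\calC = O|1_{\calS\calC}$ by definition.

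The first reduction is to note that it suffices to treat generators of the category $\Delta$: it is enough to check that each coface map $\rho|F\to d^i{}^*\rho|F$ and each codegeneracy map $\rho|F\to s^i{}^*\rho|F$ is a homotopy equivalence, since a general $f$ is a composite of these, and homotopy equivalences are closed under composition. The codegeneracy case should be routine — pulling back along $s^i$ only inserts a degenerate (identity) entry into $\rho$, and one can write down an explicit deformation retraction of right fibers; this is the content of the kind of argument Gillet--Grayson use. The substantive case is the coface maps, where pulling back $\rho$ along $d^i\colon [n-1]\to[n]$ forgets one of the flag entries $M_i$ of $\rho$ (with the usual convention that forgetting $M_0$ means quotienting by $M_1$). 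Here the strategy is to use the $H$-space structure on right fibers supplied by Construction~\ref{cons:Hspace} and Claim~\ref{claim:Hspace}: both $\rho|F$ and $f^*\rho|F$ are homotopy-associative, homotopy-commutative $H$-spaces, the comparison map is an $H$-map, and cofinality of $F$ ensures that both monoids $\pi_0$ are actually \emph{groups} (every object $C$ and every mono $O\rtail M$ admits the required "inverse up to direct summand / restricted pushout" data inside $Y$) — this is precisely where the hypothesis that $F$ has cofinal image gets used. A map of grouplike $H$-spaces which induces an isomorphism on all homotopy groups is a homotopy equivalence, so it remains to check the map is a $\pi_*$-isomorphism. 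For $\pi_*$ one constructs an explicit homotopy inverse by "adding back" the forgotten datum: given a simplex of $f^*\rho|F$ one reinserts (a copy of, or a restricted-pushout with) the missing object, using cofinality to stay inside $Y$, and checks via the simplicial homotopies coming from initiality of $\oplus$ and $\star$ that this is inverse up to homotopy.

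The main obstacle I expect is the coface map that forgets $M_0$ (equivalently, the "$d_0$" direction), because there the pullback is not simply a forgetful operation but involves taking quotients $L_i\mapsto L_i/M_1$ and $J_i\mapsto J_i/J_0$ throughout the flag, and one must check that the reinsertion map lands back in $Y$ and is compatible with all the chosen quotients — this is exactly the place where Axiom~(PQ), Lemma~\ref{lem:quotFilt} (quotients respect filtrations), and the $0$-moral vs.\ $k$-moral bookkeeping of Construction~\ref{cons:Hspace} all have to be coordinated. A secondary obstacle is verifying that the $H$-space structures on $\rho|F$ and on $f^*\rho|F$ are genuinely compatible under the comparison map (so that it is an $H$-map), which again reduces to functoriality of restricted pushouts (Fact~\ref{facts:restrictedpushouts}) and of direct sums, but requires care about basepoints. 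I would organize the write-up as: (1) reduce to cofaces and codegeneracies; (2) dispatch codegeneracies by explicit retraction; (3) set up the grouplike $H$-space framework and invoke cofinality to get grouplikeness; (4) handle the cofaces $d^i$ with $i\geq 1$ by a direct reinsertion homotopy; (5) handle $d^0$ by the same idea, carefully tracking quotients; (6) assemble and apply Theorem~B'.
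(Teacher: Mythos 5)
Your proposal is correct in spirit but uses a genuinely different reduction than the paper, and the paper's is cleaner. You decompose a general $f\colon A'\to A$ into cofaces and codegeneracies and then propose to check each generator separately (with a special treatment of $d^0$ because of the quotienting). The paper instead picks any $g\colon [0]\to A'$ and observes that since $(fg)^*=g^*f^*$, and both $fg$ and $g$ have source $[0]$, the two-out-of-three property of homotopy equivalences reduces the whole problem to showing that every map $f_i\colon [0]\to A$, $0\mapsto i$, induces a homotopy equivalence. Because $O$ is the unique vertex of $\calS\calC$, the pullback $f_i^*\overline{M}$ is always just $O$, so the target fiber is always $O|F$ — a \emph{single, uniform} comparison $\overline{M}|F\to O|F$, rather than the family of differently-shaped fibers $(d^i)^*\rho|F$ and $(s^i)^*\rho|F$ you would have to control. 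The paper then exhibits one explicit map $h_0\colon O|F\to\overline{M}|F$ and shows that both $h_0$ and $f_i^*\circ h_0$ are homotopy equivalences, the second via grouplikeness, which follows from cofinality exactly as you anticipate (Step~1 = "cofinality $\Rightarrow$ dominance"). So both approaches live and die on the same two ingredients — Theorem~B' plus cofinality-implies-grouplike $\pi_0$ — but the paper's $[0]$-reduction lets you avoid the delicate $d^0$ bookkeeping with quotients and the coface-by-coface case analysis you flag as the main obstacle.

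One small clarification on your logic in step (4): you first say it remains to check that the comparison map is a $\pi_*$-isomorphism, and then in the same breath propose to construct an explicit homotopy inverse by reinsertion. These are two different exit strategies; if you build an explicit homotopy inverse you never need the grouplike-$H$-space/Whitehead argument, while if you argue via $\pi_*$ you need to actually compute $\pi_*$, which you never do. The paper goes the explicit route (constructs $h_0$ and then a second map $h_1$ by adding the inverse supplied by grouplikeness), and I'd recommend you commit to that route rather than to a Whitehead-style argument, since the latter would force you to identify the homotopy groups of the right fibers, which the cofinality hypothesis does not directly hand you.
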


\begin{proof} Say the map $F$ is \emph{dominant} if $+$ induces a group structure on $\piMSF$ given \emph{any} $\overline{M}\in\calS\calC(A)$ for \emph{any} $A\in\Delta$. By Claim~\ref{claim:Hspace}, we already know $\piMSF$ is a monoid; dominance thus asserts the existence of inverses. Theorem~\ref{thm:thmC'} then follows from establishing two main implications.
	\begin{itemize}
		\item Step 1: If $F$ has cofinal image, then $F$ is dominant.
		\item Step 2: If $F$ is dominant, then Diagram~\eqref{eq:ThmC'} is a homotopy 
		Cartesian square.
	\end{itemize}
	
\subsubsection*{Step 1: Cofinality implies dominance} Fix some $\overline{M}\in\calS\calC(A)$. The monoid
 $\piMSF$ can be presented as
\begin{itemize}
	\item Generators: Vertices of $\overline{M}| F$, represented as
	$$W=\begin{pmatrix}  &&&&(O_Y&\rtail) &  \doubleunderline{J_0}\\ O\rtail M_1&\rtail &\dots &\rtail &M_a &\rtail &  N\end{pmatrix}$$
	
	\item Relations: 1-simplices of $\overline{M}|  F$.
\end{itemize} 	
As before, we omit the choice of quotients. In fact, any two vertices with the same presentation will have isomorphic quotients, and are thus connected by a 1-simplex.\footnote{The choice of 1-simplex is obvious, but justification takes some work. Informal proof sketch: construct the obvious simplex for the bottom row using the fact that quotients respect filtrations (Lemma~\ref{lem:quotFilt}) and are unique up to isomorphism (Axiom (K)). To show it is indeed a simplex, we must show the rightmost column of isomorphism squares are all distinguished; by goodness, it suffices to show they commute in the ambient category. We get the top square for free by Axiom (K). To see the square below it also commutes, use the fact that all $\E$-morphisms are monic  (if $\E\subseteq \calC$) or epi (if $\E^{\opp}\subseteq \calC$); this uses Axiom (M). Keep going.}
Hence, simplify the generators to
\begin{equation}\label{eq:W}
W=\begin{pmatrix}
(O_Y&\rtail) & \doubleunderline{J_0} \\ M_a&\rtail& N
\end{pmatrix}.
\end{equation}

To construct its inverse in $\piMSF$, it will be helpful to consider the case of $0$-moral subsets separately.
\subsubsection*{\textbf{Case I: $k= 0$}} Since $F$ has cofinal image, there exists some $T\in\ob\calC$ such that 
$$O\rtail \frac{N}{M_a}\oplus T\quad \in Y[1].$$
Setting $N':=M_a\oplus T$, it follows from Fact~\ref{facts:restrictedpushouts} that $$N\star_{M_a} N'\cong N\oplus T.$$ 
Thus, define a new vertex $W'$ whereby
$$W+ \underbrace{\begin{pmatrix}
&& \doubleunderline{O} \\ M_a&\rtail& N'
\end{pmatrix}}_{W'}=\begin{pmatrix}
&& \doubleunderline{\quad \,O\, \quad } \\ M_a&\rtail& N\oplus T
\end{pmatrix}.$$
We show $W'$ is the inverse of $W$ by verifying that $W+W'$ lies in the connected component of the additive identity. This is because
$$\begin{pmatrix}
&& \doubleunderline{\quad O \rtail \frac{N}{M_a}\oplus T\quad\,\, } \\ M_a&\rtail& M_a \rtail N\oplus T
\end{pmatrix}$$
defines a 1-simplex in $\piMSF$, since $\frac{N}{M_a}\oplus T\cong \frac{N\oplus T}{M_a}$ by Lemma~\ref{lem:ADTsquares}. Since the $0^{\mathrm{th}}$ face map acts by forgetting the second $M_a$ on the bottom row and factoring the top by $\frac{N}{M_a}\oplus T$, conclude that the 1-simplex connects the additive identity (Equation~\eqref{eq:addID}) to $W+W'$.

\subsubsection*{\textbf{Case II: $k\geq 1$}} The argument is similar. Given $W$ as above, use cofinality to define a vertex $W'$ whereby
$$W+ \underbrace{\begin{pmatrix}
	O_Y&\rtail & \doubleunderline{\quad \,\frac{N}{M_a}\quad \,} \\ M_a&\rtail& M_a\oplus J_0
	\end{pmatrix}}_{W'}=\begin{pmatrix}
O_Y&\rtail & \doubleunderline{ \frac{N}{M_a} \oplus J_0} \\ M_a&\rtail& N\oplus J_0
\end{pmatrix}.$$
The sum $W+W'$ is then connected to the additive identity via the 1-simplex
$$\begin{pmatrix}
O_Y&\rtail& \doubleunderline{ O \rtail \frac{N}{M_a}\oplus J_0} \\ M_a&\rtail& M_a \rtail N\oplus J_0
\end{pmatrix}.$$

In sum: given any $W\in \piMSF$, we can use cofinality to construct its inverse with respect to $+$. Since  $\overline{M}\in\calS\calC(A)$ and $A\in\Delta$ were chosen arbitrarily, this shows that $F$ is dominant.
	
	\subsubsection*{Step 2: $O|F$ as a homotopy pullback} Fix $\overline{M}\in\calS\calC(A)$ for some $A\in\Delta$, and fix $f\colon A'\to A$. By Theorem B' (Theorem~\ref{thm:B'}), it suffices to show that the base-change map $f^\ast\colon \MSF\to f^\ast\MSF $ is a homotopy equivalence.  By Step 1, we can use the fact that $\piMSF$ is a group since $F$ is dominant.
	
	\subsubsection*{Step 2a: A reduction} 
	Let $g\colon [0]\to A'$ be any morphism in $\Delta$. To show that $f^\ast$ is a homotopy equivalence, it suffices to show that $(fg)^\ast=g^\ast f^\ast$ and $g^\ast$ are. In fact, since both $fg$ and $g$ have $[0]$ as source, it suffices to show that 
	$$f_i\colon [0]\to A,\qquad f_i(0)=i\,\,\text{for}\, i\in A,$$
	induces a homotopy equivalence for any $A\in\Delta$. Notice $f^\ast_i$ defines a map
	$$f^\ast_i\colon \MSF \to O| F$$
	since $O$ is the only vertex of $\calS \calC$. 
	
	\subsubsection*{Step 2b: The base case} Define a map 
	\begin{align}
	h_0\colon O|  F &\longrightarrow \MSF \nonumber\\
	{\begin{pmatrix} &(O_Y\rtail) & \doubleunderline{J_0\rtail \dots \rtail J_q}\\ &O \rtail & L_0\rtail \dots \rtail L_q \end{pmatrix}} & \longmapsto {\begin{pmatrix} & (O_Y\rtail) & J_0\rtail \dots \rtail J_q\\ O\rtail \dots \rtail  &M_a \rtail &\doubleoverline{M_a\oplus  L_0\rtail \dots \rtail M_a\oplus L_q}\end{pmatrix}}\nonumber 
	\end{align}
	with quotients defined as
	\begin{itemize}
		\item $\frac{M_a\oplus L_j}{M_a\oplus L_k}:=\frac{L_j}{L_k}\left(= F\left(\frac{J_j}{J_k}\right)\right)$, 
		\item $\frac{M_a\oplus L_j}{M_a}:=L_j$, \qquad $\frac{M_a \oplus L_j}{M_i}:=\frac{M_a}{M_i}\oplus L_j$ 
	\end{itemize}

	To show that $f^\ast_i$ is a homotopy equivalence (for arbitrary $i$), it suffices to establish the following claim.
	
	\begin{claim}\label{claim:thmC'} The maps $f^\ast_i\circ h_0$ and $h_0$ are homotopy equivalences. 
	\end{claim}
	\begin{proof}[Proof of Claim] Two main checks.
		\begin{enumerate}[label=(\roman*)]
			\item \emph{On $f^\ast_i\circ h_0$.} The map $f^\ast_i\circ h_0\colon O|F\to O|F$ sends 
			
			$$ \begin{pmatrix} & (O_Y\rtail)& \doubleunderline{J_0\rtail \dots \rtail J_q}\\ &O \rtail & L_0\rtail \dots \rtail L_q \end{pmatrix}  \longmapsto
			\begin{pmatrix}  (O_Y\rtail) & J_0\rtail \dots \rtail J_q\\ O \rtail &\doubleoverline{\frac{M_a}{M_i}\oplus  L_0\rtail \dots \rtail \frac{M_a}{M_i}\oplus  L_q}\end{pmatrix} $$ 
			for $0\leq i\leq a$. Since $O|F$ is an $H$-space, we can reformulate $f_i^\ast\circ h_0$ more suggestively as 
			$$\left(f_i^\ast\circ h_0\right) (W) = W + \begin{pmatrix}
			(O_Y\rtail) & O \\
			O\rtail & \doubleoverline{\frac{M_a}{M_i}}
			\end{pmatrix}^{(q)},$$
where the notation ``$(q)$'' indicates a (potentially degenerate) $q$-simplex formed by repeating the vertex $q$ times. Since $\pi_0(O|F)$ is a group on the vertices of $O|F$, there exists a vertex $V$ such that 
			$$\begin{pmatrix}
		(O_Y\rtail )	& O \\
			O\rtail & \doubleoverline{\frac{M_a}{M_i}}
			\end{pmatrix} +   V \sim \begin{pmatrix}
		(O_Y\rtail )	& O \\
			O\rtail & \doubleoverline{O}
			\end{pmatrix}.$$
Since $+$ is a simplicial map, it commutes with face and degeneracy maps. In particular, degeneracies preserve identities and inverses: if $x + x^{-1} \sim e$ for a vertex $x$, then $$s_0^q(x) + s_0^q(x^{-1})\sim s_0^{q}(x+x^{-1}) \sim s_0^q(e).$$
Thus, define $h_1\colon O|F\to O | F$ by setting
			$$ h_1(W) = W +  V^{(q)}$$
			for any simplex $W$. Since $+$ is homotopy associative and commutative, conclude that
			$$f_i^\ast \circ h_0\circ h_1\sim 1,\qquad h_1\circ f_i^\ast \circ h_0\sim 1.$$
			\item \emph{On $h_0$.} Notice: $f^\ast_a\circ h_0$ is isomorphic to the identity map on $O|F$. It therefore suffices to show $h_0\circ f^\ast_a$ is homotopic to the identity map $1$ on $\MSF$. But this follows from the natural isomorphism 
			$$h_0\circ f^\ast_a \cong 1,$$
			induced by the isomorphism
			$$M_a\oplus \frac{L_j}{M_a}\cong \frac{M_a\oplus L_j}{M_a}= L_j,\qquad \text{for all}\, j,$$
		a consequence of Lemma~\ref{lem:ADTsquares} (iii) and the choice of quotients by $h_0$.\footnote{Notice the specific choice of quotients by $h_0$ is crucial; otherwise, the isomorphism may fail to hold since e.g. not all short exact sequences split.}
		\end{enumerate}
		This completes proof of Claim~\ref{claim:thmC'}.
	\end{proof}
	
	\subsubsection*{Step 3: Finish.} Fix a $k$-moral subset $F\colon Y\to\calS\calC$ that has cofinal image. Step 1 showed that $F$ is dominant, i.e. $\piMSF$ is a group for any $\overline{M}\in\SC(A)$. Step 2 combines this with Theorem B' to show that Diagram~\eqref{eq:ThmC'} is homotopy Cartesian. This proves the theorem.
\end{proof}

The following corollary justifies viewing the right fiber (Definition~\ref{def:fiber}) as the simplicial analogue of a homotopy fiber, and will be useful in Section~\ref{sec:generators}.

\begin{corollary}\label{cor:HtpyFib} Suppose $F\colon Y\to \calS\calC$ is $k$-moral subset with cofinal image. Then $|O|F|$ is homotopy equivalent to the homotopy fiber of $|F|$.
\end{corollary}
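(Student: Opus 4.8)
The plan is to deduce Corollary~\ref{cor:HtpyFib} as a formal consequence of Theorem~\ref{thm:thmC'} together with the contractibility facts already established in Observation~\ref{obs:homotopycartesian}. The starting point is the homotopy Cartesian square of Theorem~\ref{thm:thmC'}:
\begin{equation*}
\begin{tikzcd}
O|F\ar[d] \ar[r] & Y\ar[d]\\
O|\calS\calC \ar[r]& \calS\calC
\end{tikzcd}
\end{equation*}
Applying geometric realisation (which preserves homotopy Cartesian squares), we obtain a homotopy pullback of spaces $|O|F| \to |Y| \to |\calS\calC| \leftarrow |O|\calS\calC|$. By Observation~\ref{obs:homotopycartesian}(i), $|O|\calS\calC|$ is contractible, so this homotopy pullback is equivalent to the homotopy pullback of $\ast \to |\calS\calC| \leftarrow |Y|$ along $|F|$ — but that homotopy pullback is, by definition, the homotopy fiber of $|F|\colon |Y|\to|\calS\calC|$ over the basepoint $O$.

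First I would make precise the claim that the map $|O|\calS\calC| \to |\calS\calC|$ appearing as the right-hand vertical in the square is the canonical face map $q$, which is exactly the one whose fiber-theoretic role is pinned down in Observation~\ref{obs:homotopycartesian}; this is immediate from the construction of the square in Theorem~\ref{thm:thmC'}, since the bottom-right corner is literally $q\colon O|\calS\calC\to\calS\calC$. Next I would invoke the standard fact that a homotopy Cartesian square of simplicial sets realises to a homotopy Cartesian square of topological spaces (geometric realisation is a left adjoint that preserves finite products and sends Kan fibrations appropriately, or more cleanly: it preserves homotopy pullbacks because it preserves weak equivalences and finite limits up to homotopy in the relevant sense — cf.\ the discussion around Theorem B'). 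Then the contractibility of $O|\calS\calC$ lets me replace the cospan $|O|\calS\calC| \to |\calS\calC| \leftarrow |Y|$ by $\ast \to |\calS\calC| \leftarrow |Y|$ without changing the homotopy pullback, and the homotopy pullback of the latter is precisely $\mathrm{hofib}(|F|)$.

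The argument is essentially bookkeeping, so I do not expect a genuine obstacle; the one point requiring mild care is ensuring that the basepoint over which we take the homotopy fiber is the correct one — namely the image of the unique vertex $O_Y$ (or of $O$ when $k=0$) under $|F|$, which is the basepoint $O\in\calS\calC[0]$ used throughout — and that the left-hand vertical $|O|F|\to|O|\calS\calC|$ together with contractibility of the target is compatible with this choice. Since $O|\calS\calC$ is contractible, the specific basepoint is immaterial up to homotopy, so this causes no difficulty. I would therefore present the corollary as a short paragraph: cite Theorem~\ref{thm:thmC'}, realise, apply Observation~\ref{obs:homotopycartesian}(i), and read off the homotopy fiber.
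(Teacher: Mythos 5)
Your argument is correct and is essentially the paper's own proof, just written out in more detail: the paper likewise cites the homotopy Cartesian square from Theorem~\ref{thm:thmC'}, invokes contractibility of $O|\calS\calC$ from Observation~\ref{obs:homotopycartesian}, and then identifies the resulting homotopy pullback with $\mathrm{hofib}(|F|)$. The extra care you take about geometric realisation preserving homotopy pullbacks and about the basepoint is fine but left implicit in the paper.
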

\begin{proof} By Observation~\ref{obs:homotopycartesian},  $O|\calS\calC$ is contractible. Since the homotopy fiber of $|F|$ is the homotopy pullback of the cospan  $\ast \to |\calS\calC| \xleftarrow{|F|} Y $, the statement follows.
\end{proof}

In addition, we now obtain a key result of this paper regarding the $G$-construction.

\begin{theorem}\label{thm:Gconstruction} Given any pCGW category $\calC$, there is a homotopy equivalence $$|G\calC|\xrightarrow{\sim}\Omega |S\calC|.$$
Further, direct sum induces an $H$-space structure on $G\calC$.
\end{theorem}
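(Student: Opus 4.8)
The plan is to derive Theorem~\ref{thm:Gconstruction} directly from Theorem~\ref{thm:thmC'} by specialising it to the map $q\colon O|\calS\calC\to\calS\calC$, which we have already observed is a $1$-moral subset with cofinal image. First I would invoke Observation~\ref{obs:homotopycartesian}: item (i) says $O|\calS\calC$ is contractible, and item (iii) says that $|\Omega\calS\calC|\simeq\Omega|\calS\calC|$ precisely when the square
\begin{equation*}
\begin{tikzcd}
\Omega\calS\calC \ar[r,"t"] \ar[d,"b"] & O|\calS\calC \ar[d,"q"]\\
O|\calS\calC \ar[r,"q"] & \calS\calC
\end{tikzcd}
\end{equation*}
is homotopy Cartesian. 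By item (ii) of the same Observation, $O|q\simeq\Omega\calS\calC$, so it is equivalent to show that
\begin{equation*}
\begin{tikzcd}
O|q \ar[r] \ar[d] & O|\calS\calC \ar[d,"q"]\\
O|\calS\calC \ar[r,"q"] & \calS\calC
\end{tikzcd}
\end{equation*}
is homotopy Cartesian. But this is exactly the conclusion of Theorem~\ref{thm:thmC'} applied to the moral subset $F=q\colon O|\calS\calC\to\calS\calC$, which has cofinal image. Hence $|G\calC|=|\Omega\calS\calC|\simeq\Omega|\calS\calC|$, and unwinding the chain of equivalences gives the asserted homotopy equivalence $|G\calC|\xrightarrow{\sim}\Omega|\calS\calC|$.

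For the second assertion, I would apply Claim~\ref{claim:Hspace} with the moral subset $F=q$ and with $\overline{M}=O\in\calS\calC([0])$ the basepoint $0$-simplex. Claim~\ref{claim:Hspace} then furnishes $O|q$ with a homotopy associative and commutative $H$-space structure whose addition map is built from formal direct sums on the $Y$-component and restricted pushouts $L_i\star_O L'_i$ on the $\calS\calC$-component; since the base is $O$, the restricted pushout $L_i\star_O L'_i$ is by definition the formal direct sum $L_i\oplus L'_i$ (Axiom (A)). Transporting this structure along the homotopy equivalence $O|q\simeq\Omega\calS\calC=G\calC$ of the previous paragraph equips $G\calC$ with the claimed $H$-space structure, and by construction the induced operation is the one given by taking direct sums of the flag diagrams defining simplices of $G\calC$.

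I do not expect any serious obstacle here: the theorem is essentially a repackaging of Theorem~\ref{thm:thmC'} and Observation~\ref{obs:homotopycartesian}, both already established, together with the $H$-space structure from Claim~\ref{claim:Hspace}. The only point requiring a little care is bookkeeping the various identifications — namely that $G\calC$ was \emph{defined} as $\Omega\calS\calC$, that $O|q\simeq\Omega\calS\calC$ is the content of Observation~\ref{obs:homotopycartesian}(ii), and that the $H$-space operation on $O|q$ transported across this equivalence genuinely agrees with the naive "direct sum of simplices" operation on $G\calC$. This last compatibility is immediate from unwinding Construction~\ref{cons:Hspace} in the case $k=1$, $\overline M=O$, where the bottom-row restricted pushout degenerates to a direct sum; so the verification is routine and I would state it as such rather than belabour it.
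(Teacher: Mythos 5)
Your proof is correct and follows essentially the same route as the paper: invoking Observation~\ref{obs:homotopycartesian} to reduce the statement to the square involving $O|q$ being homotopy Cartesian, applying Theorem~\ref{thm:thmC'} to the $1$-moral subset $q$ with cofinal image, and transferring the $H$-space structure from Construction~\ref{cons:Hspace} (with $\overline{M}=O$, so that $\star_{M_a}$ degenerates to $\oplus$) across the equivalence $O|q\simeq\Omega\calS\calC=G\calC$. You merely spell out the bookkeeping that the paper's one-line proof leaves implicit, which is a harmless and arguably helpful elaboration.
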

\begin{proof} Let us review Key Observation~\ref{obs:homotopycartesian}. By item (iii), $|G\calC|=|\Omega\calS\calC|\simeq \Omega|\calS\calC|$ iff
		\begin{equation*}
	\begin{tikzcd}
	\Omega \calS\calC \ar[r,"t"] \ar[d,"b"] & O|\calS\calC \ar[d,"q"]\\
	O|\calS\calC \ar[r,"q"] & \calS\calC
	\end{tikzcd}
	\end{equation*}
is homotopy Cartesian. By item (ii), we have $O|q\simeq \Omega \calS\calC$. Since $q$ is a $1$-moral subset with cofinal image, the rest follows from Theorem~\ref{thm:thmC'}; the $H$-space structure on $G\calC$ comes from Construction~\ref{cons:Hspace}.
\end{proof}

\begin{remark} The equivalence in Theorem~\ref{thm:Gconstruction} established between $G\calC$ and $K\calC$ is one of topological spaces, not of infinite loop spaces or spectra.
\end{remark}

\begin{discussion}[Comparison with other proofs] Our approach combines ideas from two existing proofs of Theorem~\ref{thm:Gconstruction} when $\calC$ is an exact category. While all three proofs rely on Theorem B', there are some key differences worth mentioning.
	
	In broad strokes, Theorem B' says: if the induced map on fibers $\rho|F\to f^\ast \rho|F$ is a homotopy equivalence for {\em any} $f\colon A'\to A$, then $\rho|F$ is a homotopy pullback. In their original  paper \cite{GG}, Gillet-Grayson simplifies this condition by restricting to the maps $f_0,f_1\colon [0]\to [1]$; compare this with Step 2 of our proof of Theorem~\ref{thm:thmC'}.  Their argument is technical, but reflects the informal intuition: to understand how objects break into finitely many pieces, it suffices to understand how to break a single object into two.
	
	A different proof appears in \cite[Thm 8.2]{GraysonThmC}, where Grayson uses Theorem B' to study {\em dominant} exact functors. By contrast, notice we define dominance for {\em simplicial maps}. This adjustment accounts for the fact that $O|\SC$ does not {\em a priori} correspond to a pCGW category. Further, our definitions of moral subsets and cofinality were designed for our arguments to go through smoothly; no real attempts at generality were made. Whereas Grayson gives a full characterisation of dominant functors \cite[Thm 2.1]{GraysonThmC}, his proof does not translate well to our setting; for our purposes, it suffices to identify a sufficient criterion for dominance, i.e. cofinality.
\end{discussion}

\section{Generators of $K_1(\calC)$}\label{sec:generators}

Having established Theorem~\ref{thm:Gconstruction}, we now begin to deliver on our promise that the $G$-construction leads to an explicit description of $K_1(\calC)$. Section~\ref{sec:Gcons} unpacks the definition of the $G$-construction. Sections~\ref{sec:techprelim} and ~\ref{sec:genK1} work to obtain an increasingly sharp description of the generators of $K_1$; our analysis extends various results from \cite{ShermanAbelian,ShermanExact,NenGen}.

\subsection{Review of $G$-Construction}\label{sec:Gcons} 

\begin{construction}[$G$-Construction]\label{cons:Gcons} An $n$-simplex of $G\calC$ is a pair of flag diagrams of the form
	
	\[\small{\begin{tikzcd}
		P_0\ar[r, >->] & P_1 \ar[r, >->] & P_2 \ar[r, >->] & \cdots\ar[r, >->]  & P_n \\
		&	P_{1/0} \ar[r, >->] \ar[u, {Circle[open]}->]& P_{2/0} \ar[r, >->]\ar[u, {Circle[open]}->]  & \cdots \ar[r,>->] & P_{n/0}  \ar[u, {Circle[open]}->]\\
		&& P_{2/1} \ar[r,>->]  \ar[u, {Circle[open]}->]  & \cdots   \ar[r,>->] & P_{n/1} \ar[u, {Circle[open]}->] \\
		&&& \vdots\ar[u, {Circle[open]}->]  & \vdots \ar[u, {Circle[open]}->] \\
		&&&& P_{n/n-1} \ar[u, {Circle[open]}->] 
		\end{tikzcd}}\quad \small{\begin{tikzcd}
		P'_0\ar[r, >->] & P'_1 \ar[r, >->] & P'_2 \ar[r, >->] & \cdots\ar[r, >->]  & P'_n \\
		&	P_{1/0} \ar[r, >->] \ar[u, {Circle[open]}->]& P_{2/0} \ar[r, >->]\ar[u, {Circle[open]}->]  & \cdots \ar[r,>->] & P_{n/0}  \ar[u, {Circle[open]}->]\\
		&& P_{2/1} \ar[r,>->]  \ar[u, {Circle[open]}->]  & \cdots   \ar[r,>->] & P_{n/1} \ar[u, {Circle[open]}->] \\
		&&& \vdots\ar[u, {Circle[open]}->]  & \vdots \ar[u, {Circle[open]}->] \\
		&&&& P_{n/n-1} \ar[u, {Circle[open]}->] 
		\end{tikzcd}}\]
	subject to the conditions: 
	\begin{enumerate}[label=(\roman*)]
	\item All drawn squares are distinguished. Further, the quotient index square
		\[ \begin{tikzcd}
		P_{j/i} \ar[r,>->] \ar[dr, phantom, "\square"] & P_{k/i} \\
		P_{j/l} \ar[u, {Circle[open]}->]\ar[r,>->]& P_{k/l} \ar[u, {Circle[open]}->]
		\end{tikzcd}\qquad \text{where $i<l<j<k$}\]
is the same in both flag diagrams. 
		\item Every quotient index triangle defines a distinguished square 
		\[\begin{tikzcd}
		P_{j/i} \ar[r,>->] \ar[dr, phantom, "\square"]& P_{k/i} \\
		O \ar[u, {Circle[open]}->]\ar[r,>->]& P_{k/j} \ar[u, {Circle[open]}->]
		\end{tikzcd} \qquad  \text{for any $i<j<k$},\]
		and is the same in both flag diagrams.
		\item Any $P_j\rtail P_k$ and $P'_j\rtail P'_k$ in the filtration can be completed into distinguished squares
		\begin{equation*}\label{eq:triangleDS}
		\begin{tikzcd}
		P_{j} \ar[dr, phantom, "\square"]\ar[r,>->] & P_k \\
		O \ar[u, {Circle[open]}->]\ar[r,>->]& P_{k/j} \ar[u, {Circle[open]}->]
		\end{tikzcd},\quad \begin{tikzcd}
		P'_{j} \ar[r,>->]\ar[dr, phantom, "\square"] & P'_k \\
		O \ar[u, {Circle[open]}->]\ar[r,>->]& P_{k/j} \ar[u, {Circle[open]}->]
		\end{tikzcd}\qquad  \text{for any $j<k$}.
		\end{equation*}
	\end{enumerate}
\end{construction}

\begin{convention}\label{con:vertex} Technically, an $n$-simplex of $G\calC$ is a pair of $(n+1)$-simplices in $\calS\calC$
	$$O\rtail P_0\rtail \dots \rtail P_n\qquad \qquad O\rtail P'_0\rtail \dots \rtail P'_n$$
	such that the $0^\text{th}$ faces (= forgetting $O$ and quotienting by $P_0$) agree. Here we omit the base-point $O$ for simplicity. Therefore, a {\em vertex} of $G\calC$ is a pair $(M,N)\in\calC\times\calC$,
	  and an {\em edge} $(M,N)\to (M',N')$ is a pair of distinguished squares with identical quotient
\[\left(\dsquare{O}{C}{M}{M'}, \dsquare{O}{C}{N}{N'}\right).\]
We sometimes also represent a vertex as $\begin{pmatrix}
M\\N
\end{pmatrix}$.
\end{convention}


\subsection{Sherman Loops \& Splitting}\label{sec:techprelim} Here we extend Sherman's analysis of $K_1$ for exact categories \cite{ShermanAbelian,ShermanExact} to pCGW categories. A guiding principle in his approach is that restricting to split exact sequences helps clarify the general case. This section adapts Sherman's insight to the pCGW setting, culminating in a first characterisation of the generators of $K_1(\calC)$, Theorem~\ref{thm:Sherman}. 


\begin{convention}\label{conv:K1} Denote $G\calC^{\mathsf{o}}$ to be the component of $G\calC$ containing the vertex $(O,O)$; we call this the {\em base-point component}. Hereafter, we take as definition
	$$K_1(\calC):=\pi_1|G\calC^{\mathsf{o}}|.$$
Notice: by Theorem~\ref{thm:GG}, this is isomorphic to the standard definitions of $K_1$ (e.g. via the $S_\bullet$-construction), justifying our choice of definition.
\end{convention}

\begin{construction}[Sherman Loop]\label{cons:eltG}  A \emph{Sherman triple} $(\alpha,\beta,\theta)$  consists of the following data:
\begin{itemize}
	\item Two $\M$-morphisms $A\xrtail{\alpha} B,$ $A'\xrtail{\beta} B'$;
	\item An isomorphism $\theta\colon A\oplus C\oplus B'\rtail A'\oplus C' \oplus B$, where $C$ and $C'$ are specific choices of quotients
	\begin{equation}\label{eq:doublesq}
	\dsquaref{O}{C}{A}{B}{}{}{\alpha}{\delta}\qquad \dsquaref{O}{C'}{A'}{B'}{}{}{\beta}{\gamma}.
	\end{equation}	
\end{itemize}
Its associated \emph{Sherman loop} is the homotopy class $G(\alpha,\beta,\theta)$ in $K_1(\calC)$ represented by the loop 
\begin{equation}\label{eq:shermanloop}
\begin{pmatrix}
O \\ O
\end{pmatrix} \to  \ones{A}{A} \to  \ones{A\oplus C \oplus B'}{B\oplus B'} \to  \ones{A'\oplus C'\oplus B}{B'\oplus B} \leftarrow \ones{A'}{A'} \leftarrow \ones{O}{O},
\end{equation}
where the arrows denote the obvious 1-simplices.\footnote{\label{fn:1simSherLoop}\emph{Details.} The middle arrow in Equation~\eqref{eq:shermanloop} applies $\theta$ on the top row and the canonical permutation isomorphism $B\oplus B'\to B'\oplus B$ on the bottom; the rest of the 1-simplices are defined by applying Lemma~\ref{lem:ADTsquares}. 
}  
\end{construction}

\begin{remark} Fix a pair of $\M$-morphisms $\alpha,\beta$. One easily checks that any two Sherman triples $(\alpha,\beta,\theta)$ and $(\alpha,\beta,\theta')$ define the same Sherman loop in $K_1(\calC)$ up to homotopy -- see e.g. \cite[\S 1]{ShermanAbelian}.
\end{remark}

\begin{definition}[Split] Call a distinguished square of the form 
	\begin{equation*}\label{eq:splitDS}
	\dsquaref{O}{C}{A}{B}{}{}{f}{g}
	\end{equation*} 
	an \emph{exact square}. 
	\begin{enumerate}[label=(\roman*)]
		\item An exact square is called {\em split} if there exists an isomorphism 
		$$\Psi\colon B\rtail A\oplus C$$ 
		such that the following squares commute
		\[\begin{tikzcd}
		A \ar[r,>->,"f"] \ar[d,>->,swap,"1_A"]& B \ar[d,>->,"\Psi"] \\
		A \ar[r,>->,"p_A"] & A\oplus C
		\end{tikzcd}\qquad \begin{tikzcd}
		B \ar[d, {Circle[open]}->,swap,"\varphi(\Psi)"] & C \ar[l, {Circle[open]}->,swap,"g"] \ar[d, {Circle[open]}->,"1_C"]\\
		A\oplus C & C \ar[l, {Circle[open]}->,swap,"q_C"]	 	 
		\end{tikzcd}\]
		in $\M$ and $\E$ respectively, where $p_A$ and $q_C$ are the morphisms from the obvious direct sum square.
		\item  Call an $\M$-morphism $A\rtail B$ is called \emph{split} if its corresponding exact square [obtained from Axiom (K)] is split.
	\end{enumerate}
	
\end{definition}

\begin{remark} A sanity check: suppose $\calC$ is an exact category, and so $\varphi(\Psi)$ corresponds to $$\Psi^{-1}\colon A\oplus C\to B$$
by Axiom (I). A split exact square means there exists an isomorphism $\Psi$ such that $\Psi\circ f = p_A$ and $g\circ \Psi^{-1}=q_C$ in the ambient category. Since $g\circ \Psi^{-1}=q_C\iff g=q_C\circ \Psi$, this recovers the usual definition of a split exact sequence. 
\end{remark}

\begin{theorem}\label{thm:Sherman} $K_1(\calC)$ is generated by Sherman loops $G(\alpha,\beta,\theta)$. 
\end{theorem}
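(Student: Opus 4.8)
The plan is to reduce everything to a statement about the simplicial set $G\calC^o$ and a well-chosen moral subset, then invoke Theorem~\ref{thm:thmC'}. Let $\Cplus$ denote the CGW subcategory of $\calC$ whose exact squares are exactly the \emph{split} exact squares (every object of $\calC$, all $\M$- and $\E$-morphisms, but only split distinguished squares; one checks this still satisfies Axioms (Z), (I), (M), (K) and the pCGW axioms, since formal direct sum squares are split by Convention~\ref{conv:restrPO-quotient}). The inclusion $\calS\Cplus\hookrightarrow\calS\calC$ is then a $0$-moral subset: it is visibly a simplicial subset, and closure under direct sums holds because a direct sum of split exact squares is split (the splitting isomorphisms add via restricted pushouts, using Axiom (PQ) to identify the quotients). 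Moreover this inclusion has cofinal image in the sense of Definition~\ref{def:cofinal}(i): given any $O\rtail M\in\calS\calC[1]$ with quotient $Q$, the restricted pushout / direct sum $O\rtail M\oplus Q$ is split by construction (apply Lemma~\ref{lem:ADTsquares}), so take $T=Q$.

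Applying Theorem~\ref{thm:thmC'} to $F\colon \calS\Cplus\hookrightarrow\calS\calC$, the square
\begin{equation*}
\begin{tikzcd}
O|F\ar[d] \ar[r] & \calS\Cplus\ar[d]\\
O|\calS\calC \ar[r]& \calS\calC
\end{tikzcd}
\end{equation*}
is homotopy Cartesian. Since $O|\calS\calC$ is contractible (Observation~\ref{obs:homotopycartesian}(i)), Corollary~\ref{cor:HtpyFib} identifies $|O|F|$ with the homotopy fiber of $|\calS\Cplus|\to|\calS\calC|$; equivalently, taking loop spaces and comparing with the $G$-construction via Key Observation~\ref{obs:homotopycartesian}, one obtains a fibration sequence relating $G\Cplus$, $G\calC$, and $O|F$. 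The point of restricting to $\Cplus$ is that in $\calS\Cplus$ every flag is (up to the distinguished-square data) a direct-sum flag, so $G\Cplus$ is, up to homotopy, a classifying-space-type object built from the symmetric monoidal groupoid $(\mathrm{iso}\calC,\oplus)$; in particular $\pi_1|G\Cplus^o|$ is generated by loops of the shape recorded in Equation~\eqref{eq:shermanloop} with $\alpha,\beta$ identities — i.e. by automorphism loops $G(1,1,\theta)$. This is the "split" base case, and mirrors Sherman's analysis in \cite{ShermanAbelian}.

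The general case then follows by a diagram chase in the long exact sequence of homotopy groups associated to the homotopy Cartesian square, or more concretely by an explicit homotopy in $G\calC^o$. Any loop in $G\calC^o$ representing a class in $K_1(\calC)$ can, using that $G\calC$ is an $H$-space and that $\pi_0$ of the relevant right fibers is a \emph{group} (dominance, from Step 1 of Theorem~\ref{thm:thmC'}), be deformed so that each edge either lies in the image of $\calS\Cplus$ or is one of the canonical edges attached to a single $\M$-morphism $A\xrtail{\alpha}B$ (the edges built from Lemma~\ref{lem:ADTsquares} appearing in~\eqref{eq:shermanloop}). Concatenating the "non-split" edges in antecedent/consequent pairs produces exactly the middle edge of a Sherman loop $G(\alpha,\beta,\theta)$, with the residual $\Cplus$-part absorbed into the base case; hence every generator of $K_1(\calC)$ is a product of Sherman loops. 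The main obstacle is the bookkeeping in this last step: one must verify carefully that an arbitrary $1$-cycle in $G\calC^o$ can be subdivided and the $2$-simplices of $G\calC$ supplied (using closure of distinguished squares under isomorphism, goodness, and Lemma~\ref{lem:quotFilt}) so that it is homotopic to a concatenation of loops each of the normal form~\eqref{eq:shermanloop} — this is where the pCGW hypotheses, rather than mere CGW structure, are essential, because we repeatedly need restricted pushouts of $\M$-morphisms to exist and to behave functorially (Fact~\ref{facts:restrictedpushouts}).
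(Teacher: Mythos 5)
Your setup—the subcategory $\Cplus$ of split exact squares, the observation that $\calS F\colon\calS\Cplus\hookrightarrow\calS\calC$ is a $0$-moral subset with cofinal image, and the appeal to Theorem~\ref{thm:thmC'}—matches the paper's Step 0 exactly. But from there the proposal stalls: the two remaining paragraphs gesture at a computation and an unwinding that you never carry out, and you yourself flag the final reduction as "bookkeeping." That bookkeeping is the content of the theorem. Let me be specific about the missing pieces.

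First, the claim that $\pi_1|G\Cplus^o|$ is "a classifying-space-type object built from the symmetric monoidal groupoid $(\mathrm{iso}\calC,\oplus)$," generated by loops $G(1,1,\theta)$, is stated as if known. It is not obviously true at this level of generality, and proving the correct version is a real piece of work: the paper's Lemma~\ref{lem:SherLoopSplit} (an inductive deformation argument over combinatorial loops in $G\calC$) establishes that $K_1(\Cplus)$ is generated by automorphism loops $G(A,\alpha)$, and Lemma~\ref{lem:SherSplitSher} separately shows these are Sherman loops. You need both, and neither is free. Also, to even speak of "a fibration sequence relating $G\Cplus$, $G\calC$, and $O|F$" via Theorem~\ref{thm:Gconstruction} you would need $\Cplus$ to be a pCGW category (not just CGW), which you do not verify. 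The paper avoids this by never invoking $|G\Cplus|\simeq\Omega|\calS\Cplus|$; it uses $K_1(\Cplus)=\pi_1\Omega|\calS\Cplus|$ directly.

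Second, and more importantly, your passage from "a class in $K_1(\calC)$" to "a concatenation of Sherman loops" is exactly where the proof lives, and you describe it as a deformation that "can be done" without showing how. The paper's device is to introduce the intermediate simplicial set $\Ghat\subseteq G\calC$ of pairs of flags whose top row consists of split $\M$-morphisms, and to identify $|\Ghat|\simeq|O|p|\simeq|G\calC|$ (Claim~\ref{claim:Sherhtpyfib}). This gives a projection $v\colon\Ghat\to O|\calS F$ (bottom row), and hence a homotopy exact sequence
\[
\pi_1\Omega|\calS\Cplus|\longrightarrow\pi_1|\Ghat|\xrightarrow{\;v_*\;}\pi_1|O|\calS F|\xrightarrow{\;p_*\;}\pi_1|\calS\Cplus|.
\]
The point is that $\pi_1|O|\calS F|$ has an explicit presentation by $1$-simplices (the paper's Step 2), so one reads off a pair of $\M$-morphisms from $v_*(x)$, and then uses exactness (the vanishing of $p_*v_*(x)$ in $K_0(\Cplus)$, plus the stable-isomorphism interpretation of $K_0$) to manufacture the isomorphism $\theta$ (Step 3). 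Claim~\ref{claim:SherLookLoop} then verifies, by an explicit diagram of $2$-simplices in $O|\calS F$, that $v_*(G(\alpha,\beta,\theta))=v_*(x)$; so $x-G(\alpha,\beta,\theta)$ lies in the image of $K_1(\Cplus)$, and the split base case plus Lemma~\ref{lem:SherLoopAdd} (sums of Sherman loops are Sherman loops) finishes. Without the intermediate $\Ghat$ and the exactness argument that produces $\theta$, your proposal does not have a mechanism for producing the isomorphism in the Sherman triple, and the final concatenation step has no precise meaning. As it stands, the proposal is a correct statement of strategy followed by an acknowledged gap at the crux.
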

\begin{proof} The proof relies on two helper constructions: (i) $\Cplus$, a pCGW subcategory of $\calC$; and (ii) $\Ghat$, a simplicial subset of $G\calC$. These encode the splitting data of $\calC$ and $G\calC$ respectively, and so admit particularly nice presentations on the level of $\pi_1$. We will relate them to $K_1(\calC)$ to establish the theorem. 
	
Throughout, all $\pi_1$'s are taken at the obvious base-point components (e.g. the one containing $(O,O)$ in $\Ghat$), which we suppress from the notation.

\subsubsection*{Step 0: Setup} This step introduces the two helper constructions, and records some preliminary observations. 

\begin{construction} Define $\calC^{\oplus}$ to be the CGW subcategory of $\calC$ whose exact squares are precisely the split exact squares of $\calC$.
\end{construction}

Since $\calS\Cplus$ is closed under taking direct sums, the simplicial map
$$\calS F\colon \calS\Cplus\hookrightarrow \calS\calC,$$
induced by the inclusion CGW functor $F\colon \Cplus \hookrightarrow \calC$, is a $0$-moral subset. In fact, since $\calS\Cplus[1]=\SC[1]$, $\calS F$ has cofinal image. One can therefore apply Theorem~\ref{thm:thmC'} to construct a homotopy Cartesian diagram
 \begin{equation}\label{eq:SherHCD}
 \begin{tikzcd}
 O|\calS F \ar[d,"r"] \ar[r,"p"] & \calS \Cplus \ar[d,"\calS F"]\\
 O|\calS\calC \ar[r,"q"]& \calS\calC 
 \end{tikzcd}\quad .
 \end{equation}

\begin{construction} Following the convention of Construction~\ref{cons:Hspace}, represent an $n$-simplex in $G\calC$ as
	$$\begin{pmatrix} O \rtail& \doubleunderline{L_0\rtail \dots \rtail L_n}\\ O \rtail& M_0\rtail \dots \rtail M_n \end{pmatrix}$$
	Define $\Ghat$ to be the simplicial subset of $G\calC$ where the $\M$-morphisms in the top row are all split. 
\end{construction}

Some observations. Notice this mirrors $O|\calS F$, except the top row of an $n$-simplex in $\Ghat$ is an $(n+1)$-simplex in $\SC$ whereas the top row in $O|\calS F$ is an $n$-simplex (whose $\M$-morphisms are also all split). In addition, the simplices used to define $G(\alpha,\beta,\theta)$ also belong to $\Ghat$, and so $G(\alpha,\beta,\theta)\in \pi_1|\Ghat|$.

\subsubsection*{Step 1: Relating $\Ghat$ and $\calS\Cplus$} By Corollary~\ref{cor:HtpyFib}, $|O|p|$ and $|O|q|$ define homotopy fibers. We thus work towards extracting a homotopy fiber sequence from Diagram~\eqref{eq:SherHCD}. 

\begin{claim}\label{claim:Sherhtpyfib} $|\Ghat|$ is homotopy equivalent to $|O|p|$ and $|G\calC|$.
\end{claim}
\begin{proof} Take the geometric realisation of Diagram~\eqref{eq:SherHCD} to obtain a homotopy Cartesian square of spaces. As is well-known, the map $r$ induces a homotopy equivalence on the homotopy fibers $|O|p|\to |O|q|$. Next, Observation~\ref{obs:homotopycartesian} (ii) notes that $G\calC \simeq O|q$, essentially by unpacking definitions. One can similarly verify that $\Ghat\simeq O|p$. Assembling all the equivalences, conclude that $|G\calC|\simeq |O|q|\simeq |O|p|\simeq  |\Ghat|$.
\end{proof}

In particular, consider the homotopy fiber sequence associated to $|O|p|\to |\OSF| \xrightarrow{p} |\calS \Cplus|$. Unwinding the proof of Claim~\ref{claim:Sherhtpyfib} yields the following key observation.

\begin{observation}\label{obs:htpyseq} The homotopy equivalence $O|p\simeq \Ghat$ identifies a projection map $v\colon \Ghat\to \OSF$, sending an $n$-simplex to its bottom row. In particular, the exact sequence 
	\begin{equation}
	\begin{tikzcd}
 \pi_1\Omega \gm{\calS\Cplus} \ar[r] &	\pi_1\gm{O|p} \ar[r] & \pi_1\gm{O|\calS F}\ar[r,"p_\ast"] & \pi_1\gm{\calS\Cplus}
	\end{tikzcd}
	\end{equation}
may be reformulated as
	\begin{equation}\label{eq:SherHseq}
	\begin{tikzcd}
\pi_1\Omega\gm{\calS\Cplus} \ar[r] &	\pi_1\gm{\Ghat}\ar[r,"v_\ast"] & \pi_1\gm{O|\calS F}\ar[r,"p_\ast"] & \pi_1\gm{\calS\Cplus}
	\end{tikzcd},
	\end{equation}
where $v_\ast$ is induced by $v$. For further details, see \cite[Cor. 1.8]{GG}. 
\end{observation}

\subsubsection*{Step 2: Generators of $\pi_1|O|\calS F|$} The argument is standard -- no surprises. The 1-simplices of the form
\begin{equation}\label{eq:treeOSF}
\ones{ & O\rtail N}{ O\rtail & \doubleoverline{O\rtail N}}
\end{equation}
yield a maximal tree for the 1-skeleton of $|\OSF|$, connecting the base-point of $\OSF$ to any of its vertices. 
Thus by \cite[Lemma IV.3.4]{WeibelKBook}, the total set of 1-simplices of $\OSF$ generate $\pi_1\gm{\OSF}$. We therefore represent the generators of $\pi_1|\OSF|$ as 
\begin{equation}
\ones{ & O\rtail A}{ O\rtail & \doubleoverline{O\rtail A}} \ones{ & O\rtail C}{ O\rtail & \doubleoverline{A\rtail B}} \ones{ & O\rtail B}{ O\rtail & \doubleoverline{O\rtail B}}^{-1}.
\end{equation}
For simplicity, generators are sometimes represented just by the middle term above, since the other two edges can be recovered from the maximal tree.

\subsubsection*{Step 3: Constructing a Sherman Loop} Review Homotopy Exact Sequence~\eqref{eq:SherHseq}. Given $v_*(x)\in\pi_1|\OSF|$ for any $x\in \pi|\Ghat|$, we can use its presentation from Step 2 to construct a Sherman Loop $G(\alpha,\beta,\theta)$.

\begin{itemize}
	\item {\em The two $\M$-morphisms.} Since $\OSF$ is an $H$-space,  $v_*(x)$ can be expressed as a difference of two 1-simplices, let us say
	\begin{equation}\label{eq:Sher1sim}
	\ones{ & O\rtail C}{ O\rtail & \doubleoverline{A\rtail B}} \quad\text{and}\quad \ones{ & O\rtail C'}{ O\rtail & \doubleoverline{A'\rtail B'}}.
	\end{equation}
	This yields the $\M$-morphisms $\alpha\colon A\rtail B$ and $\beta\colon A'\rtail B'$.
	\item The isomorphism $\theta$. Recall the map $p\colon \OSF\to\calS\Cplus$ acts by projection on the top row. Thus $p_\ast v_\ast(x)\in \pi_1|\calS\Cplus|$ corresponds to the difference between
	\begin{equation}\label{eq:SherSCloops}
	(O\rtail A)(O\rtail C)(O\rtail B')^{-1}\qquad \text{and} \qquad (O\rtail A')(O\rtail C')(O\rtail B')^{-1}. 
	\end{equation}
	By Presentation Theorem~\ref{thm:PresK0}, we know that $\pi_1|\SC^{\oplus}|=K_0(\Cplus)$, so let us rewrite the above as
	\begin{equation}\label{eq:SherDiff}
	[A]+[C]-[B] \qquad \text{and} \qquad [A']+[C']-[B']. 
	\end{equation}
	We don't know if, e.g. $[A]+[C]=[B]$ in $K_0(\Cplus)$ since $A\rtail B$ may not be split. Nonetheless, by exactness of Homotopy Sequence~\eqref{eq:SherHseq}, we can deduce
	 \begin{equation}
	 [A]+[C]-[B] - \left([A']+[C']-[B']\right) = 0,
	 \end{equation}
	 and so
	 \begin{equation}
	 [A]+[C]+[B'] = [A']+[C']+[B].
	 \end{equation}
Since $[M]=[N]$ in $K_0(\Cplus)$ iff $M$ and $N$ are stably isomorphic\footnote{The same argument for exact categories extends to the pCGW setting.},
there exists some $Z\in\calC$ such that 
	 \begin{equation}
	 A\oplus C \oplus B' \oplus Z \cong A'\oplus C' \oplus B \oplus Z. 
	 \end{equation}	 
	Now notice that
	 \begin{equation}\label{eq:Z}
	 \ones{ & O\rtail Z}{ O\rtail & \doubleoverline{O\rtail Z}} \ones{ & O\rtail O}{ O\rtail & \doubleoverline{Z\rtail Z}} \ones{ & O\rtail Z}{ O\rtail & \doubleoverline{O\rtail Z}}^{-1}
	 \end{equation}
	 is null-homotopic, Hence, we can always modify the representation of $v_*(x)$ by adding Loop~\eqref{eq:Z} to Equation~\eqref{eq:Sher1sim} without changing the homotopy class. As such, without loss of generality, assume $Z=O$, giving the isomorphism 
	 \begin{equation}\label{eq:Shertheta}
	 \theta\colon A\oplus C\oplus B'\xrightarrow{\cong} A'\oplus C'\oplus B.
	 \end{equation}
\end{itemize}


\subsubsection*{Step 4: An Equivalence} The following claim tells us that any $x\in \pi_1|\Ghat|$ looks like a Sherman loop when viewed in $\pi_1|\OSF|$.

\begin{claim}\label{claim:SherLookLoop} Given any $x\in \pi_1|\Ghat|$, there exists a Sherman loop $G(\alpha,\beta,\theta)$ such that $v_*(x)$ and $v_*(G(\alpha,\beta,\theta))$ are homotopic.
\end{claim}
\begin{proof} Given $x\in \pi_1|\Ghat|$, construct a Sherman Loop $G(\alpha,\beta,\theta)$ as in Step 3. In particular, $v_*(G(\alpha,\beta,\theta))$ is well-defined since  $G(\alpha,\beta,\theta)\in \pi_1|\Ghat|$.
	
Consider the following diagram in $\pi_1|\OSF|$
\begin{equation}\label{eq:SherLoopHtpy}
\small{\begin{tikzcd}
\ones{ & O}{ O\rtail & \doubleoverline{A}} \ar[d,red,""{name=X1}] \ar[r,blue]& \ones{ & O}{ O\rtail & \doubleoverline{B\oplus B'}} \ar[r,blue]& \ones{ & O}{ O\rtail & \doubleoverline{B'\oplus B}}&  \ones{ & O}{ O\rtail & \doubleoverline{A'}} \ar[l,blue] \ar[dl,red,""{name=X7},{xshift=10pt},{yshift=-5pt}] \\
\ones{ & O}{ O\rtail & \doubleoverline{B}} \ar[ur,""{name=X2}]  \ar[from=X2, to=X1,phantom, "\small{(1)}"] & \ones{&O}{O&\doubleoverline{O}} \ar[ur,""{name=X5}]\ar[l,red,""{name=X3}]  \ar[from=X3, to=X2,phantom, "\small{(2)}",{xshift=20pt},{yshift=5pt}] \ar[r,red]   \ar[u,""{name=X4}]  \ar[from=X4, to=X5,phantom, "\small{(3)}"] & \ones{ & O}{ O\rtail & \doubleoverline{B'}} \ar[u,""{name=X6}]\ar[from=X5, to=X6,phantom, "\small{(4)}",{yshift=-5pt}] \ar[u,""{name=X4}]  \ar[from=X6, to=X7,phantom, "\small{(5)}",{yshift=2.5pt}]
\end{tikzcd}}
\end{equation}
The edges of the diagram are obvious, and record various paths between vertices
\begin{equation}\label{eq:Shervertices}
\ones{ & O}{ O\rtail & \doubleoverline{A}} \dashrightarrow \ones{ & O}{ O\rtail & \doubleoverline{A'}},
\end{equation}
e.g. by concatenating along the blue edges, by concatenating along the red edges, etc.

A couple of key observations. First, notice that all triangles in Diagram~\eqref{eq:SherLoopHtpy} define boundaries of 2-simplices, listed below. 
\begin{align*}
\small{(1) \, \ones{ & O \rtail C \rtail  C\oplus B' }{ O\rtail & \doubleoverline{A\rtail B\rtail B\oplus B'}}, \,\, (2) \, \ones{ & O \rtail B \rtail  B\oplus B' }{ O\rtail & \doubleoverline{O\rtail B\rtail B\oplus B'}},\,\, (3) \, \ones{ & O \rtail B\oplus B' \rtail  B'\oplus B }{ O\rtail & \doubleoverline{O\rtail B\oplus B'\rtail B'\oplus B}} }
\end{align*}
\begin{align*}
\small{(4) \, \ones{ & O \rtail B' \rtail B' \oplus B }{ O\rtail & \doubleoverline{O\rtail B'\rtail B'\oplus B}}, \,\, (5) \, \ones{ & O \rtail C' \rtail  C'\oplus B }{ O\rtail & \doubleoverline{A'\rtail B'\rtail B'\oplus B}}.}
\end{align*}
Hence, the blue and red paths in Diagram~\eqref{eq:SherLoopHtpy} between the two vertices ~\eqref{eq:Shervertices} are homotopic. 

Second, recall from Observation~\ref{obs:htpyseq} that $v\colon \Ghat\to \OSF$ acts by projecting the bottom row. Thus, $v_*(G(\alpha,\beta,\theta))$ corresponds to the loop  
\begin{equation}
\footnotesize{	\ones{ & O\rtail A}{ O\rtail & \doubleoverline{O\rtail A}} \ones{ & O\rtail C\oplus B'}{ O\rtail & \doubleoverline{A\rtail B\oplus B'}} \ones{ & O\rtail O}{ O\rtail & \doubleoverline{B\oplus B'\rtail B'\oplus B}} \ones{ & O\rtail C\oplus B'}{ O\rtail & \doubleoverline{A'\rtail B'\oplus B}}^{-1} \ones{ & O\rtail A'}{ O\rtail & \doubleoverline{O\rtail A'}}^{-1}} \nonumber 
\end{equation}
whereas $v_*(x)$, the difference between two generators, corresponds to the loop 
\begin{equation}
\footnotesize{	\ones{ & O\rtail A}{ O\rtail & \doubleoverline{O\rtail A}} \ones{ & O\rtail C}{ O\rtail & \doubleoverline{A\rtail B}} 
\ones{ & O\rtail B}{ O\rtail & \doubleoverline{O\rtail B}}^{-1} \ones{ & O\rtail B'}{ O\rtail & \doubleoverline{O\rtail B'}} \ones{ & O\rtail C'}{ O\rtail & \doubleoverline{A'\rtail B'}}^{-1} \ones{ & O\rtail A'}{ O\rtail & \doubleoverline{O\rtail A'}}^{-1}}. \nonumber 
\end{equation}
In particular, the loop $v_*(G(\alpha,\beta,\theta))$ corresponds to composing along the blue edges in Diagram~\eqref{eq:SherLoopHtpy} whereas $v_*(x)$ corresponds to composing along the red edges. Hence, they are homotopy equivalent by our previous observation. Conclude that $v_*(G(\alpha,\beta,\theta))$ and $v_*(x)$ have the same homotopy class.
\end{proof}

\subsubsection*{Step 5: Finish} Let  $x$ be an element of $K_1(\calC)$. By Theorem~\ref{thm:Gconstruction} and Claim~\ref{claim:Sherhtpyfib}, we know
$$\Omega |\calS\calC|\simeq |G\calC|\simeq |\Ghat|,$$ 
and so regard $x\in \pi_1|\Ghat|$.\footnote{If $|G\calC|\simeq |\Ghat|$, why not simply regard $x\in \pi_1|G\calC|$ and use $\pi_1|G\calC|$ in Homotopy Sequence~\eqref{eq:SherHseq}? Answer: the map $v\colon \Ghat\to \OSF$ admits a useful explicit description (= projection of the bottom row), which streamlines the proof of Claim~\ref{claim:SherLookLoop}.} In particular, $x$ is an element in Homotopy Sequence~\eqref{eq:SherHseq}. By Claim~\ref{claim:SherLookLoop}, there exists a Sherman loop $G(\alpha,\beta,\theta)$ such that $v_*(G(\alpha,\beta,\theta))=v_*(x)$ in $\pi_1|\OSF|$. In other words, the difference $x-G(\alpha,\beta,\theta)$ vanishes in $\pi_1|\OSF|$, and thus lies in the image of 
$$ K_1(\Cplus)=\pi_1\Omega|\calS\Cplus|\longrightarrow \pi_1|\Ghat|=K_1(\calC).$$ To finish, we quote a couple of technical facts about Sherman Loops whose proof we defer to Appendix~\ref{app:Sherman Loops}. By Lemmas~\ref{lem:SherLoopSplit} and \ref{lem:SherSplitSher}, $K_1(\Cplus)$ is generated by Sherman Loops, and thus so is its image in $K_1(\calC)$. By Lemma~\ref{lem:SherLoopAdd}, the sum of two Sherman Loops is still a Sherman Loop. As such, since
$$x=x-G(\alpha,\beta,\theta)+G(\alpha,\beta,\theta),$$
conclude that $x$ is homotopic to a Sherman Loop in $K_1(\calC)$.
\end{proof}


\begin{discussion} Our proof streamlines Sherman's approach in \cite{ShermanExact}. For instance, Sherman's original argument for Claim~\ref{claim:Sherhtpyfib} proceeds by defining a pair of exact functors 
$$\Delta\colon \calC\to \calC\times \calC$$
$$\Delta'\colon \Cplus\to\Cplus\times \calC,$$
where $\Delta$ is the diagonal, and $\Delta'$ is the diagonal composed with the obvious inclusion map. Sherman then asserts as obvious that the cofiber of $\calS\Delta$ is homotopy equivalent to $|\calS\calC|$, presumably by analogy with short exact sequences of abelian groups. However, there are subtleties. It is not generally true that $\cofib(\Delta)\simeq X$ for a diagonal map of some space $X$ --  e.g. consider $\Delta\colon S^1\rightarrow S^1\times S^1$. 
A potential remedy would be to first prove $\cofib(\calS\Delta)$ and $\calS\calC$ are equivalent as $\mathbb{E}_\infty$-spaces before recovering the desired result, but this is rather involved. Alternatively, one can proceed directly by unpacking definitions, as in our proof of Claim~\ref{claim:Sherhtpyfib}.

\end{discussion}

\subsection{Double Exact Squares}\label{sec:genK1}
Given an exact category, Nenashev \cite{Nen0,NenGen} shows that its $K_1$ is generated by so-called {\em double short exact sequences} -- sharpening Sherman's original result. We extend his analysis to the pCGW setting. 
\begin{definition}[Double Exact Squares]\label{def:DES} A \emph{double exact square} is a pair of exact squares with identical nodes, but possibly different morphisms:
		\[l:=\left(\, \dsquaref{O}{C}{A}{B}{ }{ }{f_1}{g_1} \quad,\quad  \dsquaref{O}{C}{A}{B}{ }{ }{f_2}{g_2} \,\right).\]	
Since this defines an edge $(A,A)\to (B,B)$, any double exact square defines a loop 
\begin{equation}
\begin{tikzcd}
(A,A) \ar[rr,"l"] && (B,B)\\
& (O,O) \ar[ul,"e(A)"] \ar[ur,swap,"e(B)"]
\end{tikzcd}
\end{equation}
where $e(A)$ and $e(B)$ are the obvious edges from the base-point, e.g.	\[e(A):=\left(\, \dsquaref{O}{A}{O}{A}{ }{ }{}{1_A} \quad,\quad  \dsquaref{O}{A}{O}{A}{ }{ }{}{1_A} \,\right).\]	
We call this the \emph{canonical loop of $l$}, and denote it as $\mu(l)$. We denote $\lrangles{l}$ to be its homotopy class in $K_1(\calC)$.
\end{definition}

As the following example illustrates, double exact squares generalise automorphisms in $\calC$. 

\begin{example}[Automorphisms]\label{ex:Aut} If $(A,\alpha)\in \Aut(\calC)$ is an automorphism, we write
	$$l(\alpha)=\left(\, \dsquaref{O}{O}{A}{A}{ }{ }{1_A}{} \quad,\quad  \dsquaref{O}{O}{A}{A}{ }{ }{\alpha}{}  \,\right).$$
\end{example}

To prove that $K_1(\calC)$ is generated by double exact squares, it suffices to show that any Sherman Loop is homotopic to the canonical loop of a double exact square; the rest follows from Theorem~\ref{thm:Sherman}. We first 
set some conventions (which the reader may safely skim on first reading), before stating our main theorem.

\begin{convention}[Coordinate-wise Definition of 1-simplices]\label{conv:coord} Consider two exact squares
	\[h_0:= \left(\dsquaref{O}{M''}{M'}{M}{}{}{m_0}{m_1}\right) \qquad h_1:= \left(\dsquaref{O}{N''}{N'}{N}{}{}{n_0}{n_1}\right).\]	
	\begin{itemize}
		\item If $M''=N''$, we usually denote the corresponding 1-simplex as $(h_0,h_1)\colon (M',N')\to (M,N)$. 
		\item If $M''=O=N''$, we denote the corresponding 1-simplex as $(m_0,n_0)\colon (M',N')\to (M,N)$.
		\item If we wish to take their direct sum (see Lemma~\ref{lem:DirectSum}), this will be denoted
		\[h_0\oplus h_1:= \left(\dsquaref{O}{M''\oplus N''}{M'\oplus N'}{M\oplus N}{}{}{m_0\oplus n_0}{m_1\oplus n_1}\right).\]	
		\item We can also ``add'' an object $C$ to $h_0$ in the obvious way (see Lemma~\ref{lem:ADTsquares}) as follows:
		\[h_0\oplus C:=  \left(\dsquaref{O}{M''\oplus C}{M'}{M\oplus C}{}{}{m_0\oplus C}{m_1\oplus 1}\right).\]	
	\end{itemize}
\end{convention}

\begin{theorem}\label{thm:NenashevDSES} Given any $x\in K_1(\calC)$, there exists a double exact square $l$ such that $x=\mu(l)$.
\end{theorem}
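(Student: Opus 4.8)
The plan is to take an arbitrary $x\in K_1(\calC)$, pass to its representation as a Sherman loop $G(\alpha,\beta,\theta)$ via Theorem~\ref{thm:Sherman}, and then exhibit an explicit homotopy between $G(\alpha,\beta,\theta)$ and the canonical loop $\mu(l)$ of a carefully chosen double exact square $l$. So the real content reduces to a single statement: \emph{every Sherman loop is the canonical loop of some double exact square}. Once that is established, the theorem follows since the Sherman loops generate $K_1(\calC)$ (Theorem~\ref{thm:Sherman}) and $\lrangles{\,\cdot\,}$ is evidently additive (the sum of two double exact squares, formed coordinatewise as in Convention~\ref{conv:coord} via Lemma~\ref{lem:DirectSum}, is again a double exact square, and $\mu$ respects this sum up to homotopy by the $H$-space structure on $G\calC$ from Theorem~\ref{thm:Gconstruction}) --- though in fact, since every element is \emph{itself} (not merely a sum) represented by a single Sherman loop, the additivity is only needed if one wants to stay within a single double exact square.

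First I would unwind the definition of $G(\alpha,\beta,\theta)$ from Construction~\ref{cons:eltG}. Its representing loop (Equation~\eqref{eq:shermanloop}) passes through the vertices $\binom{O}{O}$, $\binom{A}{A}$, $\binom{A\oplus C\oplus B'}{B\oplus B'}$, $\binom{A'\oplus C'\oplus B}{B'\oplus B}$, $\binom{A'}{A'}$, $\binom{O}{O}$, with the middle $1$-simplex built from $\theta$ on top and the permutation isomorphism on the bottom. The goal is to collapse this hexagonal loop, using $2$-simplices of $G\calC$, onto a loop of the shape $(O,O)\to(P,P)\xrightarrow{l}(Q,Q)\to(O,O)$ for suitable objects $P,Q$ and a double exact square $l$. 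The natural candidate is to set $P=A\oplus C\oplus B'$ and $Q=B\oplus B'$, with the first exact square of $l$ being the canonical one coming from the split mono $A\rtail A\oplus C\oplus B'$ composed appropriately (so its quotient is $C\oplus B'$... wait, one needs the quotient to match), and the second exact square twisted by $\theta$ together with the permutation. Concretely: the middle arrow of the Sherman loop, composed with the tail segments $\binom{A}{A}\to\binom{A\oplus C\oplus B'}{B\oplus B'}$ and $\binom{A'}{A'}\to\binom{A'\oplus C'\oplus B}{B'\oplus B}$ (both reversed appropriately) and the isomorphism-edges to $(O,O)$, should all be homotoped --- using that isomorphisms of exact squares fill in $2$-simplices (Lemma~\ref{lem:ADTsquares}, goodness of $\calC$) --- into a single double exact square edge $(R,R)\to(S,S)$. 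Here I expect $R$ and $S$ to be built from $A,A',C,C',B,B'$ via direct sums, and the two squares of $l$ to differ precisely by the isomorphism $\theta$ (plus bookkeeping permutations), which is exactly the kind of data a double exact square records --- cf. Example~\ref{ex:Aut}, where an automorphism $\alpha$ of $A$ gives the double exact square with the two squares differing by $\alpha$.

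The main obstacle, and where I would spend the most care, is the bookkeeping of quotients: a double exact square requires the two squares to have \emph{identical nodes} $O,C,A,B$, so I must arrange that after collapsing, the chosen quotient objects on the two sides literally agree (not just up to isomorphism), which forces a judicious choice of representatives and repeated appeals to Axiom~(K) (uniqueness of formal cokernels up to codomain-preserving isomorphism) and to the fact that distinguished squares are closed under isomorphism. A secondary subtlety is handling the permutation isomorphisms $B\oplus B'\cong B'\oplus B$ and $A\oplus C\oplus B'$ versus $A'\oplus C'\oplus B$: these must be absorbed into $\theta$ so that what remains is a genuine loop based at a \emph{single} object-pair $(P,P)$. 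I anticipate this is essentially a diagram chase showing that each of the (four or five) triangles in the hexagonal-to-bigon collapse bounds a $2$-simplex of $G\calC$ --- entirely parallel to Diagram~\eqref{eq:SherLoopHtpy} in the proof of Theorem~\ref{thm:Sherman}, but now carried out inside $G\calC$ rather than after projecting to $O|\calS F$. Finally I would note that the resulting double exact square lies in the base-point component (since the Sherman loop does), so $\lrangles{l}$ is a well-defined element of $K_1(\calC)=\pi_1|G\calC^o|$ equal to $x$, completing the proof.
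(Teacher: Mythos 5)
Your top-level plan---invoke Theorem~\ref{thm:Sherman} to write $x$ as a Sherman loop $G(\alpha,\beta,\theta)$, then homotope that loop to the canonical loop of a double exact square---is exactly the paper's strategy, and the observation that one need not worry about sums (since a single Sherman loop already represents $x$) is fine. But the proposal leaves a genuine gap at the step you flag with ``wait, one needs the quotient to match'': it is not just bookkeeping, and your candidate objects are not the right ones.

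The obstruction is structural. The middle edge of the Sherman loop joins $\binom{A\oplus C\oplus B'}{B\oplus B'}$ to $\binom{A'\oplus C'\oplus B}{B'\oplus B}$, and \emph{neither of these vertices is diagonal}: the top and bottom rows differ. A double exact square, by contrast, is an edge $(M,M)\to(N,N)$ between diagonal vertices. So there is no way to ``collapse this hexagonal loop \dots onto a loop of the shape $(O,O)\to(P,P)\xrightarrow{l}(Q,Q)\to(O,O)$'' by filling in isomorphism triangles alone; you must first modify the loop to force the sources and targets to coincide. The paper does this in two ways that your outline omits entirely: it uses $A\oplus A'$ (not $A$) as the common source of the double exact square, building specific distinguished squares $s_0\colon A\oplus A'\rtail A\oplus C\oplus B'$ and $s_1\colon A\oplus A'\rtail A'\oplus C'\oplus B$ (Equations~\eqref{eq:s0s1}--\eqref{eq:coordinateMAPS}), so that the resulting $l(x)$ has identical nodes $(A\oplus A',\,A'\oplus C'\oplus B,\,C\oplus C')$ on both sides; and, crucially, it introduces the restricted pushout $V:=(B\oplus B')\star_{A\oplus A'}(A'\oplus C'\oplus B)$ (Lemma~\ref{lem:Nen3}) as a common target for two auxiliary loops (Lemmas~\ref{lem:Nen1}, \ref{lem:Nen2}, \ref{lem:Nen4}) so that both $\mu(l(x))$ and $G(\alpha,\beta,\theta)$ reduce to the same loop $L$. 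Without this pushout the $2$-simplices you anticipate simply do not exist in $G\calC$; the triangles have no common face to glue along. The paper's own Discussion~\ref{dis:NenPushout} records precisely why Nenashev's classical argument (which uses the additive isomorphism $B\oplus B/A\cong B\star_A B$) breaks in $\Var_k$ and must be replaced by this hand-built pushout construction --- that replacement is the core of the proof, and it is what your proposal is missing.

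Concretely: your candidate ``$P=A\oplus C\oplus B'$, $Q=B\oplus B'$'' cannot yield a double exact square edge of the Sherman loop (the codomains of the two exact squares would differ), and the ``four or five triangles'' you anticipate cannot all bound $2$-simplices until the extra vertex $\binom{V}{V}$ (and the auxiliary vertices $\binom{P\oplus C\oplus C'}{V}$, $\binom{Q\oplus C\oplus C'}{V}$ of Diagrams~\eqref{eq:NenBigL1}, \eqref{eq:NenBigL2}) have been introduced. The analogy with Diagram~\eqref{eq:SherLoopHtpy} is suggestive but does not carry over verbatim, because that argument takes place after projecting to $O|\calS F$ (where split quotients are essentially free), whereas here you must work inside $G\calC$ where the pushout trick (Corollary~\ref{cor:RPtrick}, Lemma~\ref{lem:DirectSum}(v)) does the heavy lifting.
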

\begin{proof} By Theorem~\ref{thm:Sherman}, we may assume $x$ is a Sherman Loop $G(\alpha,\beta,\theta)$ arising from a pair of exact squares
\begin{equation}
\dsquaref{O}{C}{A}{B}{ }{ }{\alpha}{\delta} \quad,\quad  \dsquaref{O}{C'}{A'}{B'}{ }{ }{\alpha'}{\delta'}
\end{equation}
and an isomorphism $\theta\colon A \oplus C \oplus B' \xrtail{\cong} A'\oplus C'\oplus B$. To turn this into a double exact square, first construct the following distinguished squares
\begin{equation}\label{eq:s0s1}
s_0:=\left(\dsquaref{O}{C\oplus C'}{A\oplus A'}{A\oplus C\oplus B'}{ }{ }{f_0}{g_0}\right) \quad,\quad  s_1:=\left( \dsquaref{O}{C\oplus C'}{A\oplus A'}{A'\oplus C' \oplus B}{ }{ }{f_1}{g_1}\right)
\end{equation}
\begin{equation}\label{eq:coordinateMAPS}
f_0=\begin{pmatrix}
1 & 0\\
0 & 0\\
0 & \alpha'
\end{pmatrix},\,\, f_1=\begin{pmatrix}
0& 1\\
0 & 0\\
\alpha & 0
\end{pmatrix},\,\, g_0=\begin{pmatrix}
0 & 0\\
1 & 0\\
0 & \delta'
\end{pmatrix},\,\, g_1=\begin{pmatrix}
0 & 0\\
0 & 1\\
\delta & 0
\end{pmatrix}.
\end{equation}
The matrix notation indicates, e.g. how the morphism $f_1\colon A\oplus A'\rtail A'\oplus C'\oplus B$ is constructed from $\alpha\colon A\rtail B$ and $1\colon A'\rtail A'$. To see why $s_0$ and $s_1$ are distinguished, apply Lemmas~\ref{lem:ADTsquares} and ~\ref{lem:DirectSum}.\footnote{Note: for $s_1$, one must also permute the direct summands via the obvious permutation isomorphisms.} 

We then apply the isomorphism $\theta$ to define the double exact square
\begin{equation}
l(x):=\left(\dsquaref{O}{C\oplus C'}{A\oplus A'}{A'\oplus C'\oplus B}{ }{}{ \theta\circ f_0}{\varphi(\theta)\circ g_0} \quad,\quad  \dsquaref{O}{C\oplus C'}{A\oplus A'}{A'\oplus C' \oplus B}{ }{ }{f_1}{g_1}\right).
\end{equation}

It thus remains to show that $\mu(l(x))$ is homotopic to $G(\alpha,\beta,\theta)$ in $K_1(\calC)$. In fact, since $K_1(\calC)$ is abelian (see Observation~\ref{obs:K1DirectSum}), it suffices to show that they are freely homotopic. This is accomplished by the following series of lemmas. 

\begin{convention} To ease notation, denote $P:=A\oplus C\oplus B'$ and $Q:=A'\oplus C'\oplus B$. 
\end{convention}

\begin{lemma}\label{lem:Nen1} $\mu(l(x))$ is freely homotopic to the loop
	\begin{equation}\label{eq:NenashevL1}
\begin{tikzcd} (P,Q) \ar[rr,"(\theta\text{,}1)"] && (Q,Q)\\
& (A\oplus A',A\oplus A') \ar[ul,"(s_0\text{,}s_1)"] \ar[ur,swap,"(s_1\text{,}s_1)"]
\end{tikzcd}
	\end{equation}
\end{lemma}
\begin{proof}[Proof of Lemma] Consider the diagram
\begin{equation}
\begin{tikzcd} & (P,Q) \ar[dddr,violet,"(\theta\text{,}1)", ""{name=PD}]\\
 \\
 \\
(A\oplus A',A\oplus A') \ar[uuur,violet,"(s_0\text{,}s_1)",""{name=PU}] \ar[rr,teal,bend left=10,"l(x)"] \ar[rr,violet,bend right =10, "(s_1\text{,}s_1)"] \ar[from=PU, to=PD,phantom,"(1)"] && (Q,Q)\\
\\
& (O,O) \ar[uul,xshift=-1em,teal,"e(A\oplus A')", ""{name=UL}] \ar[uur,swap,teal,"e(Q)", ""{name=UR}] \ar[from=UL, to=UR, phantom, "(2)"]
\end{tikzcd}	
\end{equation}
The statement follows from observing that Triangles (1) and (2) form 2-simplices. Triangle (2) is obvious. Triangle (1) is given by
	
\begin{tikzcd}
A\oplus A'\ar[r, >->,"f_0"] & P \ar[dr,phantom,"\square"] \ar[r, >->,"\theta"] & Q\\
&	C\oplus C' \ar[r, >->,"1"] \ar[u, {Circle[open]}->,"g_0"]& C\oplus C' \ar[u, {Circle[open]}->,swap,"\varphi(\theta)\circ g_0"] \\
&& O \ar[u, {Circle[open]}->] 
\end{tikzcd}\quad \begin{tikzcd}
A\oplus A'\ar[r, >->,"f_1"] & Q \ar[r, >->,"1"]  \ar[dr,phantom,"\square"]  & Q\\
&	C\oplus C' \ar[r, >->,"1"]\ar[u, {Circle[open]}->,"g_1"]& C\oplus C' \ar[u, {Circle[open]}->,"g_1	"] \\
&& O \ar[u, {Circle[open]}->] 
\end{tikzcd}
	
\end{proof}

\begin{lemma}\label{lem:Nen2} The loop $G(\alpha,\beta,\theta)$ is freely homotopic to the loop
	\begin{equation}\label{eq:NenashevL2}
\begin{tikzcd} (P,B\oplus B') \ar[rr,"(\theta\text{,}1)"] && (Q,B\oplus B')\\
& (A\oplus A',A\oplus A') \ar[ul,"(s_0\text{,}s)"] \ar[ur,swap,"(s_1\text{,}s)"]
\end{tikzcd}
	\end{equation}
where $s$ is the distinguished square
\begin{equation}\label{eq:s}
s:=\left(\dsquaref{O}{C\oplus C'}{A\oplus A'}{B\oplus B'}{}{}{\alpha\oplus \alpha'}{\delta\oplus \delta'}\right)\qquad .
\end{equation}
\end{lemma}
\begin{proof}[Proof of Lemma] Consider the diagram
	\begin{equation}\label{eq:NenSherLoop}
	\begin{tikzcd}
(P,B\oplus B') \ar[rr,red,"(\theta\text{,}1)"]&& (Q,B\oplus B')\\
\\
(A,A) \ar[r] \ar[uu,teal,"(a_0\text{,} b_0)"]& (A\oplus A',A\oplus A') \ar[uur,red,swap,"(s_1\text{,}s)",""{name=R}] \ar[uul,red,"(s_0\text{,}s)",""{name=L}] \ar[from=L, to=R, phantom, "(\star)"]& (A',A') \ar[l] \ar[uu,swap,teal,"(a_1\text{,}b_1)"]\\
\\
& (O,O) \ar[uul,teal,"e(A)"] \ar[uur,teal,swap,"e(A')"] \ar[uu,"e(A\oplus A')"]
	\end{tikzcd}
	\end{equation}
where the 1-simplices $(a_0,b_0)$ and $(a_1,b_1)$ are defined by the obvious data.
Notice: the outer loop of Diagram~\eqref{eq:NenSherLoop} is equivalent\footnote{A small difference: the top edge here is $(\theta,1)\colon (P,B\oplus B')\to (Q,B\oplus B')$, as opposed to $(\theta,\tau)\colon (P,B\oplus B')\to (Q,B'\oplus B)$, but the twist is already encoded in $(a_1,b_1)\colon (A',A')\to (Q,B\oplus B')$.}  to the Sherman Loop $G(\alpha,\beta,\theta)$ while the triangle $(\star)$ is Loop~\eqref{eq:NenashevL2}. 

It remains to check that all other triangles in Diagram~\eqref{eq:NenSherLoop} bound 2-simplices. The bottom two triangles are immediate. The top left triangle bounds
$$\begin{tikzcd}
A\ar[r, >->] & A\oplus A' \ar[dr,phantom,"\square"] \ar[r, >->,"f_0"] & P\\
&	A' \ar[r, >->,"C\oplus \alpha' "] \ar[u, {Circle[open]}->]& C\oplus B' \ar[u, {Circle[open]}->,swap,"A\oplus 1"] \\
&O \ar[r,>->] \ar[u,{Circle[open]->}] \ar[ur,phantom,"\square"]& C\oplus C' \ar[u, {Circle[open]}->,swap,"1\oplus \delta'"] 
\end{tikzcd}\quad \begin{tikzcd}
A\ar[r, >->] & A\oplus A' \ar[dr,phantom,"\square"] \ar[r, >->,"\alpha\oplus \alpha'"] & B\oplus B'\\
&	A' \ar[r, >->,"C\oplus \alpha'"] \ar[u, {Circle[open]}->, ]& C\oplus B' \ar[u, {Circle[open]}->,swap," \delta\oplus 1 "] \\
&O \ar[r,>->] \ar[u,{Circle[open]->}] \ar[ur,phantom,"\square"]& C\oplus C' \ar[u, {Circle[open]}->,swap,"1\oplus \delta'"] 
\end{tikzcd}$$
That this defines a 2-simplex follows from Lemmas~\ref{lem:ADTsquares} and \ref{lem:DirectSum}. The top right triangle is analogous. 
\end{proof}

\begin{lemma}\label{lem:Nen3} Denote $V:=\left(B\oplus B'\right)\star_{(A\oplus A')} Q$ and $W:=C\oplus C'\oplus C\oplus C'$. \underline{Then}, there exists distinguished squares of the form
	\begin{equation}
	t:=\left(\dsquaref{C\oplus C'}{W}{Q}{V}{}{g_1}{h_t}{j}\right) \qquad t':=\left(\dsquaref{O}{C\oplus C'}{Q}{V}{}{}{h_t}{k_t}\right)
	\end{equation}
		\begin{equation}\label{eq:u-u'}
	u:=\left(\dsquaref{C'\oplus C'}{W}{B\oplus B'}{V}{}{\delta\oplus\delta'}{h_u}{j}\right) \qquad u':=\left(\dsquaref{O}{C\oplus C'}{B\oplus B'}{V}{}{}{h_u}{k_u}\right).
	\end{equation}
\end{lemma}
\begin{proof} Applying Axiom (DS), Definition~\ref{def:pCGW} to the diagram 
	\[\begin{tikzcd}
	C\oplus C' \ar[d, {Circle[open]}->,"\delta\oplus \delta'"] \ar[dr, phantom, "\square"]& O \ar[dr, phantom, "\square"]\ar[d, {Circle[open]}->]  \ar[l, >->] \ar[r, >->] & C\oplus C' \ar[d, {Circle[open]}->,"g_1"] \\
	B\oplus B' & A\oplus A' \ar[l, >->,swap,"\alpha\oplus \alpha'"] \ar[r, >->,"f_1"] & Q 
	\end{tikzcd},\]
conclude that there exists a distinguished square of the form $t$. 
Since $t$ is distinguished, deduce that $\frac{V}{Q}\cong C\oplus C'$ (Remark~\ref{rem:INV}), and thus there exists a distinguished square of the form $t'$. The same argument shows there exists distinguished squares of the form $u$ and $u'$.
\end{proof}

\begin{lemma}\label{lem:Nen4} Loops~\eqref{eq:NenashevL1} and \eqref{eq:NenashevL2} are homotopic.
\end{lemma}
\begin{proof}[Proof of Lemma] Let $V:=\left(B\oplus B'\right)\star_{(A\oplus A')} Q$ and $t$ as in Lemma~\ref{lem:Nen3}, and $s_0,s_1$ as in Equation~\eqref{eq:s0s1}. Define $\s$ as the horizontal composition of $s_1$ and $t$
	\[\s:=\left(\begin{tikzcd}
O \ar[dr, phantom, "\square"]\ar[d, {Circle[open]}->]   \ar[r, >->] & C\oplus C' \ar[d, {Circle[open]}->,"g_1"] \ar[r,>->] \ar[dr, phantom, "\square"] & W \ar[d, {Circle[open]}->,"j"] \\
 A\oplus A' \ar[r, >->,"f_1"] & Q  \ar[r,>->,"h_t"] & V
\end{tikzcd}\right).\]	
Now let $s,u$ as in Equations~\eqref{eq:s} and~\eqref{eq:u-u'}. By construction of $V$, the $\M$-morphism $h_t\circ f_1$ equals 
$$h_u\circ (\alpha\oplus\alpha')\colon A\oplus A' \rtail B\oplus B'\rtail V.$$
Hence, $\tilde{s}$ also equals the horizontal composition of $s$ and $u$.  This will allow us to construct a loop $L$ that Loops~\eqref{eq:NenashevL1} and \eqref{eq:NenashevL2} are both homotopic to. 

Consider the diagram
\begin{equation}\label{eq:NenBigL1}
\!\!\!\!\!\!\!\!\!\!\!\!\!\!\!\!\!\!\!\!\!\begin{tikzcd}
(P\oplus C\oplus C',V) \ar[rr,"(\theta\oplus 1\text{,}1)" {xshift=-30pt},blue]&& (Q\oplus C\oplus C',V)\\
\\
(P,Q) \ar[rr,swap,"(\theta\text{,}1)",""{name=X},violet] \ar[uu,"(1\oplus C\oplus C'\text{,} t')",""{name=Z}] \ar[uurr,"(\theta\oplus C\oplus C'\text{,}\,t')" {yshift=10pt,xshift=50pt},""{name=Y}]  \ar[from=X, to=Y,phantom, "(4)"] \ar[from=Z, to=Y,phantom, "(3)"] && (Q,Q) \ar[uu,swap,"(1\oplus C\oplus  C'\text{,}t')"]\\
\\
& (A\oplus A',A\oplus A') \ar[uul,swap,"(s_0\text{,}s_1)",""{name=X2},violet] \ar[uur,"(s_1\text{,}s_1)",""{name=Y2},violet] 
\ar[uuuul,bend left=120, xshift=-0.5em,""{name=X1},"(s_0\oplus C\oplus C'\text{,}\,\,\s)",blue]  \ar[uuuur,bend right=120, xshift=0.5em,""{name=Y1},swap,"(s_1\oplus C\oplus C'\text{,}\s)",blue] 
\ar[from=X1, to=X2,phantom, "(1)"] 
\ar[from=Y1, to=Y2,phantom, "(2)"]
\end{tikzcd}\qquad
\end{equation}
The purple edges are Loop~\eqref{eq:NenashevL1}, the blue edges form an outer loop, which we denote $L$. To show that the two loops are homotopic, it suffices to check that Triangles (1) - (4) are boundaries of 2-simplices -- this is worked out explicitly in Claim~\ref{claim:NenL1}. Analogously, one can construct the diagram
\begin{equation}\label{eq:NenBigL2}
\!\!\!\!\!\!\!\!\!\!\!\!\!\begin{tikzcd}
(P\oplus C\oplus  C',V) \ar[rr,"(\theta\oplus 1\text{,}1)" {xshift=-30pt},blue]&& (Q\oplus C\oplus C',V)\\
\\
(P,B\oplus B') \ar[rr,swap,"(\theta\text{,}1)",""{name=X},red] \ar[uu,"(1\oplus C\oplus C'\text{,} u')",""{name=Z}] \ar[uurr,"(\theta\oplus C\oplus  C'\text{,}\,u')" {yshift=10pt,xshift=50pt},""{name=Y}]  \ar[from=X, to=Y,phantom, "(4')"] \ar[from=Z, to=Y,phantom, "(3')"] && (Q,B\oplus B') \ar[uu,swap,"(1\oplus C\oplus C'\text{,} u')"]\\
\\
& (A\oplus A',A\oplus A') \ar[uul,swap,"(s_0\text{,}s)",""{name=X2},red] \ar[uur,"(s_1\text{,}s)",""{name=Y2},red] 
\ar[uuuul,bend left=120, xshift=-0.7em,""{name=X1},"(s_0\oplus C\oplus C'\text{,}\,\,\s)",blue]  \ar[uuuur,bend right=120, xshift=0.7em,""{name=Y1},swap,"(s_1\oplus C\oplus C'\text{,}\s)",blue] 
\ar[from=X1, to=X2,phantom, "(1')"] 
\ar[from=Y1, to=Y2,phantom, "(2')"]
\end{tikzcd}
\end{equation}
where the red edges are Loop~\eqref{eq:NenashevL2} and the blue edges are loop $L$. A similar check shows Triangles (1') - (4') are also boundaries of 2-simplices (details in Claim~\ref{claim:NenL2}). Conclude that Loop~\eqref{eq:NenashevL1} and  Loop~\eqref{eq:NenashevL2} are both homotopic to loop $L$, and thus homotopic to each other as well. 
\end{proof}

\subsubsection*{Proof of Theorem~\ref{thm:NenashevDSES}} Given any Sherman Loop $G(\alpha,\beta,\theta)\in K_1(\calC)$, we construct another loop $\mu(l(x))$ where $l(x)$ is a double exact square. Reviewing our work:
\begin{itemize}
	\item Lemma~\ref{lem:Nen1} shows $\mu(l(x))$ is freely homotopic to Loop~\eqref{eq:NenashevL1}.
	\item Lemma~\ref{lem:Nen2} shows $G(\alpha,\beta,\theta)$ is freely homotopic to Loop~\eqref{eq:NenashevL2}. \item Lemma~\ref{lem:Nen4} shows  Loops~\eqref{eq:NenashevL1} and ~\eqref{eq:NenashevL2} are homotopic.
\end{itemize}
Since $K_1(\calC)$ is an abelian group, deduce that $G(\alpha,\beta,\theta)$ is homotopic to $\mu (l(x))$. Since $K_1(\calC)$ is generated by Sherman Loops (Theorem~\ref{thm:Sherman}), conclude that $K_1(\calC)$ is generated by double exact squares.
\end{proof}

\begin{discussion}\label{dis:NenPushout} Although we were guided by Nenashev's proof in \cite{NenGen}, a direct translation to our setting does not work. The key issue is the isomorphism 
	$$B  \oplus \frac{B}{A} \cong  B\star_A B,$$
which holds in exact categories and is central to \cite[Lemma 2.6]{NenGen}, but (as Inna Zakharevich pointed out to the author) fails in $\Var_k$ -- see Figure~\ref{fig:Inna}. To get around this, we construct the homotopies explicitly, verifying by hand that the chosen diagrams define valid 2-simplices. The methodological upshot: while pushouts in exact categories are arguably better behaved, restricted pushouts still retain enough formal properties to make the analysis go through (cf. Lemmas~\ref{lem:Nen3} and \ref{lem:DirectSum}). 
\end{discussion}
\vspace{-1.5em}
\begin{figure}[H]
	\centering
	\includegraphics[scale=0.5]{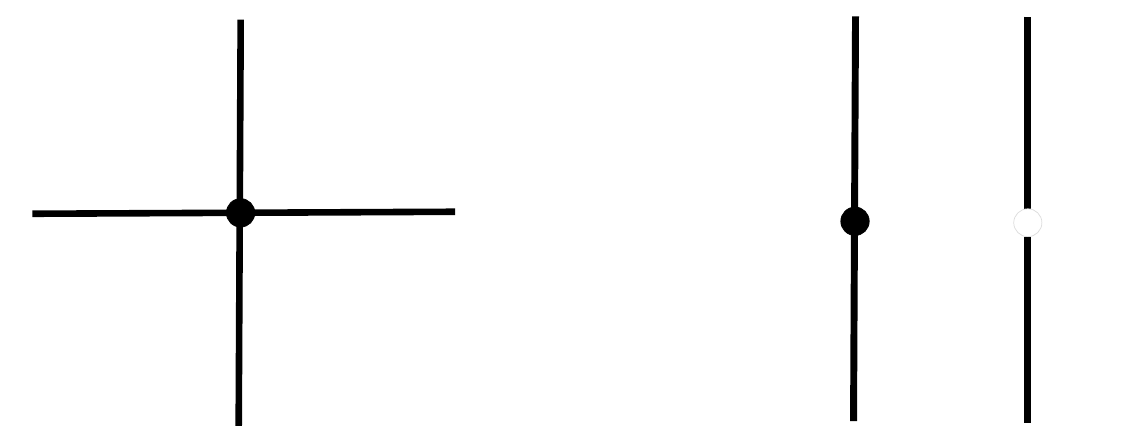}
	\caption{$\mathbb{A}^1\star_{\{\ast\}}\mathbb{A}^1$ is not isomorphic to $\mathbb{A}^1\coprod \left(\mathbb{A}^1\setminus \{\ast\} \right)$.
	}\label{fig:Inna}
\end{figure}

\section{Relations of $K_1(\calC)$}\label{sec:Nenashev}

Having characterised the generators of $K_1(\calC)$ for pCGW categories, we now work to determine its relations. We first give a baseline characterisation in Proposition~\ref{prop:baseline}. We then sharpen our understanding by comparing this to other descriptions of $K_1$ by Nenashev \cite{Nen0,Nen1} (for exact categories) and Zakharevich \cite{ZakhK1} (for Assemblers). A guiding observation is Warning~\ref{warning:compose}, which highlights a technical subtlety regarding the composition of 1-simplices in $K_1$. This brings into focus an apparent discrepancy between our account and Zakharevich's, which we investigate.

\subsection{A Baseline Argument} Recall from Convention~\ref{conv:K1} that $K_1(\calC)=\pi_1|G\calC^{\mathsf{o}}|$. Hence, applying the standard presentation of the fundamental group of a connected simplicial space, we get the following:

\begin{proposition}\label{prop:baseline} $K_1(\calC)$ is generated by symbols $\lrangles{f}$, for isomorphism classes of double exact squares $f$ in $G\calC$, modulo the relations:
\begin{itemize}
	\item[(B1)] Given any $A\in\calC$, the standard edge $e(A)\colon (O,O)\to (A,A)$ of $G\calC$ vanishes. That is,
	\[\left\langle\left(\dsquaref{O}{A}{O}{A}{}{}{}{1}\,,\,\dsquaref{O}{A}{O}{A}{}{}{}{1}\right)\right\rangle=0.\]
	\item[(B2)]  Given any $A\in\calC$, the degenerate 1-simplex $\id_A\colon (A,A)\to (A,A)$ of $G\calC$ vanishes. That is, 
	\[\left\langle\left(\dsquaref{O}{O}{A}{A}{}{}{1}{}\,,\,\dsquaref{O}{O}{A}{A}{}{}{1}{}\right)\right\rangle=0.\]
	\item[(B3)] Given double exact squares of the form
	\begin{equation}\label{eq:baseline1SIMPa}
	\begin{small}
	l_A:=\left(\dsquaref{O}{X}{A}{B}{}{}{f_0}{g_0} \,,\, \dsquaref{O}{X}{A}{B}{}{}{f'_0}{g'_0} \right) \quad 	l_B:=\left(\dsquaref{O}{Y}{B}{C}{}{}{f_1}{g_1} \,,\, \dsquaref{O}{Y}{B}{C}{}{}{f'_1}{g'_1} \right)
	\end{small}
	\end{equation}
\begin{equation}\label{eq:baseline1SIMPb}
\begin{small}
l_C:=\left(\dsquaref{O}{Z}{A}{C}{}{}{f_2}{g_2} \,,\, \dsquaref{O}{Z}{A}{C}{}{}{f'_2}{g'_2} \right) 
\end{small}
\end{equation}
that assemble into a 2-simplex in $G\calC$
	\begin{equation}\label{eq:baseline2SIMP}
\begin{tikzcd}
A\ar[r, >->," f_0"] & B  \ar[dr,phantom,"\square"] \ar[r, >->,"f_1"] & C\\
& X \ar[dr,phantom,"\square"] \ar[dr,phantom,"\square"]\ar[r, >->,swap,"h_1"] \ar[u, {Circle[open]}->,"g_0"]& Z \ar[u, {Circle[open]}->, swap, "g_2"] \\
& O \ar[r,>->] \ar[u, {Circle[open]}->]& Y\ar[u, {Circle[open]}->,swap,"h_2"] 
\end{tikzcd}\quad \begin{tikzcd}
A\ar[r, >->," f'_0"] & B  \ar[dr,phantom,"\square"] \ar[r, >->,"f'_1"] & C\\
& X \ar[dr,phantom,"\square"] \ar[dr,phantom,"\square"]\ar[r, >->,swap,"h_1"] \ar[u, {Circle[open]}->,"g'_0"]& Z \ar[u, {Circle[open]}->, swap, "g'_2"] \\
& O \ar[r,>->] \ar[u, {Circle[open]}->]& Y\ar[u, {Circle[open]}->,swap,"h_2"] 
\end{tikzcd}
\end{equation}
we have that 
$$ \lrangles{l_A} + \lrangles{l_B} = \lrangles{l_C}.$$
\end{itemize}	
\end{proposition}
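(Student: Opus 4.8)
The statement is a direct application of the standard presentation of the fundamental group of the geometric realisation of a simplicial set $X$ (see e.g. \cite[Lemma IV.3.4]{WeibelKBook} or \cite[Chapter IV, \S 3]{WeibelKBook}): namely, $\pi_1|X|$ relative to a chosen maximal tree $T$ in the $1$-skeleton is generated by the $1$-simplices of $X$ not in $T$, modulo the relation that each degenerate $1$-simplex is trivial and that for each $2$-simplex $\sigma$ with faces $d_0\sigma, d_1\sigma, d_2\sigma$, one has $\langle d_1\sigma\rangle = \langle d_0\sigma\rangle \cdot \langle d_2\sigma\rangle$ (after correcting by the tree edges). So the plan is: first, recall that by Convention~\ref{conv:K1} we have $K_1(\calC)=\pi_1|G\calC^o|$, where $G\calC^o$ is the base-point component containing $(O,O)$; and that a vertex of $G\calC$ is a pair $(M,N)$, an edge a double exact square (Convention~\ref{con:vertex}), and a $2$-simplex a pair of $3\times 3$ flag diagrams of the shape displayed in Construction~\ref{cons:Gcons}, whose three $1$-faces are the three double exact squares obtained by deleting a column (with the convention that deleting the $0^{\mathrm{th}}$ column means factoring by $P_0$).

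First I would fix the maximal tree. Since $G\calC^o$ is connected, every vertex $(A,A)$ lying in the base-point component is joined to $(O,O)$ by the standard edge $e(A)$; I take the collection $\{e(A) : A\in\calC\}$ (more precisely, one such edge for each vertex in the component) as the maximal tree $T$. This is a tree because each $e(A)$ emanates from $(O,O)$ and these edges share no vertex other than $(O,O)$; it is maximal since it reaches every vertex. Then the general presentation gives: generators are the edges of $G\calC^o$, i.e. double exact squares $l\colon (A,A)\to(B,B)$ (up to isomorphism, since isomorphic simplices are identified in $|G\calC|$), with the homotopy class of the edge $l$ sent to the class $\langle l\rangle := [e(B)^{-1}\cdot l\cdot e(A)]$ of the canonical loop $\mu(l)$ of Definition~\ref{def:DES}. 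The tree edges $e(A)$ themselves become trivial, which is precisely relation (B1).

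Next I would extract relations (B2) and (B3) from the two families of relations in the standard presentation. Relation (B2) is the statement that degenerate $1$-simplices are trivial: the degenerate edge $s_0(A,A)=\id_A\colon (A,A)\to(A,A)$ is exactly the pair of identity squares displayed, and the identity $e(A)^{-1}\cdot\id_A\cdot e(A)\simeq \mathrm{const}$ forces $\langle \id_A\rangle=0$. Relation (B3) is the $2$-simplex relation: a $2$-simplex of $G\calC$ is, by Construction~\ref{cons:Gcons} and Convention~\ref{con:vertex}, exactly a pair of $3\times3$ flag diagrams as in Diagram~\eqref{eq:baseline2SIMP}, and one checks directly that its three $1$-dimensional faces are $l_A$ (delete the last column, i.e. the $P_2$-column — giving $A\rtail B$ with quotient $X$), $l_C$ (delete the middle column, composing $A\rtail B\rtail C$ — giving $A\rtail C$ with quotient $Z$), and $l_B$ (delete the $0^{\mathrm{th}}$ column, i.e. factor out by $P_0=A$ — giving $B/A=X \rtail C/A = Z$... ). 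Here I need to be careful about which face is which: the standard simplicial relation reads $d_1 = d_0\cdot d_2$ in $\pi_1$, and after translating through the maximal tree and the quotient-by-$P_0$ convention for $d_0$, this becomes $\langle l_C\rangle = \langle l_B\rangle + \langle l_A\rangle$ in the abelian group $K_1(\calC)$ — which is the claimed relation $\langle l_A\rangle + \langle l_B\rangle = \langle l_C\rangle$. That the presentation is complete (no further relations) is exactly the content of the cited general fact about $\pi_1$ of a simplicial set.

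\textbf{Main obstacle.} The only genuinely delicate point is the bookkeeping in the last step: correctly identifying the three $1$-faces of a $G\calC$-$2$-simplex with $l_A, l_B, l_C$ under the non-standard $0^{\mathrm{th}}$-face convention ("forgetting $M_0$ means factoring out by $M_1$"), and tracking the tree-edge corrections $e(A), e(B), e(C)$ so that the face relation $d_1\sigma = (d_0\sigma)(d_2\sigma)$ of the abstract presentation comes out as $\langle l_A\rangle + \langle l_B\rangle = \langle l_C\rangle$ rather than some permuted or sign-twisted variant. This is purely a matter of unwinding definitions — there is no hard homotopy theory — but it is where an error would most easily creep in, so I would write out the face maps of Diagram~\eqref{eq:baseline2SIMP} explicitly. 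Everything else (connectedness of $G\calC^o$, that isomorphic simplices give equal classes, abelianness of $K_1$ which is needed only to rearrange the final relation into the stated symmetric form) is either already established in the excerpt or completely standard.
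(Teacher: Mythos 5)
There is a real gap in the choice of maximal tree.

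You claim that the collection $\{e(A):A\in\calC\}$ of standard edges $(O,O)\to(A,A)$ is a maximal tree in the base-point component $G\calC^o$, on the grounds that it ``reaches every vertex.'' It does not: the vertices of $G\calC^o$ are pairs $(M,N)$ with $[M]=[N]$ in $K_0(\calC)$, and in general $M\not\cong N$, so $(M,N)$ is not of the form $(A,A)$ and is not the target of any $e(A)$. (For $\Var_k$ this is precisely Borisov's phenomenon: there exist non-isomorphic varieties with the same class in $K_0$, and such pairs give vertices your tree never touches.) Consequently your tree is not maximal, and the standard presentation of $\pi_1$ applied naively to $G\calC^o$ would have to use all its edges and $2$-simplices — not only those between diagonal vertices, i.e.\ not only double exact squares. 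Without further argument, nothing forces the extra generators and relations to be redundant. This is exactly the point addressed in Remark~\ref{rem:piGC} of the paper.

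The paper's proof avoids this by first passing to the simplicial subset $X\subseteq G\calC^o$ generated by the diagonal vertices $(A,A)$ and the double exact squares, and then invoking Theorem~\ref{thm:NenashevDSES} (that every element of $K_1(\calC)$ is the canonical loop of a double exact square) to show $\iota_*\colon\pi_1|X|\to\pi_1|G\calC^o|$ is an isomorphism — surjectivity directly from the theorem, injectivity because any homotopy in $G\calC^o$ can be refined (again via the theorem) to live in $X$. Only then does one have a bona fide maximal tree $\Gamma=\{(O,O)\to(A,A)\}$, and the standard presentation argument you describe goes through verbatim on $X$. So the body of your argument — the tree-edge corrections, the identification of (B1), (B2), and (B3) with the tree/degeneracy/$2$-simplex relations, and the face-map bookkeeping $d_0=l_B$, $d_1=l_C$, $d_2=l_A$ yielding $\lrangles{l_A}+\lrangles{l_B}=\lrangles{l_C}$ — is correct and matches the paper, but the proof is not a ``direct application'' of the standard presentation: it genuinely requires Theorem~\ref{thm:NenashevDSES} as input to justify the reduction to the diagonal subcomplex.
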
	
\begin{proof} Let $\iota \colon X\hookrightarrow G\calC^{\mathsf{o}}$ be the simplicial subset generated by vertices $(A,A)$ and double exact squares. By Theorem~\ref{thm:NenashevDSES}, the inclusion map induces an isomorphism $\iota_*\colon \pi_1|X|\to \pi_1|G\calC^{\mathsf{o}}|$, so it suffices to describe $\pi_1|X|$.\footnote{{\em Details.} Surjectivity follows directly from Theorem~\ref{thm:NenashevDSES}. For injectivity, any 2-simplex relation in $G\calC^{\mathsf{o}}$ can be refined, up to homotopy, to lie in $X$: each of its edges represents a canonical loop in $\pi_1|G\calC^{\mathsf{o}}|$ (up to choice of maximal spanning tree -- see Remark~\ref{rem:piGC}), which by Theorem~\ref{thm:NenashevDSES} is homotopic the canonical loop of some double exact square.}
Define $\Gamma$ as the set of 1-simplices $\{(O,O)\to (A,A)\}$; this is clearly a maximal tree spanning the 1-skeleton of $X$. We therefore obtain the standard presentation:
	$$\pi_1|X|:={{\bigslant{\pi_0(X[1])}{\stackanchor{$\langle t\rangle $ = 0 if $t\in\Gamma$, and $\lrangles{\id_A}=0$ for any degenerate 1-simplex}{$d_1(x)=d_2(x) + d_0(x),\quad \forall x\in \pi_0(X[2])$}}}}\qquad .$$
$\pi_0 (X[1])$ corresponds to the isomorphism classes of double exact squares. Further, any $x\in \pi_0 (X[2])$ can be represented as Diagram~\ref{eq:baseline2SIMP}, where $d_0(x)=l_B$, $d_2(x)=l_A$ and $d_1=l_C$. Putting everything together yields the stated presentation of $K_1$ in the proposition.
\end{proof}

\begin{remark}\label{rem:piGC} Was appealing to Theorem~\ref{thm:NenashevDSES} necessary? In principle, one could apply the above argument to $GC^{\mathsf{o}}$ directly, but there are complications. Notice the obvious choice set of 1-simplices $$\{(O,O)\to (A,A')\}$$ 
only connects to vertices where $A\cong A'$, and so fails to form a maximal tree in general. Of course, this can always be upgraded to some maximal tree $T$ by Zorn's Lemma, but we would no longer have an explicit description of $T$ -- and thus no explicit description of the relations for $\pi_1|G\calC^{\mathsf{o}}|$ either.
\end{remark}

\begin{warning}[Composition of 1-simplices]\label{warning:compose} Proposition~\ref{prop:baseline} does {\em not} assert that any three double exact squares $l_A,l_B,l_C$ give the identity
	$$ \lrangles{l_A} + \lrangles{l_B} = \lrangles{l_C},\qquad \text{whenever\,\, $f_0\circ f_1 = f_2$ \,\,and\,\, $f'_0\circ f'_1 = f'_2$.}$$
Why? While composition in $\calS\calC$ yields a pair of flag diagrams, notice these define a 2-simplex in $G\calC$ just in case the quotient index triangles in Equation~\eqref{eq:baseline2SIMP} 
\[\left(\dsquaref{O}{Y}{X}{Z}{ }{ }{h_1}{h_2 }\quad ,\quad\dsquaref{O}{Y}{X}{Z}{ }{ }{h_1}{h_2 }\right)\]
are two copies of the same square.
\end{warning}

Keeping Warning~\ref{warning:compose} in mind will help us appreciate the work done in the subsequent sections. On one level, we extend Nenashev's work on exact categories \cite{Nen0,Nen1} to our setting, providing yet another characterisation of $K_1(\calC)$. On another level, Nenashev's presentation clarifies {\em how} composition of 1-simplices splits in $K_1$, illuminating the difference between our approach and Zakharevich's.

\subsection{Admissible Triples} 
 A {\em triangle contour} $\mathcal{T}$ in $G\calC$ 
\begin{equation}\label{eq:tricontour}
\begin{tikzcd}
&(P_1,P'_1) \ar[dr,"e_1"]\\
(P_0,P'_0) \ar[ur,"e_0"] \ar[rr,"e_2"] && (P_2,P'_2)
\end{tikzcd} 
\end{equation}
is given by three pairs of distinguished squares of the form 
\begin{equation*}
e_0:=\small{\left(\dsquaref{O}{P_{1/0}}{P_0}{P_1}{}{}{\alpha_{0,1}}{\alpha_{1/0,1}} \quad,\quad \dsquaref{O}{P_{1/0}}{P'_0}{P'_1}{}{}{\alpha'_{0,1}}{\alpha'_{1/0,1}}\right)}\quad e_1:=\small{\left(\dsquaref{O}{P_{2/1}}{P_1}{P_2}{}{}{\alpha_{1,2}}{\alpha_{2/1,2}} \quad,\quad \dsquaref{O}{P_{2/1}}{P'_1}{P'_2}{}{}{\alpha'_{1,2}}{\alpha'_{2/1,2}}\right)} 
\end{equation*}

\begin{equation*}
e_2:=\small{\left(\dsquaref{O}{P_{2/0}}{P_0}{P_2}{}{}{\alpha_{0,2}}{\alpha_{2/0,2}} \quad,\quad \dsquaref{O}{P_{2/0}}{P'_0}{P'_2}{}{}{\alpha'_{0,2}}{\alpha'_{2/0,2}}\right)}.
\end{equation*}

\begin{definition}[Admissible Triple]\label{def:AdmTrip} We call a triple $\calT=(e_0,e_1,e_2)$ of the above form \emph{admissible} if $$\alpha_{1,2}\circ \alpha_{0,1}=\alpha_{0,2}\qquad \text{and}\qquad \alpha'_{1,2}\circ \alpha'_{0,1}=\alpha'_{0,2}\,\,.$$ 
In which case, one can apply Lemma~\ref{lem:quotFilt} and complete $\calT$ to the following pair of diagrams:
	\begin{equation}\label{eq:admtripDEF}
\begin{tikzcd}
P_0\ar[r, >->," \alpha_{0,1} "] & P_1  \ar[dr,phantom,"\square"] \ar[r, >->,"\alpha_{1,2}"] & P_2\\
&	P_{1/0} \ar[dr,phantom,"\square"] \ar[dr,phantom,"\square"]\ar[r, >->,swap,"\alpha_{1/0,2/0}"] \ar[u, {Circle[open]}->,"\alpha_{1/0,1}"]& P_{2/0} \ar[u, {Circle[open]}->, swap, "\alpha_{2/0,2}"] \\
& O \ar[r,>->] \ar[u, {Circle[open]}->]& P_{2/1} \ar[u, {Circle[open]}->,swap,"\alpha_{2/1,2/0}"] 
\end{tikzcd}\quad \begin{tikzcd}
P'_0\ar[r, >->," \alpha'_{0,1} "]  & P'_1 \ar[dr,phantom,"\square"] \ar[r, >->,"\alpha'_{1,2}"] & P'_2\\
 &	P_{1/0} \ar[dr,phantom,"\square"] \ar[r, >->,swap,"\alpha'_{1/0,2/0}"] \ar[u, {Circle[open]}->,"\alpha'_{1/0,1}"]& P_{2/0} \ar[u, {Circle[open]}->, swap, "\alpha'_{2/0,2}"] \\
& O \ar[r,>->] \ar[u, {Circle[open]}->]& P_{2/1} \ar[u, {Circle[open]}->,swap,"\alpha'_{2/1,2/0}"] 
\end{tikzcd}
\end{equation}
 In particular, define 
\begin{equation}\label{eq:admiAssocDS}
l(\calT):=\left(\dsquaref{O}{P_{2/1}}{P_{1/0}}{P_{2/0}}{}{}{\alpha_{1/0,2/0}}{\alpha_{2/1,2/0}} \quad \, \quad \dsquaref{O}{P_{2/1}}{P_{1/0}}{P_{2/0}}{}{}{\alpha'_{1/0,2/0}}{\alpha'_{2/1,2/0}}\right)
\end{equation}
to be the \emph{double exact square associated to admissible triple $\calT$}. Following Definition~\ref{def:DES}, denote $\mu (l(\calT))$ to be the canonical loop of $l(\calT)$.
\end{definition}

Any admissible triple $\calT=(e_0,e_1,e_{2})$ defines a loop $e_0e_1e^{-1}_2$, which we also denote using $\calT$. Notice: if $$\alpha_{1/0,2/0}=\alpha'_{1/0,2/0}\qquad 
\text{and}\qquad \alpha_{2/1,2/0}=\alpha'_{2/1,2/0}$$ 
in Diagram~\eqref{eq:admtripDEF}, then the loop $\calT$ bounds a 2-simplex in $G\calC$. However, even when the condition does not hold, we can still say something meaningful about the (free) homotopy class of $\calT$:

\begin{lemma}\label{lem:admtriple} Let $\calT$ be an admissible triple, with $l(\calT)$ its associated double exact square. If the 1-simplices of $\calT$ are all double exact squares, then the loop $\calT$ is freely homotopic to $\mu(l(\calT))$.
\end{lemma}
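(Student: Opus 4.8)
\textbf{Proof plan for Lemma~\ref{lem:admtriple}.}

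The plan is to run the same strategy that was used in Lemma~\ref{lem:Nen4} (and, before that, in the proof of Theorem~\ref{thm:NenashevDSES}): rather than attempting to fill in the loop $\calT = e_0 e_1 e_2^{-1}$ with a single 2-simplex — which is impossible in general precisely because the quotient index triangles of $e_0 e_1$ need not equal those of $e_2$ on the nose (cf. Warning~\ref{warning:compose}) — I would produce an auxiliary loop $L$ built from enlarged squares, and then exhibit two homotopies: one from $\calT$ to $L$, and one from $\mu(l(\calT))$ to $L$. Since $K_1(\calC)$ is abelian (Observation~\ref{obs:K1DirectSum}), free homotopy suffices, so this chain of homotopies proves the lemma.

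First I would unpack the hypothesis: the 1-simplices $e_0, e_1, e_2$ are assumed to be double exact squares, i.e. each $e_i$ is a pair of exact squares with initial corner $O$ (so each has a well-defined canonical loop $\mu(e_i)$ as in Definition~\ref{def:DES}), and by admissibility the underlying $\M$-filtrations compose. Applying Lemma~\ref{lem:quotFilt} (quotients respect filtrations) to each of the two flags in Definition~\ref{def:AdmTrip} produces Diagram~\eqref{eq:admtripDEF}, and in particular isolates the associated double exact square $l(\calT)$ of Equation~\eqref{eq:admiAssocDS} sitting in the ``quotient layer'' of that diagram. The key structural fact I want is that each of the $3\times 3$-style diagrams in~\eqref{eq:admtripDEF} decomposes, via horizontal and vertical composition of distinguished squares, so that the edge $e_2$ factors (up to the canonical codomain-preserving isomorphisms of Axiom (K), which we elide as usual — cf. Footnote~\ref{fn:INV}) as $e_0$ followed by $e_1$ followed by a ``correction'' governed precisely by $l(\calT)$. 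Concretely, I expect the correct auxiliary loop $L$ to be assembled by horizontally composing $e_1$ with a distinguished square of the form $t$ from Lemma~\ref{lem:Nen3} (built via Axiom (DS) applied to the relevant span of distinguished squares extracted from~\eqref{eq:admtripDEF}), exactly mirroring the construction of $\tilde s$ in the proof of Lemma~\ref{lem:Nen4}; the point of passing through restricted pushouts is that $B\oplus \frac{B}{A}\not\cong B\star_A B$ in general (Discussion~\ref{dis:NenPushout}), so one cannot split the relevant flags directly and must instead thicken everything by an extra $\frac{P_{2/0}}{P_{1/0}}$-summand and check the enlarged squares are genuinely distinguished using Lemmas~\ref{lem:ADTsquares} and~\ref{lem:DirectSum}.

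The two homotopies would then each be exhibited as a diagram of the form~\eqref{eq:NenBigL1}/\eqref{eq:NenBigL2}: a hexagonal or pentagonal arrangement of vertices in $G\calC$ whose boundary is $\calT$ (respectively $\mu(l(\calT))$) concatenated with the reverse of $L$, in which every interior triangle is verified — by an explicit pair of flag diagrams, as in Claims~\ref{claim:NenL1} and~\ref{claim:NenL2} — to be the boundary of a 2-simplex. I anticipate the main obstacle to be bookkeeping: getting the direct-sum decompositions, the permutation isomorphisms, and the choices of quotients to line up so that each interior triangle really is a legitimate 2-simplex of $G\calC$ (in particular so that the quotient index squares and triangles of Construction~\ref{cons:Gcons}(i)–(ii) agree in both flags of each pair). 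None of the individual checks is deep — each is an application of Lemma~\ref{lem:quotFilt}, Axiom (DS), Fact~\ref{facts:restrictedpushouts}, and closure of distinguished squares under isomorphism — but there are many of them, and the correct auxiliary loop $L$ has to be guessed so that all of them go through simultaneously; that is where the real work lies. Once the diagrams are in place, the conclusion is immediate: $\calT \simeq L \simeq \mu(l(\calT))$ freely, hence $[\calT] = \lrangles{l(\calT)}$ in $K_1(\calC)$.
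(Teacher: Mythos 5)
Your blueprint — construct an auxiliary loop $L$ and exhibit explicit 2-simplices filling in the triangular interstices on both sides, as in Claims~\ref{claim:NenL1}--\ref{claim:NenL2} — is reasonable, but the paper takes a structurally different route, and I think the difference matters. The paper does \emph{not} directly exhibit a single $L$ with $\calT\sim L\sim\mu(l(\calT))$. It first proves a stronger statement (the Key Lemma in Section~\ref{sec:admtriples}): that $\calT$ is freely homotopic to $(P_2,P_2)\oplus\mu(l(\calT))$, where $\oplus$ means the obvious formal direct sum applied to every node of the canonical loop. Only afterward does it derive your statement as Corollary~\ref{cor:admtrip}, by noting that if every $e_i$ is a double exact square then there is an edge $(O,O)\to(P_2,P_2)$ in $G\calC$, and the induced simplicial homotopy between $(O,O)\oplus\argu$ and $(P_2,P_2)\oplus\argu$ absorbs the extra summand. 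This two-step factorization is doing real work: it isolates exactly where the hypothesis that the $e_i$ are \emph{double exact} squares (rather than arbitrary 1-simplices in $G\calC$) is used.

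The more important gap in your plan, though, is that you never name the mechanism that actually closes the bookkeeping. You correctly anticipate that after enough applications of Axiom~(DS), Fact~\ref{facts:restrictedpushouts}, and Lemmas~\ref{lem:ADTsquares}/\ref{lem:DirectSum}, you will have a large arrangement of 1-simplices bounded by your two loops; but the step that lets you identify the interior with zero in $K_1$ is not merely ``another elementary check.'' In the paper's proof, after the restricted-pushout diagram of Step~0 is built, the argument reduces (via Corollary~\ref{cor:RPtrick} repeatedly) to an identity of the form $\lrangles{\gamma_0}+\lrangles{a}-\lrangles{d}-\lrangles{e}=0$, and delivering this requires two non-trivial inputs you do not cite: Observation~\ref{obs:K1DirectSum} (the inverse in $K_1$ is row-swap, not just loop reversal), and Permutation Lemma~\ref{lem:permute} (which lets you reorder direct summands coordinate-wise on one row of a 1-simplex without changing its homotopy class). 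The Permutation Lemma is itself a substantive simplicial-homotopy argument; it is \emph{not} a consequence of closure of distinguished squares under isomorphism, because the permutation acts on only one of the two flags. Without these two tools, the 2-simplex you build in the analogue of Step~2 gives you a relation involving direct sums of 1-simplices with scrambled summands, and there is no way to match it against $\mu(l(\calT))$.

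Lastly, the analogy with Lemma~\ref{lem:Nen4} is weaker than you suggest. In that proof, the distinguished square $t$ from Lemma~\ref{lem:Nen3} and the enlarged $\tilde s$ were available because the two loops being compared share the specific vertex $(Q,Q)$ and arose from a common Sherman triple; here, $\calT$ is an \emph{arbitrary} admissible triple, with no $\theta$ in sight. The paper's construction replaces that structure with the restricted-pushout lattice $Q,Z_0,Z_1,W_0,W_1,W_2$ of Step~0, together with a careful recalibration of quotients (Claim~\ref{claim:step-2b}) to make the $\E$-squares commute. If your $L$ is not built from that lattice you will, I think, struggle to make the quotient index diagrams of Construction~\ref{cons:Gcons}(i)--(ii) agree in both flags of your interstitial 2-simplices — which is exactly the technical point you flag as ``where the real work lies'' but leave open.
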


This lemma extends \cite[Corollary 4.3]{Nen0}. The proof is more involved than Nenashev's original argument (for similar reasons as in Discussion~\ref{dis:NenPushout}), and will be deferred to Section~\ref{sec:admtriples}.

\subsection{Nenashev Relations} To define Nenashev's relations on $K_1(\calC)$, we shall need the following generalisation of double exact squares. A \emph{$3 \times 3$ diagram} in a pCGW category $\calC$ is a pair of diagrams
\begin{equation}\label{eq:txDiagram}
\left(\begin{tikzcd}
X_{00}\ar[r, >->, "f_{0}"] 
\ar[d,>->,swap,"h_{0}"] & X_{01} \ar[d, >->,"h_{1}"] & X_{02}  
\ar[d,>->, "h_{2}"] \ar[l, {Circle[open]}->,swap,"g_{0}"]\\
X_{10}  \ar[r,>->,"f_{1}"]  &	X_{11}  \ar[dr,phantom,"\circlearrowleft"] & X_{12} \ar[l, {Circle[open]}->,swap,"g_1"] \\
X_{20} \ar[u, {Circle[open]}->,"j_{0}"] \ar[r,>->,"f_2"]& X_{21} \ar[u, {Circle[open]}->,"j_1"]& X_{22}  \ar[u, {Circle[open]}->,swap,"j_2"] \ar[l, {Circle[open]}->,swap,"g_{2}"]
\end{tikzcd} \qquad\text{,}\qquad  \begin{tikzcd}
X_{00}\ar[r, >->, "f'_{0}"] 
\ar[d,>->,swap,"h'_{0}"] & X_{01} \ar[d, >->,"h'_{1}"] & X_{02}  
\ar[d,>->, "h'_{2}"] \ar[l, {Circle[open]}->,swap,"g'_{0}"]\\
X_{10}  \ar[r,>->,"f'_{1}"]  &	X_{11}  \ar[dr,phantom,"\circlearrowleft"] & X_{12} \ar[l, {Circle[open]}->,swap,"g'_1"] \\
X_{20} \ar[u, {Circle[open]}->,"j'_{0}"] \ar[r,>->,"f'_2"]& X_{21} \ar[u, {Circle[open]}->,"j'_1"]& X_{22}  \ar[u, {Circle[open]}->,swap,"j'_2"] \ar[l, {Circle[open]}->,swap,"g'_{2}"]
\end{tikzcd}\right)
\end{equation}
on the same objects subject to the following conditions:
\begin{itemize} 
	\item The horizontal and vertical rows of each diagram define exact squares. Explicitly, a $\tx$ diagram is defined by 6 double exact squares
	\begin{equation*}
l_{i}:= \left(	\dsquaref{O}{X_{i2}}{X_{i0}}{X_{i1}}{ }{ }{ f_{i}}{g_{i} } \, \,,\,\,	\dsquaref{O}{X_{i2}}{X_{i0}}{X_{i1}}{ }{ }{ f'_{i}}{g'_{i} }\right) \qquad	l^{i}:= \left(\dsquaref{O}{X_{2i}}{X_{0i}}{X_{1i}}{ }{}{h_{i} }{ j_{i}} \, \,,\,\, \dsquaref{O}{X_{2i}}{X_{0i}}{X_{1i}}{ }{}{h'_{i} }{ j'_{i}} \right),
\end{equation*}
for all $i\in \{0,1,2\}$.
	\item The bottom right $\E$-square commutes. No conditions are imposed on the other squares. 
\end{itemize}

These $\tx$ diagrams were originally defined as commutative diagrams in exact categories. In this setting, one can form the pushout $Z:=X_{01}\star_{X_{00}} X_{10}$ and check that the induced map $v\colon Z\rtail X_{11}$ is an admissible mono -- this plays a key role in the proof of \cite[Prop. 5.1]{Nen0}. However, this argument breaks down in general pCGW categories: restricted pushouts may fail to yield an $\M$-morphism $v\colon Z\rtail X_{11}$ unless the original $\M$-square in Diagram~\eqref{eq:txDiagram} is {\em optimal} (in the sense of Definition~\ref{def:restPO}). We circumvent this problem by hardcoding the desired pushout behaviour into the following definition.

\begin{definition}[Optimal $\tx$ Diagrams]\label{def:OPTtx} Call a $\tx$ diagram \emph{optimal} if there exists an object $Z$ along with three pairs of $\M$-morphisms 
	$$v,v'\colon Z\rtail X_{11},\quad u,u'\colon X_{01}\rtail Z \quad \text{and}\quad w,w'\colon X_{10}\rtail Z.$$
These are required to satisfy the identities
\begin{align}\label{eq:OPTID}
&u\circ f_0=w\circ h_0  &  v\circ u = h_1  &&  v\circ w = f_1 \\
& u'\circ f'_0=w'\circ h'_0 & v'\circ u' = h'_1 && v'\circ w '= f'_1\,\,, \nonumber 
\end{align}
and assemble into the following diagrams 
\begin{enumerate}[label=(P\arabic*)]
	\item \[
\begin{tikzcd}
X_{01}\ar[r, >->,"u"] \ar[dr,phantom,"\square"] & Z \ar[dr,phantom,"\square"] \ar[r, >->,"v"] & X_{11} \\
O \ar[r,>->] \ar[u,{Circle[open]}->]&	X_{20} \ar[r, >->,"f_2"] \ar[u, {Circle[open]}->] \ar[dr,phantom,"\square"]  & X_{21}\ar[u, {Circle[open]}->,"j_1"] \\
& O \ar[r,>->] \ar[u,{Circle[open]}->]& X_{22} \ar[u, {Circle[open]}->,"g_2"] 
\end{tikzcd}	 \quad 	\begin{tikzcd}
	X_{01}\ar[r, >->,"u'"] \ar[dr,phantom,"\square"] \ar[dr,phantom,"\square"] & Z \ar[dr,phantom,"\square"] \ar[r, >->,"v'"] & X_{11} \\
	O \ar[r,>->] \ar[u,{Circle[open]}->]&	X_{20} \ar[dr,phantom,"\square"] \ar[r, >->,"f'_2"] \ar[u, {Circle[open]}->]& X_{21}\ar[u, {Circle[open]}->,"j'_1"] \\
	& O \ar[r,>->] \ar[u,{Circle[open]}->]& X_{22} \ar[u, {Circle[open]}->,"g_2'"] 
	\end{tikzcd}	
	\]
		\item \[
	\begin{tikzcd}
	X_{10}\ar[r, >->,"w"]\ar[dr,phantom,"\square"]  & Z \ar[dr,phantom,"\square"] \ar[r, >->,"v"] & X_{11} \\
	O \ar[r,>->] \ar[u,{Circle[open]}->]&	X_{02} \ar[dr,phantom,"\square"]  \ar[r, >->,"h_2"] \ar[u, {Circle[open]}->]& X_{12}\ar[u, {Circle[open]}->,"g_1"] \\
	&O \ar[r,>->] \ar[u,{Circle[open]}->]& X_{22} \ar[u, {Circle[open]}->,"j_2"] 
	\end{tikzcd}	 \quad 	\begin{tikzcd}
	X_{10}\ar[r, >->,"w'"] \ar[dr,phantom,"\square"] & Z \ar[dr,phantom,"\square"] \ar[r, >->,"v'"] & X_{11} \\
O \ar[r,>->] \ar[u,{Circle[open]}->]	&	X_{02} \ar[dr,phantom,"\square"]  \ar[r, >->,"h'_2"] \ar[u, {Circle[open]}->]& X_{12}\ar[u, {Circle[open]}->,"g'_1"] \\
	&O \ar[r,>->] \ar[u,{Circle[open]}->]& X_{22} \ar[u, {Circle[open]}->,"j'_2"] 
	\end{tikzcd}	
	\]
	\item \[
	\begin{tikzcd}
	X_{00}\ar[r, >->,"f_0"] & X_{01} \ar[dr,phantom,"\square"] \ar[r, >->,"u"] & Z \\
	&	X_{02} \ar[r, >->] \ar[u, {Circle[open]}->,"g_0"]& X_{02}\oplus X_{20}\ar[u, {Circle[open]}->] \\
	&& X_{20} \ar[u, {Circle[open]}->] 
	\end{tikzcd}	 \quad 		\begin{tikzcd}
	X_{00}\ar[r, >->,"f'_0"] & X_{01} \ar[dr,phantom,"\square"] \ar[r, >->,"u'"] & Z \\
	&	X_{02} \ar[r, >->] \ar[u, {Circle[open]}->,"g'_0"]& X_{02}\oplus X_{20}\ar[u, {Circle[open]}->] \\
	&& X_{20} \ar[u, {Circle[open]}->] 
	\end{tikzcd}	
	\]
	
		\item \[
	\begin{tikzcd}
	X_{00}\ar[r, >->,"h_0"] & X_{10} \ar[dr,phantom,"\square"] \ar[r, >->,"w"] & Z \\
	&	X_{20} \ar[r, >->] \ar[u, {Circle[open]}->,"j_0"]& X_{02}\oplus X_{20}\ar[u, {Circle[open]}->] \\
	&& X_{02} \ar[u, {Circle[open]}->,] 
	\end{tikzcd}	 \quad 		\begin{tikzcd}
	X_{00}\ar[r, >->,"h'_0"] & X_{10} \ar[dr,phantom,"\square"] \ar[r, >->,"w'"] & Z \\
	&	X_{20} \ar[r, >->] \ar[u, {Circle[open]}->,"j'_0"]& X_{02}\oplus X_{20}\ar[u, {Circle[open]}->] \\
	&& X_{02} \ar[u, {Circle[open]}->] 
	\end{tikzcd}	
	\]
\end{enumerate}	
\end{definition}

\begin{remark} A sanity check: notice the Optimality Identities~\eqref{eq:OPTID} imply that the $\M$-squares in Diagram~\eqref{eq:txDiagram} commute, e.g. $h_1\circ f_0=f_1\circ h_0$. 
\end{remark}

\begin{definition}\label{def:NenDC} Define $\calD(\calC)$ to be the abelian group with generators $\lrangles{l}$ for all double exact squares $l$ in $\calC$ subject to the following relations. 
	\begin{enumerate}[label=(N\arabic*)]
	\item $\lrangles{l}=0$ if 
\begin{equation}\label{eq:NenR1}
l=\left(\dsquaref{O}{C}{A}{B}{}{}{f}{g} \quad ,\quad \dsquaref{O}{C}{A}{B}{}{}{f}{g}\right)
\end{equation}
Any identical pair of exact squares will be called a \emph{diagonal} square.
	\item Given an optimal $\tx$ diagram
	\begin{equation}\label{eq:3x3}
\left(\begin{tikzcd}
X_{00}\ar[r, >->, "f_{0}"] 
\ar[d,>->,swap,"h_{0}"]  & X_{01} \ar[d, >->,"h_{1}"] & X_{02}  
\ar[d,>->, "h_{2}"] \ar[l, {Circle[open]}->,swap,"g_{0}"]\\
X_{10}  \ar[r,>->,"f_{1}"]  &	X_{11}  \ar[dr,phantom,"\circlearrowleft"] & X_{12} \ar[l, {Circle[open]}->,swap,"g_1"] \\
X_{20} \ar[u, {Circle[open]}->,"j_{0}"] \ar[r,>->,"f_2"]& X_{21} \ar[u, {Circle[open]}->,"j_1"]& X_{22}  \ar[u, {Circle[open]}->,swap,"j_2"] \ar[l, {Circle[open]}->,swap,"g_{2}"]
\end{tikzcd} \qquad\text{,}\qquad  \begin{tikzcd}
X_{00}\ar[r, >->, "f'_{0}"] 
\ar[d,>->,swap,"h'_{0}"] & X_{01} \ar[d, >->,"h'_{1}"] & X_{02}  
\ar[d,>->, "h'_{2}"] \ar[l, {Circle[open]}->,swap,"g'_{0}"]\\
X_{10}  \ar[r,>->,"f'_{1}"]  &	X_{11} \ar[dr,phantom,"\circlearrowleft"]  & X_{12} \ar[l, {Circle[open]}->,swap,"g'_1"] \\
X_{20} \ar[u, {Circle[open]}->,"j'_{0}"] \ar[r,>->,"f'_2"]& X_{21} \ar[u, {Circle[open]}->,"j'_1"]& X_{22}  \ar[u, {Circle[open]}->,swap,"j'_2"] \ar[l, {Circle[open]}->,swap,"g'_{2}"]
\end{tikzcd}\right)
\end{equation}
defined by the following 6 double exact squares
	\begin{equation*}
l_{i}:= \left(	\dsquaref{O}{X_{i2}}{X_{i0}}{X_{i1}}{ }{ }{ f_{i}}{g_{i} } \, \,,\,\,	\dsquaref{O}{X_{i2}}{X_{i0}}{X_{i1}}{ }{ }{ f'_{i}}{g'_{i} }\right) \qquad	l^{i}:= \left(\dsquaref{O}{X_{2i}}{X_{0i}}{X_{1i}}{ }{}{h_{i} }{ j_{i}} \, \,,\,\, \dsquaref{O}{X_{2i}}{X_{0i}}{X_{1i}}{ }{}{h'_{i} }{ j'_{i}} \right)
\end{equation*}
for all $i\in \{0,1,2\}$, the following 6-term relation holds
\begin{equation}
\lrangles{l_0} + \lrangles{l_2} - \lrangles{l_1} = \lrangles{l^0} + \lrangles{l^2} - \lrangles{l^1}. 
\end{equation}
\end{enumerate}
\end{definition}
\begin{theorem}\label{thm:NenSurj} There exists a well-defined homomorphism 
\begin{equation}\label{eq:NenSurj}
m\colon \calD(\calC)\longrightarrow K_1(\calC)
\end{equation}
that is surjective. In other words, the two relations of $\calD(\calC)$ also hold in $K_1(\calC)$.	
\end{theorem}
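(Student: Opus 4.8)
The plan is to construct the map $m$ by sending a generator $\lrangles{l}$ of $\calD(\calC)$ to the homotopy class $\lrangles{l}\in K_1(\calC)$ of the canonical loop $\mu(l)$ from Definition~\ref{def:DES}, and then to check that the two defining relations (N1) and (N2) of $\calD(\calC)$ are satisfied in $K_1(\calC)$. Surjectivity is then immediate from Theorem~\ref{thm:NenashevDSES}, which guarantees every element of $K_1(\calC)$ is of the form $\mu(l)$ for some double exact square $l$. So the content of the theorem is the well-definedness of $m$, i.e.\ verifying the relations.

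First I would dispose of (N1). If $l$ is a diagonal square (two copies of the same exact square $\dsquaref{O}{C}{A}{B}{}{}{f}{g}$), then the canonical loop $\mu(l)$ factors through an honest degenerate structure: the pair of flag diagrams defining the associated edge $(A,A)\to(B,B)$ literally agree, so by Warning~\ref{warning:compose}'s criterion (the quotient-index triangles being two copies of the same square) together with the standard edges $e(A),e(B)$ — which also come in diagonal form — the loop $\mu(l)$ bounds a $2$-simplex in $G\calC$. Concretely one exhibits the $2$-simplex $(O,O)\to (A,A)\to (B,B)$ whose two flag diagrams are identical; this makes $\lrangles{l}=\lrangles{e(B)}-\lrangles{e(A)}=0$ in $\pi_1|G\calC^o|$ by relation (B1) of Proposition~\ref{prop:baseline}. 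This is routine.

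The substantive step is (N2). Given an optimal $\tx$ diagram as in Equation~\eqref{eq:3x3}, the object $Z$ and the $\M$-morphisms $u,w,v$ (and their primed versions) from Definition~\ref{def:OPTtx}, together with the four diagrams (P1)--(P4), give me exactly the data needed to build the two admissible triples controlling the two sides of the $6$-term relation. Specifically: the composite $X_{00}\rtail X_{01}\xrtail{u} Z$ paired with $X_{00}\rtail X_{10}\xrtail{w}Z$ and the factorisation $v\circ u=h_1$, $v\circ w=f_1$ let me assemble, on the one hand, an admissible triple whose loop is $e_0e_1e_2^{-1}$-style and whose associated double exact square (via Lemma~\ref{lem:admtriple} and Definition~\ref{def:AdmTrip}) is built from $l_0$, $l^1$, $l_2$ — reading off the columns — and, on the other hand, a second admissible triple assembled from $l^0$, $l_1$, $l^2$ reading off the rows. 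The plan is: (i) use (P3) and (P4) to identify the quotients $Z/X_{01}\cong X_{20}$, $Z/X_{10}\cong X_{02}$, and $Z/X_{00}\cong X_{02}\oplus X_{20}$; (ii) use (P1) and (P2) to identify the quotients of $X_{11}$ by $Z$, $X_{20}$, $X_{02}$ with $X_{22}$ and the cross terms; (iii) apply Lemma~\ref{lem:admtriple} twice to rewrite the loop $e_0e_1e_2^{-1}$ of each admissible triple as $\mu$ of the associated double exact square, which will be (up to the diagonal-square relation (N1), already shown, and the additivity from Proposition~\ref{prop:baseline}(B3)) a sum of three of the six $\lrangles{l_i},\lrangles{l^i}$; (iv) observe that both triples compute the homotopy class of the \emph{same} loop in $G\calC$ — the one going around the boundary of the big square $X_{00}\to X_{11}$ built via $Z$ — so the two resulting sums in $K_1(\calC)$ are equal, giving $\lrangles{l_0}+\lrangles{l_2}-\lrangles{l_1}=\lrangles{l^0}+\lrangles{l^2}-\lrangles{l^1}$.

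I expect step (iv) — exhibiting a single loop or a connected web of $2$-simplices in $G\calC$ that simultaneously witnesses both admissible-triple decompositions — to be the main obstacle, for precisely the reason flagged in Discussion~\ref{dis:NenPushout} and Warning~\ref{warning:compose}: the naive pasting argument that works in exact categories (forming $X_{01}\star_{X_{00}}X_{10}$ and noting the induced map to $X_{11}$ is an admissible mono, as in \cite[Prop.~5.1]{Nen0}) is unavailable, which is exactly why optimality of the $\tx$ diagram was hardcoded into Definition~\ref{def:OPTtx}. The diagrams (P1)--(P4) are engineered to supply, by hand, the distinguished squares that the pushout would have produced automatically; the work is to check that these genuinely assemble into valid $2$-simplices of $G\calC$ (all drawn squares distinguished, quotient-index squares and triangles matching in both flag diagrams), invoking goodness of $\calC$ to reduce distinguishedness to commutativity in the ambient category wherever an isomorphism of $\M$- or $\E$-morphisms is involved, together with Lemmas~\ref{lem:ADTsquares}, \ref{lem:DirectSum}, \ref{lem:quotFilt} and Axioms (PQ), (DS) of Definition~\ref{def:pCGW}. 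Once the $2$-simplices are in place, the algebra of adding and subtracting their boundary relations in $\pi_1|G\calC^o|$ via Proposition~\ref{prop:baseline} is bookkeeping.
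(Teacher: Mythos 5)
Your overall plan is exactly the paper's: define $m$ on generators via the canonical loop $\mu(l)$, observe that surjectivity follows from Theorem~\ref{thm:NenashevDSES}, and reduce the theorem to verifying (N1) and (N2) inside $K_1(\calC)=\pi_1|G\calC^o|$. Your (N1) argument is correct and matches the paper's (an explicit $2$-simplex over the diagonal pair of flags, together with Relations (B1) and (B3) of Proposition~\ref{prop:baseline}).

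For (N2) you have the right ingredients but the mechanism in your steps (iii)--(iv) does not quite work as written. The problem: your proposed ``column'' admissible triple $X_{00}\rtail X_{01}\rtail X_{11}$ has associated double exact square $X_{02}\rtail \frac{X_{11}}{X_{00}}$ with quotient $X_{21}$ --- this is \emph{not} $l_2$ and does not live on the six objects $X_{ij}$, so you cannot read off a sum of ``three of the six $\lrangles{l_i},\lrangles{l^i}$'' from it. And the step ``observe both triples compute the homotopy class of the same loop'' does not deliver the $6$-term relation, because the two decompositions you describe share no common loop whose equality of classes is the desired identity. What makes the argument go, and what Definition~\ref{def:OPTtx} is engineered to supply, is a \emph{finer} triangulation of the square $\alpha:=l_0l^1(l_1)^{-1}(l^0)^{-1}$ using four auxiliary double exact squares $\alpha_0,\alpha_1,\alpha_2,\alpha_3$ running through $(Z,Z)$. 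Then the four optimality diagrams play two distinct roles: (P3), (P4) are genuine $2$-simplices, yielding
\[
\lrangles{l_0}+\lrangles{\alpha_3}=\lrangles{\alpha_0},\qquad
\lrangles{l^0}+\lrangles{\alpha_1}=\lrangles{\alpha_0},
\]
whereas (P1), (P2) are \emph{only} admissible triples (their two quotient-index triangles disagree), and applying Lemma~\ref{lem:admtriple} to them produces $l_2$ and $l^2$ precisely as the obstruction terms:
\[
\lrangles{\alpha_3}+\lrangles{\alpha_2}-\lrangles{l^1}=\lrangles{l_2},\qquad
\lrangles{\alpha_1}+\lrangles{\alpha_2}-\lrangles{l_1}=\lrangles{l^2}.
\]
The $6$-term relation then follows by subtracting these four equations so that $\alpha_0,\alpha_1,\alpha_2,\alpha_3$ cancel. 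So the $l_2,l^2$ do not come from two admissible triples ``computing the same loop'' --- they come out of Warning~\ref{warning:compose}'s obstruction to composition, which is exactly what Lemma~\ref{lem:admtriple} isolates. You correctly identified that Lemma~\ref{lem:admtriple} is the crucial tool and that (P1)--(P4) stand in for the unavailable pushout; the missing idea is to name the four inner edges as double exact squares in their own right and to keep the bookkeeping of which of (P1)--(P4) is a $2$-simplex and which is merely an admissible triple.
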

\begin{proof} By Theorem~\ref{thm:NenashevDSES}, we know that $K_1(\calC)$ is generated by double exact squares so both groups have the same generators. It remains to check the relations.
\begin{enumerate}[label=(N\arabic*):]
	\item Let $l$ be as in Equation~\eqref{eq:NenR1}. The corresponding loop $\mu(l)$ bounds the 2-simplex 
\[	\begin{tikzcd}
O\ar[r, >->] & A \ar[dr,phantom,"\square"] \ar[r, >->,"f"] & B \\
	&	A \ar[r, >->,"f"] \ar[u, {Circle[open]}->]& B \ar[u, {Circle[open]}->,""] \\
	&& C \ar[u, {Circle[open]}->,"g"] 
	\end{tikzcd}	 \quad 	\begin{tikzcd}
	O\ar[r, >->] & A \ar[dr,phantom,"\square"] \ar[r, >->,"f"] & B \\
	&	A \ar[r, >->,"f"] \ar[u, {Circle[open]}->]& B \ar[u, {Circle[open]}->,""] \\
	&& C \ar[u, {Circle[open]}->,"g"] 
	\end{tikzcd}
	\]
in $G\calC$, and so $\lrangles{l}=0$.
	\item 
Leveraging the fact that the $\tx$ diagram is optimal, construct the diagram
	\begin{equation}
\begin{tikzcd}
(X_{00}\tk X_{00}) \ar[rr,"l_0",blue] \ar[dd,"l^{0}",blue] \ar[dr,"\alpha_0"]&& (X_{01}\tk  X_{01}) \ar[dd,"l^1",blue] \ar[dl,swap,"\alpha_3"]\\
& (Z \tk Z) \ar[dr,"\alpha_2"]  \\
(X_{10}\tk X_{10}) \ar[rr,"l_1",blue] \ar[ur,"\alpha_1"] && (X_{11} \tk X_{11})
\end{tikzcd}
\end{equation}
where outer blue edges $\alpha:=l_0l^1(l_1)^{-1}(l^0)^{-1}$ form a loop, and the inner edges are given by 
\[\small{\alpha_0:= \left(	\dsquaref{O}{X_{02}\oplus X_{20}}{X_{00}}{Z}{ }{ }{u\circ f_0}{} \, ,\, 	\dsquaref{O}{X_{02}\oplus X_{20}}{X_{00}}{Z}{ }{ }{u'\circ f'_0}{}\right) \quad \alpha_1:= \left(	\dsquaref{O}{X_{02}}{X_{10}}{Z}{ }{ }{w}{} \, ,\, 	\dsquaref{O}{X_{02}}{X_{10}}{Z}{ }{ }{w'}{}\right)}  \]
\[\small{\alpha_2:= \left(	\dsquaref{O}{X_{22}}{Z}{X_{11}}{ }{ }{v}{} \, ,\, 	\dsquaref{O}{X_{22}}{Z'}{X_{11}}{ }{ }{v'}{}\right)  \quad  \alpha_3:= \left(	\dsquaref{O}{X_{20}}{X_{01}}{Z}{ }{ }{u}{} \, ,\, 	\dsquaref{O}{X_{20}}{X_{01}}{Z}{ }{ }{u'}{}\right)}  \] 
For orientation, we start with a basic observation.
 
 \begin{lemma}\label{lem:Nenclosedloop} 
 	Given any closed loop $l=e_0\dots e_n$ whose edges are all double exact squares, 
 	$$ \lrangles{l}= \sum^{n}_{i=0} (-1)^{\epsilon_i}\lrangles{e_i} $$
 	in $K_1(\calC)$, where the coefficient $(-1)^{\epsilon_i}$ reflects the orientation of edge $e_i$ in $l$. 
 \end{lemma}
\begin{proof}[Proof of Lemma] We spell this out for the case of $\alpha$; the general case is completely analogous. Consider the diagram
	\begin{equation}
	\begin{tikzcd}
	(X_{00}\tk X_{00}) \ar[rr,"l_0",blue] \ar[dd,"l^{0}",blue]&& (X_{01}\tk  X_{01}) \ar[dd,"l^1",blue] \\
	& (O \tk O) \ar[ur] \ar[dr] \ar[ul] \ar[dl]  \\
	(X_{10}\tk X_{10}) \ar[rr,"l_1",blue] && (X_{11} \tk X_{11})
	\end{tikzcd}
	\end{equation}
	featuring $\alpha$ as the outer loop but now with the base-point $(O,O)$ at the center. Tracing along the loop $\mu(l_0)\mu(l^1) \mu(l_1)^{-1}\mu(l^0)^{-1}$, each edge $(O,O)\to (X_{ij},X_{ij})$ is traversed exactly once in both directions, and so their contributions cancel. Hence, the loop is freely homotopic to the square boundary, and so
	\begin{equation*}
	\lrangles{\alpha} = \lrangles{l_0} + \lrangles{l^1} - \lrangles{l_1} - \lrangles{l^0}.
	\end{equation*}
\end{proof}
 
Next, appealing once more to the optimality of the $\tx$ diagram, notice:
\begin{itemize}

	\item Diagrams (P3) and (P4) are 2-simplices. By the Optimality Identities~\eqref{eq:OPTID}, they are bounded by the loops $l_0 \alpha_3 \alpha_{0}^{-1}$ and $l^0 \alpha_1 \alpha_{0}^{-1}$ respectively. Therefore, deduce that
\begin{align}\label{eq:3x3r1}
\lrangles{l_0} + \lrangles{\alpha_3} - \lrangles{\alpha_0}=0\\
\lrangles{l^0} + \lrangles{\alpha_1} - \lrangles{\alpha_0}=0. \nonumber
\end{align}
\item Diagrams (P1) and (P2) are admissible triples whose associated exact squares are $l_2$ and $l^2$ respectively. We may represent these admissible triples as $\alpha_{3}\alpha_2 (l^1)^{-1}$ and $\alpha_{1}\alpha_2 (l_1)^{-1}$ respectively; this follows from the Optimality Identities~\eqref{eq:OPTID} and the fact the $\E$-squares in the $\tx$-diagram commute. Thus, apply Lemma~\ref{lem:admtriple} to deduce
\begin{align}\label{eq:3x3r2}
\lrangles{\alpha_3} + \lrangles{\alpha_2} - \lrangles{l^1}=\lrangles{l_2}  \\
\lrangles{\alpha_1} + \lrangles{\alpha_2} - \lrangles{l_1}= \lrangles{l^2}. \nonumber
\end{align}
\end{itemize}
Combining Equations~\eqref{eq:3x3r1} and \eqref{eq:3x3r2}, 
\begin{align*}
\lrangles{l_2} - \lrangles{l^2} & =  \lrangles{\alpha_3} + \lrangles{\alpha_2} - \lrangles{l^1} - \lrangles{\alpha_1} - \lrangles{\alpha_2} + \lrangles{l_1} \\
&= \lrangles{\alpha_3} - \lrangles{\alpha_0} - \lrangles{l^1} - \lrangles{\alpha_1} + \lrangles{\alpha_0} + \lrangles{l_1} \\
& = -\lrangles{l_0} - \lrangles{l^1} + \lrangles{l^0} + \lrangles{l_1},
\end{align*}
and so by rearranging terms, conclude
\begin{equation}
\lrangles{l_0} + \lrangles{l_2} - \lrangles{l_1} = \lrangles{l^0} + \lrangles{l^2} - \lrangles{l^1}.
\end{equation}
\end{enumerate}	
\end{proof}

\subsection{Stratified Birational Equivalences}\label{sec:stratified} Here, we apply Theorem~\ref{thm:NenSurj} to establish the connection between the generators of $K_1(\Var_k)$ and birational maps. We begin with the case of
irreducible varieties.

\smallskip

Recall: a birational equivalence $f\colon X\dashrightarrow Y$ between irreducible varieties is a rational map defining an isomorphism $U\cong f(U)$, where $U\subseteq X$ and $f(U)\subseteq Y$ are non-empty opens. We call such an $f$ {\em stratified} if, in addition, $$X\setminus U\cong Y\setminus f(U).$$ This resembles a double exact square in $\Var_k$, 
except that exact squares allow the open subvariety to be empty. The following corollary shows this extra generality is not essential for $K_1$.

\begin{corollary}\label{cor:BirIso} Let $\alpha,\beta\colon A\to A$ be any pair of automorphisms in a pCGW category $\calC$. By Axiom (I), we may either regard them as $\M$-morphisms or $\E$-morphisms. Hence, define
	$$\widetilde{l}(\alpha):=\left(\dsquaref{O}{A}{O}{A}{ }{ }{ }{\varphi(\alpha)} \quad,\quad \dsquaref{O}{A}{O}{A}{ }{ }{ }{1_A} \right).$$
Recall the definition of $l(\alpha)$ from Example~\ref{ex:Aut}. One thus obtains the identities:
	\begin{enumerate}[label=(\roman*)]
		\item $\lrangles{l(\alpha)}=\lrangles{\widetilde{l}(\alpha^{-1})}.$
		\item $\lrangles{l(\alpha\beta)}=\lrangles{l(\alpha)}+\lrangles{l(\beta)}$.
		\item  Define
	$$l(\alpha,\beta):=\left(\, \dsquaref{O}{O}{A}{A}{ }{ }{\alpha}{} \quad,\quad  \dsquaref{O}{O}{A}{A}{ }{ }{\beta}{}  \,\right),$$
	not to be confused with $l(\alpha\beta)$, which composes $\alpha$ with $\beta$. Then $$\lrangles{l(\alpha,\beta)}=\lrangles{l(\beta\alpha^{-1})}=\lrangles{l(\beta)}-\lrangles{l(\alpha)} \quad \text{in $K_1(\calC)$}.$$

	\end{enumerate}
\end{corollary}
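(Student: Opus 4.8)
The plan is to deduce all three identities from the machinery already in place---chiefly Theorem~\ref{thm:NenSurj} (the relations (N1) and (N2) hold in $K_1(\calC)$), the additivity of Sherman/double-exact-square loops, and the admissible-triple Lemma~\ref{lem:admtriple}---rather than re-deriving homotopies by hand. Throughout I will work in the basepoint component $G\calC^o$, where by Theorem~\ref{thm:NenashevDSES} every class is $\mu$ of a double exact square and the relation (N1) says a diagonal square is null.

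For (i): the square $l(\alpha)$ has nodes $O,O,A,A$ with the $\M$-morphism $\alpha\colon A\rtail A$ (and its formal cokernel $O$), while $\widetilde l(\alpha^{-1})$ has the same nodes but records $\alpha^{-1}$ as an $\E$-morphism. First I would build an optimal $\tx$ diagram (or, more economically, a single $2$-simplex in $G\calC$ together with the maximal-tree edges) whose rows/columns are $l(\alpha)$, $\widetilde l(\alpha^{-1})$ and diagonal squares; concretely, the commutative square with corners $A\xrightarrow{\alpha}A\xrightarrow{1}A$ on one side and $A\xrightarrow{1}A\xrightarrow{\alpha}A$ on the other, filled in using Axiom~(I) to pass between the $\M$- and $\E$-pictures of $\alpha$. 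Applying (N1) to kill the diagonal pieces and (N2) to the resulting $6$-term relation collapses everything to $\lrangles{l(\alpha)}=\lrangles{\widetilde l(\alpha^{-1})}$. This is essentially the pCGW analogue of the standard fact that an automorphism, viewed as an admissible mono with trivial cokernel, represents the same $K_1$-class as its inverse viewed on the quotient side.

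For (ii): I would exhibit an admissible triple $\calT$ built from the composable pair of automorphisms $A\xrightarrow{\alpha}A\xrightarrow{\beta}A$, so that $\alpha_{0,2}=\beta\alpha$ and all formal cokernels are $O$. By Definition~\ref{def:AdmTrip} the associated double exact square $l(\calT)$ is again a square on zero quotients, and unwinding the construction identifies $l(\calT)$ with $l(\beta\alpha)$ (up to a diagonal square, killed by (N1)). The edges $e_0,e_1,e_2$ of $\calT$ are $l(\alpha)$, $l(\beta)$ (suitably placed) and $l(\beta\alpha)$, all of which are double exact squares, so Lemma~\ref{lem:admtriple} applies and gives that the loop $e_0e_1e_2^{-1}$ is freely homotopic to $\mu(l(\calT))$; since the loop $e_0e_1e_2^{-1}$ also directly represents $\lrangles{l(\alpha)}+\lrangles{l(\beta)}-\lrangles{l(\beta\alpha)}$ in the abelian group $K_1(\calC)$ (via Lemma~\ref{lem:Nenclosedloop}), and $\mu(l(\calT))=\mu(l(\beta\alpha))$ is the trivial amount ``$0$'' contributed after cancellation, we obtain $\lrangles{l(\alpha\beta)}=\lrangles{l(\alpha)}+\lrangles{l(\beta)}$ once we observe $l(\alpha\beta)$ and $l(\beta\alpha)$ agree in $K_1$ by conjugation-invariance (or simply run the argument with $\alpha\beta$ in place of $\beta\alpha$, the ordering being immaterial since $K_1$ is abelian and $\lrangles{l(\alpha)}+\lrangles{l(\beta)}$ is symmetric).

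For (iii): write $l(\alpha,\beta)$ as the ``difference'' $1$-simplex with top map $\alpha$ and bottom map $\beta$. The cleanest route is to form the $2$-simplex in $G\calC$ with edges $e(A)\colon(O,O)\to(A,A)$ via $\beta\alpha^{-1}$-twisted data realizing $l(\beta\alpha^{-1})$, together with the obvious diagonal fillers, so that $\lrangles{l(\alpha,\beta)}=\lrangles{l(\beta\alpha^{-1})}$ follows from (N1) plus a base-change/reparametrization of the loop; then $\lrangles{l(\beta\alpha^{-1})}=\lrangles{l(\beta)}-\lrangles{l(\alpha)}$ is immediate from (ii) and (i) (apply (ii) to $\beta\alpha^{-1}$ and $\alpha$: $\lrangles{l(\beta)}=\lrangles{l(\beta\alpha^{-1})}+\lrangles{l(\alpha)}$).

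\textbf{Main obstacle.} The routine-looking but genuinely fiddly step is verifying that the chosen diagrams really are optimal $\tx$ diagrams (resp.\ admissible triples, resp.\ $2$-simplices in $G\calC$) in the pCGW sense---i.e.\ checking the $\M$-squares can be filled so that all the required squares in Definitions~\ref{def:AdmTrip}--\ref{def:OPTtx} are distinguished, and that the quotient-index squares and triangles coincide on the nose in both copies. As Warning~\ref{warning:compose} and Discussion~\ref{dis:NenPushout} emphasize, the analogue of the exact-category identity $B\oplus\frac{B}{A}\cong B\star_A B$ fails in general, so I cannot simply transcribe Nenashev's diagrams; I expect to have to use Axiom~(I) together with the fact that all quotients here are $O$ (Axiom~(Z)) to make the optimality and coincidence conditions hold trivially, which is exactly why restricting to automorphisms (and hence to squares with vanishing cokernel) keeps the verification manageable.
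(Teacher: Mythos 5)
Your proposal is correct and mirrors the paper's argument: (i) via an optimal $3\times3$ diagram with trivial cokernels and Relations (N1)/(N2), and (iii) by first showing $\lrangles{l(\alpha,\beta)}=\lrangles{l(\beta\alpha^{-1})}$ with another such diagram and then applying (ii). For (ii), however, note that the paper gets away with much less: since all cokernels here are $O$, the quotient index triangles of the two flag diagrams coincide automatically, so the edges $l(\alpha)$, $l(\beta)$, $l(\alpha\beta)$ already bound an honest $2$-simplex in $G\calC$ and Relation~(B3) of Proposition~\ref{prop:baseline} gives the additivity immediately; invoking Lemma~\ref{lem:admtriple} (whose whole point is to handle the case where the quotient triangles \emph{disagree}) is unnecessary. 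Also beware a misstatement in your write-up of (ii): the associated double exact square $l(\calT)$ of your admissible triple is the diagonal square on $O$'s (so $\lrangles{l(\calT)}=0$ by (N1)), \emph{not} $l(\beta\alpha)$ --- it is the edge $e_2$ of the triple, not $l(\calT)$, that equals $l(\beta\alpha)$. Since $\lrangles{l(\calT)}=0$, your Lemma~\ref{lem:Nenclosedloop} computation still yields $\lrangles{l(\alpha)}+\lrangles{l(\beta)}-\lrangles{l(\beta\alpha)}=0$, so the conclusion survives despite the mislabeled intermediate object.
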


\begin{remark} Suppose $\alpha,\beta\colon A\to A$ are automorphisms of a variety. By items (i) and (iii) above, $l(\alpha,\beta)=\lrangles{\widetilde{l}(\alpha\beta^{-1})}$, and so $l(\alpha,\beta)\in K_1(\Var_k)$ corresponds to an honest birational automorphism. One may also interpret $\lrangles{l(\alpha,\beta)}$ as the formal difference between automorphisms $\beta$ and $\alpha$.
\end{remark}

\begin{proof}[Proof of Corollary] Recall that $\varphi(\alpha)\colon A\otail A$ defines $\alpha\colon A\to A$ in $\calC$ if $\E\subseteq \calC$, and $\alpha^{-1}\colon A\to A$ if $\E^\opp\subseteq \calC$.
	\begin{enumerate}[label=(\roman*)] 	\item Consider the following $\tx$ diagram
		 \begin{equation*} 
	\small{	\left(\begin{tikzcd}
		O\ar[r, >->] 
		\ar[d,>-> ] & O \ar[d, >->] & O
		\ar[d,>->] \ar[l, {Circle[open]}->]\\
	A  \ar[r,>->,"1_A"]  &A  \ar[dr,phantom,"\circlearrowleft"] & O \ar[l, {Circle[open]}->] \\
		A \ar[u, {Circle[open]}->,"1_A"] \ar[r,>->,"\alpha"]& A \ar[u, {Circle[open]}->,"\varphi(\alpha^{-1})"]& O  \ar[u, {Circle[open]}->] \ar[l, {Circle[open]}->]
		\end{tikzcd} \qquad\text{,}\qquad  \begin{tikzcd}
		O\ar[r, >->] 
	\ar[d,>-> ] & O \ar[d, >->] & O
\ar[d,>->] \ar[l, {Circle[open]}->]\\
A  \ar[r,>->,"\alpha"]  &A \ar[dr,phantom,"\circlearrowleft"] & O \ar[l, {Circle[open]}->] \\
A \ar[u, {Circle[open]}->,"1_A"] \ar[r,>->,"\alpha"]& A \ar[u, {Circle[open]}->,"1_A"]& O  \ar[u, {Circle[open]}->] \ar[l, {Circle[open]}->]
\end{tikzcd}\right).}
		\end{equation*}
To show optimality, pick the following pairs of $\M$-morphisms:
		$$v:=1_A,\,v':= \alpha,\qquad u,u':= O\rtail A \qquad \text{and}\quad w,w':=1_A.$$
The claimed identity then follows by Relations (N1) and (N2).
\item Immediate from Relation (B3) of Proposition~\ref{prop:baseline} and the fact that $K_1$ is abelian.
		\item The following (optimal) $\tx$ diagram
		\begin{equation*} 
		\small{	\left(\begin{tikzcd}
			A\ar[r, >->,"\alpha"] \ar[d,>->, swap,"\alpha"]  & A \ar[d, >->,"1_A"] & O
			\ar[d,>->] \ar[l, {Circle[open]}->]\\
			A  \ar[r,>->,"1_A"]  &A \ar[dr,phantom,"\circlearrowleft"] & O \ar[l, {Circle[open]}->] \\
			O \ar[u, {Circle[open]}->] \ar[r,>->]& O \ar[u, {Circle[open]}->] & O  \ar[u, {Circle[open]}->] \ar[l, {Circle[open]}->]
			\end{tikzcd} \qquad\text{,}\qquad  \begin{tikzcd}
			A\ar[r, >->,"\beta"] 
			\ar[d,>->,swap,"\alpha"] & A \ar[d, >->,"1_A"] & O
			\ar[d,>->] \ar[l, {Circle[open]}->]\\
			A  \ar[r,>->,"\beta\alpha^{-1}"]  &A \ar[dr,phantom,"\circlearrowleft"]  & O \ar[l, {Circle[open]}->] \\
			O \ar[u, {Circle[open]}->] \ar[r,>->]& O \ar[u, {Circle[open]}->] & O  \ar[u, {Circle[open]}->] \ar[l, {Circle[open]}->]
			\end{tikzcd} \right)}		
		\end{equation*}
		yields the relation $\lrangles{l(\alpha,\beta)}=\lrangles{l(\beta\alpha^{-1})}.$
 By item (ii), deduce
		 $$\lrangles{l(\beta\alpha^{-1})} = \lrangles{l(\beta)}+\lrangles{l(\alpha^{-1})}\quad\text{and}\quad \lrangles{l(
	\alpha^{-1})}=-\lrangles{l(\alpha)}.$$
This assembles to give $\lrangles{l(\alpha,\beta)}=\lrangles{l(\beta\alpha^{-1})}=\lrangles{l(\beta)}-\lrangles{l(\alpha)}$, as desired. 
	\end{enumerate}	
\end{proof}

We now extend the notion of stratified birational equivalence from
irreducible to arbitrary varieties.

\begin{definition}\label{def:strat-bir-eq}
	A {\em stratified birational equivalence} between varieties $X$ and $Y$ consists of a pair
	of open immersions of non-empty opens
$$U \hookrightarrow X, \qquad V \hookrightarrow Y,$$
 together with isomorphisms
$$U\cong V\,\,\quad\text{and}\quad X\setminus U \cong Y\setminus V \,.$$
This clearly defines a 1-simplex in $G\Var_k^\mathsf{o}$, and includes all double exact squares. As such, by Theorem~\ref{thm:NenSurj} and Corollary~\ref{cor:BirIso}, stratified birational equivalences generate $K_1(\Var_k).$
\end{definition}

\begin{discussion} The definition above agrees with the irreducible case. For reducible varieties, $U\cong V$ induces finitely many birational equivalences between the irreducible components of $X$ and $Y$ that intersect the open subsets, while the remaining components lie entirely in the closed complements (which organise into isomorphic pairs since $X\setminus U\cong Y\setminus V$). In this sense, the geometric equivalence between $X$ and $Y$ is assembled from finitely many pieces, motivating the terminology {\em stratified} birational equivalence.
\end{discussion}

\begin{discussion}[Generalising stratifiability] Any birational equivalence defines a 1-simplex in the $G$-construction, but not necessarily an element of $K_1(\Var_k)$. For this to happen, the associated 1-simplex must lie in the base-point component [since $K_1(\Var_k)=\pi_1|G\Var_k^\mathsf{o}|$]. More concretely, a birational map $f\colon X\dashrightarrow Y$ defines an element in $K_1$ just in case 
	$$[X]=[Y]\quad \text{in}\, K_0(\Var_k).$$
It is clear stratified birational equivalences of irreducible varieties satisfy this condition, but not every birational equivalence does -- e.g. the birational equivalence $\mathbb{P}^2\dashrightarrow\mathrm{Bl}_{[0:0:1]}\mathbb{P}^2$.\footnote{To be safe, assume the base field $k$ has characteristic 0, but the example should work more generally. The argument is straightforward once we know dimension is an invariant of $K_0(\Var_k)$ [and so $[\ast]\neq [\mathbb{A}^1]$], but there does not seem to be a published reference showing this for arbitrary fields. Nonetheless, see this argument \cite{SawinField} by Will Sawin.}
\end{discussion}

\subsection{Assembler Relations}\label{sec:Ass} We now apply Theorem~\ref{thm:NenSurj} to compare our relations with Zakharevich's $K_1$ of an Assembler (Proposition~\ref{prop:ass}). As a corollary, we prove that $\calD(\calC)\cong K_1(\calC)$, and so $\calD(\calC)$ gives an alternative presentation of $K_1(\calC)$ for pCGW categories (Corollary~\ref{cor:Nen}).
 
We start with an informal overview. An {\em Assembler} is a Grothendieck site $\calA$ whose topology encodes how an object $A$ may be covered by a finite set of disjoint subobjects $\{A_i\}_{i\in I}$. To define the $K$-theory of an Assembler $\calA$, one constructs its associated {\em category of covers} $\calW(\calA)$, defined as follows:
\begin{itemize}
	\item[] \textbf{Objects:} Finite sets of objects $\{A_i\}_{i\in I}$ in $\calA$;
	\item[] \textbf{Morphisms:} Piecewise automorphisms in $\calA$. Explicitly, a morphism  $f\colon \{A_i\}_{i\in I}\to \{B_j\}_{j\in J}$ in $\calW(\calA)$ is a tuple of morphisms $f_i\colon A_i\to B_{f(i)}$ such that $\{f_i\colon A_i\to B_j\}_{i\in f^{-1}(j)}$ is a finite disjoint covering family.
\end{itemize}
The $K$-theory spectrum $K(\calA)$ is then defined as the symmetric spectrum of the $\Gamma$-space induced by $\calW(\calA)$ -- see \cite[\S 2]{ZakhAss} for details. Its relevance is that it gives an alternative construction of $K\Var_k$, which is equivalent to the CGW construction \cite[Theorems 7.8 and 9.1]{CGW}. Further, extending Muro-Tonks' model of $K_1$ of a Waldhausen Category \cite{MuroTonks}, Zakharevich gave the following $K_1$ presentation.

\begin{theorem}[{{\cite[Theorem B]{ZakhK1}}}]\label{thm:ZakhB} Let $\calA$ be an Assembler whose morphisms are closed under pullback. Then $K_1(\calA)$ is generated by a pair of morphisms 
	$$A\xrightrightarrows[g]{f} B$$
	in $\calW(\calA)$. These satisfy the relations
	\begin{enumerate}[label=(Z\arabic*)]
		\item $\lrangles{A\xrightrightarrows[f]{f} B}=0$;
		\item $\lrangles{A\xrightrightarrows[f_2]{f_1} B} + \lrangles{C\xrightrightarrows[g_2]{g_1} D}=\lrangles{A\coprod C\xrightrightarrows[f_2\coprod g_2]{f_1\coprod g_1} B\coprod D}$;
		\item $\lrangles{B\xrightrightarrows[g_2]{g_1} C} +\lrangles{A\xrightrightarrows[f_2]{f_1} B}=\lrangles{A\xrightrightarrows[g_2f_2]{g_1f_1} B}$.
	\end{enumerate}
\end{theorem}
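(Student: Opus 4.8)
Since the statement reproduces Zakharevich's \cite[Theorem B]{ZakhK1} verbatim, the proof I would give is really a pointer to that reference together with a recollection of the argument's structure, which we need in any case for the comparison in Proposition~\ref{prop:ass}. The plan is as follows. First I would recall that $K(\calA)$ is stably equivalent to the $K$-theory of the category of covers $\calW(\calA)$, viewed as a Waldhausen category with cofibrations the sub-cover inclusions and weak equivalences the morphisms of $\calW(\calA)$ \cite{ZakhAss}. Next I would invoke Muro--Tonks' algebraic model for the $1$-type of a Waldhausen $K$-theory spectrum \cite{MuroTonks}: $K_1$ is the degree-$1$ homotopy of a stable quadratic module generated by weak equivalences together with cofibre sequences, subject to the Muro--Tonks relations.

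The reduction to the presentation stated in the theorem then rests on the structural fact, \cite[Theorem 1.9]{ZakhK1}, that every cofibre sequence in $\calW(\calA)$ splits up to weak equivalence. I would use this twice: once to eliminate the cofibre-sequence generators (each becomes identified with a weak equivalence, so that $K_1(\calA)$ is generated by parallel pairs $A\rightrightarrows B$, the two legs encoding a morphism and a chosen splitting), and once to collapse the Muro--Tonks relations, whereupon additivity of cofibre sequences becomes (Z2), the composition axiom becomes (Z3), and identity-normalisation becomes (Z1). Verifying that these relations suffice — that the resulting quadratic module genuinely has the claimed $\pi_1$, with no surviving relations — is the delicate bookkeeping step, and is the part of \cite{ZakhK1} that I would cite rather than redo.

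The hard part, were one to attempt an independent proof in the pCGW setting, is precisely the splitting-up-to-weak-equivalence of cofibre sequences: this is an artefact of Zakharevich's non-standard weak equivalences, which build the splitting in by fiat (contrast $\ast\hookrightarrow\mathbb{P}^1\hookleftarrow\mathbb{A}^1$, which does not split once weak equivalences are taken to be isomorphisms, as in our framework). Since our setup lacks this feature, I would not attempt such a reproof; instead I take Theorem~\ref{thm:ZakhB} as an external input and, in the rest of this subsection, translate its generators and relations into double exact squares by means of Theorem~\ref{thm:NenSurj}.
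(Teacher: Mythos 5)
Since Theorem~\ref{thm:ZakhB} is an imported external result, the paper itself gives no proof; it simply cites \cite[Theorem B]{ZakhK1} and then discusses it. Your proposal does the same, and your recollection of the scaffolding of Zakharevich's argument --- the Waldhausen model via $\calW(\calA)$, the Muro--Tonks stable quadratic module, and the splitting of cofibre sequences up to weak equivalence from \cite[Theorem 1.9]{ZakhK1} --- is accurate and consistent with the paper's own surrounding discussion, including the $\ast\hookrightarrow\mathbb{P}^1\hookleftarrow\mathbb{A}^1$ example.

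One claim should be corrected. You write that verifying ``that these relations suffice --- that the resulting quadratic module genuinely has the claimed $\pi_1$, with no surviving relations --- is the delicate bookkeeping step, and is the part of \cite{ZakhK1} that I would cite rather than redo.'' In fact Zakharevich does \emph{not} prove completeness of (Z1)--(Z3). Theorem~B of \cite{ZakhK1} asserts that $K_1(\calA)$ is generated by the stated elements and that the three relations hold; it does not claim that they present $K_1$. The paper makes this point explicitly in the remarks immediately following the theorem statement (``\cite[Theorem B]{ZakhK1} leaves open the possibility that there may be more relations on $K_1$ to be identified. This incompleteness is inherited from Muro--Tonks' original model of $K_1$\ldots''), and in the introduction it credits the completeness result to the later homological argument of \cite[Prop.~4.1]{SrokaScissors}. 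So the ``delicate bookkeeping step'' you describe is not in the cited reference; what is there is the weaker assertion. Apart from this overclaim, your proposal matches the paper's treatment.
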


A couple remarks are in order. First, \cite[Theorem B]{ZakhK1} leaves open the possibility that there may be more relations on $K_1$ to be identified. This incompleteness is inherited from Muro-Tonks' original model of $K_1$: although \cite[Prop 6.3]{MuroTonks} shows that their model coincides with Nenashev's model for exact categories (and is thus complete), they were unable to show the same for all Waldhausen categories. Second, Relation (Z1) clearly corresponds to the diagonal relation (N1) of $\calD(\calC)$. It remains to investigate Relations (Z2) and (Z3) in our context, which we work out below.

\begin{proposition}[Assembler Relations]\label{prop:ass} Let $f$ and $g$ be two double exact squares
		\begin{equation*}\label{eq:assf}
f:= \left(	\dsquaref{O}{X}{A}{B}{ }{ }{ f_{1}}{f_{2} } \quad ,\quad 	\dsquaref{O}{X}{A}{B}{ }{ }{ f'_{1}}{f'_{2} }\right)  \qquad\quad g:= \left(	\dsquaref{O}{Y}{C}{D}{ }{ }{ g_{1}}{g_{2} } \quad ,\quad 	\dsquaref{O}{Y}{C}{D}{ }{ }{ g'_{1}}{g'_{2} }\right). 
	\end{equation*}
Then the following relations hold in $\calD(\calC)$:	
\begin{enumerate}[label=(A\arabic*)]
	\item \emph{(Formal Direct Sums).} $\lrangles{f}+\lrangles{g}=\lrangles{f\oplus g}$.
\item \emph{(Restricted Composition).} Suppose $B=C$, and so $f$ and $g$ define an admissible triple. Then
$$\lrangles{f} + \lrangles{g}=\lrangles{g\circ f} + \lrangles{l_2},$$ 
where $l_2$ is the associated double exact square 
\[l_2:=\left( \dsquaref{O}{Y}{X}{\frac{D}{A}}{ }{ }{h_1}{j_1} \quad,\quad  \dsquaref{O}{Y}{X}{\frac{D}{A}}{ }{}{h'_1}{j'_1}\right), \qquad \text{and} \qquad \lrangles{g\circ f}:= \left(	\dsquaref{O}{\frac{D}{A}}{A}{D}{ }{ }{ g_{1}f_1}{ } \quad ,\quad 	\dsquaref{O}{\frac{D}{A}}{A}{D}{ }{ }{ g'_{1}f'_1}{}\right).\]
\end{enumerate}	
\end{proposition}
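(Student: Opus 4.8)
\textbf{Proof plan for Proposition~\ref{prop:ass}.}

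The strategy is to realise each of (A1) and (A2) as an instance of the $\tx$-relation (N2) for a suitably chosen \emph{optimal} $\tx$ diagram, exactly as was done in the proof of Corollary~\ref{cor:BirIso}. The whole point of having Theorem~\ref{thm:NenSurj} available is that once the six double exact squares of an optimal $\tx$ diagram are displayed, the relation $\lrangles{l_0}+\lrangles{l_2}-\lrangles{l_1}=\lrangles{l^0}+\lrangles{l^2}-\lrangles{l^1}$ holds for free; so the work is purely in choosing the diagram and verifying optimality.

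For (A1), the plan is to take the $\tx$ diagram whose three rows are $f$, $g$, and $f\oplus g$ (reading the bottom row as the "sum" row), with all vertical maps the evident coproduct inclusions/projections; more precisely, set $X_{0\bullet}$ to be the top row of $f$, $X_{1\bullet}$ the top row of $g$, arranged so that the columns are $O$-filtrations. Concretely I would build the diagram so that the six double exact squares are $l_0=f$, $l_1=g$, $l_2=f\oplus g$ (up to the bookkeeping of which index is which), and $l^0,l^1,l^2$ are the three "degenerate" columns of the form $(\dsquaref{O}{B}{O}{B}{}{}{}{1},\ldots)$, i.e. standard edges $e(-)$, which vanish by Relation (N1)/(B1) after passing through $m$. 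Then (N2) collapses to $\lrangles{f}+\lrangles{g}=\lrangles{f\oplus g}$. The object $Z$ witnessing optimality is just $A\oplus C$ (or $B\oplus D$ in the other orientation), and the maps $u,v,w$ are the canonical summand inclusions; the diagrams (P1)--(P4) are then all built from direct-sum squares, whose distinguishedness is Lemma~\ref{lem:DirectSum}, and the identities~\eqref{eq:OPTID} are immediate for coproduct data. This step should be essentially formal.

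For (A2), the plan is to use the admissible triple determined by $A\xrtail{f_1}B=C\xrtail{g_1}D$: apply Lemma~\ref{lem:quotFilt} to the filtration $A\rtail B\rtail D$ to obtain the quotients $\frac{B}{A}=X$, $\frac{D}{A}$, $\frac{D}{B}=Y$ and the middle $\M$-morphism $X\rtail \frac{D}{A}$, and similarly for the primed data. This produces precisely the $\tx$-type picture of Definition~\ref{def:AdmTrip}: the three rows of the $\tx$ diagram are $f=(A\rtail B)$, $g=(C\rtail D)$ and $g\circ f=(A\rtail D)$, while the three columns are the standard edges $e(A)$, $e(B)$, and the double exact square $l_2$ associated (via Lemma~\ref{lem:quotFilt}) to the filtration of quotients. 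With $Z:=B=C$ itself, $v,v':=g_1,g'_1$, $u,u':=f_1,f'_1$, $w,w':=1_B$, the Optimality Identities~\eqref{eq:OPTID} read $f_1=1_B\circ f_1$, $g_1\circ f_1=g_1\circ f_1$ etc., which hold trivially, and Diagrams (P1)--(P4) are the distinguished squares already supplied by Lemma~\ref{lem:quotFilt} (plus trivial $\oplus O$ bookkeeping). Feeding this optimal $\tx$ diagram into Relation (N2) gives $\lrangles{f}+\lrangles{g\circ f}-\lrangles{g}=\lrangles{e(A)}+\lrangles{l_2}-\lrangles{e(B)}$; since $\lrangles{e(A)}=\lrangles{e(B)}=0$ in $\calD(\calC)$ by (N1), rearranging yields $\lrangles{f}+\lrangles{g}=\lrangles{g\circ f}+\lrangles{l_2}$, which is (A2).

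The main obstacle I anticipate is the verification of \emph{optimality} in each case --- specifically, checking that the chosen $Z$ together with the maps $u,v,w$ really do assemble into the four pairs of distinguished diagrams (P1)--(P4) of Definition~\ref{def:OPTtx}, not just that the evident identities hold. For (A1) this reduces to a careful but routine application of Lemma~\ref{lem:DirectSum} and Fact~\ref{facts:restrictedpushouts} (the restricted pushout of a span of split-type data is the direct sum, and $Z=A\oplus C$); for (A2) the subtlety is that $Z=B$ is \emph{not} in general the restricted pushout $B\star_A B$ --- cf. Discussion~\ref{dis:NenPushout} and Figure~\ref{fig:Inna} --- so one must check by hand, using Lemma~\ref{lem:quotFilt} and the uniqueness of quotients (Axiom (K)), that Diagrams (P1)--(P4) with this choice of $Z$ are genuinely distinguished. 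Once optimality is in hand, (A1) and (A2) drop out of (N2) mechanically, and the vanishing of the standard-edge generators $\lrangles{e(-)}$ under (N1) finishes both.
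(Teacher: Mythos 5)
Your overall strategy — realise each of (A1), (A2) as an instance of Relation (N2) for a suitable optimal $\tx$ diagram, kill the padding squares by (N1), and rearrange — is exactly the paper's; for (A1) the plan is essentially correct, modulo some loose bookkeeping (the paper's columns are the direct-sum squares $A\rtail A\oplus C\ltail C$, etc., not standard edges $e(-)$, and $Z=B\oplus C$ rather than $A\oplus C$ — though both kinds of column are diagonal and so vanish under (N1), so the conclusion survives).

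For (A2), however, there is a concrete gap. A $\tx$ diagram cannot have $f=(A\rtail B)$, $g=(B\rtail D)$ and $g\circ f=(A\rtail D)$ all as rows (in any order): the sources are $A,B,A$ and targets $B,D,D$, so the column-exact-squares forced by any such arrangement fail (e.g.\ you would need $A$ or $B$ to be the formal cokernel of an identity or of $A\rtail B$, which is false in general). Consequently the formula you write down, $\lrangles{f}+\lrangles{g\circ f}-\lrangles{g}=\lrangles{e(A)}+\lrangles{l_2}-\lrangles{e(B)}$, neither arises from a valid instance of (N2) nor "rearranges" to the stated conclusion: after setting the $e$'s to zero it gives $\lrangles{f}+\lrangles{g\circ f}=\lrangles{g}+\lrangles{l_2}$, which differs from (A2) by a sign. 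Notably, your choice of optimality witness $Z=B$, $u=f_1$, $v=g_1$, $w=1_B$ is \emph{exactly} the paper's and it forces $X_{01}=A$, $X_{10}=B$, $X_{11}=D$ — which is \emph{incompatible} with any of the row-arrangements you describe (they require $X_{01}=B$ or $X_{01}=D$). Feeding that witness back into Definition~\ref{def:OPTtx} produces the paper's diagram, in which $f$ and $g\circ f$ are the first two \emph{columns}, $g$ and $l_2$ are the middle and bottom \emph{rows}, and the remaining row (the diagonal on $A\xrtail{=}A$) and column (the diagonal on $O\rtail Y$) are the padding that dies by (N1); this gives $0+\lrangles{l_2}-\lrangles{g}=\lrangles{f}+0-\lrangles{g\circ f}$, which is (A2). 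You should also drop the worry about $Z$ being a restricted pushout: Definition~\ref{def:OPTtx} does not require it, which is precisely why optimality was introduced.
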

\begin{proof} We derive Relations (A1) and (A2) from Relations (N1) and (N2).
\begin{itemize}
	\item[(A1):] We claim the following is an optimal $\tx$ diagram:
	\begin{equation}\label{eq:asscoprod}
	\left(\begin{tikzcd}
	A \ar[r, >->, "f_{1}"] 
	\ar[d,>->]  & B \ar[d, >->] & X
	\ar[d,>->] \ar[l, {Circle[open]}->,swap,"f_{2}"]\\
	A\oplus C \ar[r,>->,"f_{1}\oplus g_1"]  &	B\oplus D \ar[dr,phantom,"\circlearrowleft"]  & X\oplus Y\ar[l, {Circle[open]}->,swap,"f_2\oplus g_2"] \\
	C \ar[u, {Circle[open]}->] \ar[r,>->,"g_1"]& D\ar[u, {Circle[open]}-> ] & Y \ar[u, {Circle[open]}->,swap]\ar[l, {Circle[open]}->,swap,"g_{2}"]
	\end{tikzcd} \qquad\text{,}\qquad  \begin{tikzcd}
	A \ar[r, >->, "f'_{1}"] 
	\ar[d,>->]  & B \ar[d, >->] & X
	\ar[d,>->] \ar[l, {Circle[open]}->,swap,"f'_{2}"]\\
	A\oplus C \ar[r,>->,"f'_{1}\oplus g'_1"]  &	B\oplus D  \ar[dr,phantom,"\circlearrowleft"] & X\oplus Y \ar[l, {Circle[open]}->,swap,"f'_2\oplus g'_2"] \\
	C \ar[u, {Circle[open]}->] \ar[r,>->,"g'_1"]& D \ar[u, {Circle[open]}-> ]& Y \ar[u, {Circle[open]}->] \ar[l, {Circle[open]}->,swap,"g'_{2}"]
	\end{tikzcd}\right)
	\end{equation}		
	The vertical columns -- denoted $l^0,l^1,l^2$ -- define the direct sum squares from Axiom (A), Definition~\ref{def:pCGW}. The top and bottom rows correspond to $f$ and $g$. The middle rows correspond to $f\oplus g$, as defined in Lemma~\ref{lem:DirectSum}. To define the required $\M$-morphisms for optimality, take repeated restricted pushouts of 
	$$ D\xltail{g_1} C\ltail O \rtail A\xrtail{f_1} B \quad \text{and}\quad  D\xltail{g'_1} C\ltail O \rtail A\xrtail{f'_1} B;$$
the obvious choices of $\M$-morphisms are easily seen to satisfy Optimality Identities~\eqref{eq:OPTID}. Lemma~\ref{lem:DirectSum} then  supplies Diagrams (P1) – (P4) by “adding” the obvious filtrations. Finally, to see why the $\E$-squares commute, use the fact that the direct sum quotients are defined coordinate-wise, that is $f_2\oplus g_2:=(1\oplus g_2)\circ (f_2\oplus 1)$ and $f'_2\oplus g'_2:=(1\oplus g'_2)\circ (f'_2\oplus 1)$.\footnote{{\em Details.} The validity of these quotient choices is established in the proof of Lemma~\ref{lem:DirectSum}, Step~2. Using this, one verifies that the $\E$-squares are quotients of the same $\M$-morphism and hence commute up to canonical isomorphism on $Y$. By Axiom (PQ), this isomorphism lifts to $X\oplus Y$ and thus may be absorbed into the choice of direct sum quotients.}

	This sets up the final move. Since Diagram~\eqref{eq:asscoprod} is optimal, apply Relation (N2) to get 
	\[\lrangles{f}+\lrangles{g}-\lrangles{f\oplus g} = \lrangles{l^0} + \lrangles{l^2} - \lrangles{l^1}.\]
	Since $l^0, l^1$ and $l^2$ are all diagonal, deduce from (N1) that $\lrangles{l^0} = \lrangles{l^1} =\lrangles{l^2} = 0$, and so conclude 
	\[\lrangles{f}+\lrangles{g}=\lrangles{f\oplus g}. \]
	\item[(A2):] Given that $B=C$, construct the following diagram
	\begin{equation}\label{eq:asscompose}
	\left(\begin{tikzcd}
	A \ar[r, >->, "="] 
	\ar[d,>->,swap,"f_1"]  & A \ar[d, >->,"g_1f_1"] & O
	\ar[d,>->] \ar[l, {Circle[open]}->]\\ B \ar[r,>->,"g_1"]  &	D \ar[dr,phantom,"\circlearrowleft"]  & Y \ar[l, {Circle[open]}->,swap,"g_2"] \\
X \ar[u, {Circle[open]}->,"f_2"] \ar[r,>->,"h_1"]& \frac{D}{A} \ar[u, {Circle[open]}->]& Y \ar[u, {Circle[open]}->,swap,"="] \ar[l, {Circle[open]}->,swap,"j_1"]
	\end{tikzcd} \qquad\text{,}\qquad  \begin{tikzcd}
	A \ar[r, >->, "="] 
	\ar[d,>->,swap,"f'_1"]  & A \ar[d, >->,"g'_1f'_1"] & O
	\ar[d,>->] \ar[l, {Circle[open]}->]\\ B \ar[r,>->,"g'_1"]  &	D \ar[dr,phantom,"\circlearrowleft"]  & Y \ar[l, {Circle[open]}->,swap,"g'_2"] \\
X \ar[u, {Circle[open]}->,"f'_2"] \ar[r,>->,"h'_1"]& \frac{D}{A} \ar[u, {Circle[open]}->]& Y \ar[u, {Circle[open]}->,swap,"="] \ar[l, {Circle[open]}->,swap,"j'_1"]
	\end{tikzcd}  \right).
	\end{equation}		
As before, label the horizontal rows as $l_0$, $l_1$ and $l_2$ and the vertical columns as $l^0, l^1$ and $l^2$, where $l_1=g$, $l^0=f$ and $l^1=g\circ f$. It is obvious most of these define double exact squares; the only non-trivial case is $l_2$, but this is directly constructed by Lemma~\ref{lem:quotFilt}. In fact, the $\E$-morphisms $(j_1,j'_1)$ are specifically chosen by Lemma~\ref{lem:quotFilt} to ensure that the $\E$-squares commute, and so Diagram~\eqref{eq:asscompose} is a $\tx$ diagram. Finally, to check optimality, pick the following pairs of $\M$-morphisms:
	$$v:=g_1,\,v':= g'_1,\qquad u:=f_1,\,u':=f'_1 \qquad \text{and}\quad w,\, w':=1_B.$$ 
	The Optimality Identities are trivially satisfied; it is also easy to construct the required Diagrams (P1) – (P4). 
	Now apply Relation (N2) to get 
	\[\lrangles{l_0}+\lrangles{l_2}-\lrangles{g} = \lrangles{f} + \lrangles{l^2} - \lrangles{g\circ f}.\]
	Since $l_0$ and $l^2$ are diagonal, apply (N1) to conclude
	\[\lrangles{g\circ f} + \lrangles{l_2}= \lrangles{f} + \lrangles{g}.\] 
\end{itemize}
\end{proof}

Relation (A2) improves upon Proposition~\ref{prop:baseline} by determining what happens to {\em any} admissible triple in $K_1$, clarifying Warning~\ref{warning:compose}. Namely, given an admissible triple $\calT=(f,g,g\circ f)$, we now know that
$$\lrangles{g\circ f} + \lrangles{l_2} = \lrangles{f}+\lrangles{g} \qquad \text{in}\, K_1.$$
Unlike Theorem~\ref{thm:ZakhB}, where composition always splits in $K_1$, here an obstruction term $\lrangles{l_2}$ appears. Of course, if $\lrangles{l_2}=0$ then Relations (A2) and (Z3) coincide, but this is not true in general, as below.  

\begin{example}\label{ex:KeyExample} Given any automorphism $\alpha\colon B\rtail B$, construct the diagram pair
	\[\begin{tikzcd}
	A \ar[r, >->] \ar[dr,phantom,"\square"] & A\oplus B  \ar[dr,phantom,"\square"] \ar[r, >->,"1_A\oplus 1_B"] & A\oplus B \\
	O \ar[r,>->] \ar[u,{Circle[open]}->]&	B \ar[r, >->,"1_B"] \ar[u, {Circle[open]}->] \ar[dr,phantom,"\square"]  & B \ar[u, {Circle[open]}->] \\
	& O \ar[r,>->] \ar[u,{Circle[open]}->]& O \ar[u, {Circle[open]}->] 
	\end{tikzcd}  \quad \begin{tikzcd}
	A \ar[r, >->] \ar[dr,phantom,"\square"] & A\oplus B  \ar[dr,phantom,"\square"] \ar[r, >->,"1_A\oplus \alpha "] & A\oplus B \\
	O \ar[r,>->] \ar[u,{Circle[open]}->]&	B \ar[r, >->,"\alpha"] \ar[u, {Circle[open]}->] \ar[dr,phantom,"\square"]  & B \ar[u, {Circle[open]}->] \\
	& O \ar[r,>->] \ar[u,{Circle[open]}->]& O \ar[u, {Circle[open]}->] 
	\end{tikzcd}	\qquad.
	\]	
Notice the obstruction term here is $\lrangles{l_2}=l(\alpha)$, the double exact square associated to $\alpha$ (Example~\ref{ex:Aut}). It is known that $l(\alpha)\neq 0$ in general, including the case of varieties -- see e.g. \cite{ZakhPtCount}.  
\end{example}

\begin{discussion}\label{dis:Comp} We suspect this discrepancy with Zakharevich's account stems from \cite[Theorem 2.1]{ZakhK1}, a key ingredient in Zakharevich's proof of Theorem~\ref{thm:ZakhB}, which models the $K$-theory of (nice) Assemblers using Waldhausen categories whose cofibration sequences all split (up to weak equivalence). We emphasise that this relies on a non-standard notion of weak equivalence \cite[Def. 1.7]{ZakhK1}. In the CGW setting, it is clearly false that all exact squares split in $\Var_k$ -- consider e.g.	\[\dsquare{O}{\mathbb{A}^1}{\{\ast\} }{ \mathbb{P}^1}.\]
\end{discussion}

\begin{remark} Proposition~\ref{prop:ass} identifies certain relations of the abelian group $\calD(\calC)$, and so naturally the proof was algebraic. However, since we are ultimately interested in $K_1(\calC)=\pi_1|G\calC^{\mathsf{o}}|$, one may also appeal to the homotopical perspective to establish the relations more directly. Relation (A1) can be deduced from the $H$-space structure of $G\calC$ and applying the standard Eckmann-Hilton argument; Relation (A2) translates Lemma~\ref{lem:admtriple}, though the lemma's proof is comparatively involved (see Section~\ref{sec:admtriples}).
\end{remark}


We end with one final application. In order to show that $\calD(\calC)\cong K_1(\calC)$ for exact categories, Nenashev \cite{Nen1} constructs a homomorphism
$$b\colon K_1(\calC)\to \calD(\calC)$$
and shows that it is inverse to the map $m\colon \calD(\calC)\to K_1(\calC)$ from Equation~\eqref{eq:NenSurj}. Notice the naive map sending $\lrangles{f}\mapsto \lrangles{f}$ is not \emph{a priori} well-defined since $K_1(\calC)$ may impose relations beyond those of $\calD(\calC)$. In \cite[\S 4]{Nen1}, well-definedness of $b$ is verified through substantial combinatorial bookkeeping, before the final proof that it yields the desired isomorphism.\footnote{{\em Commentary.} With hindsight, one can interpret Nenashev’s proof as explicitly deducing an Eckmann–Hilton result from Relations (N1) and (N2) of $K_1$; compare \cite[\S 4]{Nen1} with our Observation~\ref{obs:K1DirectSum}. Notice the only place where homotopical input appears is \cite[Lemma 4.9]{Nen1}, which verifies invariance under elementary 2-simplex (triangle) moves.} In our framework, Propositions~\ref{prop:baseline} and ~\ref{prop:ass} combine to give a more direct proof.

\begin{corollary}\label{cor:Nen} Given any pCGW category $\calC$, there is an isomorphism
	$$\calD(\calC)\cong K_1(\calC).$$	
\end{corollary}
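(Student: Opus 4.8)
The plan is to construct a two-sided inverse to the surjection $m\colon \calD(\calC)\to K_1(\calC)$ of Theorem~\ref{thm:NenSurj}. Since $m$ is already known to be a surjective homomorphism, it suffices to produce a homomorphism $b\colon K_1(\calC)\to\calD(\calC)$ with $m\circ b=\id$; surjectivity of $m$ then forces $b\circ m=\id$ as well, and both maps are isomorphisms. The natural candidate for $b$ sends the homotopy class $\lrangles{l}$ of (the canonical loop of) a double exact square to the generator $\lrangles{l}\in\calD(\calC)$. By Theorem~\ref{thm:NenashevDSES} every element of $K_1(\calC)$ is of the form $\mu(l)$ for some double exact square $l$, so $b$ is defined on all of $K_1(\calC)$ as soon as we know it is well-defined.

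The crux is therefore well-definedness of $b$: if $l$ and $l'$ are double exact squares with $\mu(l)\simeq\mu(l')$ in $K_1(\calC)$, we must check $\lrangles{l}=\lrangles{l'}$ already holds in $\calD(\calC)$. Here I would use the explicit presentation of $K_1(\calC)=\pi_1|G\calC^o|$ from Proposition~\ref{prop:baseline}: two canonical loops of double exact squares are homotopic precisely when they are related by a finite sequence of the moves (B1), (B2), (B3) (plus re-choice of maximal spanning tree, which Corollary~\ref{cor:BirIso}-style arguments handle). Concretely, it suffices to verify that $b$ kills each relator: that $\lrangles{e(A)}=0$ in $\calD(\calC)$ (this is immediate from Relation (N1), since $e(A)$ is a diagonal square once we fix the spanning tree — the standard edges $(O,O)\to(A,A)$ and degenerate edges are diagonal), and that for any 2-simplex as in Diagram~\eqref{eq:baseline2SIMP}, the relation $\lrangles{l_A}+\lrangles{l_B}=\lrangles{l_C}$ holds in $\calD(\calC)$. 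The first part is Relation (N1) directly. The second part is exactly the content of Relation (A2) of Proposition~\ref{prop:ass}: a 2-simplex in $G\calC$ presents an admissible triple in which the obstruction term $\lrangles{l_2}$ is forced to vanish — indeed, the defining condition of a 2-simplex (Warning~\ref{warning:compose}) is that the quotient-index triangle is a \emph{diagonal} square, so $\lrangles{l_2}=0$ in $\calD(\calC)$ by (N1), and (A2) collapses to $\lrangles{l_A}+\lrangles{l_B}=\lrangles{l_C}$. Thus $b$ respects all defining relations of $\pi_1|G\calC^o|$ and descends to a homomorphism $K_1(\calC)\to\calD(\calC)$.

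The remaining steps are bookkeeping. One must check $b$ is a group homomorphism, i.e.\ that concatenation of loops in $K_1(\calC)$ corresponds to addition in $\calD(\calC)$; this follows from Relation (A1) together with the Eckmann–Hilton identification of the loop-concatenation product with the $H$-space ($\oplus$-induced) product on $G\calC$ (Observation~\ref{obs:K1DirectSum}), since Proposition~\ref{prop:ass}(A1) says $\lrangles{f}+\lrangles{g}=\lrangles{f\oplus g}$ in $\calD(\calC)$ and the same holds in $K_1(\calC)$ by the $H$-space structure. Finally, $m\circ b(\mu(l))=m(\lrangles{l})=\mu(l)$ on generators, so $m\circ b=\id_{K_1(\calC)}$; combined with surjectivity of $m$ this yields $b\circ m=\id_{\calD(\calC)}$, completing the proof.

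The main obstacle I anticipate is the well-definedness verification — specifically, being careful that \emph{every} homotopy between canonical loops of double exact squares can be reduced to the three elementary relator types of Proposition~\ref{prop:baseline} \emph{after} possibly changing the maximal spanning tree, and that changing the tree does not alter $b$ (one re-expresses the standard edges, which are diagonal and hence sent to $0$ by (N1), so the tree choice is immaterial in $\calD(\calC)$). Once the reduction to (B1)–(B3) is in hand, the verification that $b$ annihilates each relator is genuinely short, since it is precisely Relations (N1) and (A2); the work has effectively already been done in Propositions~\ref{prop:baseline} and~\ref{prop:ass}. This is why, as the excerpt remarks, the present framework gives a more direct route than Nenashev's original combinatorial construction of $b$ in \cite[\S 4]{Nen1}.
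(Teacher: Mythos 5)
Your proposal is correct and follows essentially the same route as the paper: the paper's proof also reduces the statement to verifying that the complete relations (B1)--(B3) of Proposition~\ref{prop:baseline} already hold in $\calD(\calC)$, with (B1)--(B2) being instances of (N1) and (B3) being (A2) with the obstruction term $\lrangles{l_2}$ diagonal (hence $0$ by (N1)). One small repair: your final inference ``$m\circ b=\id$ combined with surjectivity of $m$ forces $b\circ m=\id$'' is not valid as stated (a surjection can have many sections); what makes it go through here is that $b$ is \emph{also} surjective, since both $K_1(\calC)$ and $\calD(\calC)$ are generated by double exact squares and both $m$ and $b$ fix these generators, so $m\circ b$ and $b\circ m$ are each the identity on a generating set and hence everywhere.
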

\begin{proof} It suffices to check that $\calD(\calC)$ implies the relations in Proposition~\ref{prop:baseline}, which we know to be complete. Relations (B1) and (B2) are diagonal relations, and follow immediately from Relation (N1). Relation (B3) is a special case of Relation (A2) in Proposition~\ref{prop:ass}, where $\lrangles{l_2}=0$ since $l_2$ is diagonal by assumption. 
\end{proof}

\begin{remark} In fact, the proof of Corollary~\ref{cor:Nen} shows $K_1(\calC)$ can be presented as the free abelian group generated by double exact squares, modulo the Diagonal Relation (N1) (Definition~\ref{def:NenDC}) and Restricted Composition Relation (A2) (Proposition~\ref{prop:ass})	
\end{remark}

\section{Some Test Problems}

This paper began with Question~\ref{qn:key}: what information do the higher $K$-groups of varieties encode? Progress on this question requires advances on two fronts: developing the framework of non-additive $K$-theory on the one hand, and concrete applications of the newly-developed ($K$-theory) tools on the other. This paper is of the first kind, with a view towards laying the groundwork for future theorems. We conclude with some test problems and discussion.

\subsection{Non-Additive $K$-theory}\label{sec:NAKtheory}

Having characterised $K_1$, the natural next step is the following problem.

\begin{problem} Characterise $K_n(\calC)$ for $n>1$ for pCGW categories. 
\end{problem}

\begin{discussion} In a substantial generalisation of Nenashev's work, Grayson gave a complete characterisation of $K_n$ for all $n$ in the setting of exact categories \cite{GraysonBinary}. Encouraged by the results in this paper, the natural proof strategy would be to extend Grayson's argument to the pCGW setting. 
	
	However, there is an obvious barrier. Grayson characterises the $K$-groups of exact categories via binary chain complexes, and so invokes the Gillet-Waldhausen Theorem. Although an analogue of this result has been shown for extensive categories (e.g. $\mathrm{FinSet}$) \cite{SarazolaShapiroGW}, a Gillet-Waldhausen Theorem for $\Var_k$ has not been worked out yet. Nonetheless, even the case of $\mathrm{FinSet}$ is interesting. By Barratt-Priddy-Quillen, the $K$-groups of $\mathrm{FinSet}$ correspond to the stable homotopy groups of spheres, whose complete description is a longstanding open problem in homotopy theory. What might this presentation of $K_n(\mathrm{Finset})$ tell us about the stable homotopy groups of spheres? About the $J$-homomorphism and Adams $e$-invariant?
\end{discussion}

In light of Proposition~\ref{prop:ass}, perhaps a more urgent problem is the following.

\begin{problem} Resolve the discrepancy between our presentation of $K_1(\Var_k)$ and Zakharevich's.
\end{problem}


\begin{discussion} Here is one way to make precise Discussion~\ref{dis:Comp} regarding the choice of weak equivalences. Our proof of Theorem~\ref{thm:Sherman} makes crucial use of Lemma~\ref{lem:SherLoopSplit}, which states: if $\calC$ is a pCGW category whose exact squares all split, then $K_1(\calC)$ is generated by automorphisms. In which case, by Corollary~\ref{cor:BirIso}, composition splits in $K_1$ with no obstruction term. In light of \cite[Theorem 2.1]{ZakhK1}, can we extend Lemma~\ref{lem:SherLoopSplit} to Waldhausen categories whose cofibration sequences all split? 
\end{discussion}

\begin{discussion} Example~\ref{ex:KeyExample} shows how the obstruction term may fail to vanish in $K_1(\Var_k)$. So what happens in the Assemblers setting?

Recall from \cite[\S 5.1]{ZakhAss} the assembler $\calV_k$: its objects are $k$-varieties, its morphisms locally closed immersions, and its Grothendieck topology is generated by coverages of the form $\{Y\to X,X\setminus Y\to X\}$ for closed embeddings $Y\to X$. In Zakharevich's framework, the generators of $K_1(\calV_k)$ are finite tuples of morphisms from $\calV_k$.

We now translate Example~\ref{ex:KeyExample} to Assemblers (with coproducts). Take two multimorphisms $$f, f' \colon \{\{A\}, \{B\}\} \to A \oplus B$$
given by coproduct inclusions, followed by automorphisms $1 \oplus 1$ and $1 \oplus \alpha$. These correspond to the data of Example~\ref{ex:KeyExample} in a natural way, yet their composition yields a different admissible triple: 

	\[  \begin{tikzcd}
A \ar[r, >->] \ar[dr,phantom,"\square"] & A\oplus B  \ar[dr,phantom,"\square"] \ar[r, >->,"1_A\oplus 1_B"] & A\oplus B \\
O \ar[r,>->] \ar[u,{Circle[open]}->]&	B \ar[r, >->,"1_B"] \ar[u, {Circle[open]}->] \ar[dr,phantom,"\square"]  & B \ar[u, {Circle[open]}->,swap,"A\oplus 1_B"] \\
& O \ar[r,>->] \ar[u,{Circle[open]}->]& O \ar[u, {Circle[open]}->] 
\end{tikzcd} \quad	\begin{tikzcd}
A \ar[r, >->] \ar[dr,phantom,"\square"] & A\oplus B  \ar[dr,phantom,"\square"] \ar[r, >->,"1_A\oplus \alpha"] & A\oplus B \\
O \ar[r,>->] \ar[u,{Circle[open]}->]&	B \ar[r, >->,"1_B"] \ar[u, {Circle[open]}->] \ar[dr,phantom,"\square"]  & B \ar[u, {Circle[open]}->,swap,"A\oplus\alpha"] \\
& O \ar[r,>->] \ar[u,{Circle[open]}->]& O \ar[u, {Circle[open]}->] 
\end{tikzcd}
\]	
In this case, the obstruction term is now $\lrangles{1_B,1_B}$, which vanishes. That is, the corresponding composition in our presentation of $K_1(\Var_k)$ also splits. It is presently unclear how this argument extends to arbitrary pairs of piecewise automorphisms, but it offers a template for investigating the general case.\footnote{Where is the difficulty? Unlike the pCGW category $\Var_k$, which treats open and closed immersions separately, the assembler $\calV_k$ freely combines them. This wasn't a problem in the present example (which only involved coproduct inclusions and automorphisms), but will require careful handling in the general case. Arguments from \cite[\S 9]{CGW} may be helpful.}
\end{discussion}

\begin{remark} In \cite{CGW}, the authors introduce the notion of an ACGW category, which essentially formalises how $\Var_k$ resembles an abelian category.\footnote{Strictly speaking, only the category of reduced schemes of finite type is an ACGW category, not $\Var_k$. However, \cite[Example 6.3]{CGW} shows the two categories have equivalent $K$-theory.} As such, it is perhaps worth recalling that every abelian category $\calA$ supports a split exact structure, which in general defines a different $K$-theory from the natural exact structure on $\calA$ -- see the warning after \cite[Example II.7.1.2]{WeibelKBook}.
\end{remark}

Another natural question, posed to the author by Emanuele Dotto, is the following.

\begin{problem} Does the $K$-theory of pCGW categories commute with infinite products?
\end{problem}

\begin{discussion} Relevantly: Zakharevich conjectures in \cite[Remark 2.4]{ZakhPtCount} that the $K$-theory of Assemblers commutes with infinite products. However, she notes that such this result seems presently out of reach since previous results of this form were worked out for Waldhausen categories with cylinder functors (which Assemblers do not have) and exact categories.
\end{discussion}

\subsection{The Motivic Euler Characteristic} There is a well-known enrichment of the Euler Characteristic, known as the {\em motivic Euler Characteristic} or {\em compactly-supported $\mathbb{A}^1$-Euler Characteristic}, defined as a ring homomorphism
$$\chi^{\mathrm{mot}}\colon K_0(\Var_k) \to \mathrm{GW}(k)$$
valued in $GW(k)$, the Grothendieck-Witt ring of quadratic forms over field $k$. An exposition of its construction can be found in \cite{5authorZakhWick}. It is natural to ask if one can lift this to the level of $K$-theory spectra, which was proved in the affirmative by Nanavaty \cite[Theorem 1.1]{Nanavaty}. Explicitly, he constructs a map of spectra
$$K\Var_k \to \End(\sk)$$
where $\End(\sk)$ is the {\em Endomorphism Spectrum of the unit object in the motivic stable homotopy category}, recovering $\cmot$ on $\pi_0$. This sets up the problem:

\begin{problem}\label{prob:K1} Define a natural map $K_1(\Var_k)\to \pi_{1,0}(\sk)$. What geometric information does it encode?
\end{problem}

\begin{discussion} The homotopy groups of $\End(\sk)$ are defined as $\pi_{\ast,0}$. A foundational result, due to Morel \cite{Morel}, shows that $\pi_{0,0}(\sk)\cong \mathrm{GW}(k)$, so one may regard the higher homotopy groups as defining higher Grothendieck-Witt Groups. What geometric information is detected at the higher levels? Recent work by \cite{1lineMOTIVIC} tells us 
	\begin{equation}\label{eq:pi1MOREL}
	0\to K^M_2(k)/24\to \pi_{1,0}(\sk)\to k^\times/2\oplus \Z/2\to 0,
	\end{equation}
where $K^M_\ast(k)$ denotes the Milnor $K$-theory of $k$.
Combined with Theorems~\ref{thm:K1} and/or \ref{thm:Nenashev}, we now have an explicit description of both groups in Problem~\ref{prob:K1}. It remains to map the double exact squares in $\Var_k$ to $\pi_{1,0}(\sk)$ in a natural way, but it is presently unclear, e.g. how these generalised automorphisms interact with the Milnor $K$-theory term, and what this means geometrically. 
\end{discussion}

Note: the canonical unit map $\End(\sk)\to KQ$ (i.e. the Hermitian $K$-theory spectrum) induces an isomorphism on the level of $\pi_0$.  We may therefore also think of  $\cmot$ as a map on $\pi_0$ of $K\Var_k\to KQ$, which may be more tractable than the full motivic-homotopy formulation. For those interested in power structures on $K_0(\Var_k)$, here is a natural question:

\begin{problem}\label{prob:ExP} Can we lift the symmetric power structure on $K_0(\Var_k)$ to all $K$-groups
	$$\Sym^{(m)}\colon K_n(\Var_k)\longrightarrow K_n(\Var_k)\qquad?$$
Some natural follow-ups:
\begin{enumerate}[label=(\roman*)]
	\item Can the compatibility of $\cmot$ with symmetric powers on $\pi_0$ be checked (or deduced) from the simplicial presentation given by the $G$-construction?
	\item If a direct lift fails, is there a natural modification of the construction that {\em does} lift -- e.g. reduced symmetric powers with the fat diagonals removed? 
\end{enumerate}	
\end{problem}

\begin{discussion} Notice Problem~\ref{prob:ExP} asks about operations on $K$-groups rather than the underlying $K$-theory spaces. This is a subtle distinction since space-level definitions can be problematic. To illustrate: given the $Q$-construction $\calQ\M$ of an exact category with exterior powers, one might try to define $\lambda$-operations on the $K$-groups via a self-map $\lambda^m\colon |\calQ\calM|\to|\calQ\calM|$. But this cannot work: such a map would induce a group homomorphism on $\pi_1$, whereas the usual $\lambda^m$ is not additive on $K_0$. 

Interestingly, these are precisely the kind of issues the $G$-construction was originally designed to resolve -- at least in algebraic $K$-theory. 
Indeed, in contrast with other models of $K$-theory (e.g. the $Q$-construction, the $S_\bullet$-construction etc.), Grayson \cite{GraysonExterior,GraysonAdams} showed that the $G$-construction can be leveraged to give combinatorial descriptions of $\lambda$– and Adams operations on higher $K$-groups for suitable exact categories.
	
Returning to our context: the compatibility of $\cmot$ with power structures on $K_0$ and $GW(k)$ has so far only been established in special cases, typically via deep arithmetic (e.g. \cite{PajPalPower,PajwaniCellular}). Problem~\ref{prob:ExP} calls for a shift in perspective: what if we approach the problem simplicially instead? The jury is still out, but Theorem~\ref{thm:GG} gives a promising first clue: the $G$-construction behaves as expected on $\Var_k$. We plan to explore this direction in future work.
\end{discussion}

\begin{remark} Lifting the symmetric power structure on $K_0(\Var_k)$ ought to have useful implications for lifting Kapranov's motivic zeta function to a map of $K$-theory spectra -- see \cite[Question 7.6]{CWZ}. 
\end{remark}

\subsection{Combinatorics of Definable Sets} Theorem~\ref{thm:NenashevDSES} showed that $K_1(\calC)$ of a pCGW category is generated by double exact squares; Example~\ref{ex:Aut} showed that these generalise automorphisms. Automorphisms play a key role in many different areas of mathematics -- can this be extended to these double exact squares in a productive way?

For the model theorist, the automorphism group $\Aut(M)$ of a countable first-order structure $M$ encodes important information about $M$ -- e.g. $\Aut(M)$ measures the homogeneity of $M$, but there are other insights \cite{AutKayeMcPherson,EvansAut}. Some interesting questions:

\begin{problem} A countable structure $M$ is said to be {\em homogeneous} just in case every finite partial isomorphism extends to a global automorphism of $M$. Can we adapt 
	the obstruction theory developed in \cite{ZakhLefschetz} to analyse, e.g. barriers to homogeneity or amalgamation?  What about uncountable structures? 
\end{problem}

\begin{problem} To what extent is $K_1(M)$ be useful for studying non-homogeneous structures? 
\end{problem}


\begin{problem} In the converse direction, by viewing $\Aut(M)$ as a topological group, model theorists were able to analyse its structure productively using a wide range of tools. Can we apply the same analysis to $K_1 (M)$? What new insights does this give? 
\end{problem}

It is also worth revisiting the original papers \cite{Krajicek,KrajicekScanlon} where the Grothendieck ring of Definable Sets was first investigated. In particular, \cite{KrajicekScanlon} introduces the so-called strong and weak Euler Characteristics on first-order structures before asking which fields admit a non-trivial strong Euler Characteristic. In light of our present work, a natural problem may be:

\begin{problem} Lift the weak/strong Euler Characteristic on first-order structures to the level of spectra. Analyse what happens on $K_1$ -- what information does it detect? 	 In addition, are there examples of fields with strong Euler characteristics that are trivial on $K_0$ but non-trivial on $K_1$?  
\end{problem}

\subsection{Matroids}\label{sec:ProbMatroids} Similar questions about generalised automorphisms may be posed regarding matroids. However, in light of Example~\ref{ex:MatroidspCGW}, a more urgent problem is the following:

\begin{problem} What is the right notion of restricted pushouts for matroids?
\end{problem}

\begin{discussion} Unlike the other pCGW categories, the restricted pushout of $M_0\ltail N\rtail M_1$ cannot be the pushout in the ambient category $\mathrm{Mat}_\bullet$ since this may not exist. To see why, consider the following example suggested to the author by Chris Eppolito: let $N=\{a,b,c,\bullet\}, M_0=\{a,b,c,d,\bullet\}$ and $M_1=\{a,b,c,e,\bullet\}$, all endowed with the uniform matroid structure of rank 2, with point $\bullet$. 
\end{discussion}

\begin{discussion}[Independent Squares] A related problem was considered by the model theorists. Let $\calD$ be a category whose morphisms are all monic, and suppose $\calD$ admits a {\em stable independence notion}, i.e. a class of so-called {\em independent squares}
	$$\begin{tikzcd}
	N \ar[r,>->] \ar[d,>->]  \ar[dr,phantom,"\forkindep"]& M_1 \ar[d,>->]\\
	M_0  \ar[r,>->]& M
	\end{tikzcd}$$  
satisfying the axioms listed in \cite[Definition 5.5]{Vasey}. This includes the condition that any span in $\calD$ $$M_0\ltail N\rtail M_1$$
can be completed into an independent square.  Examples include $\mathrm{FinSet}$, the category of vector spaces over a fixed fields (with injective linear transformations), and the category of graphs (with subgraph embeddings). 

Here is the key insight from \cite[\S 5.1]{Vasey}. Under certain technical conditions, the category of independent squares associated to a given span has a weakly initial object, which is unique up to (not necessarily unique) isomorphism. Model theorists call this the {\em prime object} over the span. Some natural questions:
\begin{enumerate}
	\item  Can the definition of restricted pushouts be (meaningfully) weakened to include prime objects? 
	\item Can specific classes of matroids (e.g. vectorial, or graphic matroids) admit such a structure? All classes?
	\item How do independent squares align with matroid-theoretic notions like rank and flats?
\end{enumerate}
This will be investigated in future work.
\end{discussion}

\appendix
\section{Properties of Restricted Pushouts}

As explained in Section~\ref{sec:pCGW}, it is unreasonable to ask for $\M$-morphisms of CGW categories to be closed under pushouts, so we instead ask for them to be closed under {\em restricted pushouts}, a weaker notion. This section collects various technical facts about them, which hold in any pCGW category $\calC$. Finally, whenever relevant, we follow Convention~\ref{conv:coord} when labelling morphisms. 

\begin{lemma}\label{lem:Hspace1}\hfill 
	\begin{enumerate}[label=(\roman*)]
		\item Given a span $B\leftarrowtail A \rtail C$, 
		$$\frac{B}{A}\oplus \frac{C}{A}\cong \frac{B\star_A C}{A}.$$
		\item Given a span $B\leftarrowtail A \rtail C$, along with $\M$-morphisms $B\rtail B'$ and $C\rtail C'$,
		$$\frac{B'\star_A C'}{B\star_A C} \cong \frac{B'}{B}\oplus \frac{C'}{C}.$$
	\end{enumerate}
\end{lemma}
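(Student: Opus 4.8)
The plan is to obtain both isomorphisms from Axioms~(DS), (PQ) and (A), together with Lemma~\ref{lem:quotFilt} and Fact~\ref{facts:restrictedpushouts}, and to reduce (ii) to (i).

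For (i): I would produce an $\E$-morphism $g\colon \frac{B}{A}\oplus\frac{C}{A}\otail B\star_A C$ by feeding Axiom~(DS) the diagram of distinguished squares whose two component squares are the formal-cokernel squares of Axiom~(K) for $A\rtail B$ and $A\rtail C$. Concretely I take the ``unprimed'' span to be $\frac{B}{A}\leftarrowtail O\rtail\frac{C}{A}$, whose restricted pushout is $\frac{B}{A}\oplus\frac{C}{A}$ by Axiom~(A), and the ``primed'' span to be $B\leftarrowtail A\rtail C$, joined by the $\E$-morphisms $c(A\rtail B)$, $O\otail A$ (which exists since $O$ is initial in $\E$), $c(A\rtail C)$; the two vertical squares one must check are distinguished are then precisely the Axiom~(K) squares. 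Axiom~(DS) additionally furnishes a distinguished square relating $c(A\rtail B)$ to the induced $g$. By Remark~\ref{rem:INV} the two $\E$-morphisms of a distinguished square have isomorphic formal kernels, and $\ker(c(A\rtail B))=A$ with $k(c(A\rtail B))=(A\rtail B)$ by Axiom~(K); tracing the associated morphism in $\Art\M$ identifies $k(g)$ with the composite $A\rtail B\rtail B\star_A C$. Hence $\frac{B}{A}\oplus\frac{C}{A}$ is a formal cokernel of $A\rtail B\star_A C$, which Lemma~\ref{lem:quotFilt} identifies, up to canonical isomorphism (cf. Footnote~\ref{fn:INV}), with $\frac{B\star_A C}{A}$.

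For (ii): I would bootstrap from (i). Fact~\ref{facts:restrictedpushouts} exhibits $B\star_A C$ as a leg of $B'\star_A C'$, via $B'\star_A C'\cong B'\star_B(B\star_A C')$ and $B\star_A C'\cong C'\star_C(B\star_A C)$, giving a chain $A\rtail B\star_A C\rtail B'\star_A C'$. The third-isomorphism theorem (Lemma~\ref{lem:quotFilt}) then yields
\[
\frac{B'\star_A C'}{B\star_A C}\;\cong\;\frac{(B'\star_A C')/A}{(B\star_A C)/A},
\]
and applying (i) to $B'\leftarrowtail A\rtail C'$ and $B\leftarrowtail A\rtail C$ rewrites the right-hand side as the quotient of $\frac{B'}{A}\oplus\frac{C'}{A}$ by $\frac{B}{A}\oplus\frac{C}{A}$. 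The comparison $\M$-morphism between these sums is block-diagonal: the summand inclusions produced in (i) are the quotients-over-$A$ of the legs $B\rtail B\star_A C$ and $C\rtail B\star_A C$, so the $\frac{C}{A}$-summand is carried into the $\frac{C'}{A}$-summand by the canonical map $\frac{C}{A}\rtail\frac{C'}{A}$ induced by $C\rtail C'$ (since $C\rtail B\star_A C\rtail B'\star_A C'$ and $C\rtail C'\rtail B'\star_A C'$ are the same composite), and symmetrically for the $\frac{B}{A}$-summand. Lemma~\ref{lem:DirectSum} (a quotient of a direct sum of $\M$-morphisms is the direct sum of the quotients; see also Lemma~\ref{lem:ADTsquares}) then splits this as $\frac{B'/A}{B/A}\oplus\frac{C'/A}{C/A}$, and one last application of Lemma~\ref{lem:quotFilt} to $A\rtail B\rtail B'$ and $A\rtail C\rtail C'$ identifies this with $\frac{B'}{B}\oplus\frac{C'}{C}$.

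The main obstacle I anticipate is the bookkeeping in (ii): verifying that the comparison map $\frac{B\star_A C}{A}\rtail\frac{B'\star_A C'}{A}$ really is block-diagonal for the decompositions supplied by (i), which comes down to tracking how the summand inclusions of (i) behave under composition of legs and under the quotient functor $k^{-1}\colon\Art\M\to\Ars\E$. A minor secondary point is to fix once and for all that the restricted pushout is well-defined up to the evident symmetry $B\star_A C\cong C\star_A B$ and that all formal quotients are taken only up to codomain-preserving isomorphism (Footnote~\ref{fn:INV}); with these conventions the chain of isomorphisms above is routine.
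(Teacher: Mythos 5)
For part (i), your approach is essentially the paper's: you feed the pair of Axiom~(K) squares for $A\rtail B$ and $A\rtail C$ into Axiom~(DS) to obtain an $\E$-morphism $\frac{B}{A}\oplus\frac{C}{A}\otail B\star_A C$, and then identify $\frac{B}{A}\oplus\frac{C}{A}$ as a formal cokernel of $A\rtail B\star_A C$. The paper does the last step by composing two distinguished squares horizontally and invoking uniqueness of formal cokernels; you instead trace the formal kernel through $k$ and Remark~\ref{rem:INV}. The difference is cosmetic.

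For part (ii), you take a genuinely different route, and this is where the comparison is interesting. The paper re-runs the argument from (i) on the \emph{new} span $B\star_A C'\leftarrowtail B\star_A C\rtail B'\star_A C$ obtained from the $3\times 3$ grid, uses Axiom~(PQ) once on each leg to identify $\frac{B\star_A C'}{B\star_A C}\cong\frac{C'}{C}$ and $\frac{B'\star_A C}{B\star_A C}\cong\frac{B'}{B}$, and invokes Fact~\ref{facts:restrictedpushouts} to collapse $(B\star_A C')\star_{B\star_A C}(B'\star_A C)\cong B'\star_A C'$. Everything stays at the level of pushouts over $B\star_A C$; no comparison of direct-sum maps is needed. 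Your route passes through the Noether-style isomorphism $\frac{B'\star_A C'}{B\star_A C}\cong\frac{(B'\star_A C')/A}{(B\star_A C)/A}$, applies (i) to both numerator and denominator, and then must show the ambient comparison $\M$-morphism $\frac{B}{A}\oplus\frac{C}{A}\rtail\frac{B'}{A}\oplus\frac{C'}{A}$ is block-diagonal for the decompositions supplied by (i). You have correctly identified this as the crux, and it is a real step: the isomorphism $\frac{B\star_A C}{A}\cong\frac{B}{A}\oplus\frac{C}{A}$ is only determined up to codomain-preserving isomorphism, so one has to choose the representative coherently across the $3\times 3$ grid and then use initiality of the restricted pushout (or Lemma~\ref{lem:DirectSum}) to see that the two candidate maps agree. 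This can be carried out, but it is precisely the kind of ``unwanted twist'' bookkeeping the paper repeatedly flags (cf.\ Footnote~\ref{fn:INV} and the care taken in Step~2b of Section~\ref{sec:admtriples}); the paper's proof is more economical because Axiom~(PQ) hands you the quotient identifications directly, rather than asking you to diagonalise a map between direct sums after the fact.

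One minor correction in (i): the final appeal should be to uniqueness of formal cokernels in Axiom~(K), not to Lemma~\ref{lem:quotFilt} (which concerns filtrations of length two and is not what is being used at that point).
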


\begin{proof}\hfill 
\begin{enumerate}[label=(\roman*):]
	\item Consider the diagram
	\[\begin{tikzcd}
	\frac{C}{A}\ar[d, {Circle[open]}->] \ar[dr, phantom, "\square"]& O \ar[dr, phantom, "\square"]\ar[d, {Circle[open]}->]  \ar[l, >->] \ar[r, >->] & \frac{B}{A} \ar[d, {Circle[open]}->] \\
	C & A \ar[l, >->] \ar[r, >->] & B
	\end{tikzcd}\]
	Apply Axiom (DS) to obtain the left diagram below:
	\begin{equation}\label{eq:PPiso}
	\begin{tikzcd}
	O \ar[dr, phantom, "\square"]\ar[d, {Circle[open]}->]  \ar[r, >->] & \frac{B}{A} \ar[d, {Circle[open]}->] \ar[dr, phantom, "\square"] \ar[r,>->] &   \frac{B}{A}\oplus \frac{C}{A} \ar[d,{Circle[open]}->]\\
	A \ar[r, >->,"f"] & B \ar[r,>->,"f'"]  & B\star_A C
	\end{tikzcd}\qquad\qquad  \dsquaref{O}{\frac{B\star_AC}{A}}{A}{B\star_AC}{}{}{f'f}{} 
	\end{equation}
	Since distinguished squares compose, the outermost rectangle of the left diagram also defines a distinguished square. On the other hand, by Axiom (K), the RHS exact square exists. Since formal cokernels are unique up to isomorphism, conclude that $\frac{B}{A}\oplus \frac{C}{A}\cong \frac{B\star_A C}{A}$.
	\item Repeated applications of Fact~\ref{facts:restrictedpushouts} yields 
	\begin{equation}
	\begin{tikzcd}
	A \ar[r,>->] \ar[d,>->] & B \ar[r,>->] \ar[d,>->] & B' \ar[d,>->]\\
	C \ar[r,>->] \ar[d,>->] & B\star_A C  \ar[d,>->] \ar[r,>->] &  B'\star_A C \ar[d,>->]\\
	C'\ar[r,>->]& B\star_AC' \ar[r,>->]& B'\star_AC'
	\end{tikzcd}
	\end{equation}
	In particular, there exists an $\M$-morphism $B\star_AC\rtail B'\star_A C'$, so $\frac{B'\star_AC'}{B\star_AC}$ is well-defined and exists by Axiom (K). By Axiom (PQ), restricted pushouts preserve quotients, and so
	\begin{equation}\label{eq:iso(iii)}
	\frac{B'}{B} \cong \frac{B'\star_AC}{B\star_A C}\quad\text{and} \quad \frac{C'}{C}\cong  \frac{B\star_AC'}{B\star_A C}\,\,.
	\end{equation}
	Now consider the diagram
	\begin{equation}\label{eq:PP(iii)}
	\begin{tikzcd}
	\frac{B\star_AC'}{B\star_A C} \ar[d, {Circle[open]}->] \ar[dr, phantom, "\square"]& O \ar[dr, phantom, "\square"]\ar[d, {Circle[open]}->]  \ar[l, >->] \ar[r, >->] & \frac{B'\star_AC}{B\star_A C} \ar[d, {Circle[open]}->] \\
	B\star_A C' & B\star_A C \ar[l, >->] \ar[r, >->] & B'\star_AC
	\end{tikzcd}
	\end{equation}
	Applying the same argument from (i) to Equations~\eqref{eq:iso(iii)} and \eqref{eq:PP(iii)}, deduce the desired isomorphism.
\end{enumerate}	
\end{proof}

\begin{corollary}[Pushouts create 2-simplices]\label{cor:RPtrick} Given any span of 1-simplices in $G\calC$
\begin{equation}\label{eq:RPtrick1}
\ones{M'}{L'}\leftarrow \ones{M}{L} \to \ones{M''}{L''},
\end{equation}
taking restricted pushouts yields the following diagram
\begin{equation}\label{eq:RPtrickDiag}
\begin{tikzcd}
\ones{M}{L} \ar[rr] \ar[d] \ar[drr] && \ones{M''}{L''} \ar[d]\\
\ones{M'}{L'} \ar[rr]&& \ones{M'\star_MM''}{L'\star_L L''}
\end{tikzcd},
\end{equation}
where both triangles bound 2-simplices. In particular, this defines a homotopy between Diagram~\eqref{eq:RPtrick1} and 
\begin{equation}
\ones{M'}{L'} \to \ones{M'\star_M M''}{L'\star_L L''} \leftarrow \ones{M''}{L''}.
\end{equation}
\end{corollary}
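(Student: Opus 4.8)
\textbf{Proof proposal for Corollary~\ref{cor:RPtrick}.} The plan is to construct, for each of the two triangles in Diagram~\eqref{eq:RPtrickDiag}, an explicit $2$-simplex of $G\calC$ witnessing it, using the restricted pushout on both rows of the paired flag diagram. Recall a $1$-simplex $\ones{M}{L}\to \ones{M'}{L'}$ in $G\calC$ is a pair of exact squares with a common quotient; write it as $\bigl((m_0,\mu_0),(m_0,\lambda_0)\bigr)$ with $\M$-morphisms $M\rtail M'$, $L\rtail L'$ and shared quotient $M'/M = L'/L =: K'$, and similarly the other leg gives $M\rtail M''$, $L\rtail L''$ with shared quotient $K''$. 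First I would take restricted pushouts on the top row, $M'\star_M M''$, and on the bottom row, $L'\star_L L''$, which exist since $\calC$ is pCGW. By Axiom~(DS) applied to the diagram with $O$ in the middle column (the span being $M'\leftarrowtail M\rtail M''$, resp. $L'\leftarrowtail L\rtail L''$), the squares $M'\rtail M'\star_MM''$, $M''\rtail M'\star_MM''$ (resp. for $L$) sit in distinguished squares, so the two outer legs $\ones{M'}{L'}\to\ones{M'\star_MM''}{L'\star_LL''}$ and $\ones{M''}{L''}\to\ones{M'\star_MM''}{L'\star_LL''}$ are genuine $1$-simplices of $G\calC$; and by Fact~\ref{facts:restrictedpushouts} together with Axiom~(PQ) their quotients agree with $K''$ and $K'$ respectively on the two rows, so the diagonal $\ones{M}{L}\to\ones{M'\star_MM''}{L'\star_LL''}$ is also a $1$-simplex.

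The heart of the argument is checking that each triangle bounds a $2$-simplex, i.e. assembling the pair of flag diagrams of Construction~\ref{cons:Gcons} for the chains $M\rtail M'\rtail M'\star_MM''$ (top rows) and $L\rtail L'\rtail L'\star_LL''$ (bottom rows), and verifying conditions (i)--(iii) there. The only content is identifying the three quotients of such a chain and checking they are the \emph{same} in the $M$-flag and the $L$-flag. For the chain $M\rtail M'\rtail M'\star_MM''$: the quotient of $M\rtail M'$ is $K'$ (same on both rows by hypothesis on the edge); the quotient of $M'\rtail M'\star_MM''$ is $M''/M = K''$ by Axiom~(PQ) applied to the restricted pushout square (same on both rows, again by the edge hypothesis); and the quotient of $M\rtail M'\star_MM''$, which by Lemma~\ref{lem:quotFilt} fits into a distinguished square over $K'\rtail (M'\star_MM'')/M$, is $K'\oplus K''$ by Lemma~\ref{lem:Hspace1}(i). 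Crucially the isomorphism $(M'\star_MM'')/M\cong M'/M\oplus M''/M$ of Lemma~\ref{lem:Hspace1}(i) is the \emph{same} recipe for the $M$-row and the $L$-row (it is built functorially from Axiom~(DS)), so the quotient index square and the quotient index triangle of Construction~\ref{cons:Gcons}(i)--(ii) agree in both flag diagrams. Condition (iii) is automatic from Lemma~\ref{lem:quotFilt}. This handles the triangle $\ones{M}{L}\to\ones{M'}{L'}\to\ones{M'\star_MM''}{L'\star_LL''}$; the other triangle is obtained by the symmetric chain $M\rtail M''\rtail M'\star_MM''$ (and $L$ likewise), using that $M'\star_MM''$ is canonically also $M''\star_M M'$ up to the evident permutation isomorphism, with quotient $K''\oplus K'\cong K'\oplus K''$.

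Having produced the two $2$-simplices, the homotopy statement is formal: the two outer edges of Diagram~\eqref{eq:RPtrickDiag} together with the diagonal exhibit the span \eqref{eq:RPtrick1} and the cospan $\ones{M'}{L'}\to\ones{M'\star_MM''}{L'\star_LL''}\leftarrow\ones{M''}{L''}$ as two faces of a common pair of $2$-simplices sharing the diagonal edge, which is exactly a simplicial homotopy (rel endpoints) between the two paths. I would phrase this last step by noting that in any simplicial set, if $e_0,e_1,e_2$ are edges with $e_2 = d_1$ of a $2$-simplex whose other faces are $e_0,e_1$, then $e_2$ is homotopic to $e_0 * e_1$; applying this to both triangles and composing gives the claim.

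The main obstacle I anticipate is purely bookkeeping rather than conceptual: one must be careful that the \emph{chosen} quotients used to present the two edges $\ones{M}{L}\to\ones{M'}{L'}$ and $\ones{M}{L}\to\ones{M''}{L''}$ are the ones fed into Lemma~\ref{lem:Hspace1}(i) and Axiom~(PQ), so that the resulting isomorphisms $(M'\star_MM'')/M\cong K'\oplus K''$ are literally identical on the top and bottom rows and not merely isomorphic. Since we work throughout only up to codomain-preserving isomorphism of quotients (Remark~\ref{rem:INV}, Convention~\ref{conv:restrPO-quotient}) and distinguished squares are closed under such isomorphisms, this can always be arranged, but it is the point that needs the explicit verification rather than a one-line appeal.
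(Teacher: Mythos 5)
Your proof is correct and follows essentially the same route as the paper's: both form the restricted pushouts on the two rows, invoke Lemma~\ref{lem:Hspace1}(i) (and its proof diagram~\eqref{eq:PPiso}, via Axiom~(DS)) to identify the quotient of $M\rtail M'\star_M M''$ as $\frac{M'}{M}\oplus\frac{M''}{M}$ in a way that matches on both rows, and then check the flag conditions of Construction~\ref{cons:Gcons}. The main difference is that you spell out the quotient-matching bookkeeping (conditions (i)--(iii) of the $2$-simplex) more explicitly, whereas the paper appeals directly to Lemma~\ref{lem:Hspace1} and its internal diagram; the underlying content and ingredients are identical.
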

\begin{proof} Since Diagram~\eqref{eq:RPtrick1} is a span of 1-simplices, their choice of formal quotients agree. By a harmless abuse of notation, 
denote these quotients as:
	$$\frac{M'}{M}=\frac{L'}{L}\quad\text{and}\quad \frac{M''}{M}=\frac{L''}{L}.$$
 Now form restricted pushouts $P:=M'\star_M M''$ and $Q:=L'\star_L L''$. Apply Lemma~\ref{lem:Hspace1} to deduce
	$$ \small{\frac{P}{M}\cong \frac{M'}{M}\oplus \frac{M''}{M} \qquad \text{and} \qquad \frac{Q}{L}\cong \frac{L'}{L}\oplus \frac{L''}{L}=\frac{M'}{M}\oplus \frac{M''}{M}.}$$
	In fact, Equation~\eqref{eq:PPiso} of the proof tells us the data assembles into diagram pairs, such as
	\[\begin{tikzcd}
	M \ar[r, >->] \ar[dr,phantom,"\square"]&  M' \ar[dr,phantom,"\square"] \ar[r, >->] & P\\
	O \ar[u,{Circle[open]->}] \ar[r,>->] & \frac{M'}{M}  \ar[u,{Circle[open]->}] \ar[r,>->]	\ar[dr,phantom,"\square"]& \frac{M'}{M}\oplus \frac{M''}{M}\ar[u, {Circle[open]}->] \\
	& O \ar[u,{Circle[open]}->] \ar[r,>->] & \frac{M''}{M} \ar[u, {Circle[open]}->] 
	\end{tikzcd}\quad  \begin{tikzcd}
	L \ar[r, >->] \ar[dr,phantom,"\square"]&  L' \ar[dr,phantom,"\square"] \ar[r, >->] & Q \\
	O \ar[u,{Circle[open]->}] \ar[r,>->] & \frac{M'}{M}  \ar[u,{Circle[open]->}] \ar[r,>->]	\ar[dr,phantom,"\square"]& \frac{M'}{M}\oplus \frac{M''}{M}\ar[u, {Circle[open]}->] \\
	& O \ar[u,{Circle[open]}->] \ar[r,>->] & \frac{M''}{M} \ar[u, {Circle[open]}->] 
	\end{tikzcd}\quad.	\]
This defines 2-simplices in $G\calC$
	$$\begin{pmatrix} & O\rtail  &  \doubleunderline{M\rtail M'\rtail P}\\ &O\rtail &L\rtail L'\rtail Q \end{pmatrix}\quad\text{and}\quad \begin{pmatrix} & O\rtail  &  \doubleunderline{M\rtail M''\rtail P}\\ &O\rtail &L\rtail L''\rtail Q \end{pmatrix},$$
which assembles into Diagram~\eqref{eq:RPtrickDiag}.
\end{proof}

Next, examining the examples of pCGW categories in Section~\ref{sec:pCGW}, notice the formal direct sums are all disjoint unions (= the coproduct injections are monic + their pullback is the initial object), and the distinguished squares are all pullbacks (possibly satisfying additional hypotheses). One may therefore wonder if formal direct sums commute with distinguished squares in some natural sense. The following lemmas make this intuition precise.

\begin{convention}[Permutation Isomorphisms]\label{conv:perm-iso}  For any objects $A,B$ in $\calC$, the universal property of restricted pushouts defines a canonical {\em permutation isomorphism} $\tau\colon A\oplus B \rtail B\oplus A$. Since $\tau$ is an isomorphism, Axiom (I) defines a corresponding $\E$-morphism $\varphi(\tau)\colon A\oplus B \otail B\oplus A$. 
\end{convention}

\begin{lemma}\label{lem:ADTsquares} Given any distinguished square
	\begin{equation}\phi:=\left(\dsquaref{O}{C}{A}{B}{}{}{f}{g}\right),
	\end{equation}
the following is true:
	\begin{enumerate}[label=(\roman*)]
		\item Given any object $D$ in $\calC$, we can construct the following distinguished squares: \begin{equation}\label{eq:sumdistSQ}
		\dsquaref{O}{C\oplus D}{A}{B\oplus D}{}{}{f\oplus D}{g\oplus 1} \qquad \text{and} \qquad 
		\dsquaref{O}{C}{A\oplus D}{B\oplus D}{}{}{f\oplus 1}{g\oplus D}; 
		\end{equation} 
		\begin{equation}
		\dsquaref{C}{C\oplus D}{B}{B\oplus D}{1\oplus D}{g}{1\oplus D}{g\oplus 1}\qquad \text{and} \qquad \dsquaref{D}{C\oplus D}{A\oplus D}{B\oplus D}{C\oplus 1}{q_D}{f\oplus 1}{g\oplus 1}.
		\end{equation}
		All newly featured morphisms will be defined in the course of the proof.
		\item The distinguished squares in (i) can be permuted in the obvious way, e.g. 
		\begin{equation}\label{eq:sumdistSQ-perm}
		\dsquaref{O}{D\oplus C}{A}{D\oplus B}{}{}{D\oplus f}{1\oplus g} \qquad \text{and} \qquad 
		\dsquaref{O}{C}{D\oplus A}{D\oplus B}{}{}{1\oplus f}{D\oplus g}; 
		\end{equation} 
		\item Given $A\xrtail{f\oplus D}B\oplus D$ or $A\xrtail{D\oplus f} D\oplus B$ (as defined above), there exists isomorphisms
		$$ \frac{B\oplus D}{A}\cong \frac{B}{A}\oplus D \cong C\oplus D\qquad \text{and} \qquad \frac{D\oplus B}{A}\cong D\oplus \frac{B}{A} \cong D\oplus C \qquad.$$
	\end{enumerate}
\end{lemma}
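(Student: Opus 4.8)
The plan is to prove the three parts of Lemma~\ref{lem:ADTsquares} by reducing everything to the structural axioms of pCGW categories --- principally Axioms (A), (PQ), (DS) of Definition~\ref{def:pCGW}, together with closure of distinguished squares under composition and isomorphism (Definition~\ref{def:goodDC}), and Fact~\ref{facts:restrictedpushouts}. For part (i), I would first observe that all the squares in question are instances of ``adding a constant object $D$'' to $\phi$, which is exactly the situation Axiom (DS) is designed for. Concretely, to get the first square in Equation~\eqref{eq:sumdistSQ}, apply Axiom (DS) to the diagram of distinguished squares
\[\begin{tikzcd}
C \ar[d,{Circle[open]}->] \ar[dr,phantom,"\square"] & O \ar[d,{Circle[open]}->] \ar[dr,phantom,"\square"] \ar[l,>->] \ar[r,>->] & O \ar[d,{Circle[open]}->]\\
C & O \ar[l,>->] \ar[r,>->] & D
\end{tikzcd}\]
whose restricted pushout of the bottom span is $C\oplus D$ while that of the top span (relabelled) involves $A$ and $B$; chasing through the definition of the induced map and using Axiom (DS)'s conclusion that the two induced squares are distinguished gives precisely the claimed squares, with $f\oplus D$ and $g\oplus 1$ the induced restricted-pushout maps. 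The second square in Equation~\eqref{eq:sumdistSQ} is symmetric (add $D$ on the $A$-side instead). For the two ``non-basepoint'' squares, the first is just the first square of Equation~\eqref{eq:sumdistSQ} with $A$ replaced by $C$ via $\varphi$ and Axiom (I) (or obtained directly from the direct sum square of Axiom (A) by the same Axiom (DS) argument applied to the identity span), while the last one is the direct sum square $\dsquaref{O}{C\oplus D}{A\oplus D}{B\oplus D}{}{}{}{}$ expressed relative to the summand $D$; I would spell out the maps $C\oplus 1$, $q_D$, etc., as the evident composites of direct sum injections and $f,g$.

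For part (ii), the point is simply that the permutation isomorphism $\tau\colon A\oplus B\rtail B\oplus A$ of Convention~\ref{conv:perm-iso} and its $\E$-counterpart $\varphi(\tau)$ are isomorphisms, so the closure-under-isomorphisms clause of Definition~\ref{def:goodDC} (together with Observation~\ref{obs:goodISOs}) immediately converts each square of (i) into its permuted version in Equation~\eqref{eq:sumdistSQ-perm}: precompose and postcompose the distinguished squares of (i) with the trivially-distinguished squares built from $\tau$, $\varphi(\tau)$ and identities, then use horizontal/vertical closure under composition. I would phrase this uniformly: any square obtained from a distinguished one by replacing a summand decomposition $X\oplus Y$ with $Y\oplus X$ along $\tau$ remains distinguished.

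For part (iii), I would use Axiom (PQ) (``Preserves Quotients''). Since $A\xrtail{f\oplus D} B\oplus D$ factors as $A\xrtail{f} B\xrtail{p_B} B\oplus D$ --- that is, $B\oplus D = B\star_O D$ and the composite is the restricted pushout structure map --- Fact~\ref{facts:restrictedpushouts}/Axiom (PQ) gives the canonical isomorphism $\frac{B\oplus D}{B}\cong \frac{D}{O}=D$, and hence, composing with $\frac{B}{A}\cong C$ (from $\phi$ being distinguished, via Remark~\ref{rem:INV}) and the filtration-compatibility of quotients (Lemma~\ref{lem:quotFilt} applied to $A\rtail B\rtail B\oplus D$), one obtains $\frac{B\oplus D}{A}\cong \frac{B}{A}\oplus D\cong C\oplus D$. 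The permuted version $\frac{D\oplus B}{A}\cong D\oplus \frac{B}{A}\cong D\oplus C$ then follows from part (ii). The main obstacle I anticipate is purely bookkeeping: one must be careful that the isomorphisms produced by Axiom (PQ), Axiom (K) and Lemma~\ref{lem:quotFilt} are only canonical up to codomain-preserving isomorphism (cf. Footnote~\ref{fn:INV}), so the cleanest route is to fix, once and for all, the representative of each quotient via the convention of Convention~\ref{conv:restrPO-quotient} and then verify the diagrams commute in the ambient category $\calC$, invoking goodness (Observation~\ref{obs:goodISOs}) to upgrade commuting isomorphism squares to distinguished ones. No genuinely hard step is expected; the work is in naming all the structure maps consistently.
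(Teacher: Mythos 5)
The gap is in part (i). You propose to apply Axiom (DS) to the diagram
\[\begin{tikzcd}
C \ar[d,{Circle[open]}->] \ar[dr,phantom,"\square"] & O \ar[d,{Circle[open]}->] \ar[dr,phantom,"\square"] \ar[l,>->] \ar[r,>->] & O \ar[d,{Circle[open]}->]\\
C & O \ar[l,>->] \ar[r,>->] & D
\end{tikzcd}\]
but this diagram contains neither $A$, nor $B$, nor $f$, nor $g$ --- the given distinguished square $\phi$ does not enter at all. The top span $C\leftarrowtail O\rtail O$ has restricted pushout $C$ and the bottom span $C\leftarrowtail O\rtail D$ has restricted pushout $C\oplus D$; Axiom (DS) then produces an induced $\E$-morphism $C\otail C\oplus D$ together with distinguished squares on $\dsquare{O}{C}{D}{C\oplus D}$ and $\dsquare{C}{C}{C}{C\oplus D}$, none of which is $\dsquaref{O}{C\oplus D}{A}{B\oplus D}{}{}{f\oplus D}{g\oplus 1}$. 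The clause ``that of the top span (relabelled) involves $A$ and $B$'' does not correspond to anything actually in the diagram: no relabelling of $C\leftarrowtail O\rtail O$ produces a pushout of $A$ and $B$. Since $f$ and $g$ never enter the input to Axiom (DS), the output cannot feature $f\oplus D$ or $g\oplus 1$, and the step fails.

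The correct input to Axiom (DS), which the paper uses, is
\[\begin{tikzcd}
D \ar[d, {Circle[open]}->,"q_D"] \ar[dr, phantom, "\square"]& O \ar[dr, phantom, "\square"]\ar[d, {Circle[open]}->]  \ar[l, >->] \ar[r, >->] & C \ar[d, {Circle[open]}->,"g"] \\
A\oplus D & A \ar[l, >->] \ar[r, >->,"f"] & B
\end{tikzcd}\]
where the left square is the direct sum square from Axiom (A) and the right square is $\phi$ itself. Here the top pushout is $D\star_O C=C\oplus D$ and, by Fact~\ref{facts:restrictedpushouts}, the bottom pushout is $(A\oplus D)\star_A B\cong B\oplus D$. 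Axiom (DS) then yields the third and fourth squares of the statement directly; the two squares of Equation~\eqref{eq:sumdistSQ} follow by composing one of them horizontally with $\phi$ and the other vertically with the direct sum square.

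Parts (ii) and (iii) of your proposal are salvageable. For (ii), pre/post-composing the squares of (i) with isomorphism squares built from $\tau$ and $\varphi(\tau)$ and invoking closure under composition and isomorphism is in the same spirit as the paper, which re-runs the Axiom (DS) argument with the permutation $\tau$ inserted into the diagram. For (iii), your route via Lemma~\ref{lem:quotFilt} and Axiom (PQ) is correct but overkill: once part (i) establishes $\dsquaref{O}{C\oplus D}{A}{B\oplus D}{}{}{f\oplus D}{g\oplus 1}$ as a distinguished square, $C\oplus D$ is by construction a formal cokernel of $f\oplus D$, so Axiom (K) alone gives the required isomorphism, which is all the paper does.
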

\begin{proof} 
	\begin{enumerate}[label=(\roman*):]
		\item First, apply Fact~\ref{facts:restrictedpushouts} to
		$$D\leftarrowtail O\rtail A\rtail B,$$
		and obtain the isomorphism $B\oplus D\cong (A\oplus D)\star_A B$. Next, consider the diagram
			\[\begin{tikzcd}
		D \ar[d, {Circle[open]}->,"q_D"] \ar[dr, phantom, "\square"]& O \ar[dr, phantom, "\square"]\ar[d, {Circle[open]}->]  \ar[l, >->] \ar[r, >->] & C \ar[d, {Circle[open]}->,"g"] \\
		A\oplus D & A \ar[l, >->] \ar[r, >->,"f"] & B
		\end{tikzcd}\]
		where $q_D$ is the canonical $\E$-morphism from a direct sum square. Applying Axiom (DS), we obtain the following distinguished squares
		\[\psi:=\left(\dsquaref{C}{C\oplus D}{B}{B\oplus D}{1\oplus D}{g}{1\oplus D}{g\oplus 1}\right)\qquad \psi':=\left(\dsquaref{D}{C\oplus D}{A\oplus D}{B\oplus D}{C\oplus 1}{q_D}{f\oplus 1}{g\oplus 1}\right).\]
		Next, obtain Equation~\eqref{eq:sumdistSQ} by horizontal composition of $\psi$ with $\phi$, and vertical composition of $\psi'$ with the formal sum square 
		\[\dsquare{O}{C}{D}{C\oplus D}.\]
		\item Apply the same analysis in part (i) to the diagram
			\[\begin{tikzcd}
		C \ar[d, {Circle[open]}->,swap,"g"]  \ar[dr,phantom,"\square"]& O 
	\ar[r,>->] \ar[dr,phantom,"\square"]\ar[l,>->,swap]\ar[d, {Circle[open]}->] & D\ar[d, {Circle[open]}->,"q_D"] \ar[dr,phantom,"\square"] 	\ar[r,>->,"1"] & D \ar[d, {Circle[open]}->,"\varphi(\tau)\circ q_D"] \\
		B & A \ar[l,>->,swap,"f"] 	\ar[r,>->] & A\oplus D 	\ar[r,>->,"\tau"]& D\oplus A
		\end{tikzcd}\qquad,\]
	where $\tau\colon A\oplus D\to D\oplus A$ denotes the permutation isomorphism. 
			\item Apply Axiom (K) to the LHS exact squares of Diagrams~\eqref{eq:sumdistSQ} and \eqref{eq:sumdistSQ-perm}.
	\end{enumerate}
\end{proof}

\begin{lemma}[Direct Sums]\label{lem:DirectSum} \hfill
	\begin{enumerate}[label=(\roman*)]
		\item Given any pair of exact squares
\begin{equation}\label{eq:dir-sum-exact-square}
\dsquaref{O}{V_0}{A}{B}{ }{ }{ f_{0}}{f_{1} } \qquad ,\qquad 	\dsquaref{O}{V_1}{C}{D}{ }{ }{ g_{0}}{g_{1} } 
\end{equation}
		we can construct an exact square
		\[\dsquaref{O}{V_0\oplus V_1}{A\oplus C}{B\oplus D}{ }{ }{ f_{0}\oplus g_0 }{f_{1} \oplus g_1}\quad .\]
In particular, this yields an isomorphism $\frac{B\oplus D}{A\oplus C} \cong \frac{B}{A}\oplus \frac{D}{C}$ by Axiom (K).
		\item Given objects $A,B$ in $\calC$, the induced $\M$-morphism $$1_A\oplus 1_B\colon A\oplus B\rtail A\oplus B$$ is the identity $1_{A\oplus B}$. In particular, direct sums commute with the degeneracy maps of $G\calC$.
		\item Continuing with Diagram~\eqref{eq:dir-sum-exact-square} as our setup, the following $\M$-square commutes
	\begin{equation}\label{eq:M-and-E}
	\begin{tikzcd}
	A\oplus C \ar[r,>->,"f_0\oplus g_0"] \ar[d,>->,swap,"\tau"] & B\oplus D \ar[d,>->,"\tau"]\\
	C\oplus A \ar[r,>->,"g_0\oplus f_0"]& D\oplus B
	\end{tikzcd}
	\end{equation}
		where the $\tau$'s denote the permutation isomorphisms.
		\item  Given a pair of flag diagrams
		\begin{equation}\label{eq:dirsum-flag}
		\begin{tikzcd}
		A \ar[r, >->,"f_0"] \ar[dr,phantom,"\square"]&  B  \ar[dr,phantom,"\square"] \ar[r, >->,"g_0"] & C\\
		O \ar[u,{Circle[open]->}] \ar[r,>->] & V_0  \ar[u,{Circle[open]->},"f_1"] \ar[r,>->,"h_0"]	\ar[dr,phantom,"\square"]& V_1 \ar[u, {Circle[open]}->,"g_1"] \\
		& O \ar[u,{Circle[open]}->] \ar[r,>->] & V_2 \ar[u, {Circle[open]}->,"h_1"] 
		\end{tikzcd}\quad  \begin{tikzcd}
		A' \ar[r, >->,"f'_0"] \ar[dr,phantom,"\square"]&  B'  \ar[dr,phantom,"\square"] \ar[r, >->,"g'_0"] & C'\\
		O \ar[u,{Circle[open]->}] \ar[r,>->] & V'_0  \ar[u,{Circle[open]->},"f'_1"] \ar[r,>->,"h'_0"]	\ar[dr,phantom,"\square"]& V'_1\ar[u, {Circle[open]}->,"g'_1"] \\
		& O \ar[u,{Circle[open]}->] \ar[r,>->] & V'_2\ar[u, {Circle[open]}->,"h'_1"] 
		\end{tikzcd} \quad,
		\end{equation}
	direct sums commute with the composition of $\M$-morphisms as follows:
			\begin{equation}\label{eq:dirsum-comp-M}
		(g_0\circ f_0)\oplus (g'_0\circ f'_0)= (g_0\oplus g'_0) \circ  (f_0\oplus  f'_0).
		\end{equation}	
In particular, direct sums commute with the face maps of $G\calC$.
		\item Continuing with Diagram~\ref{eq:dirsum-flag} as our setup, we can construct a distinguished square
		\begin{equation}\label{eq:dirsum-v}
		\dsquaref{V_0\oplus V'_0}{V_1\oplus V'_1}{B\oplus B'}{C\oplus C'}{h_0\oplus h'_0}{f_1\oplus f'_1}{g_0\oplus g'_0}{g_1\oplus g'_1}
		\end{equation}
	\end{enumerate}	
	
\end{lemma}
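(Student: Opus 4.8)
The statement to prove is Lemma \ref{lem:DirectSum}, which assembles five facts about how restricted pushouts and formal direct sums interact with the simplicial structure of $G\calC$. Here is how I would organize the proof.

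\textbf{Step 1: the direct sum of exact squares, part (i).} This is the foundational step and I would do it first. Given the two exact squares in \eqref{eq:dir-sum-exact-square}, I want to build an exact square on $A\oplus C \rtail B\oplus D$. The key is to realize $B\oplus D$ as an iterated restricted pushout. Using Lemma \ref{lem:ADTsquares}(i) I can ``add $D$'' to the first exact square to get a distinguished square on $A\rtail B\oplus D$ with quotient $V_0\oplus D$, and ``add $B$'' to the second to get one on $C\rtail B\oplus D$ (after a permutation) with quotient $B\oplus V_1$. Then I compose these vertically/horizontally, using that distinguished squares compose and Fact \ref{facts:restrictedpushouts} to recognize that the resulting $\M$-morphism $A\oplus C\rtail B\oplus D$ (via $f_0\oplus g_0 := (1\oplus g_0)\circ(f_0\oplus 1)$, i.e. the coordinate-wise map) has the stated quotient. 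An application of Lemma \ref{lem:Hspace1}(i) identifies the quotient $\frac{B\oplus D}{A\oplus C}$ with $V_0\oplus V_1\cong \frac{B}{A}\oplus\frac{D}{C}$; uniqueness of formal cokernels (Axiom (K)) does the rest. This is the step I expect to be the main obstacle, mostly because the composite has to be shown to be the right $\M$-morphism with the right quotient, and care is needed with the coordinate-wise definition $f_0\oplus g_0 = (1\oplus g_0)\circ (f_0\oplus 1)$ referenced in the footnote to Proposition \ref{prop:ass}.

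\textbf{Steps 2--3: items (ii) and (iii).} For (ii), $1_A\oplus 1_B$ is the unique $\M$-morphism induced by the universal property of the restricted pushout $A\oplus B = A\star_O B$ filling in the square with identities on $A$ and $B$; since $1_{A\oplus B}$ also does this, functoriality (initiality in $\widetilde\M_D$) forces them equal. Degeneracy maps of $G\calC$ insert a degenerate column, so this immediately gives compatibility with degeneracies. For (iii), the square \eqref{eq:M-and-E} commutes because both composites $\tau\circ(f_0\oplus g_0)$ and $(g_0\oplus f_0)\circ\tau$ are $\M$-morphisms $A\oplus C\rtail D\oplus B$ restricting on each summand to $f_0$ and $g_0$ in the swapped order; by the universal property of the restricted pushout $A\oplus C$ (and the fact that $\M$-morphisms are monic, Axiom (M)), any two such maps agree. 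These are short.

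\textbf{Steps 4--5: items (iv) and (v).} For (iv), the identity \eqref{eq:dirsum-comp-M} follows from Fact \ref{facts:restrictedpushouts} on composing restricted pushouts together with the coordinate-wise description of $\oplus$ on morphisms: both sides are $\M$-morphisms $A\oplus A'\rtail C\oplus C'$ agreeing summand-wise with $g_0f_0$ and $g_0'f_0'$, hence equal by the same universal-property argument as in (iii). Face maps of $G\calC$ either delete an intermediate column (which composes two $\M$-morphisms) or delete the $0$th row (which factors by the first object); \eqref{eq:dirsum-comp-M} handles the former, and the latter is handled by applying part (i) to the relevant quotient squares. For (v), I apply part (i) of this lemma directly to the two exact squares obtained by horizontally pasting in the flag \eqref{eq:dirsum-flag} -- namely the squares on $B\rtail C$ with quotient $V_1/V_0 \cong V_2$ -- after ``adding'' $V_0'$, $V_0$ appropriately; Lemma \ref{lem:Hspace1}(ii) identifies $\frac{(V_1\oplus V_1')}{(V_0\oplus V_0')}$ with $V_2\oplus V_2'$, giving the distinguished square \eqref{eq:dirsum-v}. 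Throughout I would invoke Observation \ref{obs:goodISOs} and goodness to reduce ``distinguished'' checks to commutativity in the ambient category whenever the relevant morphisms are isomorphisms or permutations.
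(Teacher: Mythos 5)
The main gap is in part (v). You propose to ``apply part (i) of this lemma directly\dots after `adding' $V_0'$, $V_0$ appropriately'' and invoke Lemma~\ref{lem:Hspace1}(ii). But Lemma~\ref{lem:Hspace1}(ii) only identifies the quotient \emph{objects}, and part (i) only produces an \emph{exact} square with $O$ in the corner, not the non-exact distinguished square~\eqref{eq:dirsum-v}. The entire difficulty of (v) — and the reason the paper's proof of it is by far the longest of the five — is verifying that the square~\eqref{eq:dirsum-v} is distinguished with the \emph{specified} $\E$-morphisms $f_1\oplus f'_1$ and $g_1\oplus g'_1$ on the nose, not merely morphisms with isomorphic formal kernels. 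The paper builds this in three careful stages (repeated use of Lemma~\ref{lem:ADTsquares}, Axiom (DS), and the functor $k$ to identify the $\E$-legs up to codomain-preserving isomorphism, then absorbing those isomorphisms into the chosen quotients), and even remarks that the analogous step in Sarazola–Shapiro's appendix contains a potential gap. Your proposal doesn't engage with this verification at all, so the argument for (v) is incomplete.

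Two secondary issues. In (i), the squares you extract — on $A\rtail B\oplus D$ with quotient $V_0\oplus D$, and on $C\rtail B\oplus D$ with quotient $B\oplus V_1$ — share neither a row nor a column, so ``composing them vertically/horizontally'' doesn't literally apply. The paper instead forms the span $A\oplus D\ltail A\oplus C\rtail B\oplus C$ with Lemma~\ref{lem:ADTsquares}-distinguished squares on each leg, applies Axiom (DS) (using Fact~\ref{facts:restrictedpushouts} to recognize the resulting pushout as $B\oplus D$), and horizontally composes; Lemma~\ref{lem:Hspace1}(i) is never needed. In (iii), the appeal to ``the universal property\dots and the fact that $\M$-morphisms are monic'' is imprecise: monicity does not give uniqueness of a map \emph{out of} a pushout, and initiality of $A\oplus C$ in $\widetilde\M_D$ applies only after checking that both composites complete the span to the \emph{same} optimal square — same legs from $A$ and $C$, and the square actually in the chosen optimal class — which is left implicit. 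The paper avoids this by first proving $\tau\circ(1\oplus f_0)=(f_0\oplus 1)\circ\tau$ via initiality applied to a span containing an isomorphism (so Condition (i) of Definition~\ref{def:restPO} does the work), and then computing $\tau\circ(f_0\oplus g_0)=(g_0\oplus f_0)\circ\tau$ directly from the coordinate-wise definition.
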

\begin{proof} \begin{enumerate}[label=(\roman*):]
		\item Apply Lemma~\ref{lem:ADTsquares} to construct the diagram
		\[\begin{tikzcd}
	V_1 \ar[d, {Circle[open]}->,swap,"A\oplus g_1"] \ar[dr, phantom, "\square"]& O \ar[dr, phantom, "\square"]\ar[d, {Circle[open]}->,]  \ar[l, >->] \ar[r, >->] & V_0 \ar[d, {Circle[open]}->,"f_1\oplus C"] \\
		A\oplus D & A\oplus C \ar[l, >->,"1\oplus g_0"] \ar[r, >->,swap,"f_0\oplus 1"] & B\oplus C
		\end{tikzcd}\qquad .\]
	By Fact~\ref{facts:restrictedpushouts}, $(B\oplus C)\star_{A\oplus C} (A\oplus D )\cong B\oplus D$. The rest follows from applying Axiom (DS) and the fact that distinguished squares compose horizontally. This defines Exact Square~\eqref{eq:dir-sum-exact-square}, with the $\M$-morphism:
\begin{equation}\label{eq:dirsum-M-morphism}
f_0\oplus g_0:=(f_0\oplus 1)\circ (1\oplus g_0)=(1\oplus g_0)\circ (f_0\oplus 1).
\end{equation}
		\item This follows from the definition of $1_A\oplus 1_B$ (Equation~\eqref{eq:dirsum-M-morphism}), and the universal property of restricted pushouts. 
		\item As a warm-up, examine the definition of $f_0\oplus 1$ and $1\oplus f_0$ in Lemma~\ref{lem:ADTsquares}, constructed using restricted pushouts. Assemble the data into the diagram below
	\begin{equation}\label{eq:perm-iso-arg}
	\begin{tikzcd}
	O \ar[r,>->] \ar[d,>->] & A \ar[r,>->,"f_0"] \ar[d,>->] & B \ar[d,>->] \\
	C \ar[r,>->] \ar[d,>->, "1"]& A\oplus C \ar[r,>->,"f_0\oplus 1"] \ar[d,>->,"\tau"]& B\oplus C\ar[d,>->,dotted,"\tau" ]\\
	C \ar[r,>->]& C\oplus A \ar[r,>->,"1\oplus f_0"] & C\oplus B
	\end{tikzcd}\quad,
	\end{equation}
		where the permutation isomorphism $\tau\colon B\oplus C\rtail C\oplus B$ is induced by the universal property of restricted pushouts (cf. Convention~\ref{conv:perm-iso}); this yields the identity $\tau\circ (1\oplus f_0)=(f_0\oplus 1)\circ\tau$. Similarly, deduce that  $(1\oplus f_0)\circ\tau  =\tau \circ (f_0\oplus 1)$, with the obvious permutations.

Now compare the definition of $f_0\oplus g_0$ vs. $g_0\oplus f_0$ from Equation~\eqref{eq:dirsum-M-morphism}, modified with the relevant permutations (greyed out below):
\begin{equation}\label{eq:twist-M-and-E}
\begin{tikzcd}
A\oplus C \ar[rr,>->,"f_0\oplus 1 "]  \ar[drr,>->,"f_0\oplus g_0"]\ar[d,>->,swap,"1\oplus g_0"]&& B\oplus C   \ar[d,>->,"1\oplus g_0"] \\
A\oplus D \ar[d,>->,gray,swap,"\tau"] \ar[rr,>->,swap,"f_0\oplus 1"]&& B\oplus D \ar[d,>->,gray, "\tau"]\\
\color{gray} D\oplus A \ar[rr,>->,gray, swap,"1\oplus f_0"]&& \color{gray}{D\oplus B}
\end{tikzcd} \qquad \qquad \begin{tikzcd} \color{gray}{A\oplus C} \ar[rr,>->,"1\oplus g_0",gray]  \ar[d,>->,swap,"\tau", gray] && \color{gray}{A\oplus D} \ar[d,>->,"\tau",gray] \\
C\oplus A \ar[rr,>->,"g_0\oplus 1 "]  \ar[drr,>->,"g_0\oplus f_0"]\ar[d,>->,swap,"1\oplus f_0"]&& D\oplus A   \ar[d,>->,"1\oplus f_0"] \\
C\oplus B \ar[rr,>->,swap,"g_0\oplus 1"]&& D\oplus B
\end{tikzcd}\quad,
\end{equation}
Plugging in the obvious permutations, compute:
$$\tau\circ (f_0\oplus g_0)=\left(\tau\circ (f_0\oplus 1)\right)\circ (1\oplus g_0)= (1\oplus f_0) \circ \left(\tau\circ  (1\oplus g_0)\right)= (g_0\oplus f_0)\circ \tau\,.$$
		\item  Examining part (i), notice the relevant $\M$-morphisms were all constructed via restricted pushouts. The claim then essentially follows from Fact~\ref{facts:restrictedpushouts}.
\item Proceed in stages. 
\subsubsection*{Step 0} Apply Lemma~\ref{lem:ADTsquares} to construct the LHS diagram below
\begin{equation}\label{eq:dirsum-step0}
\begin{tikzcd}
V'_0 \ar[d,{Circle[open]}->,swap,"q_{V'_{0}}"] \ar[dr,phantom,"\square"]& O \ar[l,>->]\ar[d,{Circle[open]}->] \ar[r,>->] \ar[dr,phantom,"\square"]& V_2 \ar[d,{Circle[open]}->,"h_1"] \\
V_0\oplus V'_0 \ar[d,{Circle[open]}->,swap,"f_1\oplus 1"]  \ar[dr,phantom,"\square"] & V_0 \ar[l,>->,"1\oplus V'_0",swap]\ar[d,{Circle[open]}->,"f_1"] \ar[r,>->,"h_0"] \ar[dr,phantom,"\square"] & V_1 \ar[d,{Circle[open]}->,"g_1"] \\
B\oplus V'_0 & B  \ar[l,>->,"1\oplus V'_0"] \ar[r,>->,swap,"g_0"]& C
\end{tikzcd}\quad\qquad \dsquaref{V_0\oplus V'_0}{V_1\oplus V'_0}{B\oplus V'_0}{C\oplus V'_0}{h_0\oplus 1}{f_1\oplus 1}{g_0\oplus 1}{\varv}\quad.
\end{equation}
By Axiom~(DS) we obtain the RHS distinguished square, with $\M$-morphisms labelled as in Lemma~\ref{lem:ADTsquares}. In addition, we may identify $\varv=g_1\oplus 1$ without loss of generality. 
[Why? Applying functor $k\colon\Ar_{\square}\E\to \Ar_{\triangle}\M$ (Helper Definition~\ref{def:help}) to this square yields 
$$\begin{tikzcd}
O \ar[d,{Circle[open]->}]\ar[rr,>->] \ar[drr,phantom,"\square"]&& V_1\oplus V'_0\ar[d,{Circle[open]->},"\varv"] \\
A \ar[rr,>->,swap,"(g_0\oplus 1)\circ (f_0\oplus V'_0)"] && C\oplus V'_0
\end{tikzcd}.$$
By appealing to either part (iv) of this Lemma or Fact~\ref{facts:restrictedpushouts} directly, notice that
$$(g_0\oplus 1)\circ (f_0\oplus V'_0)= (g_0\circ f_0)\oplus V'_0,$$
which corresponds to the kernel of $g_1\oplus 1$. Hence, by Axiom (K), $g_1\oplus 1$ and $\varv$ are equivalent up to codomain-preserving isomorphism, and the two morphisms can be identified (cf. Footnote~\ref{fn:INV}).] 

\subsubsection*{Step 1} Consider the diagram
\begin{equation}\label{eq:dirsum-step1}
\begin{tikzcd}
B\oplus V_0' \ar[d,{Circle[open]->},"1\oplus f'_1",swap] \ar[dr,phantom, "\square"] & B \ar[dr,phantom, "\square"]  \ar[d,{Circle[open]->},"q_B"] \ar[r,>->,"g_0"] \ar[l,>->,"1\oplus V'_0",swap] & C \ar[d,{Circle[open]->},"q_C"]\\
B\oplus B' & B\oplus A' \ar[r,>->,swap,"g_0\oplus 1"] \ar[l,>->, "1\oplus f'_0"] & C\oplus A'
\end{tikzcd}
\end{equation}
The LHS square is distinguished by Lemma~\ref{lem:ADTsquares}. For the RHS square, construct 
\begin{equation}
\begin{tikzcd}
& B \ar[d,{Circle[open]}->,swap,"q_B"] \ar[r,>->, dashed,blue,"u"] \ar[dr,phantom,xshift=0.5em, blue,"\square"]& C\ar[d,{Circle[open]}->,"q_C"]\\
A' \ar[r,>->,"p_A"] & B\oplus A'  \ar[r,>->,"g_0\oplus 1"]& C\oplus A'\\
O \ar[u,>->] \ar[r,>->]& B \ar[u,>->,"p_B"]  \ar[r,>->,"g_0"] &  C \ar[u,>->,swap,"p_C"] 
\end{tikzcd}\quad,
\end{equation}
with the obvious restricted pushouts in the bottom rectangle. Since $g_0\oplus 1$ induces a morphism 
	\[\left( A'\rtail B\oplus A'\right)\rtail \left(  A'\rtail C\oplus A'\right)\in \Ar_\Delta \M,\]
apply the functor $k^{-1}\colon \Ar_\triangle \M\to \Ar_{\square}\E$ to obtain the blue indicated distinguished square. Since distinguished squares commute (Definition~\ref{def:goodDC}), deduce from Convention~\ref{conv:restrPO-quotient} that $u=g_0$.

We can therefore apply Axiom (DS) to Diagram~\eqref{eq:dirsum-step1}, which yields the distinguished square
\begin{equation}
\dsquaref{B\oplus V'_0}{C\oplus V'_0}{B\oplus B'}{C\oplus B'}{g_0\oplus 1}{1\oplus f'_1}{w_0}{w_1}\quad.
\end{equation}
In particular, Fact~\ref{facts:restrictedpushouts} shows $w_0=g_0\oplus 1$; extending this, a similar argument as in Step 0 shows $w_1=1\oplus f'_1$.

\subsubsection*{Step 2} Assemble the distinguished squares from Steps 0 and 1 into the diagram:
\begin{equation}\label{eq:dirsum-step2a}
\begin{tikzcd}
O \ar[d,{Circle[open]->}] \ar[r,>->] \ar[dr,phantom,"\square"] & V_0 \ar[dr,phantom,"\square"] \ar[d,{Circle[open]->},"f_1",swap]\ar[r,>->] & V_0\oplus V'_0 \ar[dr,phantom,"\square"] \ar[d,{Circle[open]->},"f_1\oplus 1",swap] \ar[r,>->,,"h_0\oplus 1"] & V_1\oplus V'_0 \ar[d,{Circle[open]->}, "g_1\oplus 1"]\\
A \ar[d,{Circle[open]->}]\ar[r,>->,"f_0"] \ar[dr,phantom,blue,xshift=0.4em,"\square"] & B\ar[d,{Circle[open]->}] \ar[r,>->] \ar[dr,phantom,"\square"]  & B\oplus V'_0 \ar[d,{Circle[open]->},"1\oplus f'_1",swap]\ar[r,>->,"g_0\oplus 1"] \ar[dr,phantom,"\square"]  & C\oplus V'_0 \ar[d,{Circle[open]->},"1\oplus f'_1"]\\
A\oplus A' \ar[r,>->,swap,"f_0\oplus 1"] & B\oplus A' \ar[r,>->,swap,"1\oplus f'_0"] & B\oplus B' \ar[r,>->,swap,"g_0\oplus 1"] & C\oplus B'
\end{tikzcd}\qquad .
\end{equation}
All squares are distinguished: the blue square by the same reasoning as in Step~1, the others by Steps~0 – 1 and the setup.

Now recall from Equation~\eqref{eq:dirsum-M-morphism} that $f_0\oplus f'_0=(1\oplus f'_0)\circ(f_0\oplus 1)$. Since distinguished squares compose, Axiom~(K) identifies 
\[
(1\oplus f'_1)\circ(f_1\oplus 1)\;\;\sim\;\; f_1\oplus f'_1,
\]
up to codomain-preserving isomorphism. Thus, taking this to be the definition of $f_1\oplus f'_1$, we may replace the middle vertical in Diagram~\eqref{eq:dirsum-step2a} and construct the obvious diagram
\begin{equation}\label{eq:dirsum-step2b}
\begin{tikzcd}
V_0\oplus V_1' \ar[d,{Circle[open]->}] \ar[dr,phantom, "\square"] 
& V_0\oplus V_0' \ar[d,{Circle[open]->},"f_1\oplus f'_1"] \ar[dr,phantom,"\square",xshift=0.5em] 
\ar[l,>->,"1\oplus h'_0",swap] \ar[r,>->,"h_0\oplus 1"] 
& V_1\oplus V'_0 \ar[d,{Circle[open]->}]\\
B\oplus C' 
& B\oplus B' \ar[l,>->,"1\oplus g'_0"] \ar[r,>->,swap,"g_0\oplus 1"] 
& C\oplus B'
\end{tikzcd},
\end{equation}
to which Axiom (DS) applies. 
We thus obtain the desired distinguished square (in solid arrows):
\begin{equation}\label{eq:dirsum-step2c}
\begin{tikzcd}
\color{gray} O \ar[r,>->,gray,dashed]  \ar[d,{Circle[open]}->,dashed,gray] \ar[dr,phantom,yshift=-0.25em, xshift=0.3em,gray,"\square"] & V_0\oplus V'_0 \ar[d,{Circle[open]}->,"f_1\oplus f'_1"] \ar[r,>->,"h_0\oplus h'_0"] \ar[dr,phantom,"\square"]& V_1\oplus V'_1 \ar[d,{Circle[open]}->,"w_2"]\\
\color{gray} A\oplus A'  \ar[r,>->,gray,swap,dashed,"f_0\oplus f'_0"] & B\oplus B'  \ar[r,>->,"g_0\oplus g'_0",swap] & C\oplus C'
\end{tikzcd}\quad.
\end{equation}
It remains to determine $w_2$. Diagram~\eqref{eq:dirsum-step2c} shows $w_2$ is the quotient of 
$(g_0\oplus g'_0)\circ (f_0\oplus f'_0)$. But direct sums commute with composition of $\M$-morphisms, so
\[ (g_0\oplus g'_0)\circ(f_0\oplus f'_0) \;=\; (g_0\circ f_0)\oplus(g'_0\circ f'_0),\]
which corresponds to the formal kernel of $g_1\oplus g'_1$. In other words, $w_2$ and $g_1\oplus g'_1$ are equivalent up to codomain-preserving isomorphism, and so we may identify $w_2=g_1\oplus g'_1$.
	\end{enumerate}
\end{proof}

\section{Technical Lemmas \& Some 2-Simplices}

\subsection{Technical Facts about Sherman Loops}\label{app:Sherman Loops} To finish the proof of Theorem~\ref{thm:Sherman}, we will require the following three technical lemmas. The results here extend the arguments from \cite[\S 1 - 2]{ShermanAbelian} to the setting of pCGW categories.

\begin{lemma}\label{lem:SherLoopAdd} The sum of two Sherman loops is equivalent to a Sherman loop. Explicitly, consider two pairs of $\M$-morphisms 
	$$\alpha_i\colon A_i\to X_i\qquad ,\qquad \beta_i\colon B_i\to Y_i,\qquad \text{for $i=1,2$};$$
and isomorphisms 
$$\theta_i\colon A_i\oplus \frac{X_i}{A_i}\oplus Y_i\longrightarrow B_i\oplus \frac{Y_i}{B_i}\oplus X_i ,\qquad \text{for $i=1,2$}.$$
\underline{Then} 
$$G(\alpha_1,\beta_1,\theta_1)+ G(\alpha_2,\beta_2,\theta_2)=G\left(\alpha_1\oplus \alpha_2,\beta_1\oplus \beta_2,T_2(\theta_1\oplus \theta_2)T_1^{-1}\right)$$ 
in $K_1(\calC)$, whereby $T_1$ and $T_2$ are the canonical permutation isomorphisms
$$T_1\colon A_1\oplus \frac{X_1}{A_1}\oplus Y_1\oplus A_2\oplus \frac{X_2}{A_2}\oplus Y_2\to A_1\oplus A_2\oplus \frac{X_1}{A_1}\oplus \frac{X_2}{A_2}\oplus Y_1\oplus Y_2$$
$$$$
$$T_2\colon B_1\oplus \frac{Y_1}{B_1}\oplus X_1\oplus B_2\oplus \frac{Y_2}{B_2}\oplus X_2\to B_1\oplus B_2\oplus \frac{Y_1}{B_1}\oplus \frac{Y_2}{B_2}\oplus X_1\oplus X_2.$$
$$$$
\end{lemma}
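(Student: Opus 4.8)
The plan is to realise the sum of two Sherman loops explicitly at the level of $G\calC$ and then recognise the result as a single Sherman loop after permuting direct summands. Recall (Construction~\ref{cons:eltG}) that $G(\alpha_i,\beta_i,\theta_i)$ is represented by the loop
\[
\begin{pmatrix} O \\ O\end{pmatrix}\to \ones{A_i}{A_i}\to \ones{A_i\oplus \tfrac{X_i}{A_i}\oplus Y_i}{X_i\oplus Y_i}\to \ones{B_i\oplus \tfrac{Y_i}{B_i}\oplus X_i}{Y_i\oplus X_i}\leftarrow \ones{B_i}{B_i}\leftarrow\ones{O}{O},
\]
where the two middle nodes are connected by the 1-simplex applying $\theta_i$ on top and the twist on the bottom. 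Since $K_1(\calC)=\pi_1|G\calC^o|$ is abelian (Observation~\ref{obs:K1DirectSum}) and is a loop space, addition of homotopy classes is computed by concatenation; but it is more convenient to use the $H$-space structure on $G\calC$ induced by direct sum (Theorem~\ref{thm:Gconstruction}), under which the class of the pointwise direct sum of two loops agrees with the sum of their classes. Thus $G(\alpha_1,\beta_1,\theta_1)+G(\alpha_2,\beta_2,\theta_2)$ is represented by the loop obtained by taking, node-by-node, the direct sum of the two representing loops above.

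\textbf{Key steps.} First I would write out this node-by-node direct sum loop: its nodes are $\ones{O}{O}$, $\ones{A_1\oplus A_2}{A_1\oplus A_2}$, $\ones{(A_1\oplus\tfrac{X_1}{A_1}\oplus Y_1)\oplus(A_2\oplus\tfrac{X_2}{A_2}\oplus Y_2)}{(X_1\oplus Y_1)\oplus(X_2\oplus Y_2)}$, $\ones{(B_1\oplus\tfrac{Y_1}{B_1}\oplus X_1)\oplus(B_2\oplus\tfrac{Y_2}{B_2}\oplus X_2)}{(Y_1\oplus X_1)\oplus(Y_2\oplus X_2)}$, then $\ones{B_1\oplus B_2}{B_1\oplus B_2}$, then $\ones{O}{O}$, where the connecting 1-simplices are the direct sums of those of the two original loops; these are valid 1-simplices of $G\calC$ by Lemma~\ref{lem:DirectSum} (direct sums of exact squares are exact squares, compatible with quotients). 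Second, I would insert the permutation isomorphisms $T_1$ and $T_2$ (which exist by the universal property of restricted pushouts, Convention~\ref{conv:perm-iso}) together with the analogous permutations $X_1\oplus Y_1\oplus X_2\oplus Y_2\to X_1\oplus X_2\oplus Y_1\oplus Y_2$ and $Y_1\oplus X_1\oplus Y_2\oplus X_2\to Y_1\oplus Y_2\oplus X_1\oplus X_2$ on the bottom rows; the point is that conjugating the middle connecting 1-simplex by these permutations converts the node-by-node sum into exactly the canonical representative of $G(\alpha_1\oplus\alpha_2,\beta_1\oplus\beta_2,T_2(\theta_1\oplus\theta_2)T_1^{-1})$. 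To see that inserting a permutation isomorphism at a node does not change the homotopy class, I would exhibit, for each such insertion, a 2-simplex of $G\calC$ whose boundary witnesses that the old edge, the new edge, and the isomorphism-edge $\ones{Z}{W}\to\ones{Z'}{W'}$ (which is connected to the basepoint and hence degenerate up to the maximal tree) compose correctly — this is the standard ``isomorphism edges are trivial'' argument, and the relevant 2-simplices are built from the distinguished squares of Lemma~\ref{lem:ADTsquares} and closure of distinguished squares under isomorphisms (goodness). Third, I would check that the quotient data matches: the quotient of the middle $\M$-morphism $(\alpha_1\oplus\alpha_2)\oplus\cdots$ is $\tfrac{X_1}{A_1}\oplus\tfrac{X_2}{A_2}$ after permutation, which is consistent with Lemma~\ref{lem:Hspace1} and Lemma~\ref{lem:DirectSum}(i), so the resulting flag diagrams are genuine 1-simplices.

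\textbf{Main obstacle.} The routine but slightly delicate part is bookkeeping the permutation isomorphisms: there are several of them (on $A$'s, on $X/A$'s, on $Y$'s, and correspondingly on the bottom rows), and one must verify that $T_1$, $T_2$ together with the bottom-row permutations genuinely assemble into a pair of distinguished squares giving a valid 1-simplex $\ones{P_1\oplus P_2}{\cdots}\to\ones{\cdots}{\cdots}$ compatible with the isomorphism $\theta_1\oplus\theta_2$ — i.e. that $\varphi$ of the top permutation really does induce the claimed permutation of bottom rows, using Axiom (I) and Axiom (PQ). I expect this to follow from the compatibility of $\varphi$ with direct sums and permutations (as in Lemma~\ref{lem:DirectSum}(iii), the permutation-isomorphism lemma) together with the fact that all the squares involved have two parallel isomorphism edges and are therefore automatically distinguished by goodness; but writing it cleanly is where the care is needed. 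Everything else is an application of the $H$-space structure plus the observation that homotopy classes of loops in $G\calC^o$ are insensitive to post-composition with isomorphism edges.
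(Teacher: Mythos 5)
Your proposal is correct and follows essentially the same approach as the paper: apply the $H$-space (direct-sum) structure on $G\calC$ to form the node-by-node sum of the two Sherman loops, observe that it differs from the target Sherman loop only in the ordering of direct summands, and then connect the two by a chain of 2-simplices built from permutation isomorphisms, with the required distinguished squares supplied by closure under isomorphisms. The paper's published proof is a one-paragraph sketch deferring to Sherman's original argument, so your explicit account of the permutation bookkeeping, the quotient compatibility via Lemmas~\ref{lem:Hspace1} and~\ref{lem:DirectSum}, and the role of goodness is a faithful expansion of what the paper intends rather than a different route.
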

\begin{proof} There are no surprises -- the argument is the same as in \cite[Prop. 1]{ShermanAbelian}. Apply the $H$-space structure of $G\calC$ to construct the loop corresponding to $G(\alpha_1,\beta_1,\theta_1)+G(\alpha_2,\beta_2,\theta_2)$. Notice it is almost identical to $G\left(\alpha_1\oplus \alpha_2,\beta_1\oplus \beta_2,T_2(\theta_1\oplus \theta_2)T_1^{-1}\right)$ -- the only difference being that the direct summands of certain vertices are arranged in a different order. To establish homotopy equivalence, use the obvious isomorphisms to permute these summands and construct a sequence of 2-simplices connecting the two loops. [One will need to justify why the obvious diagrams do in fact define 2-simplices, but this follows from distinguished squares being closed under isomorphisms (Definition~\ref{def:goodDC}).]
\end{proof}

\begin{lemma}\label{lem:SherLoopSplit} Let $\calC$ be a pCGW category whose exact squares all split. \underline{Then}, every element of $K_1(\calC)$ is equivalent to a loop of the form
\begin{equation}
G(A,\alpha):=\left(\begin{tikzcd}
(A,A) \ar[rr,"l(\alpha)"] && (A,A)\\
& (O,O) \ar[ul] \ar[ur]
\end{tikzcd}\right)
\end{equation}
where $l(\alpha)$ is the 1-simplex
\[l(\alpha):=\left(\, \dsquaref{O}{O}{A}{A}{ }{ }{1}{} \quad,\quad  \dsquaref{O}{O}{A}{A}{ }{ }{\alpha}{} \,\right)\]	
for some automorphism $(A,\alpha)\in\Aut(\calC)$.
\end{lemma}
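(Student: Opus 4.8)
\textbf{Proof proposal for Lemma~\ref{lem:SherLoopSplit}.}

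The plan is to start from Theorem~\ref{thm:Sherman}, which already tells us that $K_1(\calC)$ is generated by Sherman loops $G(\alpha,\beta,\theta)$ arising from a pair of $\M$-morphisms $\alpha\colon A\rtail B$, $\beta\colon A'\rtail B'$, choices of quotients $C,C'$, and an isomorphism $\theta\colon A\oplus C\oplus B'\xrightarrow{\cong}A'\oplus C'\oplus B$. The key simplification available under our hypothesis is that every exact square of $\calC$ splits, so the two exact squares defining $G(\alpha,\beta,\theta)$ split: there exist isomorphisms $\Psi\colon B\rtail A\oplus C$ and $\Psi'\colon B'\rtail A'\oplus C'$ compatible with the structure maps in the sense of the Definition of ``split''. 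First I would use $\Psi$ and $\Psi'$ to rewrite the Sherman loop, replacing the vertices $\binom{A\oplus C\oplus B'}{B\oplus B'}$ and $\binom{A'\oplus C'\oplus B}{B'\oplus B}$ by their images under the obvious isomorphisms; since distinguished squares are closed under isomorphisms (goodness, Definition~\ref{def:goodDC}), the relevant diagrams still define $2$-simplices, so this change does not alter the homotopy class. After substituting, the bottom rows become $A\oplus C\oplus B'\cong A'\oplus C'\oplus B$ as well, so the loop is represented by a pair of $1$-simplices whose quotients are now all trivial ($O$), i.e. by a genuine automorphism of a single object.

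Concretely, the second step is to observe that once everything is split, the Sherman loop $G(\alpha,\beta,\theta)$ becomes homotopic to one of the form $G(A,\gamma)$ with $\gamma$ an automorphism of $M:=A'\oplus C'\oplus B$ built by conjugating $\theta$ (and the canonical twist $B\oplus B'\to B'\oplus B$) through the splitting isomorphisms. This is essentially the observation that, after splitting, the middle $1$-simplex of the Sherman loop is precisely a pair $\bigl(\dsquaref{O}{O}{M}{M}{}{1}{}{}\,,\,\dsquaref{O}{O}{M}{M}{}{\gamma}{}{}\bigr)$, once one normalises the two outer edges $(O,O)\to (A,A)$ (resp. $(O,O)\to (A',A')$) against a maximal spanning tree using $e(-)$-edges, as in Definition~\ref{def:DES}. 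Then I would invoke the additivity of Sherman loops (Lemma~\ref{lem:SherLoopAdd}): since a general element of $K_1(\calC)$ is a finite sum of Sherman loops $G(\alpha_i,\beta_i,\theta_i)$, each of which is homotopic to some $G(A_i,\gamma_i)$, and since the sum of two loops of the form $G(A,\gamma)$ is again of this form (their top rows being joined by direct sum, i.e. $G(A_1\oplus A_2,\gamma_1\oplus\gamma_2)$ after the relevant permutation $2$-simplices, using Lemma~\ref{lem:DirectSum}), the whole class is represented by a single $G(A,\alpha)$.

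The main obstacle I anticipate is the bookkeeping in the first step: verifying carefully that applying the splitting isomorphisms $\Psi$, $\Psi'$ genuinely produces a sequence of $2$-simplices connecting the original Sherman loop to the normalised one, and that the resulting $1$-simplex really has the exact shape $l(\gamma)=\bigl(\dsquaref{O}{O}{A}{A}{}{1}{}{}\,,\,\dsquaref{O}{O}{A}{A}{}{\gamma}{}{}\bigr)$ rather than merely something isomorphic to it (this is where the compatibility conditions in the Definition of ``split'' — that $\Psi\circ f=p_A$ and $g\circ\varphi(\Psi)=q_C$ — are essential, so that the splitting respects the mono/epi structure and not just the underlying objects). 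A secondary subtlety is that under our hypothesis $\Cplus=\calC$, so one should double-check that the helper constructions in the proof of Theorem~\ref{thm:Sherman} (namely $\OSF$, $\Ghat$) degenerate appropriately; in fact this is consistent, since when all exact squares split, the inclusion $\Cplus\hookrightarrow\calC$ is the identity and the homotopy sequence~\eqref{eq:SherHseq} trivialises in the relevant range. Once these points are pinned down, the statement follows by combining the normalisation with Lemma~\ref{lem:SherLoopAdd}.
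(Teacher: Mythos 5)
Your proposal is circular. You propose to start from Theorem~\ref{thm:Sherman} (``$K_1(\calC)$ is generated by Sherman loops''), but that theorem's proof \emph{relies on} Lemma~\ref{lem:SherLoopSplit}: in Step~5 of the proof of Theorem~\ref{thm:Sherman}, the author invokes Lemmas~\ref{lem:SherLoopSplit} and~\ref{lem:SherSplitSher} precisely to conclude that $K_1(\Cplus)$ is generated by Sherman loops, which is the input needed to finish the argument. You even note in passing that under the split hypothesis $\Cplus=\calC$ and that the ``homotopy sequence~\eqref{eq:SherHseq} trivialises'' --- but that observation is exactly why the lemma cannot be deduced from Theorem~\ref{thm:Sherman}: in the split case, this lemma \emph{is} the non-trivial content of ``$K_1(\Cplus)$ is generated by Sherman loops,'' and it has to be established independently.

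The paper's actual proof is a direct, lower-level argument that never invokes Theorem~\ref{thm:Sherman}. It starts from an arbitrary simplicial loop representing $z\in K_1(\calC)$ (via simplicial approximation), uses the restricted-pushout trick (Corollary~\ref{cor:RPtrick}) to reshape it into a ``zig'' of 1-simplices emanating from $(O,O)$ followed by a ``zag'' back, and then runs an induction: at each stage the split hypothesis produces an isomorphism $\psi\colon M_1\to M_0\oplus\frac{M_1}{M_0}$, which is used by hand to build explicit 2-simplices that shorten the loop, eventually collapsing it down to the two-edge configuration $(O,O)\to(M,L)\leftarrow(O,O)$, from which the automorphism $\alpha$ is read off. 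If you want to salvage your plan, you need to drop the appeal to Theorem~\ref{thm:Sherman} entirely and instead work directly with combinatorial loops in $G\calC$, applying the split-isomorphism manipulations at the level of simplices as the paper does, rather than at the level of Sherman loops. The ideas you outline for manipulating split data and using Lemmas~\ref{lem:ADTsquares}, \ref{lem:DirectSum} are in the right spirit, but they have to be deployed against a general combinatorial loop, not one already known to be a Sherman loop.
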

\begin{proof} The proof combines an argument from \cite[\S 5]{GG} and \cite[Prop. 2]{ShermanAbelian}. Proceed in stages.
\subsubsection*{Step 1: Combinatorial Loops in $K_1(\calC)$} Suppose $z\in K_1(\calC)=\pi_1|G\calC^{\mathsf{o}}|$. By the simplicial approximation theorem, $z$ can be represented by a loop formed combinatorially from 1-simplices of $G\calC$
\begin{equation}\label{eq:z1stloop}
\ones{O}{O} \to \bullet \leftarrow \bullet \to \dots \leftarrow \bullet\to \bullet \leftarrow \ones{O}{O}
\end{equation}
where we draw the 1-simplices as arrows. 
Consider one of the configurations in Diagram~\eqref{eq:z1stloop}, e.g. 
$$\ones{M'}{L'}\leftarrow \ones{M}{L} \to \ones{M''}{L''}.$$
By Corollary~\ref{cor:RPtrick}, this is homotopic to 
$$\ones{M'}{L'}\to \ones{M'\star_M M''}{L'\star_L L''} \leftarrow \ones{M''}{L''}.$$
Applying this trick multiple times, we can deform Loop~\eqref{eq:z1stloop} into one of the form
\begin{equation}\label{eq:niceloop}
\begin{tikzcd}
\ones{O}{O} \ar[r] \ar[d] & \ones{M_0}{L_0} \ar[r]& \dots\ar[r] & \ones{M_{q-1}}{L_{q-1}} \ar[d]\\
\ones{M'_0}{L'_0} \ar[r]& \dots \ar[r] & \ones{M'_{q-1}}{L'_{q-1}} \ar[r] & \ones{M}{L} 
\end{tikzcd}
\end{equation}
	\subsubsection*{Step 2: The Base Case} Start by analysing the component
\begin{equation}\label{eq:BaseCase1Simp}
\ones{O}{O}\xrightarrow{l_0} \ones{M_0}{L_0}\xrightarrow{l_1} \ones{M_1}{L_1}
\end{equation}
	 of Loop~\eqref{eq:niceloop} in $K_1(\calC)$. Suppose $l_0$ is defined by the following pair of exact squares
\begin{equation}\label{eq:Loop1stC}
l_0:=\left(\dsquaref{O}{\widehat{M_0}}{O}{M_0}{ }{ }{ }{\eta_0}\quad,\quad \dsquaref{O}{\widehat{M_0}}{O}{L_0}{ }{ }{ }{\mu_0}\right),
\end{equation}
	with isomorphisms $\eta_0$ and $\mu_0$. Recall that all isomorphisms in $\calC$ can be regarded as either an $\M$ or an $\E$-morphism, related by the functor $$\varphi\colon \mathrm{iso}\M\to \mathrm{iso}\E,$$ 
	an isomorphism of categories. We can therefore define two 1-simplices
			\[l_0':=\left(\dsquaref{O}{\widehat{M_0}}{O}{\widehat{M_0}}{ }{ }{ }{1}\quad,\quad \dsquaref{O}{\widehat{M_0}}{O}{ \widehat{M_0}}{ }{ }{ }{1}\right) \quad \text{and} \quad l_0'':=\left(
\begin{tikzcd}
O \ar[rr, >->] \ar[d,{Circle[open]->}] \ar[drr,phantom,"\square"]&&  O \ar[d,{Circle[open]->}]   \\
\widehat{M_0} \ar[rr,>->,"\varphi^{-1}(\eta_0)"] && M_0
\end{tikzcd} \,,\,       
\begin{tikzcd}
O \ar[rr, >->] \ar[d,{Circle[open]->}] \ar[drr,phantom,"\square"]&&  O \ar[d,{Circle[open]->}]   \\
\widehat{M_0} \ar[rr,>->,"\varphi^{-1}(\mu_0)"] && L_0
\end{tikzcd}
\right),\]
which assemble into the following 2-simplex
\[\begin{tikzcd}
O \ar[r, >->] \ar[dr,phantom,"\square"]&  \widehat{M_0} \ar[dr,phantom,"\square"] \ar[r, >->,"\varphi^{-1}(\eta_0) "] & M_0\\
O \ar[u,{Circle[open]->}] \ar[r,>->] & \widehat{M_0}\ar[u,{Circle[open]->},"1"] \ar[r,>->,"1"]	\ar[dr,phantom,"\square"]& \widehat{M_0}\ar[u, {Circle[open]}->,swap,"\eta_0"] \\
& O \ar[u,{Circle[open]}->] \ar[r,>->] & O  \ar[u, {Circle[open]}->] 
\end{tikzcd}\quad 
\begin{tikzcd}
	O \ar[r, >->] \ar[dr,phantom,"\square"]&  \widehat{M_0} \ar[dr,phantom,"\square"] \ar[r, >->,"\varphi^{-1}(\mu_0) "] & L_0\\
	O \ar[u,{Circle[open]->}] \ar[r,>->] & \widehat{M_0}\ar[u,{Circle[open]->},"1"] \ar[r,>->,"1"]	\ar[dr,phantom,"\square"]& \widehat{M_0}\ar[u, {Circle[open]}->,swap,"\mu_0"] \\
	& O \ar[u,{Circle[open]}->] \ar[r,>->] & O  \ar[u, {Circle[open]}->] 
	\end{tikzcd}.\]
Notice the top right squares are distinguished since distinguished squares are closed under isomorphisms. We then assemble the following diagram
\begin{equation}
\begin{tikzcd}
&& \ones{ \widehat{M_0}}{ \widehat{M_0}} \ar[d,"l''_0"] \ar[drr,blue,"l_1\circ l''_0"]\\
\ones{O}{O} \ar[urr,blue,"l'_0"] \ar[rr,red,"l_0"] && \ones{M_0}{L_0} \ar[rr,red,"l_1"]&& \ones{M_1}{L_1}
\end{tikzcd}.
\end{equation}
One easily checks the RHS triangle also bounds a 2-simplex, which implies the red path of 1-simplices is homotopic to the blue path. Hence, assume without loss of generality that both $\mu_0$ and $\eta_0$ of Equation~\eqref{eq:Loop1stC} are the identity $1\colon M_0\to M_0$.

\subsubsection*{Step 3: The Inductive Step} Proceeding along Loop~\eqref{eq:BaseCase1Simp}, consider $l_1$, to be represented as\footnote{To improve readability, quotients here are denoted suggestively as e.g. $\frac{M_1}{M_0}$ but the proof works for any quotient representative (not necessarily the canonical one from Axiom (K)). This is the same abuse of notation made in Corollary~\ref{cor:RPtrick}.}
\[l_1:=\left(\dsquaref{O}{\frac{M_1}{M_0}}{M_0}{M_1}{ }{ }{ \eta_1}{\eta'_1}\quad,\quad \dsquaref{O}{\frac{M_1}{M_0}}{M_0}{L_1}{ }{ }{\mu_1}{\mu'_1}\right).\]
Since all exact squares split by hypothesis, this yields isomorphisms
$$\psi_M\colon   M_1\rtail M_0\oplus \frac{M_1}{M_0}\qquad \text{and} \qquad \psi_L\colon L_1\rtail M_0\oplus \frac{M_1}{M_0},$$
which we shall consider as $\M$-morphisms. This defines the following distinguished squares
\[\begin{tikzcd}
O \ar[r, >->] \ar[dr,phantom,"\square"]\ar[d,{Circle[open]->}] &  \frac{M_1}{M_0} \ar[dr,phantom,"\square"] \ar[r, >->,"1"] \ar[d,{Circle[open]->}," \eta'_1"]&  \frac{M_1}{M_0}\ar[d, {Circle[open]}->,"\varphi(\psi_M)\circ \eta'_1"]   \\
M_0 \ar[r,>->,"\eta_1"] &  M_1  \ar[r,>->,"\psi_M"]&   M_0\oplus \frac{M_1}{M_0} 
\end{tikzcd}\qquad \begin{tikzcd}
O \ar[r, >->] \ar[dr,phantom,"\square"]\ar[d,{Circle[open]->}] &  \frac{M_1}{M_0} \ar[dr,phantom,"\square"] \ar[r, >->,"1"] \ar[d,{Circle[open]->}," \mu'_1"]&   \frac{M_1}{M_0}\ar[d, {Circle[open]}->,"\varphi(\psi_L)\circ \mu'_1"]   \\
M_0 \ar[r,>->,"\mu_1"] &  L_1  \ar[r,>->,"\psi_L"]&   M_0\oplus \frac{M_1}{M_0} 
\end{tikzcd}\]
By Axiom (I), the RHS squares commute and are thus distinguished. Hence, the horizontal compositions define two exact squares. In fact, we can say more. To ease notation, denote 
$$v:=\psi_M\circ\eta_1 \qquad \text{and}\qquad v':=\varphi(\psi_M)\circ \eta'_1,$$
$$w:=\psi_L\circ\mu_1 \qquad \text{and}\qquad w':=\varphi(\psi_L)\circ \mu'_1,$$
and denote
\[\dsquaref{O}{\frac{M_1}{M_0}}{M_0}{M_0\oplus \frac{M_1}{M_0}}{}{ }{ p}{q}\]
to be the canonical direct sum square of $M_0\oplus \frac{M_1}{M_0}$.  Since both exact squares in $l_1$ are split, we know that
\begin{equation}\label{eq:IndCaseIdentities}
v=  p =w
\end{equation}
$$ v'= q = w'.$$
Leverage these identities to construct the following 2-simplex
\begin{equation}\label{eq:IndCase1}
\begin{tikzcd}
O \ar[r, >->] \ar[dr,phantom,"\square"]&  M_0 \ar[dr,phantom,blue,"\square"] \ar[r, >->,"\eta_1"] & M_1\\
O \ar[u,{Circle[open]->}] \ar[r,>->] &  M_0\ar[u,{Circle[open]->}," 1"] \ar[r,>->,"v"] \ar[dr,phantom,"\square"]&   M_0\oplus \frac{M_1}{M_0} \ar[u, {Circle[open]}->,swap,"\varphi(\psi^{-1}_M)"]  \\
&O \ar[r,>->] \ar[u, {Circle[open]}->] & \frac{M_1}{M_0} \ar[u, {Circle[open]}->,swap,"q"]
\end{tikzcd} \qquad \begin{tikzcd}
O \ar[r, >->] \ar[dr,phantom,"\square"]&  M_0 \ar[dr,phantom,red,"\square"] \ar[r, >->,"\mu_1"] & L_1\\
O \ar[u,{Circle[open]->}] \ar[r,>->] &  M_0\ar[u,{Circle[open]->}," 1"] \ar[r,>->,"v"] \ar[dr,phantom,"\square"]&   M_0\oplus \frac{M_1}{M_0} \ar[u, {Circle[open]}->,swap,"\varphi(\psi_L^{-1})"]  \\
&O \ar[r,>->] \ar[u, {Circle[open]}->] & \frac{M_1}{M_0} \ar[u, {Circle[open]}->,swap,"q"]
\end{tikzcd} \quad. 
\end{equation} 
To see why the red-indicated square is distinguished, notice:
\begin{itemize}
	\item\textbf{Case 1.} Suppose $\E$ is a subcategory of $\calC$. Then 
$$	\varphi(\psi_L^{-1} )\circ v=\psi^{-1}_L\circ v=\mu_1 \qquad \text{in $\calC $}$$
	if and only if
	$$ v = \psi_L\circ \mu_1,$$
which holds by Identity~\eqref{eq:IndCaseIdentities}.
	\item \textbf{Case 2.} Suppose  $\E^{\opp}$ is a subcategory of $\calC$, and so the arrows reverse. Applying Identity~\eqref{eq:IndCaseIdentities} once more, deduce
	$$	v= \psi_L\circ \mu_1 = \varphi(\psi_L^{-1})\circ \mu_1 .$$

\end{itemize}
The claim then follows from distinguished squares being closed under isomorphisms. The case for the blue-indicated square is analogous. 

Having checked that Diagram~\eqref{eq:IndCase1} defines a 2-simplex, the gears line up and the inductive argument falls into place. First note that Diagram~\eqref{eq:IndCase1} defines a homotopy between 
$$\ones{O}{O}\xrightarrow{l_0} \ones{M_0}{L_0}\xrightarrow{l_1} \ones{M_1}{L_1}$$
and a 1-simplex
$$l'_1\colon \ones{O}{O}\to \ones{M_1}{L_1}$$
whereby
\[l'_1:=\left(\dsquaref{O}{M_0\oplus \frac{M_1}{M_0}}{O}{M_1}{ }{ }{ }{}\quad,\quad \dsquaref{O}{M_0\oplus \frac{M_1}{M_0}}{O}{L_1}{ }{ }{}{}\right).\]
Hence, the initial segment of Loop~\eqref{eq:niceloop} is homotopic to
$$\ones{O}{O}\xrightarrow{l'_1}\ones{M_1}{L_1}\xrightarrow{l_2}\ones{M_2}{L_2},$$
deleting a term. Then, apply the Base Case argument (Step 2) to justify presenting $l'_1$ as
\[l'_1:=\left(\dsquaref{O}{M_1}{O}{M_1}{ }{ }{ }{1}\quad,\quad \dsquaref{O}{\ M_1}{O}{M_1}{ }{ }{}{1}\right),\]
which sets up our inductive step again. Keep going for the rest of Loop~\eqref{eq:niceloop} on both sides, until we finally obtain a loop of the form
\begin{equation}\label{eq:IndCaseFinalLoop}
\ones{O}{O} \xrightarrow{\kappa} (M,L) \xleftarrow{\gamma}  \ones{O}{O},
\end{equation}
where 
\[\kappa:=\left(\dsquaref{O}{\widehat{M}}{O}{M}{ }{ }{ }{\kappa_0}\quad,\quad \dsquaref{O}{\widehat{M}}{O}{L}{ }{ }{}{\kappa_1}\right) \qquad\text{and}\qquad \gamma:=\left(\dsquaref{O}{\widehat{N}}{O}{M}{ }{ }{ }{\gamma_0}\quad,\quad \dsquaref{O}{\widehat{N}}{O}{L}{ }{ }{}{\gamma_1}\right)\]
Notice we can no longer apply the Base Case argument to simplify $\kappa$ or $\gamma$ since the arrows of Diagram~\eqref{eq:IndCaseFinalLoop} are in the wrong direction.
	\subsubsection*{Step 4: Finish}	A technical observation: both $\kappa$ and $\gamma$ define isomorphisms $L\xrightarrow{\cong} M$  in $\calC$ but the presentation will differ depending on whether $\E^\opp$ or $\E$ is a subcategory of $\calC$.
	\begin{itemize}
		\item[] \textbf{Case 1:} {\em $\E\subseteq \calC$.}  In which case, define $\omega:= \kappa_0\circ \kappa_1^{-1}$  and $ \lambda:=\gamma_0\circ \gamma_1^{-1}$ in $\calC$
		\item[] \textbf{Case 2:} {\em $\E^\opp \subseteq \calC$.} In which case, define $\omega:= \kappa_0^{-1}\circ \kappa_1$ and $\lambda:=\gamma^{-1}_0\circ \gamma_1$ in $\calC$.
	\end{itemize}
By Axiom (I), we may regard $\omega$ and $\lambda$ as $\M$-morphisms as well. We now construct the obvious diagram
\begin{equation}\label{eq:IndFINAL}
\begin{tikzcd}
&\ones{M}{M } \ar[r,blue,"g"]& \ones{M}{M}  \\
\ones{O}{O} \ar[ur,"f_0",blue]\ar[r,"\kappa",red] & \ones{M}{L} \ar[u,"f_1"]\ar[ur,"f_2"] & \ar[l,swap,"\gamma",red] \ones{O}{O}\ar[u, swap,"f_0",blue] 
\end{tikzcd}
\end{equation}	
whereby

\[g:= \left(\dsquaref{O}{O}{M}{M}{}{}{1}{} \quad,\quad \dsquaref{O}{O}{M}{M}{}{}{\lambda\circ\omega^{-1}}{ } \right)\qquad f_0:=\left(\dsquaref{O}{M}{O}{M}{}{}{}{1} \quad,\quad \dsquaref{O}{M}{O}{M}{}{}{}{1} \right).\]
\[f_1:=\left(\dsquaref{O}{O}{M}{M}{}{}{1}{} \quad,\quad \dsquaref{O}{O}{L}{M}{}{}{\omega}{} \right)\qquad f_2:=\left(\dsquaref{O}{O}{M}{M}{}{}{1}{} \quad,\quad \dsquaref{O}{O}{L}{M}{}{}{\lambda}{} \right) . \]
In both cases ($\E$ or $\E^\opp\subseteq \calC$), it is easy to check that the triangles of Diagram~\eqref{eq:IndFINAL} bound 2-simplices, e.g. 
\[f_1\kappa = f_0 \qquad \small{\left(\begin{tikzcd}
O \ar[r, >->," "] & M \ar[dr,phantom,"\square"] \ar[r,>->,"1"]& M \\
& \widehat{M}\ar[u,{Circle[open]->},"\kappa_0"] \ar[r,>->,swap,"\varphi^{-1}(\kappa_0)"]	 & M\ar[u, {Circle[open]}->,swap,"1"]  \\
&   & O \ar[u, {Circle[open]}->]  
\end{tikzcd}\qquad 
\begin{tikzcd}
O \ar[r, >->," "] & L \ar[dr,phantom,"\square"] \ar[r,>->,"\omega"]& M \\
& \widehat{M}\ar[u,{Circle[open]->},"\kappa_1"] \ar[r,>->,swap,"\varphi^{-1}(\kappa_0)"]	 & M \ar[u, {Circle[open]}->,swap,"1"]  \\
&   & O \ar[u, {Circle[open]}->]  
\end{tikzcd}\right)}.\]
Conclude that the red loop in Diagram~\eqref{eq:IndFINAL} is homotopic to the blue loop. Notice the blue loop is precisely of the form $G(A,\alpha)$ as claimed in lemma statement, with $A:=M$ and $\alpha:=\lambda\circ \omega^{-1}$.
\end{proof}

\begin{lemma}\label{lem:SherSplitSher} The automorphism loop $G(A,\alpha)$ in Lemma~\ref{lem:SherLoopSplit} is equivalent to a Sherman Loop.
\end{lemma}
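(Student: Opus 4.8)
The goal is to show that the automorphism loop $G(A,\alpha)$ produced in Lemma~\ref{lem:SherLoopSplit} can be rewritten as a Sherman loop $G(a,b,\theta)$ in the sense of Construction~\ref{cons:eltG}. The natural strategy is to exhibit Sherman triple data $(a,b,\theta)$ whose associated Sherman loop (Equation~\eqref{eq:shermanloop}) is freely homotopic to the loop
\begin{equation*}
\begin{tikzcd}
(A,A) \ar[rr,"l(\alpha)"] && (A,A)\\
& (O,O) \ar[ul] \ar[ur]
\end{tikzcd}
\end{equation*}
where $l(\alpha)=\left(\dsquaref{O}{O}{A}{A}{}{}{1}{}\,,\,\dsquaref{O}{O}{A}{A}{}{}{\alpha}{}\right)$. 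The obvious candidate is to take both $\M$-morphisms to be trivial: set $a\colon O\rtail A$ and $b\colon O\rtail A$ (the unique maps from the basepoint), with quotients $C=A$ and $C'=A$ in the sense of Equation~\eqref{eq:doublesq}, so that a Sherman triple is completed by an isomorphism $\theta\colon O\oplus A\oplus A\rtail O\oplus A\oplus A$, i.e. essentially an isomorphism $A\oplus A\rtail A\oplus A$. The Sherman loop~\eqref{eq:shermanloop} then degenerates to
\begin{equation*}
\ones{O}{O}\to \ones{O}{O}\to \ones{A\oplus A}{A\oplus A}\to \ones{A\oplus A}{A\oplus A}\leftarrow \ones{O}{O}\leftarrow \ones{O}{O},
\end{equation*}
and choosing $\theta$ to be $1_A\oplus\alpha$ (composed with the permutation twist on the bottom row as in Footnote~\ref{fn:1simSherLoop}) should make this freely homotopic to $G(A,\alpha)$ after absorbing the degenerate end-segments.

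The plan is therefore: first, unwind Construction~\ref{cons:eltG} in the special case where $\alpha$ and $\beta$ are the zero maps $O\rtail A$, recording exactly which 1-simplices appear and what their quotients are (using Lemma~\ref{lem:ADTsquares} to justify the direct-sum squares and their quotients). Second, build a ladder of 2-simplices in $G\calC$ connecting the degenerate Sherman loop to the two-edge loop $G(A,\alpha)$: the left and right ``$(O,O)\to(O,O)$'' edges are degenerate and collapse (Relation (B1)/(B2)-style 2-simplices, or directly because the corresponding $n$-simplices are degenerate), and the middle edge $\ones{A\oplus A}{A\oplus A}\xrightarrow{\theta}\ones{A\oplus A}{A\oplus A}$ with $\theta=1_A\oplus\alpha$ must be related to $\ones{A}{A}\xrightarrow{l(\alpha)}\ones{A}{A}$. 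Third, this last step is exactly the statement that adding a trivial summand $A$ to the automorphism loop $l(\alpha)$ does not change its homotopy class — which should follow from the $H$-space structure on $G\calC$ (Theorem~\ref{thm:Gconstruction}): $G(A,\alpha) = G(A,\alpha) + G(A,1_A)$ up to homotopy, and $G(A,1_A)$ is null-homotopic since $l(1_A)$ bounds a degenerate 2-simplex; alternatively, one constructs the connecting 2-simplex directly using the direct-sum square $\dsquare{O}{A}{A}{A\oplus A}$ and the fact that distinguished squares are closed under isomorphisms.

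I expect the main obstacle to be bookkeeping rather than conceptual: one has to be careful that the quotients chosen in the degenerate Sherman loop (where $C=A$, $C'=A$, $B'=A$, $B=A$) match up on the nose — or up to codomain-preserving isomorphism, which then must be absorbed into the choice of 1-simplices — with the quotients appearing in $l(\alpha)$ and in the direct-sum squares supplied by Lemma~\ref{lem:ADTsquares}. In particular the permutation twist $B\oplus B'\to B'\oplus B$ built into the definition of a Sherman loop (Footnote~\ref{fn:1simSherLoop}) here becomes a twist $A\oplus A\to A\oplus A$ that interacts with $\theta=1_A\oplus\alpha$; tracking this carefully is what makes the argument work, and is essentially the same kind of permutation-of-summands 2-simplex argument already used in the proof of Lemma~\ref{lem:SherLoopAdd}. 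Since $K_1(\calC)$ is abelian (Observation~\ref{obs:K1DirectSum}) it suffices throughout to produce \emph{free} homotopies, which removes any need to track basepaths and considerably simplifies the combinatorics. Once the degenerate end-segments are discarded and the summand $A$ is absorbed, what remains is visibly the Sherman loop of the triple $\bigl(O\rtail A,\ O\rtail A,\ 1_A\oplus\alpha\bigr)$, completing the proof.
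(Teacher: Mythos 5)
Your overall plan — build a Sherman triple out of the automorphism data and compare canonical loops via degenerate $2$-simplices — is the right plan, but your specific choice of Sherman triple is wrong, and the error lives precisely in the ``permutation twist'' you flag at the end as the main bookkeeping hazard. With both $\M$-morphisms taken to be $O\rtail A$, the only non-degenerate edge of the Sherman loop~\eqref{eq:shermanloop} is the middle one, which by Footnote~\ref{fn:1simSherLoop} has $\theta$ on the top row and the permutation $\tau_A\colon A\oplus A\rtail A\oplus A$ on the bottom. Taking $\theta=1_A\oplus\alpha$, the resulting double exact square $l(\theta,\tau_A)$ has class $\lrangles{l(\tau_A)}-\lrangles{l(1_A\oplus\alpha)}=\lrangles{l(\tau_A)}-\lrangles{l(\alpha)}$ in $K_1(\calC)$ (by Corollary~\ref{cor:BirIso}~(iii) and the direct-sum relation), whereas $G(A,\alpha)=\lrangles{l(\alpha)}$. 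These agree only when $\lrangles{l(\tau_A)}=2\lrangles{l(\alpha)}$, which fails already for $\calC=\mathrm{Vect}_k$ with $A=k$ and $\alpha=1$: there $\lrangles{l(\tau_A)}=-1\in k^\times$ while $2\lrangles{l(1)}=0$. So the claimed free homotopy between your degenerate Sherman loop and $G(A,\alpha)$ does not exist, and the ``visibly'' in your last sentence is doing work the construction cannot support.

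The paper sidesteps this by choosing $\theta=\tau_A$ itself, so that the middle edge $l_\tau$ has $\tau_A$ on \emph{both} rows — i.e.\ is a diagonal double exact square and hence vanishes — and encodes $\alpha$ instead in one of the two $\M$-morphisms, giving the Sherman triple $(\alpha,\;O\rtail A,\;\tau_A)$. Your trivial-$\M$-morphism variant can be rescued by taking $\theta=\tau_A\circ(1_A\oplus\alpha^{-1})$ rather than $1_A\oplus\alpha$, which makes $\lrangles{l(\theta,\tau_A)}=\lrangles{l(\alpha)}$; but one would then still need to exhibit the explicit $2$-simplices, at which point you are doing essentially the paper's computation with extra baggage. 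As written, the proposal has a genuine gap.
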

\begin{proof} Let $p_A\colon A\rtail A\oplus A$ be the canonical $\M$-morphism of a direct sum square, and $\tau_A\colon A\oplus A\rtail A\oplus A$ be the permutation isomorphism swapping components. Define the following loop 
\begin{equation}\label{eq:SherLoopSplit}
\ones{O}{O} \to \ones{A}{A} \xrightarrow{\iota_\alpha} \ones{A\oplus A}{A\oplus A} \xrightarrow{l_{\tau}}  \ones{A\oplus A}{A\oplus A} \leftarrow  \ones{O}{O}
\end{equation}
where 
$$\iota_\alpha:=\left(\dsquaref{O}{A}{A}{A\oplus A}{}{ }{p_A}{q_A}\,,\,\dsquaref{O}{A}{A}{A\oplus A}{}{ }{p_A\circ \alpha}{q_A} \right) \quad l_\tau:=\left(\dsquaref{O}{O}{A\oplus A}{A\oplus A}{}{ }{\tau_A}{ }\,,\,\dsquaref{O}{O}{A\oplus A}{A\oplus A}{}{ }{\tau_A}{ } \right)  $$ 
This is a Sherman Loop $G(\alpha,0,\tau_A)$, where $0$ denotes the $\M$-morphism $O\rtail A$.
	
To show $G(A,\alpha)\sim G(\alpha,0,\tau_A)$ in $\pi_1|G\calC^{\mathsf{o}}|$, we first modify $G(A,\alpha)$ in sensible ways that respects its homotopy class. An initial observation: the following loop 
\begin{equation}\label{eq:SherLoopSplit3}
\ones{O}{O} \to \ones{A}{A} \xrightarrow{\iota_\alpha} \ones{A\oplus A}{A\oplus A} \leftarrow  \ones{O}{O}.
\end{equation}
is homotopic to $G(A,\alpha)$, since the following diagram
$$\begin{tikzcd}
\ones{A}{A} \ar[rr,"\iota_\alpha"] \ar[dr,"\alpha"] && \ones{A\oplus A}{A\oplus A}\\
&\ones{A}{A} \ar[ur,"\iota_{A}"]
\end{tikzcd}$$
bounds a 2-simplex, where the 1-simplex $\iota_A$ denotes two copies of the direct sum square $A\oplus A$ (and is thus diagonal). Further, the Loop~\eqref{eq:SherLoopSplit3} is homotopic to 
\begin{equation}\label{eq:SherLoopSplit2}
\ones{O}{O} \to \ones{A}{A} \xrightarrow{\iota_\alpha} \ones{A\oplus A}{A\oplus A} \xrightarrow{1_{A\oplus A}}  \ones{A\oplus A}{A\oplus A} \leftarrow  \ones{O}{O},
\end{equation}
since all we did was insert a degenerate 1-simplex $1_{A\oplus A}$. It remains to show that $G(\alpha,0,\tau_A)$ is homotopic to Loop~\eqref{eq:SherLoopSplit2}. But this follows from noting that the triangles in the diagram below bound 2-simplices
$$\begin{tikzcd}
\ones{A\oplus A}{A\oplus A} \ar[dr,swap,"l_\tau"]  \ar[r,"1_{A\oplus A}"] & \ones{A\oplus A}{A\oplus A} & \ones{O}{O} \ar[dl] \ar[l]\\
& \ones{A\oplus A}{A\oplus A} \ar[u,"l_\tau"]
\end{tikzcd}\qquad .$$
	\vspace{-0.8em}
\end{proof}

\subsection{Explicit Descriptions of 2-Simplices}\label{sec:Nen2simp} 

This section explicitly constructs the two key homotopies claimed by Lemma~\ref{lem:Nen4}, necessary to finish the proof of Theorem~\ref{thm:NenashevDSES}

\begin{claim}\label{claim:NenL1} Loop~\eqref{eq:NenashevL1} is homotopic to loop $L$.
\end{claim}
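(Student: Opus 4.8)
\textbf{Proof plan for Claim~\ref{claim:NenL1}.}

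The goal is to show that Loop~\eqref{eq:NenashevL1}, namely the triangle with edges $(s_0,s_1)$, $(s_1,s_1)$ and $(\theta,1)$ on vertices $(A\oplus A', A\oplus A')$, $(P,Q)$ and $(Q,Q)$, is homotopic to the outer loop $L$ of Diagram~\eqref{eq:NenBigL1}, whose edges are $(s_0\oplus C\oplus C',\,\s)$, $(\theta\oplus 1,1)$ and $(s_1\oplus C\oplus C',\,\s)$. Per the remark made right after Diagram~\eqref{eq:NenBigL1}, it suffices to verify that Triangles $(1)$ through $(4)$ in that diagram each bound a $2$-simplex of $G\calC$; once that is done, the two loops (purple and blue) are related by a finite chain of elementary homotopies across those $2$-simplices, which gives the claim. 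So the plan is to write down, for each of the four triangles, the pair of flag diagrams that realises it as a $2$-simplex, and to justify that the drawn squares are distinguished.

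First I would handle Triangles $(1)$ and $(2)$, which involve the edges $(s_0\oplus C\oplus C',\,\s)$ and $(s_1\oplus C\oplus C',\,\s)$ decomposing through $(s_0,s_1)$ and $(s_1,s_1)$ respectively. Recall from the proof of Lemma~\ref{lem:Nen4} that $\s$ is the horizontal composite of $s_1$ with $t$, and equally the horizontal composite of $s$ with $u$. The relevant $2$-simplex for Triangle $(1)$ has as its top filtration $A\oplus A' \rtail P\oplus C\oplus C'$ obtained by first applying $s_0$ (to reach $A\oplus C\oplus B'=P$ on the middle node, with appropriate quotient) and then adding $C\oplus C'$ via Lemma~\ref{lem:ADTsquares}; the bottom filtration is $A\oplus A' \rtail V$ factoring through $Q$. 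The quotient index triangle is $C\oplus C' \rtail W$ on both flags, which is forced to agree by construction of $\s$ and $t$ (Lemma~\ref{lem:Nen3}). The squares in these flag diagrams are distinguished by Lemmas~\ref{lem:ADTsquares} and~\ref{lem:DirectSum}, together with horizontal composability of distinguished squares; Axiom~(K) pins down the quotients up to codomain-preserving isomorphism, which we absorb as usual (Footnote~\ref{fn:INV}). Triangle $(2)$ is the analogous statement with $s_0$ replaced by $s_1$, so the top filtration is $A\oplus A' \rtail Q \rtail Q\oplus C\oplus C'$; the same lemmas apply verbatim.

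Next I would treat Triangle $(3)$, the square $(1\oplus C\oplus C',\,t')$ composed against $(\theta\oplus C\oplus C',\,t')$ and $(s_0\oplus C\oplus C',\,\s)$, and Triangle $(4)$, which compares $(\theta,1)\colon (P,Q)\to(Q,Q)$ with $(\theta\oplus C\oplus C',\,t')$ via $(1\oplus C\oplus C',\,t')$. For Triangle $(4)$ the $2$-simplex has top filtration $P \rtail Q$ (via $\theta$, an isomorphism, so quotient $O$) then $Q\rtail Q\oplus C\oplus C'$ (adding $C\oplus C'$), with the quotient index triangle $O\rtail C\oplus C'$; on the bottom both nodes are $Q$, then $Q\rtail V$ via $h_t$ with quotient $C\oplus C'$ coming from $t'$ (Lemma~\ref{lem:Nen3}). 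That the drawn squares are distinguished uses closure of distinguished squares under isomorphisms (the $\theta$ part) plus Lemma~\ref{lem:ADTsquares}. Triangle $(3)$ is similar, with $s_0$ providing the first leg. The main obstacle I anticipate is bookkeeping: making sure that the quotient index squares in each flag-diagram pair literally agree (not merely up to isomorphism), which is exactly the subtlety flagged in Warning~\ref{warning:compose}. This is handled by choosing all quotients through the single functor $c=k^{-1}$ applied consistently, and invoking Axiom~(K) plus the compatibility of $\s$, $t$, $t'$, $u$, $u'$ established in Lemma~\ref{lem:Nen3} to identify them; once the choices are coherent, each triangle genuinely bounds a $2$-simplex and the chain of homotopies assembles, proving Loop~\eqref{eq:NenashevL1} $\simeq L$.
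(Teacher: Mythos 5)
Your approach is essentially the same as the paper's: verify that Triangles~(1)--(4) of Diagram~\eqref{eq:NenBigL1} bound 2-simplices of $G\calC$ by writing out the pair of flag diagrams explicitly, justifying the distinguished squares via Lemmas~\ref{lem:ADTsquares}, \ref{lem:DirectSum}, the squares $t$, $t'$ from Lemma~\ref{lem:Nen3}, Axiom~(K), and closure of distinguished squares under isomorphisms.

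One correction in the bookkeeping, however: you misidentify the third edge of Triangle~(3). Reading Diagram~\eqref{eq:NenBigL1}, Triangle~(3) has vertices $(P,Q)$, $(P\oplus C\oplus C',V)$ and $(Q\oplus C\oplus C',V)$, and its three edges are $(1\oplus C\oplus C',\,t')$, the top edge $(\theta\oplus 1,\,1)$, and the diagonal $(\theta\oplus C\oplus C',\,t')$. It does \emph{not} involve $(s_0\oplus C\oplus C',\,\s)$ at all, and the corresponding 2-simplex does not have $A\oplus A'$ or the map $s_0$ as its starting node --- its filtrations begin at $P$ and at $Q$ (the paper's flag diagram for Triangle~(3) is $P\rtail P\oplus C\oplus C'\rtail Q\oplus C\oplus C'$ paired with $Q\rtail V\rtail V$). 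So your sentence ``Triangle~(3) is similar, with $s_0$ providing the first leg'' would lead you to write down the wrong 2-simplex. Triangles~(1) and~(2) are the ones whose 2-simplices start at $(A\oplus A',A\oplus A')$ via $s_0$ and $s_1$; Triangles~(3) and~(4) both start at $(P,Q)$. Aside from this misreading of the diagram, your treatments of Triangles~(1), (2) and~(4) and your reliance on the consistency of quotient choices through Axiom~(K) are aligned with the paper's proof.
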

\begin{proof} Recall: in order to establish that the two loops are homotopic, it suffices to show that the indicated triangles of the Diagram~\eqref{eq:NenBigL1} are boundaries of 2-simplices. We describe the 2-simplices explicitly below.
	\begin{itemize}
		\item Triangle (1). Consider
		\vspace{-0.1em}
		\begin{equation*}
\begin{tikzcd}
		A\oplus A'\ar[r, >->,"f_0"] & P \ar[dr,phantom,"\square"] \ar[r, >->,"1\oplus C\oplus C'"] & P\oplus C\oplus C'\\
		&	C\oplus C' \ar[r, >->] \ar[u, {Circle[open]}->,"g_0"]& C\oplus C'\oplus C\oplus C' \ar[u, {Circle[open]}->,"g_0\oplus 1"] \\
		&& C\oplus C' \ar[u, {Circle[open]}->] 
		\end{tikzcd}\quad \begin{tikzcd}
		A\oplus A'\ar[r, >->,"f_1"] & Q \ar[r, >->,"h_t"]  \ar[dr,phantom,"\square"]  & V\\
		&	C\oplus C' \ar[r, >->]\ar[u, {Circle[open]}->,"g_1"]& C\oplus C' \oplus C\oplus  C' \ar[u, {Circle[open]}->,"j"] \\
		&& C\oplus C' \ar[u, {Circle[open]}->] 
		\end{tikzcd}
		\end{equation*}
		[Why is this a 2-simplex? The indicated square on the right diagram is the distinguished square $t$ of Lemma~\ref{lem:Nen3}. The left indicated square is distinguished by Lemma~\ref{lem:ADTsquares} (i). The rest are obvious.]
	
		\item Triangle (2).
				\vspace{-0.1em}
		\begin{equation*}
\begin{tikzcd}
		A\oplus A'\ar[r, >->,"f_1"] & Q \ar[dr,phantom,"\square"] \ar[r, >->,"1\oplus C\oplus  C'"] & Q\oplus C\oplus C'\\
		&	C\oplus C' \ar[r, >->] \ar[u, {Circle[open]}->,"g_1"]& C\oplus C'\oplus C\oplus  C' \ar[u, {Circle[open]}->,"g_1\oplus 1"] \\
		&& C\oplus C' \ar[u, {Circle[open]}->] 
		\end{tikzcd}\quad \begin{tikzcd}
		A\oplus A'\ar[r, >->,"f_1"] & Q \ar[r, >->,"h_t"]  \ar[dr,phantom,"\square"]  & V\\
		&	C\oplus C' \ar[r, >->]\ar[u, {Circle[open]}->,"g_1"]& C\oplus C' \oplus C\oplus C' \ar[u, {Circle[open]}->,"j"] \\
		&& C\oplus C' \ar[u, {Circle[open]}->] 
		\end{tikzcd}
		\end{equation*}
[Why is this a 2-simplex? Analogous to Triangle (1).]
		\item Triangle (3).
		\begin{equation*}
		\begin{tikzcd}
		P\ar[r, >->,"1\oplus C\oplus C'"] & P\oplus C\oplus  C' \ar[dr,phantom,"\square"] \ar[r, >->,"\theta\oplus 1"] & Q\oplus C\oplus  C'\\
		&	 C\oplus C' \ar[r, >->,"1"] \ar[u, {Circle[open]}->,"P\oplus 1"]&  C\oplus C' \ar[u, {Circle[open]}->,"Q\oplus 1"] \\
		&& O \ar[u, {Circle[open]}->] 
		\end{tikzcd}\quad \begin{tikzcd}
		Q\ar[r, >->,"h_t "] & V \ar[r, >->,"1"]  \ar[dr,phantom,"\square"]  & V\\
		&	C\oplus C' \ar[r, >->,"1"]\ar[u, {Circle[open]}->,"k_t"]&  C\oplus C' \ar[u, {Circle[open]}->,"k_t	"] \\
		&& O \ar[u, {Circle[open]}->] 
		\end{tikzcd}
		\end{equation*}
		[To show that this is a 2-simplex, notice Lemma~\ref{lem:Nen3} already verified that
		\[t':=\left(\dsquaref{O}{C\oplus C'}{Q}{V}{}{}{h_t}{k_t}\right)\]
		is a distinguished square. The rest follows from Lemma~\ref{lem:ADTsquares} (i).]
		\item Triangle (4).
		
		\begin{equation*}
		\begin{tikzcd}
		P\ar[r, >->,"\theta"] & Q \ar[dr,phantom,"\square"] \ar[r, >->,"1\oplus C\oplus C'"] & Q\oplus C\oplus  C' \\
		&	O \ar[r, >->] \ar[u, {Circle[open]}->]& C\oplus C' \ar[u, {Circle[open]}->,"Q\oplus 1"] \\
		&& C\oplus C'  \ar[u, {Circle[open]}->] 
		\end{tikzcd}\quad \begin{tikzcd}
		Q\ar[r, >->,"1"] & Q \ar[r, >->,"h_t"]  \ar[dr,phantom,"\square"]  & V\\
		&	O\ar[r, >-> ]\ar[u, {Circle[open]}->]& C\oplus C' \ar[u, {Circle[open]}->,"k_t	"] \\
		&& C\oplus C' \ar[u, {Circle[open]}->] 
		\end{tikzcd}
		\end{equation*}
[Why is this a 2-simplex? Immediate from Lemma~\ref{lem:ADTsquares}.]
	\end{itemize}	
	
\end{proof}

\begin{claim}\label{claim:NenL2} Loop~\eqref{eq:NenashevL2} is homotopic to loop $L$.
\end{claim}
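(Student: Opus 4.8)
The plan is to mirror the proof of Claim~\ref{claim:NenL1} essentially verbatim, exploiting the structural symmetry between Diagrams~\eqref{eq:NenBigL1} and~\eqref{eq:NenBigL2}. Recall that the two diagrams share the blue outer loop $L$ precisely because $\tilde s$ was shown (in the proof of Lemma~\ref{lem:Nen4}) to be simultaneously the horizontal composite of $s_1$ with $t$ and of $s$ with $u$; thus the whole of Diagram~\eqref{eq:NenBigL2} is obtained from Diagram~\eqref{eq:NenBigL1} by systematically swapping $t \leftrightarrow u$, $t' \leftrightarrow u'$, $s_1 \leftrightarrow s$, and $Q \leftrightarrow B\oplus B'$ at the lower node. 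So the task reduces, exactly as before, to producing 2-simplices in $G\calC$ bounded by Triangles (1'), (2'), (3'), (4').

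First I would handle Triangles (1') and (2'), the primed analogues of Triangles (1) and (2) from Claim~\ref{claim:NenL1}. For (1'), I would form the top row as the direct sum (Lemma~\ref{lem:DirectSum}) of the filtration underlying $s_0$ with $O \rtail C\oplus C' \xrtail{1} C\oplus C'$, and glue it to the bottom row obtained as the horizontal composite of $s$ with $u$; the upper-left square is then distinguished by Lemma~\ref{lem:ADTsquares}(i), and the upper-right square is precisely the distinguished square $u$ of Lemma~\ref{lem:Nen3}. Triangle (2') is identical with $s_0$ replaced by $s_1$.

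Next I would handle Triangles (3') and (4'), copying the constructions of Triangles (3) and (4) but with $u'$ in place of $t'$: Triangle (3') comes from the direct sum of the length-two constant filtration on $P$ (via $\theta$) with $O\rtail C\oplus C' \xrtail{1} C\oplus C'$, glued to the horizontal composite of $u'$ with a trivial square; Triangle (4') is immediate from Lemma~\ref{lem:ADTsquares}. In all four cases the verification that the drawn squares are distinguished reduces, by goodness, to commutativity in the ambient category, which follows from the identities built into the construction of $V = (B\oplus B')\star_{A\oplus A'} Q$ together with Axioms (K), (PQ), and (DS); in particular one uses $h_u\circ(\alpha\oplus\alpha') = h_t\circ f_1$ so that $k_u$ and $k_t$ may be identified after the harmless matching of quotients (cf.\ Remark~\ref{rem:INV}).

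The hard part will be exactly the analogue of the trickiest check in Claim~\ref{claim:NenL1}: confirming that in Triangles (1') and (2') the $\E$-morphism chosen on the bottom row — namely the $\E$-morphism of $s$ post-composed with $j$, the $\E$-morphism of $u$ — is a legitimate choice of quotient for $s_0 \oplus C\oplus C'$ (resp.\ $s_1 \oplus C\oplus C'$). This is where the coordinate-wise description of direct-sum quotients from Lemma~\ref{lem:DirectSum}, the recursive quotient conventions of Construction~\ref{cons:Hspace}, and the restricted-pushout isomorphism $\frac{V}{Q} \cong C\oplus C'$ (Lemma~\ref{lem:Hspace1} together with Remark~\ref{rem:INV}) all have to be invoked carefully. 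Once these four 2-simplices are in hand, Loop~\eqref{eq:NenashevL2} is homotopic to $L$, which — combined with Claim~\ref{claim:NenL1} — completes the proof of Lemma~\ref{lem:Nen4}.
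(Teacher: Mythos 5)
Your proposal is correct and follows essentially the same approach as the paper: mirror Claim~\ref{claim:NenL1} by producing four explicit 2-simplices bounding Triangles (1')–(4') of Diagram~\eqref{eq:NenBigL2}, where the primed triangles are obtained by swapping $s_1\leftrightarrow s$, $t\leftrightarrow u$, $t'\leftrightarrow u'$, and $Q\leftrightarrow B\oplus B'$. Your identification of the key inputs — the distinguished square $u$ from Lemma~\ref{lem:Nen3} appearing as the upper-right square in Triangles (1') and (2'), Lemma~\ref{lem:ADTsquares}(i) for the direct-sum squares, and the crucial identity $h_u\circ(\alpha\oplus\alpha')=h_t\circ f_1$ guaranteeing that the primed triangles fit coherently onto the shared outer loop $L$ — is exactly what the paper uses.

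One small calibration remark: the ``hard part'' you flag (matching the $\E$-morphism of $s$ post-composed with $j$ against the quotient of $s_0\oplus C\oplus C'$) is in the paper dispatched with a single sentence pointing to $V$ being a restricted pushout. The heavy lifting there has already been done in Lemma~\ref{lem:Nen3}, which \emph{constructs} the distinguished squares $u$ and $u'$ with exactly the $\E$-morphisms $j$ and $k_u$ you need, so once you recognise the square in Triangle (1') as being $u$ verbatim, no further quotient bookkeeping is required beyond the Remark~\ref{rem:INV} identifications. Your instinct to be careful about this point is sound, but the machinery of Lemmas~\ref{lem:Nen3}, \ref{lem:Hspace1} and \ref{lem:ADTsquares} is set up precisely so that these four 2-simplices can be written down and verified immediately.
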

\begin{proof} By analogy with Claim~\ref{claim:NenL1}, all indicated triangles in Diagram~\eqref{eq:NenBigL2} can be shown to bound 2-simplices. The only subtlety lies in verifying that Triangles (1') and (2') do in fact bound the 2-simplices below, but this follows from $V$ being a restricted pushout (see remarks above Diagram~\eqref{eq:NenBigL1}).

	\begin{itemize}
		\item Triangle (1'). 
		\begin{equation*}
 \begin{tikzcd}
		A\oplus A'\ar[r, >->,"f_0"] & P \ar[dr,phantom,"\square"] \ar[r, >->,"1\oplus C\oplus  C'"] & P\oplus C\oplus  C'\\
		&	C\oplus C' \ar[r, >->] \ar[u, {Circle[open]}->,"g_0"]& C\oplus C' \oplus C\oplus C' \ar[u, {Circle[open]}->,"g_0\oplus 1"] \\
		&& C\oplus C' \ar[u, {Circle[open]}->] 
		\end{tikzcd}\quad \begin{tikzcd}
		A\oplus A'\ar[r, >->,"\alpha\oplus \alpha'"] & B\oplus B' \ar[r, >->,"h_u"]  \ar[dr,phantom,"\square"]  & V\\
		&	C\oplus C' \ar[r, >->]\ar[u, {Circle[open]}->,"\delta\oplus\delta'"]& C\oplus C' \oplus C\oplus C' \ar[u, {Circle[open]}->,"j"] \\
		&& C\oplus C' \ar[u, {Circle[open]}->] 
		\end{tikzcd}
		\end{equation*}
		\item Triangle (2').		
		\begin{equation*}
		\!\!\!\!\!\!\!\!\!\!\!\!\!\!\!\!\!\!\!	\begin{tikzcd}
		A\oplus A'\ar[r, >->,"f_1"] & Q \ar[dr,phantom,"\square"] \ar[r, >->,"1\oplus C\oplus C'"] & Q\oplus C\oplus C'\\
		&	C\oplus C' \ar[r, >->] \ar[u, {Circle[open]}->,"g_1"]& C\oplus C' \oplus C\oplus C' \ar[u, {Circle[open]}->,"g_1\oplus 1"] \\
		&& C' \ar[u, {Circle[open]}->] 
		\end{tikzcd}\quad \begin{tikzcd}
		A\oplus A'\ar[r, >->,"\alpha\oplus \alpha'"] & B\oplus B' \ar[r, >->,"h_u"]  \ar[dr,phantom,"\square"]  & V\\
		&	C\oplus C' \ar[r, >->]\ar[u, {Circle[open]}->,"\delta\oplus \delta'"]& C\oplus C' \oplus C\oplus C' \ar[u, {Circle[open]}->,"j"] \\
		&& C' \ar[u, {Circle[open]}->] 
		\end{tikzcd}
		\end{equation*}
		\item Triangle (3').
		\begin{equation*}
		\begin{tikzcd}
		P\ar[r, >->,"1\oplus C\oplus C'"] & P\oplus C\oplus C' \ar[dr,phantom,"\square"] \ar[r, >->,"\theta\oplus 1"] & Q\oplus C \oplus  C'\\
		&	 C\oplus C' \ar[r, >->,"1"] \ar[u, {Circle[open]}->,"P\oplus 1"]&  C\oplus C' \ar[u, {Circle[open]}->,"Q\oplus 1"] \\
		&& O \ar[u, {Circle[open]}->] 
		\end{tikzcd}\quad \begin{tikzcd}
		B\oplus B'\ar[r, >->,"h_u "] & V \ar[r, >->,"1"]  \ar[dr,phantom,"\square"]  & V\\
		&	C\oplus C' \ar[r, >->,"1"]\ar[u, {Circle[open]}->,"k_u"]&  C\oplus C' \ar[u, {Circle[open]}->,"k_u"] \\
		&& O \ar[u, {Circle[open]}->] 
		\end{tikzcd}
		\end{equation*}
		\item Triangle (4').
		
		\begin{equation*}
		\begin{tikzcd}
		P\ar[r, >->,"\theta"] & Q \ar[dr,phantom,"\square"] \ar[r, >->,"1\oplus C\oplus  C'"] & Q\oplus C\oplus C' \\
		&	O \ar[r, >->] \ar[u, {Circle[open]}->]& C\oplus C' \ar[u, {Circle[open]}->,"Q\oplus 1"] \\
		&& C\oplus C'  \ar[u, {Circle[open]}->] 
		\end{tikzcd}\quad \begin{tikzcd}
		B\oplus B' \ar[r, >->,"1"] & B\oplus B' \ar[r, >->,"h_u"]  \ar[dr,phantom,"\square"]  & V\\
		&	O\ar[r, >-> ]\ar[u, {Circle[open]}->]& C\oplus C' \ar[u, {Circle[open]}->,"k_u	"] \\
		&& C\oplus  C' \ar[u, {Circle[open]}->] 
		\end{tikzcd}
		\end{equation*}
	\end{itemize}
\end{proof}

\subsection{Permutation Lemma} This section isolates a permutation lemma that clarifies how direct sums interact with reordering; this will play a key role in the study of admissible triples in Section~\ref{sec:admtriples}.

\begin{lemma}[Permutation Lemma]\label{lem:permute} Setup: 
	\begin{itemize}
		\item Let $X_1,\dots, X_m$ be 1-simplices in $G\calC$ of the form
		\[ X_j:=\left( \,\,\dsquaref{O}{Z}{A_j}{B_j}{ }{ }{\alpha_j}{\beta_j}\quad,\quad \dsquaref{O}{Z}{A'_j}{B'_j}{ }{ }{\alpha'_j}{\beta'_j}\,\,\right), \] 	
		so that all have the same quotient object $Z$. 
		\item For any permutation $\sigma\in S_m$, define
		\[ \widetilde{X}_{\sigma(j)}:=\left( \,\,\dsquaref{O}{Z}{A_j}{B_j}{ }{ }{\alpha_j}{\beta_j}\quad,\quad \dsquaref{O}{Z}{A'_{\sigma(j)}}{B'_{\sigma(j)}}{ }{ }{\alpha'_{\sigma(j)}}{\beta'_{\sigma(j)}}\,\,\right),\]
		i.e. the LHS square is fixed while the RHS is permuted by $\sigma$.
	\end{itemize}	
	\underline{Then}, there is a homotopy between
	$$X_1\oplus \dots \oplus X_m \quad \text{and} \quad \widetilde{X}_{\sigma(1)}\oplus \dots \oplus \widetilde{X}_{\sigma(m)}.$$
\end{lemma}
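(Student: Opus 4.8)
The plan is to reduce to the case $m=2$ and $\sigma$ a transposition, then handle that base case explicitly by constructing the relevant $2$-simplices in $G\calC$. Since any permutation factors as a product of adjacent transpositions, and since direct sum is (homotopy) associative and commutative as an $H$-space operation on $G\calC$ (Theorem~\ref{thm:Gconstruction} and Observation~\ref{obs:K1DirectSum}), it suffices to show: swapping the RHS squares of two consecutive summands $X_i\oplus X_{i+1}$ — while keeping their LHS squares in place — yields a homotopic $1$-simplex. All the other summands ride along unchanged, so by functoriality of direct sum with respect to $G\calC$'s simplicial structure (Lemma~\ref{lem:DirectSum}, parts (iv) and (v)), a homotopy on the $X_i\oplus X_{i+1}$ block extends to a homotopy on the whole direct sum.

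For the base case, consider two $1$-simplices $X_1, X_2$ with common quotient $Z$, and set $Y:=X_1\oplus X_2$ versus $\widetilde{Y}:=\widetilde{X}_{\sigma(1)}\oplus\widetilde{X}_{\sigma(2)}$ where $\sigma$ is the transposition. Both are $1$-simplices $(A_1\oplus A_2, A'_{?}\oplus A'_{?})\to(B_1\oplus B_2, B'_{?}\oplus B'_{?})$ whose \emph{top} flag rows (the $A_j\rtail B_j$ data and their quotient $Z\oplus Z$) are literally identical: the LHS square is unpermuted in both by definition of $\widetilde{X}$. The bottom rows differ only in that the direct summands $A'_1\oplus A'_2$ and $B'_1\oplus B'_2$ are reordered to $A'_2\oplus A'_1$ and $B'_2\oplus B'_1$. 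The canonical permutation isomorphisms $\tau_A\colon A'_1\oplus A'_2\rtail A'_2\oplus A'_1$ and $\tau_B\colon B'_1\oplus B'_2\rtail B'_2\oplus B'_1$ (Convention~\ref{conv:perm-iso}) together with the identity on the top row define an edge $(A_1\oplus A_2, A'_1\oplus A'_2)\to (A_1\oplus A_2, A'_2\oplus A'_1)$ and similarly for the $B$-vertices. The key computation is that the obvious squares — commuting $\tau_B\circ(\text{bottom map of }Y)=(\text{bottom map of }\widetilde Y)\circ\tau_A$ on the $\M$-side — hold on the nose; this is exactly Lemma~\ref{lem:DirectSum}(iii), which gives $\tau\circ(\beta_1\oplus\beta_2)=(\beta_2\oplus\beta_1)\circ\tau$. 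Since all the maps involved in these connecting edges are isomorphisms, the resulting squares are distinguished by closure under isomorphisms (Definition~\ref{def:goodDC}), and one checks that the quotient-index data agree (all quotients are $Z\oplus Z$, compatible via $\tau$). Thus one assembles a $2$-simplex in $G\calC$ exhibiting $Y$, the permuting edge, and $\widetilde Y$ as its three faces, giving the desired homotopy; a symmetric $2$-simplex at the source vertex handles the basepoint compatibility, exactly as in the proof of Lemma~\ref{lem:SherLoopAdd}.

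I expect the main obstacle to be bookkeeping rather than a genuine conceptual difficulty: one must be careful that the connecting edges between $Y$ and $\widetilde Y$ really are $1$-simplices of $G\calC$ (not merely of $\calS\calC$), which requires the quotient-index triangles and squares to match up — precisely the subtlety flagged in Warning~\ref{warning:compose}. Because here everything differs only by a permutation isomorphism, this matching is automatic once one invokes Lemma~\ref{lem:DirectSum}(iii) to see that the permutation isomorphisms on objects, quotients, and $\M$-morphisms are mutually compatible; but writing down the two $3$-term flag diagrams cleanly and verifying each drawn square is distinguished is where the care is needed. A secondary point is the reduction step: one should confirm that the $H$-space homotopy-commutativity of $\oplus$ on $G\calC$ is genuinely compatible with fixing the LHS squares — but this is immediate since the homotopies realizing associativity and commutativity of restricted pushouts and direct sums act coordinate-wise (cf. the proof of Claim~\ref{claim:Hspace}), so they restrict correctly to the block being permuted.
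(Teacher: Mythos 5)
Your overall strategy matches the paper's: reduce to a single adjacent transposition, and handle that case by constructing $2$-simplices built from permutation isomorphisms. The reduction step — factoring $\sigma$ into adjacent transpositions and extending a block homotopy to the whole direct sum via functoriality/homotopy associativity — is sound and coincides with the paper's Step~1.

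The gap is in the base case. You propose to fill the square
\[
\begin{tikzcd}
(A_1\oplus A_2, A'_1\oplus A'_2)\ar[r,"Y"]\ar[d,"\epsilon_A",swap]&(B_1\oplus B_2, B'_1\oplus B'_2)\ar[d,"\epsilon_B"]\\
(A_1\oplus A_2, A'_2\oplus A'_1)\ar[r,"\widetilde{Y}",swap]&(B_1\oplus B_2, B'_2\oplus B'_1)
\end{tikzcd}
\]
with two $2$-simplices $T_1$ (containing $Y$ and $\epsilon_B$) and $T_2$ (containing $\epsilon_A$ and $\widetilde{Y}$), both sharing the diagonal $d_1$-face. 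But if you actually track the quotient data required by Construction~\ref{cons:Gcons}(ii), these two diagonals are \emph{forced} to be different $1$-simplices of $G\calC$. For $T_1$ (top flag $A\rtail B\xrtail{1}B$, bottom flag $A'\rtail B'\xrtail{\tau_B}B''$), matching the quotient-index triangle to the top's identity forces the bottom quotient of $d_1$ to be $\varphi(\tau_B)\circ(\beta'_1\oplus\beta'_2)$, which equals $\varphi(\tau_Z)\circ(\beta'_2\oplus\beta'_1)$ — it carries an extra twist by the permutation $\tau_Z$ of $Z\oplus Z$. For $T_2$ (top flag $A\xrtail{1}A\rtail B$, bottom flag $A'\xrtail{\tau_A}\tilde A'\rtail B''$), the same matching forces the bottom quotient to be the untwisted $\beta'_2\oplus\beta'_1$. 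Since $d_1$-faces of a $2$-simplex record the quotient $\E$-morphism, these are distinct $1$-simplices; the two $2$-simplices do not glue, and your asserted homotopy does not exist as described. Lemma~\ref{lem:DirectSum}(iii) controls the $\M$-side — $\tau\circ(\alpha_1\oplus\alpha_2)=(\alpha_2\oplus\alpha_1)\circ\tau$ — but it does not resolve this incompatibility of $\E$-quotient choices, which is the subtlety Warning~\ref{warning:compose} is about.

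The paper avoids this by not working face-by-face: it constructs a genuine simplicial homotopy $h\colon\calW\times[1]\to G\calC$, where $\calW\subseteq G\calC\times G\calC$ is the simplicial subset of pairs sharing all quotient-index data. For a monotone $\beta\colon[n]\to[1]$ with cutoff index $i$, $h$ permutes the bottom-row summands in stages $\leq i$; crucially, in stages $>i$ it leaves the $\M$-morphisms of the bottom row alone but \emph{threads the $\tau$-twist through the $\E$-morphisms} (``permute the $\E$-morphisms accordingly to make a simplex in $G\calC$''). This extra $\E$-side bookkeeping is precisely what makes $h$ a simplicial map whose faces glue, and it is exactly the step missing from your base case. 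To repair your argument you would need either to reproduce this $\E$-twist tracking, or to insert an additional $2$-simplex that absorbs the $\tau_Z$-discrepancy between the two diagonals.
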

\begin{proof} We first treat the case for adjacent swaps, before extending to arbitrary permutations.

	\subsubsection*{Step 0: Adjacent Swaps} Assume $m=2$ and $\sigma=(12)$. Let $\calW\subseteq  G\calC\times G\calC$ be the simplicial subset whose higher $n$-simplices are pairs 
	$$(\alpha_1,\alpha_2)\in G\calC\times G\calC ([n])$$
sharing the same 0th-face.\footnote{In other words, $\calW:=GG\calC$, the $G$-construction applied twice to $S\calC$.} 
Explicitly, for any $n>0$, if 
	$$\alpha_r=\begin{pmatrix} O \rtail \doubleunderline{A_{r,0}\rtail \dots \rtail A_{r,n}}\\ O \rtail A'_{r,0}\rtail \dots \rtail A'_{r,n} \end{pmatrix}\qquad r\in \{1,2\},$$
	then we require all quotient index diagrams to agree:
	$$	\begin{tikzcd}
\frac{	A'_{1,j}}{A'_{1,i}} \ar[r,>->,""] \ar[dr,phantom,"\square"]& \frac{	A'_{1,j}}{A'_{1,l}} 
	\\
\frac{	A'_{1,k}}{A'_{1,i}} \ar[r,>->] \ar[u, {Circle[open]}->] & \frac{	A'_{1,k}}{A'_{1,l}} \ar[u, {Circle[open]}->] 	\end{tikzcd} = \begin{tikzcd} \frac{	A_{1,j}}{A_{1,i}} \ar[r,>->,""] \ar[dr,phantom,"\square"]& \frac{	A_{1,j}}{A_{1,l}} 
\\
\frac{	A_{1,k}}{A_{1,i}} \ar[r,>->] \ar[u, {Circle[open]}->] & \frac{	A_{1,k}}{A_{1,l}} \ar[u, {Circle[open]}->] 	\end{tikzcd} =\begin{tikzcd}
\frac{	A_{2,j}}{A_{2,i}} \ar[r,>->,""] \ar[dr,phantom,"\square"]& \frac{	A_{2,j}}{A_{2,l}} 
\\
\frac{	A_{2,k}}{A_{2,i}} \ar[r,>->] \ar[u, {Circle[open]}->] & \frac{	A_{2,k}}{A_{2,l}} \ar[u, {Circle[open]}->] 	\end{tikzcd} = \begin{tikzcd}
\frac{	A’_{2,j}}{A’_{2,i}} \ar[r,>->,""] \ar[dr,phantom,"\square"]& \frac{	A'_{2,j}}{A'_{2,l}} 
\\
\frac{	A’_{2,k}}{A’_{2,i}} \ar[r,>->] \ar[u, {Circle[open]}->] & \frac{	A'_{2,k}}{A'_{2,l}} \ar[u, {Circle[open]}->] 	\end{tikzcd},$$
for all $0\leq i\leq  l\leq  k\leq  j\leq n$.\footnote{Use the obvious identifications whenever the indices are equal -- e.g. when $j=l$, set $A'_{1,j}/A'_{1,l}=O$.} In particular, any pair of 1-simplices $(X_1,X_2)\in G\calC\times G\calC$ with common quotient belongs to $\calW$.

	We now set up the main construction of the proof. For a monotone map $\beta\colon [n] \to [1]$, set
	$$i:=\max\{j\mid \beta(j)=0\},$$
	with the convention $i=-1$ if $\beta$ is constantly 1. Then, define the map
	\begin{align*}
	h\colon \calW \times [1] ([n]) & \longrightarrow G\calC ([n]) \\
	\left(\left(\alpha_1,\alpha_2\right)\,,\,\beta\right) &\longmapsto \begin{pmatrix} O \rtail \doubleunderline{A_{1,0}\oplus A_{2,0}\rtail \dots \rtail A_{1,i}\oplus A_{2,i} \rtail A_{1,i+1}\oplus A_{2,i+1}\rtail \dots \rtail A_{1,n}\oplus A_{2,n}}\\ O \rtail A'_{2,0}\oplus A'_{1,0}\rtail \dots \rtail A'_{2,i}\oplus A'_{1,i} \rtail A'_{1,i+1}\oplus A'_{2,i+1}\rtail \dots \rtail A'_{1,n}\oplus A'_{2,n} \end{pmatrix},
	\end{align*}
	where $(\alpha_1,\alpha_2)$ is an $n$-simplex of $\calW$.
	
	In English: the map $h$ defines a simplicial homotopy between $\alpha_1+\alpha_2$ and the permuted sum. When $i=-1$, $h$ gives $\alpha_1+\alpha_2$. When $i\geq 0$, $h$ modifies $\alpha_1+\alpha_2$ by the following rule:
	\begin{itemize}
		\item[$\diamond$] \textbf{Stages $j\leq i$.} Apply the obvious isomorphisms to permute the summands of the relevant $\M$ and $\E$-morphisms (the quotient index diagrams are left unchanged);
		\item[$\diamond$] \textbf{Stages $j\geq i+1$.} Leave the $\M$-morphisms as is, and permute the $\E$-morphisms accordingly to make a simplex in $G\calC$. 
	\end{itemize}
Using Lemma~\ref{lem:DirectSum}, one checks that $h$ is a simplicial map. In particular, $h$ defines a homotopy 
		$$X_1\oplus X_2 \sim  \widetilde{X}_{\sigma(1)} \oplus \widetilde{X}_{\sigma(2)},$$
		where $X_1,X_2$ are 1-simplices with common quotient $Z$.
	\subsubsection*{Step 1: Arbitrary Permutations} Let $m$ be any positive integer. The case for $m=1$ is trivial. As for $m>1$, any permutation $\sigma\in S_m$ is a finite product of adjacent transpositions. For each adjacent transposition, say $(k\,\, k+1)$ for some $0\leq k <m$, apply Step 0 to get 
		$$X_k\oplus X_{k+1} \sim  \widetilde{X}_{\sigma(k)} \oplus \widetilde{X}_{\sigma(k+1)},$$
		before extending to get
				$$X_1\oplus \dots \oplus X_k\oplus X_{k+1}\oplus \dots \oplus X_m \sim  X_1\oplus \dots \oplus \widetilde{X}_{\sigma(k)} \oplus \widetilde{X}_{\sigma(k+1)} \oplus \dots \oplus X_m;$$
	this uses the fact that the $H$-space addition on $G\calC$ is homotopy associative. Hence, concatenating all the homotopies produced by the adjacent transpositions, deduce the desired homotopy for $\sigma$. 
\end{proof}

\subsection{Admissible Triples}\label{sec:admtriples} This section proves Lemma~\ref{lem:admtriple}, which characterises the free homotopy class of admissible triples under mild assumptions. The result appears as Corollary~\ref{cor:admtrip}, following a key lemma proved below. As before, recall Convention~\ref{conv:K1} that $K_1(\calC) = \pi_1|G\calC^{\mathsf{o}}|$, where $G\calC^{\mathsf{o}}$ is the base-point component.

We begin by clarifying the group structure on $K_1(\calC)$ before turning to the main argument. The standard group action is given by concatenation of loops, but since $G\calC$ is an $H$-space, we can say more.

\begin{observation}\label{obs:K1DirectSum} $K_1(\calC)$ is generated by 1-simplices of $G\calC^{\mathsf{o}}$. The group action is equivalent to taking their direct sums. In particular, given any 1-simplex  
		$$	X:=	\begin{pmatrix}  &O\rtail & \doubleunderline{A \xrtail{\alpha} B}\\ &O  \rtail & A'\xrtail{\alpha'}B'\end{pmatrix},
	$$
its corresponding inverse is given by swapping the top and bottom rows 
	$$	Y:=	\begin{pmatrix}  &O\rtail & \doubleunderline{A' \xrtail{\alpha'} B'}\\ &O  \rtail & A\xrtail{\alpha}B\end{pmatrix}.$$
\end{observation}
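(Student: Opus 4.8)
The statement to prove is Observation~\ref{obs:K1DirectSum}, which asserts three things: (i) $K_1(\calC)=\pi_1|G\calC^o|$ is generated by the $1$-simplices of $G\calC^o$; (ii) the group operation on $\pi_1|G\calC^o|$ coincides (up to canonical identification) with the direct-sum (i.e. $H$-space addition) operation on $1$-simplices; and (iii) for a $1$-simplex $X$ presented as a pair of flags, the $1$-simplex $Y$ obtained by interchanging the top and bottom rows represents $-\lrangles{X}$. The plan is to assemble these from the $H$-space structure on $G\calC$ (Theorem~\ref{thm:Gconstruction} and Construction~\ref{cons:Hspace}) together with the standard Eckmann–Hilton argument, plus one concrete diagram showing $X\oplus Y$ is null-homotopic.

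First I would establish (i): since $G\calC^o$ is a connected simplicial set, the standard presentation of $\pi_1$ of a connected simplicial space (as used already in the proof of Proposition~\ref{prop:baseline}, via \cite[Lemma IV.3.4]{WeibelKBook}) shows that, after choosing a maximal spanning tree, $\pi_1|G\calC^o|$ is generated by the edges of $G\calC^o$, i.e. by $1$-simplices. Next I would establish (ii). The direct sum of $1$-simplices (Lemma~\ref{lem:DirectSum}) makes $G\calC$ into a homotopy-associative, homotopy-commutative $H$-space with strict unit the degenerate simplex on $(O,O)$; this is exactly the structure produced by Construction~\ref{cons:Hspace} (or equivalently by Theorem~\ref{thm:Gconstruction}). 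The concatenation-of-loops product and the $H$-space product on $\pi_1$ of a based loop space that is also an $H$-space satisfy the interchange law, so by Eckmann–Hilton they agree and are both commutative; hence $\lrangles{X\oplus X'} = \lrangles{X}+\lrangles{X'}$ for $1$-simplices $X,X'$ in the base-point component, which is assertion (ii). (This is the same Eckmann–Hilton input flagged in the footnote after Corollary~\ref{cor:Nen}.)

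For (iii), it suffices by (ii) to show that $X\oplus Y$ is freely null-homotopic, where $Y$ is $X$ with its two rows swapped. Writing $X$ as the pair of flags $O\rtail A\xrtail{\alpha}B$ over $O\rtail A'\xrtail{\alpha'}B'$ (with common quotient, say $Z$), the sum $X\oplus Y$ is the pair $O\rtail A\oplus A'\xrtail{\alpha\oplus\alpha'} B\oplus B'$ over $O\rtail A'\oplus A\xrtail{\alpha'\oplus\alpha}B'\oplus B$, with common quotient $Z\oplus Z$. The Permutation Lemma~\ref{lem:permute} (with $m=2$, $\sigma=(12)$) provides a homotopy interchanging the two rows; combined with the fact that a $1$-simplex whose two flag rows are literally equal is diagonal and hence bounds the $2$-simplex of Relation (N1)-type (both copies equal, as in the proof of Theorem~\ref{thm:NenSurj}, case (N1)), one deduces $X\oplus Y$ is connected to the additive identity. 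Concretely, the homotopy from Lemma~\ref{lem:permute} turns $X\oplus Y$ into a $1$-simplex of the form (pair of identical flags) $\oplus$ (something), and the identical-flag part is null-homotopic via the standard triangle $2$-simplex; the $H$-space associativity then finishes it.

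\textbf{Main obstacle.} The delicate point is (iii): making precise that $X\oplus Y$ is null-homotopic. Lemma~\ref{lem:permute} as stated applies to $1$-simplices sharing a common \emph{quotient object}, which $X$ and $Y$ do (both have quotient $Z$, hence $X\oplus Y$ and its row-swap share quotient $Z\oplus Z$), so the hypothesis is met; but one must check carefully that after applying the permutation homotopy the resulting $1$-simplex genuinely has both flag rows equal (not merely isomorphic), or else replace "equal" by "canonically isomorphic" and invoke closure of distinguished squares under isomorphism together with the freedom to choose quotient representatives (Convention~\ref{conv:restrPO-quotient}, Remark~\ref{rem:INV}). I expect this bookkeeping — tracking which rows become equal and verifying the intervening diagrams are genuine $2$-simplices — to be the only nontrivial part; everything else is a direct appeal to the already-established $H$-space structure and Eckmann–Hilton.
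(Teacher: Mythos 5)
Your proposal is correct and follows essentially the same route as the paper: Remark~\ref{rem:piGC} for (i), Eckmann–Hilton for (ii), and for (iii) applying the Permutation Lemma~\ref{lem:permute} with $m=2$, $\sigma=(12)$ to show $X\oplus Y$ is homotopic to a diagonal $1$-simplex and hence null-homotopic via the standard $2$-simplex. The paper's proof is even terser on the bookkeeping you flag (it simply writes ``with the obvious quotients''), so your extra caution about equal-vs-canonically-isomorphic rows is a reasonable expansion of the same argument rather than a deviation from it.
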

\begin{proof} That $K_1(\calC)$ is generated by 1-simplices in $G\calC^{\mathsf{o}}$ is Remark~\ref{rem:piGC}. Next, by the usual Eckmann-Hilton argument, the standard group action of $\pi_1|G\calC^{\mathsf{o}}|$ is homotopic to the $H$-space action induced by the $G$-construction -- which, by Theorem~\ref{thm:Gconstruction}, corresponds to taking direct sums. It remains to verify the claim about inverses. Taking the sum of $X$ and $Y$ above, we get
	$$X + Y = \begin{pmatrix}  &O\rtail & \doubleunderline{A\oplus A'\xrtail{\alpha\oplus \alpha'}B\oplus B' }\\ &O  \rtail & A'\oplus A\xrtail{\alpha'\oplus \alpha}B'\oplus B \end{pmatrix}\sim \begin{pmatrix}  &O\rtail & \doubleunderline{A\oplus A'\xrtail{\alpha\oplus \alpha'}B\oplus B'}\\ &O  \rtail & A\oplus A'\xrtail{\alpha\oplus \alpha'}B\oplus B' \end{pmatrix},$$
with the obvious quotients. By Permutation Lemma~\ref{lem:permute}, we can permute the summands on the bottom row of $X+Y$ without changing its homotopy class; hence, $X+Y$ is homotopic to a diagonal square. Since the canonical loop of a diagonal square bounds a 2-simplex, conclude that $\lrangles{X+Y}=0$ in $K_1$, as desired.\footnote{ {\em Notes for the cautious reader.} The choice of maximal spanning tree $T$ in Remark~\ref{rem:piGC} determines a canonical loop for each 1-simplex $f$ in $G\calC^{\mathsf{o}}$, so $\lrangles{f}$ is well-defined in $K_1$. Explicitly, let us build $T$ by first including $\{(O,O)\to(A,A')\}$ with $A$ the chosen common quotient, before extending to a maximal tree by Zorn’s Lemma. In particular, diagonal squares still vanish in $K_1$ since their canonical loops bound a 2-simplex (see Theorem~\ref{thm:NenSurj}, Relation~(N1), proved independently of Lemma~\ref{lem:admtriple}).}\end{proof}

\begin{construction} Let $\calT$ be a triangle contour, presented as in Diagram~\eqref{eq:tricontour}. Given any vertex $(A,A')$ in $G\calC$, one can construct a new triangle contour $(A,A')\oplus \mathcal{T}$ by formal direct sum
	\begin{equation}
	\begin{tikzcd}
	&(P_1\oplus A,P'_1\oplus A') \ar[dr,"e_1\oplus (A\text{,}A')"]\\
	(P_0\oplus A,P'_0\oplus A') \ar[ur,"e_0\oplus (A\text{,}A')"] \ar[rr,"e_2\oplus (A\text{,}A')"] && (P_2\oplus A,P'_2\oplus A')
	\end{tikzcd} 
	\end{equation}
That $(A,A')\oplus \mathcal{T}$ is well-defined follows from Lemma~\ref{lem:ADTsquares}.
\end{construction}

\begin{lemma}[Key Lemma] Let $\calT=(e_0,e_1,e_2)$ be an admissible triple, where each $e_i$ is a double exact square. Following the notation of ~\eqref{eq:tricontour}, this assembles into a pair of flag diagrams, presented below.
	\begin{equation}\label{eq:admissibleDES}
	\begin{tikzcd}
	P_0 \ar[r, >->,"\alpha_{0,1}"] & P_1 \ar[dr,phantom,"\square"] \ar[r, >->,"\alpha_{1,2}"] & P_2\\
	&	P_{1/0} \ar[r, >->,"\alpha_{1/0,2/0}"] \ar[u, {Circle[open]}->,"\alpha_{1/0,1}"]& P_{2/0} \ar[u, {Circle[open]}->,swap,"\alpha_{2/0,2}"] \\
	&& P_{2/1} \ar[u, {Circle[open]}->,swap,"\alpha_{2/1,2/0}"] 
	\end{tikzcd}\quad \begin{tikzcd}
	P_0 \ar[r, >->,"\alpha'_{0,1}"] & P_1 \ar[dr,phantom,"\square"] \ar[r, >->,"\alpha'_{1,2}"] & P_2\\
	&	P_{1/0} \ar[r, >->,"\alpha'_{1/0,2/0}"] \ar[u, {Circle[open]}->,"\alpha'_{1/0,1}"]& P_{2/0} \ar[u, {Circle[open]}->,swap,"\alpha'_{2/0,2}"] \\
	&& P_{2/1} \ar[u, {Circle[open]}->,swap,"\alpha'_{2/1,2/0}"] 
	\end{tikzcd}
	\end{equation}		
Let $l(\calT)$ be the double exact square associated to $\calT$, and $\mu(l(\calT))$ be its canonical loop.
	\underline{Then} the loop $\calT=e_0e_1e_2^{-1}$ is freely homotopic to 
	$(P_2,P_2)\oplus \mu(l(\calT)).$
\end{lemma}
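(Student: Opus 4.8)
The plan is to realise the loop $\calT = e_0 e_1 e_2^{-1}$ directly as a $2$-simplex in $G\calC$ after suitably stabilising by $(P_2,P_2)$, much as Nenashev does in \cite[Cor. 4.3]{Nen0}, but working around the absence of the isomorphism $B\oplus\frac{B}{A}\cong B\star_A B$ in the manner of Lemma~\ref{lem:Nen4}. The key point is that Diagram~\eqref{eq:admissibleDES} already \emph{almost} exhibits $\calT$ as a $2$-simplex: the pair of flag diagrams assembles the three edges $e_0,e_1,e_2$ into the right combinatorial shape, and the associated double exact square $l(\calT)$ is exactly the quotient-index square that appears in the $2$-simplex. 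The only obstruction to $\calT$ itself bounding a $2$-simplex is that the quotient-index squares of the two flag diagrams need not agree --- i.e. $(\alpha_{1/0,2/0},\alpha_{2/1,2/0})$ need not equal $(\alpha'_{1/0,2/0},\alpha'_{2/1,2/0})$ --- and this discrepancy is precisely what $l(\calT)$ records.

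First I would set up a diagram of loops entirely analogous to Diagram~\eqref{eq:NenBigL1}. Working with the stabilised vertices $(P_i\oplus P_2, P'_i \oplus P_2)$, I would introduce an auxiliary object built from a restricted pushout --- concretely something like $V := P_2 \star_{P_0} P_2$ relative to the two maps $\alpha_{0,2}$ and $\alpha'_{0,2}$ (the composites which agree by admissibility are $\alpha_{1,2}\circ\alpha_{0,1}=\alpha_{0,2}$ and $\alpha'_{1,2}\circ\alpha'_{0,1}=\alpha'_{0,2}$, so the pushout along the first-coordinate maps is well-defined) --- together with the morphisms supplied by Axiom (DS) and Fact~\ref{facts:restrictedpushouts}. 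This produces a stabilised edge $\widetilde{e}$ that can be written simultaneously as a horizontal composite involving $e_1$ (via $\calT$'s first flag) and as one involving the quotient data (via $l(\calT)$), exactly mirroring the rôle of $\tilde s$ in Lemma~\ref{lem:Nen4}. Then I would assemble an octahedron-type diagram whose purple edges form $\calT$, whose other distinguished edges form $(P_2,P_2)\oplus\mu(l(\calT))$, and whose blue edges form a common intermediary loop $L'$; the bulk of the work is checking that each of the four constituent triangles bounds a genuine $2$-simplex in $G\calC$, which is done by writing out the pair of flag diagrams explicitly and invoking Lemmas~\ref{lem:ADTsquares} and~\ref{lem:DirectSum} (together with Corollary~\ref{cor:RPtrick}) to verify all drawn squares are distinguished and the quotient-index data matches. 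The same reasoning applied symmetrically shows $(P_2,P_2)\oplus\mu(l(\calT))$ is also homotopic to $L'$, giving the result; and since all maps in sight already lie in $G\calC^o$, the homotopy is free.

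The main obstacle will be the bookkeeping in constructing the intermediary edge $\widetilde e$ and verifying the triangles genuinely bound $2$-simplices. As in Discussion~\ref{dis:NenPushout}, one cannot shortcut via a pushout isomorphism; instead one must carefully check that the restricted pushout $V$ receives the right $\M$-morphisms from $P_2$ (in both its rôles: as codomain of the $\alpha_{0,2}$ flag and as the quotient $P_{2/0}$ appearing in $l(\calT)$), that the induced quotient squares are distinguished, and --- crucially --- that the $\E$-morphisms chosen via the functor $k$ of Lemma~\ref{lem:quotFilt} can be taken to make the relevant $\E$-squares commute on the nose (using Axiom (M) and Remark~\ref{rem:INV}, as in the proof of Theorem~\ref{thm:NenSurj}, Relation (N2)). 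Once the auxiliary edge is correctly in hand, the triangle-by-triangle verification is routine: each triangle is the direct sum of two or three elementary flag diagrams of the types already catalogued, and distinguishedness is inherited from closure under isomorphism and composition together with Fact~\ref{facts:restrictedpushouts}.

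\begin{corollary}\label{cor:admtrip} Let $\calT$ be an admissible triple whose $1$-simplices are all double exact squares. Then the loop $\calT$ is freely homotopic to $\mu(l(\calT))$.
\end{corollary}
\begin{proof} By the Key Lemma, $\calT$ is freely homotopic to $(P_2,P_2)\oplus\mu(l(\calT))$. Since $(P_2,P_2)$ is the standard edge-pair whose associated loop is $e(P_2)$ --- or more precisely, since stabilising by the vertex $(P_2,P_2)$ amounts to adding a diagonal square, which vanishes in $K_1(\calC)$ by Relation (N1) (equivalently, bounds a $2$-simplex) --- the extra summand contributes nothing up to homotopy. Hence $\calT$ is freely homotopic to $\mu(l(\calT))$, which is Lemma~\ref{lem:admtriple}.
\end{proof}
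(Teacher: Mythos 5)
The overall shape of your plan — mimic Lemma~\ref{lem:Nen4} by stabilising, building a mediating object via restricted pushouts, and then checking triangle-by-triangle that everything bounds a $2$-simplex — is in the right family, but there are two substantive gaps that the paper's actual proof has to work hard to close.

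\begin{enumerate}
\item \emph{The auxiliary object $V := P_2\star_{P_0}P_2$ is too small.} In Lemma~\ref{lem:Nen4} the auxiliary $V = (B\oplus B')\star_{(A\oplus A')}Q$ works because the two loops being compared share all their vertices and differ only in one internal $\M$-morphism, so a single pushout absorbs the discrepancy. Here $\calT$ and $(P_2,P_2)\oplus\mu(l(\calT))$ have genuinely different vertex sets: one runs through $(P_1,P_1)$, the other through $(P_2\oplus P_{1/0},P_2\oplus P_{1/0})$, and the only thing relating $P_2\star_{P_0}P_1$ to $P_2\oplus P_{1/0}$ is that their quotients over $P_2$ agree — not that the objects themselves agree (this is exactly the pushout failure of Discussion~\ref{dis:NenPushout}). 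A single pushout of $P_2\ltail P_0\rtail P_2$ does not receive the relevant maps from $P_1$, $Q$, $Q'$, etc., so it cannot serve as the apex of the octahedron you describe. What is actually needed is the iterated chain of restricted pushouts producing $Q$, $Q'$, $Z_i$, $W_i$, $W'_i$, terminating in a common $(W_2,W'_2)$ — and even then these appear \emph{paired across the two flags}, so the comparison cannot be done flag-by-flag.

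\item \emph{You have not accounted for the Permutation Lemma, and I do not see how to avoid it.} Once both loops have been pushed up to the level of the $W$-vertices, the two candidate $2$-simplices that would close the argument inevitably carry their direct-sum factors in different orders on the top and bottom rows (e.g. $P_2\oplus P_{1/0}\oplus Q'$ versus $Q\oplus P_2\oplus P_{1/0}$), because the $H$-space addition inserts the new filtration to the right while the restricted pushout of the second flag naturally inserts it to the left. This order mismatch is invisible on $\pi_0$ but is precisely what the quotient-index condition of Construction~\ref{cons:Gcons} forbids you from ignoring in a $2$-simplex. The argument therefore cannot be closed purely geometrically with $2$-simplices; one must pass to $K_1$, reduce to an identity of the form $\lrangles{\gamma_0}+\lrangles{a}-\lrangles{d}-\lrangles{e}=0$, and then invoke Lemma~\ref{lem:permute} (together with Observation~\ref{obs:K1DirectSum}) to permute the summands. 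Your proposal does not mention this algebraic reduction at all, and without it the ``octahedron, four $2$-simplices'' plan stalls.
\end{enumerate}

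There is also a quieter issue: to make the quotient-index data agree in the putative $2$-simplices, the quotients of the restricted pushouts must be chosen so that the $\E$-morphisms of the two filtrations literally commute, not merely up to isomorphism. This is where admissibility and Axioms (PQ), (K), (M) are used in an essential way (cf. the discussion of $\alpha_{1/0,2/0}=\beta_{1/0,2/0}$), and it is the step most sensitive to the pushout failure you flag. ``Invoking Lemmas~\ref{lem:ADTsquares} and~\ref{lem:DirectSum}'' will not cover this by itself.

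Your proof of the Corollary is essentially correct in spirit but slightly imprecise as stated: $(P_2,P_2)\oplus\mu(l(\calT))$ is based at $(P_2,P_2)$, not $(O,O)$, so one cannot directly ``subtract a diagonal square''. The clean argument is that, when $\calT$ consists of double exact squares, there is an edge $(O,O)\to(P_2,P_2)$, and the induced simplicial homotopy between $(O,O)\oplus\argu$ and $(P_2,P_2)\oplus\argu$ does the job. The diagonal-square intuition you invoke is what that edge \emph{is}, so the statement is repairable, but it should be phrased at the level of a simplicial homotopy between the two stabilisation maps rather than an identity in $K_1$.
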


\begin{proof}\hfill
	\subsubsection*{Step 0: Setup}
	 Start by taking repeated restricted pushouts
	
	\begin{equation}\label{eq:admtripRP}
	\!\!\!\!\!\!\!\!\!\!\!\!\!\!\!\!\!\!\!\!\!\!\!\!\!\!\!\!\!\!\!\!\!\!\!\!\!\!\!\!\small{\begin{tikzcd} & O \ar[r,>->] \ar[d,>->]& P_{1/0} \ar[d,>->] \ar[r,>->,"\alpha_{1/0,2/0}"] & P_{2/0} \ar[d,>->]\\
		P_0 \ar[d,>->,swap,"\alpha_{0,1}"] \ar[r,>->,"\alpha_{0,2}"] & P_2\ar[r,>->,"1\oplus P_{1/0}"] \ar[d,>->]& P_2\oplus P_{1/0} \ar[d,>->] \ar[r,>->,"1\oplus\alpha_{1/0,2/0}",yshift=3.5]&  P_{2}\oplus P_{2/0} \ar[d,>->]\\
		P_1  \ar[r,>->] \ar[d,>->,swap,"\alpha_{1,2}"] & Q \ar[r,>->] \ar[d,>->] & Z_0 \ar[d,>->] \ar[r,>->]& Z_1 \ar[d,>->]\\
		P_2 \ar[r,>->] & W_0\ar[r,>->]  & W_1 \ar[r,>->] & W_2\\
		\end{tikzcd}} \,\,\quad \small{\begin{tikzcd}& O \ar[r,>->] \ar[d,>->]& P_{1/0} \ar[d,>->] \ar[r,>->,"\alpha'_{1/0,2/0}"] & P_{2/0} \ar[d,>->] \\
		P_0 \ar[d,>->,swap,"\alpha'_{0,1}"] \ar[r,>->,"\alpha'_{0,2}"] & P_2\ar[r,>->,"1\oplus P_{1/0}"] \ar[d,>->]& P_2\oplus P_{1/0} \ar[d,>->] \ar[r,>->,"1\oplus\alpha'_{1/0,2/0}",yshift=3.5]&  P_{2}\oplus P_{2/0} \ar[d,>->]\\
		P_1  \ar[r,>->] \ar[d,>->,swap,"\alpha'_{1,2}"] & Q' \ar[r,>->] \ar[d,>->] & Z'_0 \ar[d,>->] \ar[r,>->]& Z'_1 \ar[d,>->]\\
		P_2 \ar[r,>->] & W'_0\ar[r,>->]  & W'_1 \ar[r,>->] & W'_2\\
		\end{tikzcd}},
	\end{equation}
	 For readability, we have renamed the restricted pushouts so that, e.g. $Q:= P_2\star_{P_0} P_1$ is the restricted pushout induced by $P_1\xltail{\alpha_{0,1}}P_0 \xrtail{\alpha_{0,2}} P_2$. Notice: all pairs of $\M$-morphisms in Diagram~\eqref{eq:admtripRP} form 1-simplices. 
	This fact, combined with Corollary~\ref{cor:RPtrick}, gives a roadmap for how to connect the vertices of the two loops.
	
\subsubsection*{Step 1: Reformulation} The restricted pushouts in Step 0 assemble into the diagram of 1-simplices below.
	\begin{equation*}
	\!\!\!\!\!\!\!\!\!\!\!\!\!\!\!\!\!\!\!\small{\begin{tikzcd}
		&& \ones{W_2}{W'_2}  \\ 
		\ones{P_2\oplus P_{1/0}}{P_2\oplus P_{1/0}}\ar[rr,blue,"\gamma_1"]  \ar[urr,"a"{name=Y6},violet] && 	\ones{P_2\oplus P_{2/0}}{P_2\oplus P_{2/0}} \ar[u,"b"{name=Y7},violet] \ar[from=Y6, to=Y7,phantom,"(1)", yshift=-5]\\
		& 	\ones{P_2}{P_2} \ar[ur,blue,"\gamma_2",""{name=E2}] \ar[ul,"\gamma_0",swap,blue,""{name=E1}] \end{tikzcd}}\qquad \begin{tikzcd}
	&&&& \ones{W_2}{W_2'}   \\
	\ones{Q}{Q'}  \ar[rrrru,"e"{name=U1},violet] \ar[rrrr, "h"{name=Y0}]&&&& \ones{W_0}{W_0'} \ar[u, violet,"f"{name=U0}] \ar[from=U0,to=U1,phantom,"(1')",yshift=-10] \\
	&\ones{P_1}{P_1} \ar[rr, "e_1"{name=Y0},red] \ar[ul,""{name=Y1}] \ar[urrr,bend left=5,""{name=Y3}]&& \ones{P_2}{P_2} \ar[ur,""{name=Y4}] \ar[from=Y3, to=Y1,phantom,"(2')", xshift=-15]\ar[from=Y3, to=Y4,phantom,"(3')",,yshift=-5]
	\\
	&& \ones{P_0}{P_0} \ar[uull, bend left=35,""{name=Z3}] \ar[uurr, bend right=35,""{name=Z2}]\ar[d,"e_2"] \ar[ul,red,swap,"e_0"{name=Z6}] \ar[ur,red,"e_2"{name=Z7}]
	\ar[from=Z6,to=Z3,phantom,"(4')"]\ar[from=Z7,to=Z2,phantom,"(5')"]
	\\
	&& \ones{P_2}{P_2} \ar[uuurr, bend right=30,violet,swap, "g",""{name=Z0}] 
	\ar[from=Z2, to=Z0, phantom, "(7')",xshift=-50,yshift=-30]  
	\ar[uuull, bend left=30,violet,"d"{name=Z1}]
	\ar[from=Z1, to=Z3, phantom, "(6')",xshift=50,yshift=-30]
	\end{tikzcd}
	\end{equation*}
	
\noindent Here, $(P_2,P_2)\oplus \mu(l(\calT))$ is presented as the blue edges $\gamma_0 \gamma_1 \gamma_2^{-1}$ in the LHS diagram above, while the loop $\calT=e_0e_1e_2^{-1}$  as the red edges in the RHS diagram. Some preliminary observations:
\begin{enumerate}[label=(\alph*)]
	\item On the RHS diagram: the loop $dhg^{-1}$ is obtained by taking restricted pushouts of 
	$$\ones{P_2}{P_2}\leftarrow \ones{P_0}{P_0}\to \ones{P_1}{P_1}\qquad \ones{P_2}{P_2}\leftarrow \ones{P_0}{P_0}\to \ones{P_2}{P_2}\quad \text{and}\quad \ones{Q}{Q'}\leftarrow \ones{P_1}{P_1}\to \ones{P_2}{P_2}.$$
	By Corollary~\ref{cor:RPtrick}, Triangles (2') - (7') are all 2-simplices. It is also easy to see that they assemble into the diagram above. The only subtlety is verifying Triangles (3') and (5') share the same face $(P_2,P'_2)\to (W_0,W'_0)$ -- but this follows from our hypothesis that $\calT$ is an {\em admissible} triple. Conclude that $\calT$ is freely homotopic to the loop $dhg^{-1}$.
	\item Triangles (1) and (1') are obtained by taking restricted pushouts of 
	$$\ones{W_1}{W'_1}\leftarrow \ones{P_2\oplus P_{1/0}}{P_2\oplus P_{1/0}}\to \ones{P_2\oplus P_{2/0}}{P_2\oplus P_{2/0}}\qquad \text{and}\quad \ones{Z_1}{Z_1'}\leftarrow \ones{Q}{Q'}\to \ones{W_0}{W'_0}$$
	respectively, before applying Corollary~\ref{cor:RPtrick}. Conclude that $\calT$ is freely homotopic to the outer loop $def^{-1}g^{-1}$, and $(P_2,P'_2)\oplus \mu(l(\calT))$ is freely homotopic to $\gamma_0ab^{-1}\gamma_2^{-1}$.
	\item The concatenation of 1-simplices $\gamma_2 b$ is homotopic to $gf$ by applying Corollary~\ref{cor:RPtrick} to the restricted pushout
	\[\begin{tikzcd}
	\ones{P_2}{P_2} \ar[rr,"g"] \ar[d,swap,"\gamma_2"] \ar[drr] && \ones{W_0}{W_0'} \ar[d,"f"]\\
	\ones{P_2\oplus P_{2/0}}{P_{2}\oplus P_{2/0}} \ar[rr,swap,"b"]&& \ones{W_2}{W_2'}
	\end{tikzcd}.\]
	\item It is worth recalling Warning~\ref{warning:compose}: given two composable 1-simplices 
	$$z_0\colon (A,A')\to (B,B') \qquad\text{and}\qquad z_1\colon (B,B')\to (C,C')$$
with sequences $A\rtail B\rtail C$ and $A'\rtail B'\rtail C'$ in $\M$, their composite 
$$\overline{z_0z_1} \colon (A,A')\to (C,C')$$
need not satisfy
$$\lrangles{z_0}+\lrangles{z_1}=\lrangles{\overline{z_0z_1}}\qquad\text{in} \,\, K_1(\calC),$$
{\em unless} the three 1-simplices are related by a 2-simplex. In the case of Item (c), such a 2-simplex exists, which yields
\begin{equation}\label{eq:compSPLIT}
\lrangles{g}+\lrangles{f}=\lrangles{\overline{gf}} = \lrangles{\overline{\gamma_2 b}} =\lrangles{\gamma_2}+\lrangles{b}.
\end{equation}
\end{enumerate}

In sum: items (a) and (b) establishes the free homotopies 
$$\calT\sim def^{-1}g^{-1} \qquad\text{and}\qquad  (P_2,P'_2)\oplus \mu(l(\calT)) \sim \gamma_0ab^{-1}\gamma_2^{-1}.$$
Hence, to prove the lemma, it suffices to show $def^{-1}g^{-1}$ and $\gamma_0ab^{-1}\gamma_2^{-1}$ are freely homotopic to each other. This is equivalent\footnote{The setup here is more general than in Lemma~\ref{lem:Nenclosedloop}: a canonical loop for a 1-simplex may involve a zig-zag back to $(O,O)$. Nevertheless, the same cancellation argument applies, so the decomposition of loops into formal sums of 1-simplices still holds.} to showing 
$$ \lrangles{\gamma_0} + \lrangles{a} - \lrangles{b} -\lrangles{\gamma_2} = \lrangles{d} + \lrangles{e} -\lrangles{f} -\lrangles{g} \qquad \text{in $K_1$}.$$
Since $\lrangles{g}+\lrangles{f}=\lrangles{\gamma_2} +\lrangles{b}$ by item (d), this reduces to showing 
\begin{equation}\label{eq:step2ID}
\lrangles{\gamma_0}+\lrangles{a}-\lrangles{d}-\lrangles{e}=0 .
\end{equation}

\subsubsection*{Step 2: A New 2-Simplex} Recall from Observation~\ref{obs:K1DirectSum} that addition in $K_1$ corresponds to taking direct sums of 1-simplices, and subtraction to swapping the top and bottom rows. Hence, with Step 1 in mind, we construct a 2-simplex whose boundary corresponds to the concatenation of 1-simplices $\lrangles{\gamma_0}-\lrangles{d}$ and $\lrangles{a}-\lrangles{e}$ (up to permutation of summands). 

Explicitly, this 2-simplex is presented by
$$	\begin{pmatrix}  &O\rtail & \doubleunderline{\,P_2\oplus P_2 \rtail  P_2\oplus P_{1/0}\oplus Q'\rtail W_2\oplus W'_2\, }\\ &O  \rtail & P_2\oplus P_2 \rtail  Q\oplus P_2\oplus P_{1/0}\rtail W_2\oplus W'_2\,\end{pmatrix},
$$
which corresponds to the flag diagrams
\begin{equation*}
\!\!\!\!\!\!\!\!\!\!\!\!\!\!\!\!\!\!\!\!\!\!\!\!\!\!\!\!\!\small{\begin{tikzcd}
P_2\oplus P_2\ar[r, >->] & P_2\oplus P_{1/0}\oplus Q'  \ar[dr,phantom,"\square"] \ar[r, >->] & W_2\oplus W'_2\\
&	P_{1/0}\oplus P_{1/0} \ar[r, >->] \ar[u, {Circle[open]}->]& P_{2/0}\oplus P_{2/0}\oplus P_{2/0}\oplus P_{2/0}\ar[u, {Circle[open]}->] \\
&& P_{2/0}\oplus P_{2/1}\oplus P_{2/0}\oplus P_{2/1}\ar[u, {Circle[open]}->] 
\end{tikzcd}\!\!\begin{tikzcd}
P_2\oplus P_2\ar[r, >->] & Q\oplus P_2\oplus P_{1/0} \ar[dr,phantom,"\square"] \ar[r, >->] & W_2\oplus W'_2\\
&	P_{1/0}\oplus P_{1/0} \ar[r, >->] \ar[u, {Circle[open]}->]& P_{2/0}\oplus P_{2/0}\oplus P_{2/0}\oplus P_{2/0}\ar[u, {Circle[open]}->] \\
&& P_{2/0}\oplus P_{2/1}\oplus P_{2/0}\oplus P_{2/1}\ar[u, {Circle[open]}->] 
\end{tikzcd}}
\end{equation*}

Step 2 constructs this 2-simplex in stages. We continue with the $\M$-morphisms in Step 0, now explicitly labelled as needed. 

\subsubsection*{Step 2a} By Axiom (PQ), we have the following exact squares
\begin{equation}\label{eq:Step3a}
\dsquaref{O}{ P_{1/0}}{P_2}{Q}{}{}{q_1}{\beta_{1/0}} \qquad\text{and}\qquad  \dsquaref{O}{ P_{1/0}}{P_2}{Q'}{}{}{q'_1}{\beta'_{1/0}}.
\end{equation}
Applying Lemma~\ref{lem:DirectSum}, construct the following pair of distinguished squares
\begin{equation}\label{eq:3a}
\begin{tikzcd}
O \ar[rr,>->] \ar[d,{Circle[open]}->] \ar[drr,phantom,"\square"]&& P_{1/0}\oplus P_{1/0}  \ar[d,{Circle[open]}->,"t_0"]\\
P_2\oplus P_2 \ar[rr,>->,"s_0"]&& P_2\oplus P_{1/0}\oplus Q'
\end{tikzcd} \qquad 
\begin{tikzcd}
O \ar[rr,>->] \ar[d,{Circle[open]}->]\ar[drr,phantom,"\square"]&& P_{1/0}\oplus P_{1/0}  \ar[d,{Circle[open]}->," t_1"]\\
P_2\oplus P_2 \ar[rr,>->,"s_1"]&& Q\oplus P_2\oplus P_{1/0}
\end{tikzcd}
\end{equation}
\begin{equation*}
s_0=\begin{pmatrix}
1 & 0\\
0 & 0\\
0 & q'_1
\end{pmatrix},\,\, s_1=\begin{pmatrix}
q_1 & 0\\
1 & 0\\
0 & 0
\end{pmatrix},\,\, t_0=\begin{pmatrix}
0 & 0\\
1 & 0\\
0 & \beta'_{1/0}
\end{pmatrix},\,\, t_1=\begin{pmatrix}
\beta_{1/0} & 0\\
0 & 0\\
0 & 1
\end{pmatrix}.
\end{equation*}
\subsubsection*{Step 2b} In this stage, we show that the quotients of the restricted pushouts in Step 0 can be chosen to align with those of the admissible triple $\calT$. The following claim makes this precise.

\begin{claim}\label{claim:step-2b} Fix the morphisms of the given admissible triple $\calT=(e_0,e_1,e_2)$, as well as the $\M$-morphisms in Step 0. \underline{Then}, one can construct a pair of flag diagrams
		\begin{equation}\label{eq:3bWARMUP}
	\begin{tikzcd}
	P_0 \ar[r, >->,"\alpha_{0,1}"] & P_1 \ar[dr,phantom,"\square"] \ar[r, >->,"\alpha_{1/2}"] & P_2\\
	&	P_{1/0} \ar[r, >->,"\alpha_{1/0,2/0}"] \ar[u, {Circle[open]}->,"\alpha_{1/0,1}"]& P_{2/0} \ar[u, {Circle[open]}->,swap,"\alpha_{2/0,2}"] \\
	&& P_{2/1} \ar[u, {Circle[open]}->,swap,"\alpha_{2/1,2/0}"] 
	\end{tikzcd}\quad \begin{tikzcd}
	P_2 \ar[r, >->,"q_1"] & Q \ar[dr,phantom,"\square"] \ar[r, >->,"w_0"] & W_0\\
	&	P_{1/0} \ar[r, >->,"\beta_{1/0,2/0}"] \ar[u, {Circle[open]}->,"\beta_{1/0}"]& P_{2/0} \ar[u, {Circle[open]}->,swap,"\beta_{2/0}"] \\
	&& P_{2/1} \ar[u, {Circle[open]}->,swap,"\beta_{2/1,2/0}"] 
	\end{tikzcd}
	\end{equation}
satisfying the identities
	$$\alpha_{1/0,2/0}=\beta_{1/0,2/0} \qquad \text{and} \qquad \alpha_{2/1,2/0}=\beta_{2/1,2/0}\;.$$
\end{claim}
\begin{proof}[Proof of Claim] Apply Lemma~\ref{lem:quotFilt} to the filtrations
\begin{equation}\label{eq:2b-filtration}
P_0\rtail P_1\rtail P_2 \qquad\text{and}\qquad P_{2}\rtail Q\rtail W_0.
\end{equation}
We now make the corresponding choices of quotients explicit. By Axiom (PQ) and Convention~\ref{conv:restrPO-quotient}, choose quotients of $W_0$ such that the following diagrams commute in the ambient pCGW category $\calC$. 
\begin{equation}\label{eq:3bPOproperty}
\begin{tikzcd}
P_1 \ar[r,>->,"\alpha_{1,2}"] \ar[d,>->,swap,"q_1"] & P_2 \ar[dr,phantom,"\circlearrowleft"]\ar[d,>->,swap,"q_2"] & P_{2/1} \ar[l,swap,"\alpha_{2/1,2}",{Circle[open]}->] \ar[d,>->,"="]\\
Q \ar[r,>->,swap,"w_0"]& W_0 & P_{2/1} \ar[l,swap,swap,"\beta_{2/1}",{Circle[open]}->]
\end{tikzcd} \qquad\text{and}\qquad  \begin{tikzcd}
P_0\ar[r,>->,"\alpha_{0,2}"] \ar[d,>->,swap,"\alpha_{0,2}"] & P_2 \ar[dr,phantom,"\circlearrowleft"]\ar[d,>->,swap,"q_2"] & P_{2/0} \ar[l,swap,"\alpha_{2/0,2}",{Circle[open]}->] \ar[d,>->,"="]\\
P_2 \ar[r,>->,swap,"q_2"]& W_0 & P_{2/0} \ar[l,swap,swap,"\beta_{2/0}",{Circle[open]}->]
\end{tikzcd} \quad .
\end{equation}
Hence, let the morphisms $\alpha_{1/0,1}$, $\alpha_{2/0,2}$, $\beta_{1/0}$, and $\beta_{2/0}$ be the obvious quotients associated to~\eqref{eq:2b-filtration}. Finally, in the notation of Diagram~\eqref{eq:3bWARMUP}, choose the quotients of $\M$-morphisms $\alpha_{1/0,2/0},\beta_{1/0,2/0}\colon P_{1/0}\rtail P_{2/0}$ so that the following $\E$-squares commute:
\begin{equation}\label{eq:3bsquare}
\begin{tikzcd}
P_{2/1} \ar[d,{Circle[open]}->,swap,"\alpha_{2/1,2/0}"]\ar[r,{Circle[open]}->,"="] \ar[dr,phantom,"\circlearrowleft"]& P_{2/1} \ar[d,{Circle[open]}->,"\alpha_{2/1,2}"]\\
P_{2/0}\ar[r,{Circle[open]}->,swap,"\alpha_{2/0,2}"]& P_2
\end{tikzcd} \qquad \begin{tikzcd}
P_{2/1} \ar[d,{Circle[open]}->,swap,"\beta_{2/1,2/0}"]\ar[r,{Circle[open]}->,"="] \ar[dr,phantom,"\circlearrowleft"]& P_{2/1} \ar[d,{Circle[open]}->,"\beta_{2/1}"]\\
P_{2/0} \ar[r,{Circle[open]}->,swap,"\beta_{2/0}"]& W_0
\end{tikzcd}\,\,.
\end{equation}

Our claim then follows by translating Diagrams~\eqref{eq:3bPOproperty} and \eqref{eq:3bsquare} into commutative diagrams in $\calC$, and performing a diagram-chase. There are two cases to check.
	\begin{itemize}
		\item \textbf{Case 1:} $\E\subseteq \calC$. In which case, we get the identity
		$$\beta_{2/0}\circ \beta_{2/1,2/0}=\beta_{2/1}=q_2\circ \alpha_{2/1,2}.$$
		Since 
		$$\alpha_{2/1,2}=\alpha_{2/0,2}\circ \alpha_{2/1,2/0},$$
		this yields 
		$$  \beta_{2/0}\circ \beta_{2/1,2/0}=q_2\circ \alpha_{2/0,2}\circ \alpha_{2/1,2/0}.$$
		In particular, notice $\beta_{2/0}=q_2\circ \alpha_{2/0,2}$
		by Diagram~\eqref{eq:3bPOproperty}. Hence, since $\E$-morphisms are monic in $\calC$ by Axiom (M), deduce that  
		$$  \beta_{2/1,2/0}=\alpha_{2/1,2/0}.$$
In other words, $\alpha_{1/0,2/0}$ and $\beta_{1/0,2/0}$ are formal kernels of the same $\E$-morphism, and are thus equivalent up to codomain-preserving isomorphism by Axiom (K). Hence, without loss of generality,\footnote{Explicitly, one may insert an isomorphism $\kappa\colon P_{1/0}\to P_{1/0}$ into the relevant distinguished squares so that $\alpha_{1/0,2/0}$ and $\beta_{1/0,2/0}$ coincide. This corresponds to readjusting the chosen quotient for $q_1\colon P_2\rtail Q$ in Step 2a by an isomorphic representative; the reader can check this does not affect our ability to construct the 2-simplex.} we may assume
	$$\alpha_{1/0,2/0}=\beta_{1/0,2/0}.$$

		\item \textbf{Case 2:} $\E^\opp\subseteq \calC$. Dual to Case 1.
	\end{itemize}
\end{proof}

\subsubsection*{Step 2c} Leveraging Step 2b, we construct two pairs of distinguished squares by a similar procedure.
\begin{itemize}
	\item \textbf{First Pair.} Apply Axiom (DS) to the restricted pushouts of the spans 
	$$P_2\oplus P_{2/0}\ltail P_2\rtail W_0 \quad \text{and}\quad P_2\oplus P_{2/0}\ltail P_2\rtail W'_0, $$
and obtain distinguished squares
	\[\dsquaref{P_{2/0}}{P_{2/0}\oplus P_{2/0}}{W_0}{W_2}{}{\beta_{2/0}}{}{v_0}\qquad \dsquaref{P_{2/0}}{P_{2/0}\oplus P_{2/0}}{W'_0}{W'_2}{}{\beta'_{2/0}}{}{v_1}.\]
Composing with the distinguished squares constructed by Claim~\ref{claim:step-2b}, we obtain 
	\begin{equation}\label{eq:3c1}
\dsquaref{P_{1/0}}{P_{2/0}\oplus P_{2/0}}{Q}{W_2}{u_0}{\beta_{1/0} }{ }{ v_0} \qquad \dsquaref{P_{1/0}}{P_{2/0}\oplus P_{2/0}}{Q'}{W'_2}{u_1}{ \beta'_{1/0}}{}{ v_1 } 
\end{equation}
where the top maps are given by
\[
u_0=\begin{pmatrix}\alpha_{1/0,2/0}\\[4pt]0\end{pmatrix},\qquad
u_1=\begin{pmatrix}\alpha'_{1/0,2/0}\\[4pt]0\end{pmatrix}.
\]
We omit an explicit description of the $\E$-morphisms \(v_0,v_1\), since they play no further role in the argument.

\item \textbf{Second Pair.} Construct the following diagram pair
\[\begin{tikzcd}
P_{1/0}\oplus P_{2/0} \ar[dr,phantom,"\square",blue]\ar[d,{Circle[open]->}] &P_{1/0} \ar[l,>->,"1\oplus P_{2/0}",swap ] \ar[d,{Circle[open]->}] \ar[dr,phantom,"\square",red]\ar[r,>->,"\alpha_{1/0,2/0}"] & P_{2/0} \ar[d,{Circle[open]->}] \\
W_1 & P_{2}\oplus P_{1/0}\ar[r,>->]\ar[l,>->] & P_2\oplus P_{2/0} 
\end{tikzcd}\qquad \begin{tikzcd}
P_{1/0}\oplus P_{2/0} \ar[dr,phantom,"\square",blue]\ar[d,{Circle[open]->}] &P_{1/0} \ar[l,>->,"1\oplus P_{2/0}",swap] \ar[d,{Circle[open]->}] \ar[dr,phantom,"\square",red]\ar[r,>->,"\alpha'_{1/0,2/0}"] & P_{2/0} \ar[d,{Circle[open]->}] \\
W'_1 & P_{2}\oplus P_{1/0}\ar[r,>->]\ar[l,>->] & P_2\oplus P_{2/0} 
\end{tikzcd}\,\,;\]
the red squares are constructed by Lemma~\ref{lem:DirectSum} (v), the blue squares by applying Axiom (DS) to the restricted pushouts of the spans
$$P_{2}\oplus P_{1/0}\ltail P_{2}\rtail W_0 \qquad\text{and}\qquad P_{2}\oplus P_{1/0}\ltail P_{2}\rtail W'_0.$$
Applying Axiom (DS) once more to the above diagram pair, we obtain
\begin{equation}\label{eq:3c2}
\dsquaref{P_{1/0}}{P_{2/0}\oplus P_{2/0}}{P_{2}\oplus P_{1/0}}{W_2}{u_0}{P_2\oplus 1 }{ }{ w_0} \qquad \dsquaref{P_{1/0}}{P_{2/0}\oplus P_{2/0}}{P_2\oplus P_{1/0}}{W'_2}{u_1}{ P_2\oplus 1}{}{ w_1 }.
\end{equation}
\end{itemize}
We then play the same game as in Step 2a. Applying Lemma~\ref{lem:DirectSum} (v), combine Diagrams~\eqref{eq:3c1} and \eqref{eq:3c2} to construct 
\begin{equation}\label{eq:3cDS}
\!\!\!\!\!\!\!\!\!\dsquaref{P_{1/0}\oplus P_{1/0}}{P_{2/0}\oplus P_{2/0}\oplus P_{2/0}\oplus P_{2/0}}{P_{2}\oplus P_{1/0}\oplus Q'}{W_2\oplus W'_2}{u_3}{t_0}{ }{z_0 } \, \dsquaref{P_{1/0}\oplus P_{1/0}}{P_{2/0}\oplus P_{2/0}\oplus P_{2/0}\oplus P_{2/0}}{Q\oplus P_{2}\oplus P_{1/0}}{W_2\oplus W'_2}{u_3}{t_1 }{ }{z_1 }
\end{equation}
whereby
\begin{equation*}
u_3=u_0\oplus u_1, \qquad z_0=w_0\oplus v_1, \qquad z_1=v_0\oplus w_1,
\end{equation*}
and $t_0,t_1$ are defined as in Step 2a.
\subsubsection*{Step 2d} We now construct our 2-simplex. Start by horizontally composing Diagrams~\eqref{eq:3a} and ~\eqref{eq:3cDS}. Then, vertically compose Diagram~\eqref{eq:3cDS} with the following (identical) pair of exact squares, obtained by applying Axiom (K):
\begin{equation}
\small{\dsquaref{O}{P_{2/0}\oplus P_{2/1}\oplus P_{2/0}\oplus P_{2/1}}{P_{1/0}\oplus P_{1/0}}{P_{2/0}\oplus P_{2/0}\oplus P_{2/0}\oplus P_{2/0}}{}{ }{u_3}{} }\quad \small{\dsquaref{O}{P_{2/0}\oplus P_{2/1}\oplus P_{2/0}\oplus P_{2/1}}{P_{1/0}\oplus P_{1/0}}{P_{2/0}\oplus P_{2/0}\oplus P_{2/0}\oplus P_{2/0}}{}{ }{u_3}{}}\quad.
\end{equation}
And we are done.

\subsubsection*{Step 3: Finish} We now prove $(P_2,P_2)\oplus \mu(l(\calT))$ is freely homotopic to $\calT$. Summarising our work above:
\begin{itemize}
	\item \textbf{Step 1:} $\calT\sim def^{-1}g^{-1}$.
	\item \textbf{Step 1:} $(P_2,P_2)\oplus \mu(l(\calT)) \sim \gamma_0ab^{-1}\gamma_2^{-1}.$
	\item \textbf{Step 1:} $gf\sim \gamma_2 b$. In fact, by Equation~\eqref{eq:compSPLIT}, their composites split:
	$$\lrangles{g}+\lrangles{f} = \lrangles{\overline{gf}}=\lrangles{\overline{\gamma_2b}}=\lrangles{\gamma_2}+\lrangles{b}.$$
	\item \textbf{Step 2:} We have the following 2-simplex  
$$	\begin{pmatrix}  &O\rtail & \doubleunderline{\,P_2\oplus P_2 \rtail  P_2\oplus P_{1/0}\oplus Q'\rtail W_2\oplus W'_2\, }\\ &O  \rtail & P_2\oplus P_2 \rtail  Q\oplus P_2\oplus P_{1/0}\rtail W_2\oplus W'_2\,\end{pmatrix}.
$$
\end{itemize}
By Step 1, our problem reduces to showing that the concatenation $de$ is homotopic to $\gamma_0 a$. Presented as generators of $\pi_1|G\calC ^{\mathsf{o}}|$, this amounts to showing
$$\lrangles{\gamma_0}+\lrangles{a}-\lrangles{d} -\lrangles{e}=0.$$
By Step 2, the 2-simplex yields the equation\footnote{This step implicitly applies the extension of Proposition~\ref{prop:baseline} noted in Remark~\ref{rem:piGC}, allowing all 1-simplices in $G\calC^{\mathsf{o}}$ rather than only double exact squares.}
\begin{align}\label{eq:step4}
\!\!\!\!\!\!\!\!\!\!\left\langle 	\begin{pmatrix}  &O\rtail & \doubleunderline{\,P_2\oplus P_2 \rtail  P_2\oplus P_{1/0}\oplus Q' \, }\\ &O  \rtail & P_2\oplus P_2 \rtail  Q\oplus P_2\oplus P_{1/0} \,\end{pmatrix} \right\rangle &+ \left\langle 	\begin{pmatrix}  &O\rtail & \doubleunderline{\,  P_2\oplus P_{1/0}\oplus Q' \rtail W_2\oplus W'_2 \, }\\ &O  \rtail & Q\oplus P_2\oplus P_{1/0} \rtail W_2\oplus W'_2\,\end{pmatrix} \right\rangle \nonumber  \\ 
&= \left\langle 	\begin{pmatrix}  &O\rtail & \doubleunderline{\,P_2\oplus P_2 \rtail  W_2\oplus W'_2 \, }\\ &O  \rtail & P_2\oplus P_2 \rtail  W_2\oplus W'_2 \,\end{pmatrix} \right\rangle . 
\end{align}

Let us start by analysing the first term. Compute
$$\!\!\!\!\!\! \!\left\langle 	\begin{pmatrix}  &O\rtail & \doubleunderline{\, {\color{purple}P_2}\oplus {\color{blue} P_2} \rtail  {\color{purple} P_2\oplus P_{1/0}}\oplus {\color{blue} Q'} \, }\\ &O  \rtail & {\color{blue} P_2} \oplus {\color{purple}  P_2} \rtail {\color{blue} Q}\oplus  {\color{purple} P_2\oplus P_{1/0}} \,\end{pmatrix} \right\rangle  = \left\langle 	\begin{pmatrix}  &O\rtail & \doubleunderline{\, {\color{purple}P_2}\oplus {\color{blue} P_2} \rtail  {\color{purple} P_2\oplus P_{1/0}}\oplus {\color{blue} Q'} \, }\\ &O  \rtail & {\color{purple} P_2} \oplus {\color{blue}  P_2} \rtail {\color{purple} P_2\oplus P_{1/0}} \oplus  {\color{blue} Q} \,\end{pmatrix} \right\rangle   = \lrangles{\gamma_0} - \lrangles{d}; $$
the first equality applies Permutation Lemma~\ref{lem:permute}, the second equality applies Observation~\ref{obs:K1DirectSum}. The same permutation argument can be applied to the other terms of Equation~\eqref{eq:step4}. This assembles to yield
$$\lrangles{\gamma_0} - \lrangles{d} + \lrangles{a} - \lrangles{e} = \lrangles{\overline{\gamma_2b}} -\lrangles{\overline{gf}} = 0,$$
where the final equality is justified by Step 1, Equation~\eqref{eq:compSPLIT}, completing the proof.
\end{proof}

\begin{corollary}\label{cor:admtrip} If an admissible triple $\calT$ features only double exact squares, then its loop is freely homotopic to $\mu(l(\calT))$. 
\end{corollary}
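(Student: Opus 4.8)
The statement follows almost immediately from the Key Lemma just proved, once we recall how the $H$-space structure on $G\calC$ interacts with the additive structure on $K_1(\calC)$. The plan is as follows. Given an admissible triple $\calT=(e_0,e_1,e_2)$ all of whose edges are double exact squares, the Key Lemma produces a free homotopy between the loop $\calT=e_0e_1e_2^{-1}$ and the loop $(P_2,P_2)\oplus \mu(l(\calT))$. Here $(P_2,P_2)$ is the diagonal vertex, and adding it via the $H$-space structure corresponds, on the level of $\pi_1|G\calC^o|$, to adding the homotopy class $\lrangles{e(P_2)}$ of the standard edge $e(P_2)\colon (O,O)\to (P_2,P_2)$ -- but this standard edge lies in a chosen maximal spanning tree (cf.\ Remark~\ref{rem:piGC} and the proof of Observation~\ref{obs:K1DirectSum}), so its class vanishes.

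More precisely, first I would note that $\mu(l(\calT))$ is itself a based loop (the canonical loop of the double exact square $l(\calT)$, built from $e(P_{1/0})$, $e(P_{2/0})$ and the edge $l(\calT)$), so both $\calT$ and $(P_2,P_2)\oplus\mu(l(\calT))$ are loops based at $(O,O)$ up to the standard tree paths. By Observation~\ref{obs:K1DirectSum}, the $H$-space sum $(P_2,P_2)\oplus \mu(l(\calT))$ is homotopic, as a based loop, to the concatenation of the canonical loop of the diagonal square on $(P_2,P_2)$ with $\mu(l(\calT))$. The canonical loop of a diagonal square bounds a $2$-simplex in $G\calC$ (this is exactly Relation~(N1), established in the proof of Theorem~\ref{thm:NenSurj}, which does not depend on the present corollary), hence is null-homotopic. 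Therefore $(P_2,P_2)\oplus\mu(l(\calT))$ is freely homotopic to $\mu(l(\calT))$, and combining with the Key Lemma gives that $\calT$ is freely homotopic to $\mu(l(\calT))$.

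The one point requiring a little care -- and the only place there is anything to check -- is the passage from the \emph{free} homotopy supplied by the Key Lemma to a statement about based loops: a free homotopy of loops only determines the conjugacy class in $\pi_1$, so a priori $\calT$ and $(P_2,P_2)\oplus\mu(l(\calT))$ agree only up to conjugation. However, since $K_1(\calC)$ is abelian (Observation~\ref{obs:K1DirectSum}), conjugacy classes and elements coincide, so free homotopy of loops is the same as equality of classes in $K_1(\calC)$. Thus the chain ``$\calT \sim (P_2,P_2)\oplus\mu(l(\calT)) \sim \mu(l(\calT))$'' of free homotopies gives the desired conclusion. I expect no real obstacle here; the substantive work has already been done in the Key Lemma, and this corollary is just the bookkeeping that strips off the diagonal summand.
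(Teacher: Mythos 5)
Your proposal correctly reduces to the question of why $(P_2,P_2)\oplus\mu(l(\calT))$ and $\mu(l(\calT))$ are freely homotopic, and correctly observes that abelianness of $K_1(\calC)$ lets you trade free homotopy for equality of $K_1$ classes. However, the justification you give for stripping off the $(P_2,P_2)$ summand via Observation~\ref{obs:K1DirectSum} does not go through as stated, for two reasons. First, Observation~\ref{obs:K1DirectSum} identifies the $H$-space sum with the group operation only for \emph{loops based at $(O,O)$}; the ``extra'' summand here is the constant (or degenerate) loop at $(P_2,P_2)$, which is not based at $(O,O)$, so Eckmann--Hilton is not immediately applicable. Second, even after you reattach properly based loops, the vertex-wise $H$-space sum of $\mu(e(P_2))$ with $\mu(l(\calT))$ is a loop whose vertices are $(O,O)$, $(P_{1/0},P_{1/0})$ and $(P_2\oplus P_{2/0},P_2\oplus P_{2/0})$, whereas the reattached $(P_2,P_2)\oplus\mu(l(\calT))$ visits $(P_2,P_2)$, $(P_2\oplus P_{1/0},P_2\oplus P_{1/0})$ and $(P_2\oplus P_{2/0},P_2\oplus P_{2/0})$ --- these are literally different loops, so the equality ``$(P_2,P_2)\oplus\mu(l(\calT)) =$ concatenation of a diagonal loop with $\mu(l(\calT))$'' you assert is itself something that needs proof, not a consequence of Observation~\ref{obs:K1DirectSum}. (There is also a minor terminological slip: $(P_2,P_2)$ is a vertex, so ``the canonical loop of the diagonal square on $(P_2,P_2)$'' is not well-formed; what you mean is presumably the class of the tree edge $e(P_2)$.)

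What is actually needed --- and this is precisely what the paper's proof supplies --- is a naturality/simplicial-homotopy argument rather than an Eckmann--Hilton argument. The operation $(A,A')\oplus\argu\colon G\calC\to G\calC$ is a simplicial endomorphism (using Lemma~\ref{lem:DirectSum}), and any edge $(A,A')\to(B,B')$ of $G\calC$ induces a simplicial homotopy $(A,A')\oplus\argu\Rightarrow (B,B')\oplus\argu$. Since the hypothesis that $\calT$ consists of double exact squares forces every node $(P_i,P'_i)$ onto the diagonal --- in particular $P_2=P'_2$ --- the standard edge $e(P_2)\colon (O,O)\to(P_2,P_2)$ exists, and since $(O,O)\oplus\argu$ is the identity (up to canonical isomorphism), this homotopy applied to $\mu(l(\calT))$ yields the required free homotopy $\mu(l(\calT))\sim(P_2,P_2)\oplus\mu(l(\calT))$ directly. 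So the conclusion you aim for is correct and the place where the hypothesis ``double exact squares'' enters is the same (existence of $e(P_2)$), but the mechanism for stripping off $(P_2,P_2)$ must be this naturality of $\oplus$, not the Eckmann--Hilton identification of concatenation with $H$-space addition.
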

\begin{proof}
	
	Consider the operation
	\begin{equation}
	(A,A')\oplus\argu \colon G\calC\to G\calC,
	\end{equation}
	which adds a vertex $(A,A')$ to all the nodes of an $n$-simplex of $G\calC$ (in the sense of Lemma~\ref{lem:DirectSum}). Given any edge $(A,A')\to (B,B')$, this induces a simplicial homotopy 
	between the maps 
	\begin{equation}
	(A,A')\oplus\argu \longrightarrow (B,B')\oplus\argu.
	\end{equation} 
	In particular, if the loop $\calT$ features only double exact squares, then there exists an edge $(O,O)\to (P_2,P_2)$. By the above, this implies $(P_2,P_2)\oplus \mu(l(\calT))$ is homotopic to $\mu(l(\calT))$. 
\end{proof}

\bibliography{biblio}

\end{document}